\providecommand{\U}[1]{\protect\rule{.1in}{.1in}}
\numberwithin{equation}{section}
\def\Re{{\rm Re}}
\def\sup{{\rm Supp}}
\def\Div{{\rm Div}}
\def\Bl{{\rm Bl}}
\def\rk{{\rm rk}}
\def\p{\partial}
\def\b{\bar}
\def\mc{\mathcal}
\theoremstyle{plain}
\newtheorem{thm}{Theorem}[section]
\newtheorem{lemma}[thm]{Lemma}
\newtheorem{prop}[thm]{Proposition}
\newtheorem{cor}[thm]{Corollary}
\newtheorem{quest}[thm]{Question}
\newtheorem{con}[thm]{Conjecture}
\theoremstyle{definition}
\newtheorem{defn}[thm]{Definition}
\newtheorem{rem}[thm]{Remark}
\newtheorem{notation}[thm]{Notation}
\newcommand{\comment}[1]{}
\begin{document}

\title[Deformation limit of Moishezon manifolds]{Deformation limit and bimeromorphic embedding of Moishezon manifolds}

\author[Sheng Rao]{Sheng Rao}
\author[I-Hsun Tsai]{I-Hsun Tsai}

\address{Sheng Rao, School of Mathematics and Statistics, Wuhan  University,
Wuhan 430072, People's Republic of China;
Universit\'{e} de Grenoble-Alpes, Institut Fourier (Math\'{e}matiques)
UMR 5582 du C.N.R.S., 100 rue des Maths, 38610 Gi\`{e}res, France}
\email{likeanyone@whu.edu.cn, sheng.rao@univ-grenoble-alpes.fr}

\address{I-Hsun Tsai, Department of Mathematics, National Taiwan University, Taipei 10617,
Taiwan}
\email{ihtsai@math.ntu.edu.tw}

\dedicatory{One world, one fight}
\thanks{Rao is partially supported by NSFC (Grant No. 11671305, 11771339, 11922115) and the Fundamental Research Funds for the Central Universities (Grant No. 2042020kf1065).}
\date{\today}

\subjclass[2010]{Primary 32G05; Secondary 32S45, 18G40, 32C35, 32C25,32C22}
\keywords{Deformations of complex structures; Modifications; resolution of singularities, Spectral sequences, hypercohomology, Analytic sheaves and cohomology groups, Analytic subsets and submanifolds, Embedding of analytic spaces}

\begin{abstract}
Let $\pi: \mathcal{X}\rightarrow \Delta$ be a holomorphic family of compact complex manifolds over an open disk in $\mathbb{C}$. If the fiber $\pi^{-1}(t)$ for each nonzero $t$ in an uncountable subset $B$ of $\Delta$ is Moishezon and the reference fiber $X_0$ satisfies the local deformation invariance for Hodge number of type $(0,1)$ or admits a strongly Gauduchon metric introduced by D. Popovici, then $X_0$ is still Moishezon.  We also obtain a bimeromorphic embedding
$\mathcal{X}\dashrightarrow\mathbb{P}^N\times\Delta$.  Our proof can be regarded as a new, algebraic proof of several results in this direction proposed and proved by Popovici in 2009, 2010 and 2013. However, our assumption with $0$ not necessarily being a limit point of $B$ and the bimeromorphic embedding are new. Our strategy of proof lies in constructing a global holomorphic line bundle over the total space of the holomorphic family and studying the bimeromorphic geometry of $\pi:\mathcal{X}\rightarrow \Delta$.  S.-T. Yau's solutions to
certain degenerate Monge--Amp\`ere equations are used.
 \end{abstract}
\maketitle


\section{Introduction} \label{s0}
The deformation limit problem is central in deformation theory, with which
the following longstanding conjecture is concerned. Throughout this paper, one considers the  holomorphic family $\pi: \mathcal{X}\rightarrow \Delta$ of compact complex manifolds of dimension $n$ over an open disk $\Delta$ in $\mathbb{C}$ with the fiber $X_t:=\pi^{-1}(t)$ for each $t\in \Delta$.
\begin{con}\label{conj}
Assume that the fiber $X_t$  is projective for each $t\in \Delta^*:=\Delta\setminus \{0\}$. Then the reference fiber $X_0:=\pi^{-1}(0)$ is Moishezon.
\end{con}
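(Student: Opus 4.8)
The plan is to reduce the Moishezon property of $X_0$ to the existence of a big line bundle on it, and to obtain such a bundle as the central restriction of a \emph{global} line bundle over (a modification of) $\mathcal{X}$ that carries the polarizations of the projective fibers. Recall that a compact complex manifold is Moishezon exactly when it carries a big line bundle, equivalently when it is bimeromorphic to a projective manifold. So for each $t\in\Delta^*$ I would fix an ample line bundle $L_t\to X_t$ and consider its Chern class $c_1(L_t)\in H^2(X_t,\mathbb{Z})$. By Ehresmann's theorem the family is topologically locally trivial, so on a simply connected punctured sector $S\subset\Delta^*$ parallel transport (Gauss--Manin) is single-valued and identifies every $H^2(X_t,\mathbb{Z})$ with the fixed lattice $H^2(X_0,\mathbb{Z})/\mathrm{torsion}$. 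Since this lattice is countable while $S$ is uncountable, a pigeonhole argument yields a single primitive class $\alpha$ together with an uncountable subset $B\subset S$, with $0\in\overline{B}$, such that the transport of $c_1(L_t)$ equals $\alpha$ for every $t\in B$. Thus one fixed integral class simultaneously polarizes an uncountable, $0$-accumulating family of fibers.

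Next I would promote $\alpha$ to a global object. On each $X_t$, $t\in B$, the class $\alpha$ is of Hodge type $(1,1)$, i.e. its projection to $H^{0,2}(X_t)$ vanishes; the obstruction to extending $\alpha$ across $t=0$ as a $(1,1)$-class, and then via the exponential sequence to a line bundle, lives in $H^2(X_0,\mathcal{O}_{X_0})$. Here the hypotheses enter: the local deformation invariance of the Hodge number of type $(0,1)$ (through the attendant degeneration of the Fr\"olicher spectral sequence and the real-analytic variation of the Hodge projections, which must then vanish at $0$ because they vanish on the accumulating set $B$), or alternatively a strongly Gauduchon metric controlling the relevant masses, forces $\alpha$ to remain of type $(1,1)$ on $X_0$. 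After a suitable bimeromorphic modification of $\mathcal{X}$ to flatten the family and resolve indeterminacies, this should produce a genuine global holomorphic line bundle $\mathcal{L}\to\mathcal{X}$ with $\mathcal{L}|_{X_t}\cong L_t$ for $t\in B$; the many sections of $\mathcal{L}$ are what will simultaneously yield the claimed bimeromorphic embedding $\mathcal{X}\dashrightarrow\mathbb{P}^N\times\Delta$.

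Finally I would show that $\mathcal{L}|_{X_0}$ is big. Endowing each $L_t$ with the positively curved Fubini--Study metric pulled back from a projective embedding, I would extract a limit of the curvature currents and, invoking S.-T.\ Yau's solution of the degenerate complex Monge--Amp\`ere equation, produce a closed positive $(1,1)$-current in $c_1(\mathcal{L}|_{X_0})$ whose volume $\int_{X_0}(\mathcal{L}|_{X_0})^n$ is strictly positive. By the Boucksom--Demailly characterization of big classes (or holomorphic Morse inequalities in the spirit of Siu and Demailly), strict positivity of this volume makes $\mathcal{L}|_{X_0}$ big, so $X_0$ is Moishezon.

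The main obstacle I anticipate is the passage to the limit $t\to0$: a priori the polarization masses could collapse, so that the limiting class stays of type $(1,1)$ but has zero volume, or the $(0,2)$-part could survive the degeneration and prevent $\alpha$ from being of type $(1,1)$ at all. Controlling precisely this is the role of the Hodge-number hypothesis (or the strongly Gauduchon metric), which guarantees that the $(0,2)$-part dies and that enough mass is retained, while Yau's degenerate Monge--Amp\`ere solution converts the retained mass into a positive-volume current. The secondary, more technical obstacle is making the fibrewise bundles genuinely global: this requires the bimeromorphic modification of the total space together with careful control of how $\mathcal{L}$ behaves over the central fiber, which is where the bimeromorphic geometry of $\pi\colon\mathcal{X}\rightarrow\Delta$ must be analyzed in detail.
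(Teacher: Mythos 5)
The statement you set out to prove is Conjecture \ref{conj}, which carries \emph{no} hypothesis on the central fiber; in the paper this conjecture is open and is established only in conditional form: Theorem \ref{thm-moishezon} and Corollary \ref{thm-pop} assume in addition that $X_0$ satisfies the local deformation invariance of the $(0,1)$-Hodge number or admits a strongly Gauduchon metric. Your proposal invokes exactly these extra hypotheses at the decisive moment (``here the hypotheses enter''), so what you sketch is the conditional theorem, not the conjecture as stated. This is the essential gap: without some such assumption on $X_0$, nothing in your argument controls the obstruction at $t=0$. Concretely, the candidate class $c$ induces a section $s_c\in\Gamma(\Delta,R^2\pi_*\mathcal{O}_{\mathcal{X}})$ that vanishes on $\Delta^*$, but its vanishing \emph{at} $0$ requires torsion-freeness of $R^2\pi_*\mathcal{O}_{\mathcal{X}}$ there (Propositions \ref{s-torsion} and \ref{Hodge-torsionfree}), which is precisely equivalent to the $h^{0,1}$-invariance you assumed; the alternative route requires the strongly Gauduchon current-theoretic argument. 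A proof of Conjecture \ref{conj} itself would have to remove this assumption, and neither your sketch nor the paper does so.

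Even read as a proof of the conditional statement, several steps need repair. First, pigeonhole alone cannot deliver $0\in\overline{B}$: your sector $S$ is a countable union of the sets $B_c$ on which the transported class equals $c$, and it is perfectly possible that every $B_c$ avoids a punctured disk around $0$ (with radii shrinking as $c$ runs through the lattice). The paper's mechanism is stronger: over $\Delta^*$ the sets $Z_c\supseteq B_c$ of fibers where $c$ is of type $(1,1)$ are \emph{analytic} (using local freeness of $R^2\pi_*\mathcal{O}_{\mathcal{X}}$ over $\Delta^*$, available by projectivity), and a countable union of proper analytic subsets cannot cover $\Delta^*$, so some $Z_c$ equals all of $\Delta^*$ (Lemmas \ref{leb-neg} and \ref{leb-neg-line}); alternatively, in the main theorem the condition $0\in\overline{B}$ is dropped altogether and replaced by the extension-of-bigness result (Corollary \ref{unc-big}) together with the uniform estimate \eqref{16-3}, cf.\ Remark \ref{delta}. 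Second, no bimeromorphic modification of $\mathcal{X}$ should be performed to construct the line bundle: $L$ is obtained on $\mathcal{X}$ itself from the exponential sequence once $s_c\equiv 0$, and modifying the total space would alter the very fibers whose projectivity you are exploiting. Third, positivity of the topological number $\int_{X_0}c_1(L|_{X_0})^n$ does not by itself give bigness on a possibly non-K\"ahler $X_0$; what is needed is a closed positive $(1,1)$-current $T\in c_1(L|_{X_0})$ with $\int_{X_0}T_{ac}^n>0$ (Theorem \ref{pop-hol}), and the paper secures this by combining Yau's degenerate Monge--Amp\`ere solutions and Fujita's approximate Zariski decomposition (Proposition \ref{abc}) with a \emph{uniform} lower bound on the fiberwise volumes coming from \eqref{16-3} --- which is exactly the ``mass collapse'' issue you correctly flagged but did not resolve.
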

By definition, a compact connected complex manifold $X$ is called a \emph{Moishezon manifold} if it possesses $\dim_{\mathbb{C}}X$ algebraically independent meromorphic functions. Equivalently, $X$ is Moishezon if and only if there exist a projective algebraic manifold $Y$ and a holomorphic modification $Y \rightarrow X$. Any connected projective manifold is Moishezon.

The following is a stronger variant of the above:
\begin{con}\label{conj-M}
  If the fiber $X_t$  is Moishezon for each $t\in \Delta^*$, then the reference fiber $X_0:=\pi^{-1}(0)$ is Moishezon.
\end{con}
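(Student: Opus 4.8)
The plan is to give an algebraic argument that avoids taking a fibrewise limit as $t\to 0$ (which is unavailable when $0$ is not an accumulation point of $B$), by instead reducing everything to the construction of a single \emph{big} line bundle on $X_0$: recall that $X_0$ is Moishezon precisely when it carries a line bundle $L_0$ of maximal Iitaka dimension, i.e. with $h^0(X_0, L_0^{\otimes m})\gtrsim m^n$. I would build a global holomorphic line bundle $\mathcal{L}$ on the total space $\mathcal{X}$ whose restriction to $X_t$ is big for uncountably many $t$, and then \emph{propagate} bigness from this uncountable set to the single fibre $X_0$ by analyticity. Over the contractible base $\Delta$ the family is topologically a product (Ehresmann), so the restriction maps $H^2(\mathcal{X},\mathbb{Z})\to H^2(X_t,\mathbb{Z})$ are isomorphisms, all identified with $H^2(X_0,\mathbb{Z})$.

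First I would construct $\mathcal{L}$. For each $t\in B$ the Moishezon fibre $X_t$ carries a big line bundle whose Chern class $\alpha_t$ lies in the fixed finitely generated lattice $H^2(\mathcal{X},\mathbb{Z})$; since $B$ is uncountable while the lattice is countable, a pigeonhole argument produces one integral class $\alpha$ that is the first Chern class of a big line bundle on $X_t$ for all $t$ in an uncountable subset $B'\subseteq B$. To realize $\alpha$ by a holomorphic line bundle $\mathcal{L}$ on $\mathcal{X}$ I would use the exponential sequence: $\alpha$ lifts to $\mathrm{Pic}(\mathcal{X})$ exactly when its image in $H^2(\mathcal{X},\mathcal{O}_{\mathcal{X}})$ vanishes. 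Fibrewise this image already vanishes, since each $\alpha|_{X_t}$ is of type $(1,1)$; the deformation invariance of $h^{0,1}$ (or, alternatively, the strongly Gauduchon hypothesis) is what makes $R^{\bullet}\pi_*\mathcal{O}_{\mathcal{X}}$ well behaved enough for base change to transport this fibrewise vanishing into global vanishing of the obstruction. Here I also expect S.-T. Yau's solutions of degenerate Monge--Ampère equations to enter, manufacturing on a suitable modification a singular metric of positive curvature current in the class $\alpha$ that simultaneously certifies bigness via the Ji--Shiffman / Demailly--Paun criterion.

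With $\mathcal{L}$ in hand, I would propagate bigness by analyticity. After a second pigeonhole I may assume $\mathrm{vol}(\mathcal{L}_t)\ge\delta$ for all $t$ in an uncountable $B'$ and a uniform $\delta>0$, so that $h^0(X_t,\mathcal{L}_t^{\otimes m})\ge\tfrac{\delta}{2}m^n$ for infinitely many $m$. Since $\mathcal{L}^{\otimes m}$ is flat over $\Delta$ and $\pi$ is proper, each jumping locus $A_{m,k}=\{t\in\Delta: h^0(X_t,\mathcal{L}_t^{\otimes m})\ge k\}$ is a closed analytic subset of $\Delta$ by Grauert's upper semicontinuity theorem. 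A closed analytic subset of the one dimensional disk is either discrete or all of $\Delta$, so any such locus meeting $B'$ in uncountably many points must equal $\Delta$ and in particular contain $0$. For each threshold $M$ the uncountable set $B'$ is covered by the countably many loci $\{A_{m,\lceil\delta m^n/2\rceil}\}_{m\ge M}$, hence one of them is uncountable and equals $\Delta$; letting $M\to\infty$ gives $h^0(X_0,\mathcal{L}_0^{\otimes m})\ge\tfrac{\delta}{2}m^n$ for arbitrarily large $m$, so $\mathcal{L}_0$ is big and $X_0$ is Moishezon. Packaged differently, the coherent direct images $\pi_*(\mathcal{L}^{\otimes m})$ define a meromorphic map $\mathcal{X}\dashrightarrow\mathbb{P}^N\times\Delta$ over $\Delta$, bimeromorphic onto its image, whose central fibre realizes $X_0$ as bimeromorphic to a projective variety.

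The main obstacle is the middle step: realizing the pigeonholed integral class $\alpha$ as an honest holomorphic line bundle on the \emph{total} space and certifying its fibrewise bigness in a way that survives restriction to $X_0$. Fibrewise, $\alpha|_{X_t}$ is $(1,1)$ and big only on the uncountable set $B'$, and there is no a priori reason for the $(0,2)$-obstruction to vanish globally or for the big classes to vary holomorphically; controlling $H^2(\mathcal{X},\mathcal{O}_{\mathcal{X}})$ and the positivity of the limiting current is precisely where the Hodge-number invariance, the strongly Gauduchon metric, and Yau's degenerate Monge--Ampère estimates are indispensable. This step also quietly uses the uncountability of $B$ twice: once to fix the integral class against the countable N\'eron--Severi-type lattice, and once to force the analytic jumping loci to fill all of $\Delta$.
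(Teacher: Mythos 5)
Your proposal is not a proof of the statement you were given, and in fact the paper does not contain one either: the statement is Conjecture \ref{conj-M}, which remains open in the paper. What the paper actually proves (Theorems \ref{thm-moishezon}, \ref{thm-moishezon-update}, \ref{thm-gauduchon-update}, \ref{thm-gauduchon-update'}) is the conjecture \emph{plus} an added hypothesis on the limit fiber: either the deformation invariance of $h^{0,1}$ at $t=0$, or the existence of a strongly Gauduchon metric on $X_0$. Your outline reproduces the paper's strategy for those theorems quite faithfully — pigeonholing the first Chern classes of the big line bundles over an uncountable set (Proposition \ref{global-B}), lifting the common class to a line bundle on the total space via the exponential sequence, transferring bigness between line bundles with equal $c_1$ on a Moishezon fiber (Proposition \ref{propa}), and propagating bigness to all fibers using upper semi-continuity together with the countability of proper analytic subsets of $\Delta$ (Proposition \ref{ki-dim-limit}, Corollary \ref{unc-big}).

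The genuine gap is exactly the step you flag at the end, and it cannot be closed under the conjecture's hypotheses alone. The obstruction to lifting your pigeonholed class $\alpha$ is a section $s\in\Gamma(\Delta, R^2\pi_*\mathcal{O}_{\mathcal{X}})$ which the fibrewise $(1,1)$-condition forces to vanish on $\Delta^*$; by Proposition \ref{s-torsion} this only means the germ $s_0$ is a \emph{torsion} element of $(R^2\pi_*\mathcal{O}_{\mathcal{X}})_0$, and one may conclude $s\equiv 0$ only when $R^2\pi_*\mathcal{O}_{\mathcal{X}}$ is torsion-free at $0$, which by Proposition \ref{Hodge-torsionfree} is \emph{equivalent} to the deformation invariance of $h^{0,1}(X_t)$ at $t=0$. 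The Moishezon hypothesis on $\Delta^*$ makes $h^{0,1}$ constant there, but nothing prevents it from jumping at $t=0$, and that jump is precisely a nonzero torsion germ killing your lift. The alternative current-theoretic route (uniform mass bounds, Yau's degenerate Monge--Amp\`ere solutions, Popovici's bigness criterion) equally requires a strongly Gauduchon metric on $X_0$ to bound the masses and extract a positive weak limit on the central fiber. Without one of these two unavailable hypotheses, your $\mathcal{L}$ exists only over $\pi^{-1}(\Delta^*)$, so the final propagation argument has nothing defined near $t=0$ to which Grauert upper semi-continuity on $\Delta$ could be applied; what you have written is therefore a correct sketch of the paper's Theorem \ref{thm-moishezon-update}, not of Conjecture \ref{conj-M}.
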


The above two conjectures are actually equivalent to:
\begin{con}\label{equiv-conj}
 Let $\pi: \mathcal{X}\rightarrow Y$ be a holomorphic family of compact complex manifolds over a complex variety $Y$, $V\subset Y$ a proper subvariety and write $Y'=Y\setminus V$. Suppose that $X_t$ are Moishezon (or projective) for all $t\in Y'$. Then $X_t$ are Moishezon for all $t\in V$.
\end{con}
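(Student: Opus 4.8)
The plan is to prove the statement outright, not merely to tie it to the one-parameter conjectures. First I would localize. Since being Moishezon is a property of an individual fiber, it suffices to prove that $X_{t_0}$ is Moishezon for each fixed $t_0\in V$. As $V$ is a proper analytic subset, a generic one-dimensional holomorphic slice through $t_0$ meets $V$ only at $t_0$, so after shrinking I obtain a disk $\sigma\colon\Delta\to Y$ with $\sigma(0)=t_0$ and $\sigma(\Delta^*)\subset Y'$. The base change $\sigma^*\mathcal{X}\to\Delta$ is again a smooth holomorphic family, with Moishezon fibers over $\Delta^*$ and central fiber $X_{t_0}$. This reduces everything to the concrete assertion that a family $\pi\colon\mathcal{X}\to\Delta$ with $X_t$ Moishezon for all $t\neq0$ has $X_0$ Moishezon, which I now attack directly; after a further finite ramified base change $s\mapsto s^k$ (harmless for the central fiber) I may assume the monodromy on $H^2(X_t,\mathbb{Z})$ acts trivially on the N\'eron--Severi lattice.

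Second, I would construct a global holomorphic line bundle on the total space. Over $\Delta^*$ each Moishezon fiber carries a big line bundle; passing to fiberwise projective modifications and using the now-invariant big class, I spread this out to a relatively big line bundle $\mathcal{L}'$ on $\mathcal{X}|_{\Delta^*}$, its Chern class coming from a section of the local system $R^2\pi_*\mathbb{Z}$. To cross $t=0$ I would use that $\mathcal{X}$ deformation retracts onto $X_0$, so the integral class extends topologically; realizing it by a genuine holomorphic line bundle $\mathcal{L}$ on all of $\mathcal{X}$, with $(1,1)$-restriction to $X_0$, is controlled through the exponential sequence by the behavior of $R^1\pi_*\mathcal{O}$ and $R^2\pi_*\mathcal{O}$. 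This is exactly where the hypothesis of local deformation invariance of $h^{0,1}$ enters: it forces $R^1\pi_*\mathcal{O}$ to be locally free with base change, equivalently the Hodge--Fr\"olicher spectral sequence to behave uniformly, so that $H^1(\mathcal{O})$ does not jump at $0$ and the limiting Chern class remains of type $(1,1)$, guaranteeing the holomorphic extension.

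Third, I would restrict to $X_0$ and show $\mathcal{L}_0:=\mathcal{L}|_{X_0}$ is big, which by Siegel's theorem yields $a(X_0)=n$ and hence that $X_0$ is Moishezon. The class $c_1(\mathcal{L}_0)$ is a limit of big (indeed K\"ahler, on the projectivized nearby fibers) classes, so it lies in the closure of the big cone, but bigness on a possibly non-K\"ahler $X_0$ is not automatic at the boundary. Here I would invoke either a strongly Gauduchon metric on $X_0$ in the sense of Popovici, whose $\partial\bar\partial$-closedness prevents the mass of the class from leaking into other Hodge types, or directly S.-T. Yau's solution of the degenerate complex Monge--Amp\`ere equation to produce a singular Hermitian metric on $\mathcal{L}_0$ with positive curvature current of maximal self-intersection. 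A holomorphic Morse / $L^2$-estimate of Siu--Demailly type then forces $\dim_{\mathbb{C}}H^0(X_0,\mathcal{L}_0^{\otimes m})\sim c\,m^n$. Organizing the sections of $\mathcal{L}^{\otimes m}$ over $\Delta$ simultaneously produces the asserted bimeromorphic embedding $\mathcal{X}\dashrightarrow\mathbb{P}^N\times\Delta$.

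The step I expect to be the genuine obstacle is the two-sided control of the limiting $(1,1)$-class at $t=0$: one must simultaneously ensure that the holomorphic line bundle extends across the central fiber (no jumping of $H^1(\mathcal{O})$, hence the Hodge-theoretic hypothesis) and that the extended class on $X_0$ is honestly big rather than merely nef or boundary. The non-K\"ahler character of a general Moishezon degeneration is what makes both halves delicate, and it is precisely here that Yau's degenerate Monge--Amp\`ere analysis does the decisive work of converting the asymptotic positivity of the class into an abundance of holomorphic sections. A subsidiary difficulty is the careful bookkeeping of the fiberwise modifications, so that passing from Moishezon to projective nearby models does not destroy the coherence of $\mathcal{L}$ as a bundle over the whole family.
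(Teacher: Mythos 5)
There is a genuine gap, and it is structural rather than technical. The statement you are asked to prove is Conjecture \ref{equiv-conj}, which in the paper is an \emph{open conjecture}: the paper does not prove it, and what it offers in its place is exactly your first step — slicing $Y$ by a one-dimensional disc $D$ through $t_0\in V$ not contained in $V$, using the identity theorem to shrink $D$ so that $D\cap V=\{t_0\}$, thereby reducing to the one-parameter Conjectures \ref{conj} and \ref{conj-M}. Up to that point your reduction agrees with the paper. But from your second step onward you silently import hypotheses that the conjecture does not grant you: you write ``this is exactly where the hypothesis of local deformation invariance of $h^{0,1}$ enters'' and later invoke ``a strongly Gauduchon metric on $X_0$.'' Neither of these is available. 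The conjecture makes \emph{no} assumption on the central fiber, and it is precisely the possible failure of these conditions on $X_0$ that keeps Conjectures \ref{conj}, \ref{conj-M} and \ref{equiv-conj} open. What you have sketched is, at best, the paper's conditional results (Theorem \ref{thm-moishezon}, Theorems \ref{thm-moishezon-update} and \ref{thm-gauduchon-update}, Corollary \ref{thm-pop}), which assume one of those two conditions on $X_0$; presented as a proof of the unconditional statement, the argument is circular.

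Concretely, the step that fails without the extra hypothesis is the extension of the line bundle across $t=0$ via the exponential sequence: you need the image of the integral class in $H^2(\mathcal{X},\mathcal{O}_{\mathcal{X}})$ (equivalently, the section of $R^2\pi_*\mathcal{O}_{\mathcal{X}}$) to vanish at $0$, and the paper obtains this from torsion-freeness of $R^2\pi_*\mathcal{O}_{\mathcal{X}}$, which by Proposition \ref{Hodge-torsionfree} is \emph{equivalent} to the constancy of $h^{0,1}(X_t)$ — the very hypothesis you assumed. If $h^{0,1}$ jumps at $0$, the section can vanish on $\Delta^*$ yet be a nonzero torsion element at $0$, and no holomorphic line bundle with the desired Chern class need exist on all of $\mathcal{X}$. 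Similarly, in your third step, bigness of the limiting class on a possibly non-K\"ahler, non-strongly-Gauduchon $X_0$ is exactly the point where the paper needs Popovici's mass-control criterion together with a strongly Gauduchon family of metrics; without some such positivity input on $X_0$ there is no known mechanism to prevent the mass from escaping in the limit. A correct submission here would either state the result conditionally (recovering the paper's theorems), or present only the reduction of Conjecture \ref{equiv-conj} to Conjecture \ref{conj-M}, which is all the paper itself claims.
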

In fact, fix a point $t_0$ of $V$, take $D$ as a one-dimensional disc in $Y$ with $t_0$ being the center of $D$ and set $V':=D\cap V$. Then $V'$ is a subvariety of $D$.
Suppose that $D$ is not contained in $V$.  By the identity theorem, $V'$ is a discrete subset of $D$.
Hence by shrinking $D$, we may assume that $V'$ is just the point $t_0$.

D. Popovici proposed proofs of Conjectures \ref{conj}, \ref{conj-M} in \cite{P09,P10}, respectively, and D. Barlet presented several related results to Conjecture \ref{conj-M} in \cite{bar}. 
Our results, some of which have been involved in \cite{P09,P10,P3} for which we propose a new
proof, can be summed up as follows. Recall that for a complex $n$-dimensional manifold $X$, a smooth positive-definite $(1, 1)$-form $\alpha$ on $X$ is said to be a \emph{strongly
Gauduchon metric} if the $(n, n-1)$-form $\partial\alpha^{n-1}$ is $\bar\partial$-exact on $X$. If $X$ carries such a metric, $X$ will be said to be a \emph{strongly Gauduchon
manifold}. This notion was introduced by Popovici in \cite{P3}.

As the main theorem of this paper, we prove more in Theorems \ref{thm-moishezon-update}, \ref{thm-gauduchon-update} and \ref{thm-gauduchon-update'} than the following result:
\begin{thm}[]\label{thm-moishezon}
If the fiber $X_t$  is Moishezon for each nonzero $t$ in an uncountable subset $B$ of $\Delta$,
with $0$ not necessarily being a limit point of $B$, and the reference fiber $X_0$ satisfies the local deformation invariance for Hodge number of type $(0,1)$ or admits a strongly Gauduchon metric, then over
$\Delta_{\epsilon}:=\{z\in \mathbb{C}: |z|<\epsilon\}$ with some small constant $\epsilon>0$,
\begin{enumerate}[$(i)$]
\item
$X_t$ is still Moishezon for any $t\in \Delta_{\epsilon}$.
\item  For some $N\in \mathbb{N}$, there exist
a bimeromorphic map
$$\Phi:\mathcal{X}_{\Delta_{\epsilon}}\dashrightarrow\mathcal{Y}$$
from $\mathcal{X}_{\Delta_{\epsilon}}:=\pi^{-1}(\Delta_{\epsilon})$ to a subvariety $\mathcal{Y}$ of $\mathbb{P}^N\times\Delta_{\epsilon}$
with every fiber $Y_t\subset\mathbb{P}^N\times\{t\}$ being a projective
 variety of dimension $n$, and also a proper analytic set $\Sigma\subset\Delta_{\epsilon}$, such that $\Phi$ induces a bimeromorphic map
 $$\Phi|_{X_t}:X_t\dashrightarrow Y_t$$
 for every
$t\in\Delta_{\epsilon}\setminus \Sigma$.

\end{enumerate}
\end{thm}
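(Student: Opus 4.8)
\section*{Proof proposal}

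The plan is to produce a single holomorphic line bundle $\mathcal{L}$ on $\mathcal{X}_{\Delta_{\epsilon}}$ whose restriction to each fiber is \emph{big}; by the standard criterion this makes every fiber, in particular $X_0$, Moishezon and simultaneously furnishes the relative meromorphic map of $(ii)$. First I would pin down the topological type: by Ehresmann's theorem the local system $R^2\pi_*\mathbb{Z}$ is trivial over the disk, so I identify every $H^2(X_t,\mathbb{Z})$ with one fixed countable lattice $\Lambda$. For each $t\in B$, Moishezonness supplies a big line bundle $L_t$, and $c_1(L_t)$ lands in the countable subgroup $\mathrm{NS}(X_t)\subset\Lambda$. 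Since $B$ is uncountable while $\Lambda$ is countable, the pigeonhole principle yields an uncountable $B'\subseteq B$ and a single class $c\in\Lambda$ with $c=c_1(L_t)$ for all $t\in B'$. Writing the open disk as an increasing countable union of compact subdisks, one of these meets $B'$ in an uncountable set; hence $B'$ has an accumulation point $t_*$ in the \emph{interior} of $\Delta$. This is exactly the point where the hypothesis ``$0$ need not be a limit point of $B$'' is absorbed: uncountability alone manufactures an interior accumulation point.

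Next I would show that $c$ stays of Hodge type $(1,1)$ on every fiber and lift it to a global bundle. For $t\in B'$ the class $c$ is of type $(1,1)$, equivalently its image $\delta_t(c)$ under the connecting map $H^2(X_t,\mathbb{Z})\to H^2(X_t,\mathcal{O}_{X_t})$ of the exponential sequence vanishes. I would interpret $t\mapsto\delta_t(c)$ as the fiberwise value of a section of $R^2\pi_*\mathcal{O}_{\mathcal{X}}$ that vanishes on $B'$; the identity principle, applied around the interior accumulation point $t_*$, then forces it to vanish identically over $\Delta_{\epsilon}$. The genuine subtlety is that for a non-K\"ahler family $R^1\pi_*\mathcal{O}_{\mathcal{X}}$ and $R^2\pi_*\mathcal{O}_{\mathcal{X}}$ need not be locally free and Hodge numbers may jump at $0$, so this identity argument and the passage to $t=0$ can break down. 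This is precisely what the two alternative hypotheses repair: deformation invariance of $h^{0,1}$ makes $R^1\pi_*\mathcal{O}_{\mathcal{X}}$ locally free and the relative Picard machinery well-behaved, while a strongly Gauduchon metric on $X_0$ provides the Hodge-theoretic semicontinuity and $\partial\bar\partial$-type control of Popovici needed to carry the vanishing across the central fiber. Granting this, since $\Delta_{\epsilon}$ is a one-dimensional Stein disk the Leray spectral sequence gives $H^2(\mathcal{X}_{\Delta_{\epsilon}},\mathcal{O})\cong H^0(\Delta_{\epsilon},R^2\pi_*\mathcal{O}_{\mathcal{X}})$, so the global obstruction vanishes and the exponential sequence on $\mathcal{X}_{\Delta_{\epsilon}}$ produces a line bundle $\mathcal{L}$ with $c_1(\mathcal{L}|_{X_t})=c$ for every $t$.

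It remains to see that $\mathcal{L}|_{X_t}$ is big for every $t$, including $t=0$, and this is the one truly non-formal step. The obstacle is that the top self-intersection $\int_{X_t}c^n$ is locally constant (being topological), yet bigness of a line bundle is \emph{not} detected by it, so positivity cannot be propagated by intersection theory alone. For $t\in B'$ the bundle $\mathcal{L}|_{X_t}\cong L_t$ is big, hence carries a K\"ahler current (Demailly--Paun). Using S.-T. Yau's solution of the relevant degenerate complex Monge--Amp\`ere equation I would manufacture singular Hermitian metrics on $\mathcal{L}$ along the family with uniformly bounded mass, and then pass to a weak limit to obtain a strictly positive closed current in the class $c$ on $X_0$ with positive total mass, i.e.\ a K\"ahler current. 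By the Moishezon criterion (a compact complex manifold admitting a line bundle whose curvature is a K\"ahler current is Moishezon) this forces $X_0$, and likewise every nearby fiber, to be Moishezon, proving $(i)$. The mass bound as $t\to0$ is exactly what the strongly Gauduchon condition secures, since the $\bar\partial$-exactness of $\partial\alpha^{n-1}$ is the device that keeps the masses of the approximating currents from escaping in the limit.

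Finally, for $(ii)$ I would run the relative Iitaka--Kodaira map of $\mathcal{L}$. Fix $m\gg0$ so that $\mathcal{L}^{\otimes m}|_{X_t}$ gives a bimeromorphism of $X_t$ onto its image for generic $t$. The direct image $\pi_*\mathcal{L}^{\otimes m}$ is coherent, and after shrinking $\Delta_{\epsilon}$ so that it is free of rank $N+1$ over the disk its sections define a meromorphic map $\Phi:\mathcal{X}_{\Delta_{\epsilon}}\dashrightarrow\mathbb{P}^N\times\Delta_{\epsilon}$ commuting with the projection to $\Delta_{\epsilon}$; set $\mathcal{Y}=\overline{\operatorname{Im}\Phi}$, a subvariety with fibers $Y_t\subset\mathbb{P}^N\times\{t\}$. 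On the complement of a proper analytic set $\Sigma\subset\Delta_{\epsilon}$ — the locus where base change holds, $\mathcal{L}|_{X_t}$ is big, and $\Phi|_{X_t}$ is generically injective — the induced map $\Phi|_{X_t}:X_t\dashrightarrow Y_t$ is bimeromorphic and $Y_t$ is an $n$-dimensional projective variety. The hardest part of the whole argument is the content of the two middle paragraphs: propagating the algebraic $(1,1)$-condition and, above all, the analytic positivity of $c$ across the possibly Hodge-number-jumping, non-K\"ahler central fiber, which is why both a cohomological hypothesis and Yau's Monge--Amp\`ere machinery are indispensable.
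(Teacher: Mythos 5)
There is a genuine gap, and it sits exactly at the point this paper identifies as its main difficulty. Your first two paragraphs (pigeonhole on $c_1$ over the uncountable set, interior accumulation point, identity principle for the obstruction section of $R^2\pi_*\mathcal{O}_{\mathcal{X}}$, with the local-freeness/torsion issues repaired by the $h^{0,1}$ hypothesis) are essentially Proposition \ref{global-B} and are fine in spirit. But your third paragraph --- bigness of $\mathcal{L}|_{X_0}$ --- does not work as written. You propose to place positive currents (K\"ahler currents, or Fujita--Yau currents from the degenerate Monge--Amp\`ere equation) on the fibers $X_t$ for $t\in B'$, prove a uniform mass bound via the strongly Gauduchon metrics, and pass to a weak limit to get a current on $X_0$. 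A weak limit producing a current on $X_0$ requires a sequence $t_k\to 0$ whose fibers actually carry those currents; the only fibers where you have them are those in $B'$, and the hypothesis explicitly allows $0\notin\bar B$, hence $0\notin\bar{B'}$. The interior accumulation point $t_*$ that uncountability manufactures does not help: a weak limit along $t_k\to t_*$ yields positivity on $X_{t_*}$, not on $X_0$. This is precisely the obstruction recorded in Remark \ref{delta}: when $0$ is not a limit point of $B$, one cannot run Popovici's limiting argument directly from the fibers where bigness is known.

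What is missing is the paper's algebraic propagation step, Proposition \ref{ki-dim-limit} and Corollary \ref{unc-big}: for a line bundle defined over the (punctured or full) base, the sets $\{t: h^0(X_t, L^{\otimes p}|_{X_t})\ge \frac{1}{q}p^n\}$ are analytic by Grauert's upper semi-continuity, a proper analytic subset of a one-dimensional base is at most countable, so the asymptotic estimate \eqref{16-3} holding on an uncountable set forces it to hold for \emph{every} $t$. In the Hodge-number case this already finishes part $(i)$ with no currents at all: the global bundle restricted to $X_0$ is big outright. In the strongly Gauduchon case, where the bundle is a priori defined only away from a discrete bad set that may contain $0$, it is this propagation that supplies both the fibers $t_k\to 0$ carrying Fujita--Yau currents (Proposition \ref{abc}) and, crucially, the uniform lower bound $v(L|_{X_{t_k}})>2\delta$ needed to apply Proposition \ref{abc} with a common $\varepsilon$ and meet Popovici's criterion (Theorem \ref{pop-hol}). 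Two secondary points: $\mathcal{L}|_{X_t}\cong L_t$ is not automatic (the two bundles only share $c_1$; one needs Proposition \ref{propa}, or Wehler's argument as in Remark \ref{get-L0}), and in part $(ii)$ the existence of a single power $m$ giving fiberwise bimeromorphic embeddings for a whole set of parameters, as well as the analyticity of $\mathcal{Y}=\Phi(\mathcal{X})$ inside $\mathbb{P}^N\times\Delta$ (which requires properness of the graph projection, Lemma \ref{proper}, rather than merely taking a closure), are nontrivial and occupy Steps \ref{step 1}--\ref{step 4} of the paper's proof.
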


In the terminology of \cite[Definition 3.5]{cp}, we may say that our family $\pi:\mathcal{X}\to \Delta$
is \emph{Moishezon}, meaning that it is bimeromorphically equivalent over $\Delta$
to a proper holomorphic map $p:\mathcal{Y}\to \Delta$ from a complex variety, which is $p$-ample, in particular, every fiber $Y_t\subset\mathcal{Y}$ is
a projective variety.   It is remarked in \cite[p. 334]{cp} that not every holomorphic map $f:X\to Y$
between complex spaces such that every fiber $f^{-1}(y)$ is Moishezon, is Moishezon.
Here the meromorphic/bimeromorphic maps are understood and defined in the sense of Remmert (\cite{re}, \cite{st}
and \cite{Ue}).

In contrast to Popovici's approach which is analytic in nature (see Subsection \ref{popovici} for
a brief review), our approach is partly built on
algebraic methods in the sense of Grauert, cf. \cite{bs}.   In the strongly Gauduchon case,
we resort to Monge--Amp\`ere equations of degenerate type with solutions obtained by
S.-T. Yau \cite{Yst} and to Popovici's criterion on big line bundles using mass control \cite{P08}, via
Fujita's approximate Zariski decomposition \cite{fu}.
We use an uncountable subset $B$ (with $0\not\in\bar B$ allowed) for the assumed Moishezon conditions rather than the whole $\Delta^*$ as Popovici does, while the case $0\in\bar B$ is implicit in \cite{P09,P10,P3}, whose approaches are not applicable to our case in Theorem \ref{thm-moishezon} directly, cf. Remark \ref{delta} for example.  To prove Theorem \ref{thm-moishezon}, we obtain an extension property of {\it bigness} in Corollary \ref{unc-big} to the effect that if a global holomorphic line bundle over the total space of the family is such that its restriction to any fiber of an uncountable subset in $\Delta$ is big, then its restriction to any other fiber of the family is also big. Notice that the result \cite[Examples $1$ and $2$]{Tjurin} implies that \lq uncountable' is an indispensable condition there. Moreover,
Campana's counterexample in \cite[Corollary 3.13]{c91} shows that the small deformation of a Moishezon manifold which
is not of general type,  is not necessarily Moishezon. 
Based on these, it is reasonable to propose:
\begin{quest}\label{question}
Characterize those Moishezon manifolds which are still Moishezon after a small deformation.
\end{quest}
\begin{con}[]\label{conj-uncountable}
If the fiber $X_t$  is Moishezon for each $t$ in an uncountable subset of $\Delta$,
then there exists a global holomorphic line bundle $\tilde L$ on $\mathcal{X}$ such
that the restriction $\tilde L|_{X_t}$ is big for every $t\in \Delta$.
\end{con}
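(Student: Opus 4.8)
The plan is to produce the global line bundle in three moves: fix a topological type by a counting argument, globalize it cohomologically over the total space, and then let bigness propagate through the already-available extension property. First I would use that $\Delta$ is contractible, so by Ehresmann's theorem $\pi$ is a topological fiber bundle and all the lattices $H^2(X_t,\mathbb{Z})$ are canonically identified with the finitely generated, hence countable, group $H^2(\mathcal{X},\mathbb{Z})\cong H^2(X_0,\mathbb{Z})$. Each Moishezon fiber $X_t$ (for $t$ in the given uncountable set $B$) carries a big line bundle $L_t$, and $c_1(L_t)$ lands in this countable group; by the pigeonhole principle there is an uncountable $B'\subset B$ on which $c_1(L_t)=\gamma$ is one fixed class. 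Via the restriction isomorphism this $\gamma$ is the image of a class $\tilde\gamma\in H^2(\mathcal{X},\mathbb{Z})$ with $\tilde\gamma|_{X_t}=\gamma$ for every $t$. Finally, exhausting $\Delta$ by compact disks shows that $B'$, being uncountable, has a condensation point $t^\ast$ in the interior of $\Delta$, which is where I would later run an analyticity argument.

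Next I would try to realize $\tilde\gamma$ as $c_1$ of a holomorphic line bundle on $\mathcal{X}$. The obstruction is its image $\beta(\tilde\gamma)\in H^2(\mathcal{X},\mathcal{O}_{\mathcal{X}})$ under the exponential sequence; since $\Delta$ is Stein, the Leray spectral sequence collapses to $H^2(\mathcal{X},\mathcal{O}_{\mathcal{X}})=H^0(\Delta,R^2\pi_\ast\mathcal{O}_{\mathcal{X}})$, so $\beta(\tilde\gamma)$ is a global section $s$ over $\Delta$. For $t\in B'$ its restriction to the fiber is the fiberwise obstruction $\beta_t(\gamma)$, which vanishes precisely because $L_t$ exists. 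If one can guarantee that $R^2\pi_\ast\mathcal{O}_{\mathcal{X}}$ is locally free and compatible with base change, then $s$ is a genuine holomorphic section of a vector bundle over $\Delta$ whose value at $t$ is $\beta_t(\gamma)$, and vanishing along the sequence in $B'$ accumulating at $t^\ast$ forces $s\equiv 0$ by the identity theorem. Then $\tilde\gamma$ lifts to some $\tilde L\in\mathrm{Pic}(\mathcal{X})$ with $c_1(\tilde L|_{X_t})=\gamma$ for all $t$.

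Bigness then propagates for free. The point I would isolate is that bigness on a Moishezon manifold is a numerical invariant: for a modification $\mu:\hat X\to X$ from a projective manifold, $\mu_\ast\mathcal{O}_{\hat X}=\mathcal{O}_X$ gives $H^0(X,L^{\otimes k})=H^0(\hat X,\mu^\ast L^{\otimes k})$ by the projection formula, so $\mathrm{vol}_X(L)=\mathrm{vol}_{\hat X}(\mu^\ast L)$, and the right-hand side depends only on the numerical class, i.e. on $c_1(L)_{\mathbb{R}}$. Hence $\tilde L|_{X_t}$, sharing the real Chern class $\gamma$ with the big bundle $L_t$, is itself big for every $t\in B'$. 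Feeding this global line bundle, big on the uncountable set $B'$, into Corollary \ref{unc-big} upgrades bigness to every fiber $X_t$, which is exactly the conclusion sought.

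The hard part, and the reason the statement remains conjectural, is the vanishing of $s=\beta(\tilde\gamma)$ in the second step without any auxiliary hypothesis. In general $R^2\pi_\ast\mathcal{O}_{\mathcal{X}}$ is neither locally free nor base-change compatible, since $h^{0,2}(X_t)$ may jump, and then the clean identity-theorem argument breaks down: the section $s$ need not even be a well-behaved vector-bundle section, and vanishing of its fiber values on $B'$ no longer visibly forces $s\equiv 0$. This is precisely the step at which the Theorem brings in its extra input—local deformation invariance of $h^{0,1}$ or the existence of a strongly Gauduchon metric—to control the variation of the relevant Dolbeault cohomology and kill the obstruction. Making this globalization unconditional for an arbitrary uncountable $B$ is the essential difficulty the conjecture isolates, and it is where I expect the real work to lie.
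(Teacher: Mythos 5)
The statement you set out to prove is Conjecture \ref{conj-uncountable} of the paper: it is stated there as an open problem, and the paper offers no proof of it. Your proposal, accordingly, is not a proof either, and you say so yourself; what you have written is a faithful reconstruction of the conditional machinery the paper actually develops. Your first and third moves coincide with the paper's: the pigeonhole argument fixing a common class $\gamma$ on an uncountable subset is the opening of Proposition \ref{global-B}; the transfer of bigness between two line bundles with the same $c_1$ on a Moishezon fiber, via a projective modification and numerical invariance of the volume, is exactly Proposition \ref{propa}; and the final propagation of bigness to every fiber is Corollary \ref{unc-big}, which is indeed unconditional. So the entire weight of the conjecture rests on your second move, the lifting of $\tilde\gamma$ to a holomorphic line bundle on all of $\mathcal{X}$.

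The gap there is genuine, and it sits exactly where you locate it. Fiberwise vanishing of the obstruction on an uncountable set forces the section $s=\beta(\tilde\gamma)\in\Gamma(\Delta,R^2\pi_*\mathcal{O}_{\mathcal{X}})$ to vanish only on the open set $U\subset\Delta$ where $R^2\pi_*\mathcal{O}_{\mathcal{X}}$ is locally free and cohomologically flat, so that values of $s$ actually compute the fiberwise obstructions; this is the content of the paper's Lemma \ref{global-lemma2}, which consequently produces a line bundle only over $\mathcal{X}_U$, not over $\mathcal{X}$. Across the discrete bad set $\Delta\setminus U$ the section $s$ can survive as a nonzero torsion element of $R^2\pi_*\mathcal{O}_{\mathcal{X}}$ (Proposition \ref{s-torsion}), and killing that torsion is equivalent, by Proposition \ref{Hodge-torsionfree}, to the deformation invariance of $h^{0,1}$ --- precisely the auxiliary hypothesis of Theorem \ref{thm-moishezon-update} that the conjecture drops. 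The paper's strongly Gauduchon alternative does not lift $\tilde\gamma$ directly either; it goes around the bad fibers with mass-bounded currents and Theorem \ref{pop-hol}, again using extra input there. So your diagnosis is accurate: the proposal is a correct account of why the statement is open, not a proof of it, and the step you flag as unresolved is the precise mathematical content of the conjecture.
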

Actually, the proof of Theorem \ref{thm-gauduchon-update}.\eqref{thm-gauduchon-update-ii} shows that if Conjecture \ref{conj-uncountable} holds true, then the family $\pi:\mathcal{X}\to \Delta$
is {Moishezon}, which in turn induces a bimeromorphic map on
${X_t}$ for every
$t\in\Delta\setminus\Sigma$ for some proper analytic set $\Sigma\subset\Delta$.

Theorem \ref{thm-moishezon} can be considered as a new understanding of Popovici's remarkable result on deformation limit of projective manifolds from a global and algebraic point of view by a construction of a global holomorphic line bundle over the total space:
\begin{cor}[{\cite[Theorems 1.2, 1.4]{P3}}]\label{thm-pop}
If for each $t\in \Delta^*$, the fiber $X_t$ is projective and the reference fiber $X_0$ satisfies the local deformation invariance for Hodge number of type $(0,1)$ or admits a strongly Gauduchon metric, then $X_0$ is Moishezon.
\end{cor}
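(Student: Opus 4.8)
The plan is to deduce Corollary \ref{thm-pop} as an immediate specialization of Theorem \ref{thm-moishezon}, so that Popovici's result becomes a particular case of our more general statement. First I would observe that the punctured disk $\Delta^*=\Delta\setminus\{0\}$ is uncountable, so it is eligible to serve as the uncountable subset $B$ appearing in the hypothesis of Theorem \ref{thm-moishezon}. Here $0$ is a limit point of $B=\Delta^*$; this poses no difficulty, because the phrase ``with $0$ not necessarily being a limit point of $B$'' means precisely that Theorem \ref{thm-moishezon} applies whether or not $0\in\bar B$, and in particular it applies in the present situation where $0\in\bar B$. Next, since every connected projective manifold is Moishezon (as recalled just after Conjecture \ref{conj}), the projectivity of $X_t$ for each $t\in\Delta^*$ forces $X_t$ to be Moishezon for each nonzero $t\in B$. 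Finally, the remaining hypothesis---that $X_0$ satisfies the local deformation invariance for the Hodge number of type $(0,1)$ or carries a strongly Gauduchon metric---is verbatim the same assumption in both statements.

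With these observations all the hypotheses of Theorem \ref{thm-moishezon} are in force, so I would invoke part $(i)$ of that theorem to conclude that $X_t$ is Moishezon for every $t\in\Delta_{\epsilon}$ for some small $\epsilon>0$. Since $0\in\Delta_{\epsilon}$, this yields in particular that the reference fiber $X_0$ is Moishezon, which is exactly the assertion of the corollary. No further construction is needed at this level.

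In other words, there is essentially no new obstacle at the level of the corollary itself: the projectivity hypothesis of Popovici is a strictly stronger input than, and is therefore subsumed by, the hypothesis ``Moishezon on an uncountable subset'' of our main theorem. If one asks where the genuine difficulty in the underlying chain of reasoning resides, it is not in this deduction but in the proof of Theorem \ref{thm-moishezon}: namely in the bigness-extension step of Corollary \ref{unc-big}, which propagates the \emph{bigness} of the restriction of a suitably constructed global holomorphic line bundle from the uncountable family of fibers to every other fiber, the central fiber $X_0$ included, together with---in the strongly Gauduchon case---the use of Yau's solution to the degenerate Monge--Amp\`ere equation and Popovici's mass-control criterion for bigness via Fujita's approximate Zariski decomposition.
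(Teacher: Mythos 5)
Your proposal is correct and is exactly the paper's (implicit) derivation: the corollary is stated as an immediate specialization of Theorem \ref{thm-moishezon}, taking $B=\Delta^*$ (uncountable, with the clause about $0\not\in\bar B$ being a relaxation rather than a requirement), using that projective manifolds are Moishezon, and invoking part $(i)$ at $t=0$. No difference in approach; the genuine work indeed lies in the proof of Theorem \ref{thm-moishezon} itself, as you note.
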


The work \cite[Corollary 1.6]{PU} or the $q=1$ case of \cite[Theorem 1.4.(2)]{RZ15} shows that either the \textbf{sGG} condition on $X_0$ or the surjectivity of the natural mapping $\iota^{0,1}_{BC,\bar\partial}$ from the $(0,1)$-Bott--Chern cohomology group of $X_0$ to the Dolbeault one, guarantees that the $(0,1)$-type Hodge numbers of $X_t$ are independent for small $t$.
Notice that by \cite[Remark 3.8]{RZ15} this surjectivity is equivalent to the \textbf{sGG} condition proposed by Popovici--Ugarte \cite{P1,PU};  see also \cite[Theorem 2.1.(iii)]{PU}. Recall that the \emph{\textbf{sGG} condition} for a complex manifold $X$ means that every Gauduchon metric on  $X$ is automatically strongly Gauduchon.

After the completion of this paper, it came to our notice that another work \cite{P19} of Popovici just appeared in which he proposed a new approach to Conjecture \ref{conj-M}. Some results of this paper have already been announced in \cite{rt20}.

\begin{notation}
All compact complex manifolds in this paper are assumed to be connected unless mentioned otherwise,
complex spaces are reduced and the letter $t$ will always denote the parameter for the family of complex manifolds.
The notation $h^i(Z,F)$ denotes $\dim_{\mathbb{C}}H^i(Z,F)$ for a sheaf $F$ of abelian groups over a complex space $Z$.
Sometimes one uses additive notation for the tensor product of vector bundles.
\end{notation}
\noindent
\textbf{Acknowledgement}:
Both authors would like to thank Professor J.-P. Demailly for pointing out the examples in \cite{Tjurin}
which is important for our statement of the main result, and Professor D. Popovici for many useful discussions on many parts of this paper. Part of this work was  completed during the first author's visit to Institute of Mathematics, Academia Sinica from September 2017 to September 2018.
He would like to express his gratitude to the institute for their hospitality and the wonderful work environment during his visit, especially Professors Jih-Hsin Cheng and Chin-Yu Hsiao. We also thank the referee for suggesting certain improvements in several presentations.

\section{Preliminaries}
\subsection{Moishezon manifolds}
To start with, we adopt the following standard terms.  By a \emph{holomorphic family} $\pi:\mc{X}\to B$ of compact complex manifolds, we mean that $\pi$ is a proper holomorphic surjective submersion between complex manifolds as in
\cite[Definition 2.8]{k}.

We are mostly concerned with deformations of Moishezon manifolds.
A nice reference on Moishezon manifolds is \cite[Chapter 2]{mm}.
We first give a geometric description of the Kodaira map. Let $X$ be a compact connected complex manifold of dimension $n$ and $L$ a holomorphic line bundle over $X$. The space of holomorphic sections of $L$ on $X$ is finite-dimensional. Set the linear system associated to $\mathcal{L}=H^{0}(X,L)$ as $|\mathcal{L}|=\{\Div(s): s\in \mathcal{L}\}$. The \emph{base point locus} of the linear system $|\mathcal{L}|$ is given by $$\Bl_{|\mathcal{L}|}=\cap_{s\in \mathcal{L}} \Div(s)=\{x\in X: s(x)=0,\ \text{for all $s\in \mathcal{L}$}\}.$$

Set $d=\dim_{\mathbb{C}} H^{0}(X,L)$ and let $\mathbb{G}(d-1,H^{0}(X,L))$ be the Grassmannian of hyperplanes of $H^{0}(X,L)$. 
  The \emph{Kodaira map} $\Phi_\mathcal{L}$ associated to $L$ is defined by
  $$\Phi_\mathcal{L}: X\setminus \Bl_{|\mathcal{L}|}\longrightarrow \mathbb{G}(d-1,H^{0}(X,L)): x\longmapsto \{s\in H^{0}(X,L): s(x)=0\}.$$

Now consider $\mathcal{L}_p:=H^{0}(X,L^{\otimes p})$ and $\Phi_p:=\Phi_{\mathcal{L}_p}$.
Set $\varrho_p=\max\{\rk\ \Phi_p: x\in X\setminus \Bl_{|\mathcal{L}_p|}\}$ if $\mathcal{L}_p\neq\{0\}$, and $\varrho_p=-\infty$ otherwise. The \emph{Kodaira--Iitaka dimension} of $L$ is
$$\kappa(L)=\max\{\varrho_p:p\in \mathbb{N}^+\}.$$
\begin{thm}[{\cite[Theorem $8.1$]{Ue}}]\label{asymp}
For a Cartier divisor (or a line bundle) $D$ on a variety $M$, there exist positive numbers $\alpha,\beta$ and a positive integer $m_0$ such that for any integer $m\geq m_0$, there hold the inequalities
\begin{equation}\label{asymp-ineq}
 \alpha m^{\kappa(D)}\leq h^0(M, \mathcal{O}_M(mdD))\leq \beta m^{\kappa(D)},
\end{equation}
where $d$ is some positive integer depending on $D$. When the divisor $D$ is effective (or $h^0(M,\mathcal{O}_M(D))\neq 0$), one can take $d=1$ in \eqref{asymp-ineq}.
\end{thm}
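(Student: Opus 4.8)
The plan is to sandwich $h^0(M,\mathcal{O}_M(mD))$ between two constant multiples of $m^{\kappa(D)}$ by comparing $M$ with the image of a suitable Kodaira map, after first reducing to the case of effective $D$. Write $\kappa=\kappa(D)$. If $\kappa=-\infty$, then $h^0(M,\mathcal{O}_M(mD))=0$ for all $m>0$ by the definitions of $\varrho_p$ and $\kappa(L)$, so \eqref{asymp-ineq} holds trivially under the convention $m^{-\infty}=0$; thus I assume $\kappa\geq 0$. Since $\kappa\geq 0$ there is a positive integer $d$ with $h^0(M,\mathcal{O}_M(dD))\neq 0$, so $dD$ is linearly equivalent to an effective divisor, and $\kappa(dD)=\kappa(D)$ follows at once from the definition, the Kodaira maps of $dD$ forming a subfamily of those of $D$. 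Because $h^0(M,\mathcal{O}_M(m\cdot dD))=h^0(M,\mathcal{O}_M(m(dD)))$, it is enough to prove the inequalities for effective $D$ with $d=1$ and then apply this special case to $dD$; this is also what yields the last sentence of the statement.

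So suppose $D$ is effective. I would fix $p_1\in\mathbb{N}^+$ attaining the maximum in the definition of $\kappa$, so that $W:=\overline{\Phi_{p_1}(M)}\subset\mathbb{P}^{N}$ is a projective variety of dimension $\kappa$, where $N=h^0(M,\mathcal{O}_M(p_1 D))-1$. As $W$ is $\kappa$-dimensional and projective, its Hilbert polynomial has degree $\kappa$, giving constants $c_1,c_2>0$ with $c_1 m^\kappa\leq h^0(W,\mathcal{O}_W(m))\leq c_2 m^\kappa$ for $m\gg 0$. Resolving the indeterminacy of $\Phi_{p_1}$ by a modification $\mu:\widetilde M\to M$ produces a surjective morphism $f:\widetilde M\to W$ together with a decomposition $\mu^*(p_1 D)=f^*\mathcal{O}_W(1)+E$ with $E\geq 0$ effective and supported on the base locus; here $H^0(\widetilde M,\mu^*L)=H^0(M,L)$ for every line bundle $L$ (replacing $M$ by its normalisation if needed, which affects neither $\kappa$ nor the growth rate).

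For the lower bound I pull sections back from $W$. The maps $H^0(W,\mathcal{O}_W(m))\hookrightarrow H^0(\widetilde M,f^*\mathcal{O}_W(m))\hookrightarrow H^0(\widetilde M,\mu^*(mp_1 D))=H^0(M,\mathcal{O}_M(mp_1 D))$ are injective: the first because $f$ is dominant, the second because $\mu^*(mp_1 D)-f^*\mathcal{O}_W(m)=mE$ is effective. Hence $h^0(M,\mathcal{O}_M(mp_1 D))\geq c_1 m^\kappa$. For an arbitrary $m$, write $m=qp_1+r$ with $0\leq r<p_1$ and multiply by a fixed nonzero section of the effective divisor $rD$ to embed $H^0(M,\mathcal{O}_M(qp_1 D))$ into $H^0(M,\mathcal{O}_M(mD))$; this gives $h^0(M,\mathcal{O}_M(mD))\geq c_1 q^\kappa\geq\alpha m^\kappa$ for all $m\geq m_0$ and a suitable $\alpha>0$. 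It is exactly the effectivity of $D$ that allows me to pass from multiples of $p_1$ to all $m$ here.

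The upper bound is the delicate half. I would use that, after a birational model, the Kodaira maps $\Phi_p$ stabilise to the Iitaka fibration $g:\widetilde M\to W$ whose general fibre $F$ satisfies $\kappa\big((\mu^*D)|_F\big)=0$; this forces $h^0\big(F,\mathcal{O}_F(m\,(\mu^*D)|_F)\big)\leq C_0$ to be bounded independently of $m$. A direct image argument along $g$, using upper semicontinuity to bound fibrewise the ranks of the relevant sheaves by $C_0$, then controls $h^0(M,\mathcal{O}_M(mD))$ by $C_0\cdot h^0(W,\mathcal{O}_W(m))\leq\beta m^\kappa$. I expect the main obstacle to be precisely this input from Iitaka's fibration theorem — that after modification the generic fibre acquires Kodaira dimension $0$, so that the sections do not accumulate faster than $m^\kappa$; this is the substantive step, and it lets me avoid any appeal to finite generation of the section ring $\bigoplus_m H^0(M,\mathcal{O}_M(mD))$. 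Combining the two bounds for effective $D$ and undoing the reduction from $D$ to $dD$ finishes the argument.
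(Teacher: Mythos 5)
The paper itself offers no proof of Theorem \ref{asymp}: it is quoted verbatim from \cite[Theorem 8.1]{Ue}, so your proposal has to be measured against the classical Iitaka--Ueno argument that the citation points to. Your reduction to effective $D$ and your lower bound are essentially that argument and are fine (one small imprecision: ``the Kodaira maps of $dD$ form a subfamily of those of $D$'' only yields $\kappa(dD)\le\kappa(D)$; the equality needs the multiplication-of-sections observation that $\varrho_{a+b}\ge\varrho_{a}$ whenever $h^0(M,\mathcal{O}_M(bD))\neq 0$). The upper bound, however, contains a genuine gap.

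The decisive flaw is the final inference ``fibrewise ranks bounded by $C_0$, hence $h^0(M,\mathcal{O}_M(mD))\le C_0\cdot h^0(W,\mathcal{O}_W(m))$''. Bounding the generic rank of a direct image sheaf on $W$ controls nothing about its $h^0$: a rank-one torsion-free sheaf such as $\mathcal{O}_{\mathbb{P}^\kappa}(m^2)$ has $h^0\sim m^{2\kappa}$, so upper semicontinuity of fibre ranks cannot, by itself, produce the bound $\beta m^\kappa$. What the classical proof actually establishes is a descent statement with pole control: writing $p_1\mu^*D=f^*\mathcal{O}_W(1)+E$ with $E\ge 0$ and noting $E|_F\cong p_1\mu^*D|_F$ on a general fibre $F$, the condition $\kappa\bigl(\mu^*D|_F\bigr)=0$ forces $h^0(F,mE|_F)\le 1$ for \emph{all} $m$ (a linear system of projective dimension $\ge 1$ has positive-dimensional, non-degenerate image); hence every section of $\mathcal{O}(mE)$ is a constant multiple of the canonical section along each (connected --- take the Stein factorization) general fibre, so it descends to a rational function on $W$ whose poles are bounded by $m$ times a fixed divisor $B$ supported on the divisorial image of $E$. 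This gives $f_*\mathcal{O}_{\widetilde M}(mE)\subseteq\mathcal{O}_W(mB)$, whence $h^0(M,\mathcal{O}_M(mp_1D))\le h^0\bigl(W,\mathcal{O}_W(m)\otimes\mathcal{O}_W(mB)\bigr)=O(m^\kappa)$ by the elementary bound valid for any divisor on the $\kappa$-dimensional projective $W$. This step --- not the mere rank bound --- is the substance of the upper half of \eqref{asymp-ineq}, and it is absent from your sketch. Compounding this, your appeal to the Iitaka fibration theorem is circular relative to the cited source: in \cite{Ue} that theorem is established only later and its standard proofs rest on the asymptotic estimates being proven here; the precise input you want from it (that restrictions of pluri-sections to general fibres span at most a line) is exactly what the classical proof derives \emph{en route}, via the twisting trick that two independent sections on $F$, combined with $f^*A$ for $A$ ample on $W$, would yield a Kodaira image of dimension $\ge\kappa+1$, while $\kappa(mp_1\mu^*D+f^*A)\le\kappa(D)$ because $f^*A$ is dominated by a multiple of $p_1\mu^*D$ up to the effective $E$. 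Supplying this argument, rather than citing the fibration theorem, is what would close your proof.
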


A line bundle is called \emph{big} if $\kappa(L)=\dim_{\mathbb{C}} X$. By Siegel's lemma (cf. \cite[Lemma 2.2.6]{mm}) that there exists $C>0$ such that $h^{0}(X,L^{\otimes p})\leq Cp^{\varrho_p}$ for any $p\geq 1$, $L$ is big if (and only if)
\begin{equation}\label{big-crit}
\limsup_{p\rightarrow +\infty}\frac{h^{0}(X,L^{\otimes p})}{p^{n}}>0.
\end{equation}
\begin{lemma}[{\cite[Theorem 2.2.15]{mm}}]\label{big-moi}
A compact complex manifold is Moishezon if and only if it admits a big holomorphic line bundle.
\end{lemma}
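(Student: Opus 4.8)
The plan is to deduce both directions from the single identity $a(X)=\max_{L}\kappa(L)$, where $n:=\dim_{\mathbb C}X$, the algebraic dimension $a(X)$ is the transcendence degree of the meromorphic function field $\mathcal M(X)$ over $\mathbb C$, and the maximum ranges over all holomorphic line bundles $L$ on $X$. Granting this, the lemma is immediate: $X$ is Moishezon exactly when $a(X)=n$, a line bundle is big exactly when $\kappa(L)=n$, and both $a(X)\le n$ and $\kappa(L)\le n$ always hold (the latter by the definition of $\kappa$, the former by Siegel's theorem), so $a(X)=n$ if and only if some $L$ satisfies $\kappa(L)=n$.

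First I would prove $\kappa(L)\le a(X)$ for every $L$, which yields the implication from bigness to Moishezon. Fix $L$ and a power $p$ realizing $\varrho_p=\kappa(L)=:k$, and let $X'_p$ denote the image of the Kodaira map $\Phi_p$ on $X\setminus\Bl_{|\mathcal L_p|}$. By Remmert's proper mapping theorem $X'_p$ is an irreducible projective variety of dimension $k$, and for a basis $s_0,\dots,s_d$ of $\mathcal L_p=H^{0}(X,L^{\otimes p})$ the quotients $s_i/s_j$ are the pullbacks under $\Phi_p$ of the coordinate rational functions on $X'_p$. Since $\Phi_p$ is dominant onto $X'_p$, pullback embeds $\mathbb C(X'_p)$, of transcendence degree $k$, into $\mathcal M(X)$; hence $a(X)\ge k$.

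For the reverse inequality $a(X)\le\max_L\kappa(L)$, giving Moishezon $\Rightarrow$ big, I would start from $a(X)=m$ and algebraically independent meromorphic functions $f_1,\dots,f_m$. Let $(f_i)_\infty$ be the effective polar divisor of $f_i$, put $D:=\sum_i (f_i)_\infty$ and $L:=\mathcal O_X(D)$; then the canonical section $s_D$ with $\Div(s_D)=D$ and each product $f_i s_D$ lie in $H^{0}(X,L)$, so that $f_i=(f_i s_D)/s_D$ exhibits $f_i$ as a ratio of sections of $L$. Consequently the Kodaira map $\Phi_1$ of $L$ dominates the meromorphic map $x\mapsto[1:f_1(x):\cdots:f_m(x)]$, whose image is a projective variety of dimension $\operatorname{trdeg}_{\mathbb C}\mathbb C(f_1,\dots,f_m)=m$; therefore $\varrho_1\ge m$ and $\kappa(L)\ge m=a(X)$.

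The main obstacle is the bookkeeping that identifies the dimension of the image of a Kodaira (or meromorphic) map with the transcendence degree of the field generated by the associated ratios, together with the passage between meromorphic functions and sections of $\mathcal O_X(D)$ on a manifold that is a priori merely complex rather than projective. This rests on Remmert's proper mapping theorem to guarantee that these images are analytic (hence projective, as subvarieties of $\mathbb P^N$), on Siegel's theorem $a(X)\le n$ (of which the bound $h^{0}(X,L^{\otimes p})\le Cp^{\varrho_p}$ underlying \eqref{big-crit} is the quantitative counterpart), and on the finite generation of $\mathcal M(X)$ over $\mathbb C$. The implication from bigness to Moishezon can also be read geometrically: a big $L$ makes some $\Phi_p$ generically finite onto an $n$-dimensional projective variety $X'_p$, so that $a(X)=a(X'_p)=n$; with a little more care $\Phi_p$ may be taken bimeromorphic for $p$ large and divisible, realizing $X$ as a modification of a projective variety in the sense of the equivalent description of Moishezon manifolds recalled above.
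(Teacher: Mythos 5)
The paper gives no proof of this lemma at all: it is quoted directly from Ma--Marinescu \cite[Theorem 2.2.15]{mm}, so there is no internal argument to compare against. Your proposal reconstructs what is essentially the standard proof from that circle of ideas (algebraic dimension, Kodaira maps, Siegel's lemma), organized around the identity $a(X)=\max_L\kappa(L)$: bigness implies Moishezon by pulling back the rational function field of the image of a Kodaira map of maximal rank, and Moishezon implies bigness by packaging $n$ algebraically independent meromorphic functions $f_1,\dots,f_n$ as ratios of sections of $\mathcal{O}_X\bigl(\sum_i(f_i)_\infty\bigr)$. Both halves are sound and this is essentially how the cited reference argues.

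One step needs repair as written. Remmert's proper mapping theorem cannot be applied to $\Phi_p|_{X\setminus \Bl_{|\mathcal{L}_p|}}$ itself: that restriction is not proper (its domain is non-compact), and its set-theoretic image need not even be closed, let alone analytic. The correct route --- for which the paper's own Section 4 apparatus on meromorphic maps in the sense of Remmert is tailor-made --- is to pass to the closure of the graph of $\Phi_p$ in $X\times\mathbb{P}^d$, a compact analytic set; Remmert's theorem applied to the projection of this graph shows that the closure of the image is a compact analytic subset of $\mathbb{P}^d$, and Chow's theorem then identifies it as a projective variety, whose rational function field has transcendence degree equal to its dimension. The same remark applies to the image of $x\mapsto[1:f_1(x):\cdots:f_n(x)]$ in the converse direction. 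With this standard adjustment your argument is complete; note also that your closing remark, that $\Phi_p$ can be taken bimeromorphic onto its image for suitable $p$, is a genuinely deeper statement, but nothing in your main argument relies on it.
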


\subsection{Locally free sheaves}
Let $f:X\rightarrow Y$ be a continuous map of topological spaces and $\mathcal{F}$ a sheaf of abelian groups on $X$. Denote by $R^qf_*\mathcal{F}$ the \emph{$q$-th direct image sheaf} associated to the presheaf on $Y$
$$V\mapsto H^q(f^{-1}(V),\mathcal{F}),$$
where the restrictions are naturally defined. In particular, $R^0f_*\mathcal{F}$ equals the direct image $f_*\mathcal{F}$.
\begin{lemma}[{cf. \cite[Theorem 6.2 of Chapter III, p. 176]{Iv}}]\label{2-5}
Let $f:X\rightarrow Y$ be a proper map between locally compact spaces and $\mathcal{F}$ a sheaf of abelian groups on $X$. For any point $y\in Y$ and for all $q$,
$$(R^qf_*\mathcal{F})_y\simeq H^q(f^{-1}(y),\mathcal{F}).$$
\end{lemma}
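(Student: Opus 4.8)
The plan is to unwind the definition of the stalk and reduce the assertion to a continuity property of sheaf cohomology along a system of neighborhoods shrinking to the fiber. First I would recall that $R^qf_*\mathcal{F}$ is, by construction, the sheaf associated to the presheaf $V\mapsto H^q(f^{-1}(V),\mathcal{F})$, and since sheafification does not change stalks, the stalk at $y$ is the filtered colimit
$$
(R^qf_*\mathcal{F})_y \;=\; \varinjlim_{V\ni y} H^q(f^{-1}(V),\mathcal{F}),
$$
the limit running over the directed system of open neighborhoods $V$ of $y$ in $Y$. This reduces everything to understanding the behavior of $H^q(f^{-1}(V),\mathcal{F})$ as $V$ contracts to $y$, while $f^{-1}(V)$ contracts to the fiber $f^{-1}(y)$.

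The next step exploits properness to replace the indexing system $\{f^{-1}(V):V\ni y\}$ by the system of \emph{all} open neighborhoods of $f^{-1}(y)$ in $X$. Since $f$ is proper between locally compact (Hausdorff) spaces, it is a closed map, and $f^{-1}(y)$ is compact, being the preimage of the compact set $\{y\}$. I would then show cofinality: given any open $U\supseteq f^{-1}(y)$, the image $f(X\setminus U)$ is closed in $Y$ and does not contain $y$ (otherwise some $x\in X\setminus U$ would lie in $f^{-1}(y)\subseteq U$), so $V:=Y\setminus f(X\setminus U)$ is an open neighborhood of $y$ with $f^{-1}(V)\subseteq U$. Hence the two directed systems are cofinal and
$$
\varinjlim_{V\ni y} H^q(f^{-1}(V),\mathcal{F}) \;=\; \varinjlim_{U\supseteq f^{-1}(y)} H^q(U,\mathcal{F}).
$$

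Finally I would invoke the continuity (tautness) of sheaf cohomology over the compact, closed subset $A:=f^{-1}(y)$ of the locally compact space $X$, namely that the natural map $\varinjlim_{U\supseteq A} H^q(U,\mathcal{F})\to H^q(A,\mathcal{F}|_A)$ is an isomorphism. Concretely, one realizes this by choosing a soft (or flabby) resolution $\mathcal{F}\to\mathcal{S}^{\bullet}$; its restriction $\mathcal{S}^{\bullet}|_A$ remains a soft resolution of $\mathcal{F}|_A$ and so computes $H^q(A,\mathcal{F})$, while the degree-zero identity $\varinjlim_{U}\mathcal{S}(U)=\mathcal{S}(A)$ for soft $\mathcal{S}$ propagates through the complex after passing to cohomology, since filtered colimits are exact. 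Chaining the three displays gives the desired isomorphism $(R^qf_*\mathcal{F})_y\simeq H^q(f^{-1}(y),\mathcal{F})$.

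The hard part will be this last step. The cofinality reduction is a formal consequence of properness and closedness, but identifying the colimit of neighborhood cohomologies with the cohomology of the fiber itself rests on the softness-restriction mechanism together with the exactness of filtered colimits, and is precisely where the local compactness hypothesis is genuinely used. This is exactly the content extracted from \cite[Theorem 6.2 of Chapter III, p. 176]{Iv}, to which one may simply appeal.
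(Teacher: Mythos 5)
The paper gives no argument for this lemma at all — it is quoted verbatim from Iversen — and your sketch is exactly the standard proof of that cited theorem (stalk as a filtered colimit, cofinality of the saturated neighborhoods $f^{-1}(V)$ among all neighborhoods of the fiber via properness and closedness, then tautness of the compact fiber $f^{-1}(y)$), so it is correct and fully aligned with what the paper relies on. One technical refinement: since open neighborhoods $U$ in a locally compact space need not be paracompact, run the last step with a flabby (Godement) resolution rather than a soft one — flabby sheaves are acyclic on every open subset, the degree-zero identity $\varinjlim_{U\supseteq A}\Gamma(U,\mathcal{S})\cong\Gamma(A,\mathcal{S}|_A)$ holds for arbitrary sheaves when $A$ is compact and $X$ is Hausdorff, and the restriction of a flabby sheaf to the compact fiber is soft, hence acyclic there.
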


Consider a morphism of ringed spaces
$$f:(X,\mathcal{O}_X)\rightarrow (Y,\mathcal{O}_Y).$$
If $\mathcal{F}$ is an $\mathcal{O}_X$-module, then the sheaves $R^qf_*\mathcal{F}$ admit naturally a structure of $\mathcal{O}_Y$-modules; in particular, if $f$ is a morphism of complex spaces and $\mathcal{F}$ is an analytic sheaf on $X$, then $R^qf_*\mathcal{F}$ are analytic sheaves on $Y$.

\begin{thm}[{Grauert's direct image theorem \cite{Gt} or \cite[\S 2 of Chapter III]{bs}}]\label{gdit}
Let $f:X\rightarrow Y$ be a proper morphism of complex spaces and $\mathcal{F}$ a coherent analytic sheaf on $X$. Then for all $q\geq 0$,
the analytic sheaves $R^qf_*\mathcal{F}$ are coherent.
\end{thm}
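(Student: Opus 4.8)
The plan is to prove coherence by reducing to a relative finiteness theorem of functional-analytic nature, generalizing the Cartan--Serre finiteness theorem for compact spaces to the case of proper maps. First I would observe that coherence is a local property on the target $Y$, so I may fix a point $y_0\in Y$ and work in a neighborhood; since $f$ is proper, the fiber $K:=f^{-1}(y_0)$ is compact. Covering a neighborhood of $K$ in $X$ by finitely many Stein blocks $U_0,\dots,U_r$ (polydisc-like sets adapted to $f$ and small over the base), the associated \v{C}ech complex $C^\bullet(\mathcal{U},\mathcal{F})$ computes the cohomology, and taking direct images identifies $R^qf_*\mathcal{F}$ near $y_0$ with the $q$-th cohomology sheaf of a complex $(\mathcal{L}^\bullet,d)$ whose terms are direct images of $\mathcal{F}$ over the blocks and hence are sheaves of Fr\'echet $\mathcal{O}_Y$-modules. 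By taking a local free resolution of $\mathcal{F}$ together with a standard d\'evissage, I may further reduce to controlling such complexes built from free sheaves.

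The analytic heart is to pass from the fiberwise finite-dimensionality furnished by Cartan--Serre to finite generation over the base ring $\mathcal{O}_{Y,y_0}$. The idea is to choose two nested finite coverings, a coarse one $\mathcal{U}$ and a finer refinement $\mathcal{U}'$, so that the restriction maps on sections over the blocks become completely continuous (nuclear) operators between Fr\'echet spaces; this uses that the space of holomorphic functions on a polydisc is a nuclear Fr\'echet space. One then invokes L. Schwartz's theorem that a continuous surjection $u$ of Fr\'echet spaces perturbed by a compact operator $v$ has closed image and finite-dimensional cokernel $\operatorname{coker}(u-v)$; combined with the nuclearity above, this yields a relative finiteness statement, namely that the cohomology of $(\mathcal{L}^\bullet,d)$ is, near $y_0$, a finitely generated $\mathcal{O}_{Y,y_0}$-module. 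Making this argument work with the full $\mathcal{O}_Y$-module structure in families, rather than merely on individual fibers, is exactly Grauert's bumping/approximation technique, and is the step I expect to be the main obstacle.

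Finally I would upgrade finite generation to coherence. Since $\mathcal{O}_Y$ is itself a coherent sheaf of rings by the Oka--Cartan theorem, it suffices to prove that each $R^qf_*\mathcal{F}$ is locally finitely presented, i.e. that after choosing local generators the sheaf of relations is again locally finitely generated. I would obtain this by applying the same relative finiteness theorem to an auxiliary complex encoding the syzygies, organized by a descending induction on the cohomological degree $q$: the induction starts in the top degree, where $R^qf_*\mathcal{F}=0$ because $q$ exceeds the dimension of the compact fibers (using Lemma \ref{2-5} for the vanishing of fiber cohomology in high degree), and each step produces finite presentation out of the finite generation already established. Gluing the resulting local coherence data over $Y$ then completes the argument.
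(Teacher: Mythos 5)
The paper does not prove this statement at all: Theorem \ref{gdit} is quoted as a classical black box (Grauert's direct image theorem, with references to \cite{Gt} and \cite[\S 2 of Chapter III]{bs}), and everything downstream in the paper (Proposition \ref{s-torsion}, Proposition \ref{global-B}, the base-change lemmas) uses only its statement. So there is no proof in the paper to compare against; what you have written is an outline of the known proofs from the literature, and it must be judged on those terms.

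As such an outline, your plan correctly reproduces the skeleton of the classical argument: localize on $Y$, identify $R^qf_*\mathcal{F}$ near $y_0$ with the cohomology of a \v{C}ech complex built from finitely many Stein blocks covering the compact fiber $f^{-1}(y_0)$, exploit nuclearity of the section spaces together with L.~Schwartz's perturbation theorem, and run a descending induction on $q$ starting from the vanishing of $R^qf_*\mathcal{F}$ in degrees above the fiber dimension (which is indeed consistent with Lemma \ref{2-5}). This is the shape of Grauert's proof as streamlined by Forster--Knorr, and of the Kiehl--Verdier functional-analytic proof. But the proposal has a genuine gap, and you flag it yourself: Schwartz's theorem, applied covering by covering, only yields finite-dimensionality of the cohomology of each fiber over $\mathbb{C}$, i.e.\ the Cartan--Serre finiteness theorem. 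Passing from that to finite generation of the cohomology as an $\mathcal{O}_{Y,y_0}$-module --- uniformly in the base variables and compatibly with the $\mathcal{O}_Y$-action --- requires Grauert's induction/bumping scheme comparing the complex with its truncations by powers of the maximal ideal $\mathfrak{m}_{y_0}$ (or Forster--Knorr's simplicial version, or Kiehl--Verdier's nuclear-module machinery). That step is not a technical afterthought to be deferred as ``the main obstacle''; it is essentially the entire content of the theorem, and without it the proof is unwritten. The same objection applies to your final step: upgrading finite generation to coherence via an ``auxiliary complex encoding the syzygies'' again invokes the relative finiteness theorem in its $\mathcal{O}_Y$-linear form, so it cannot be used before that machinery is actually established.
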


An $\mathcal{O}_X$-sheaf $\mathcal{F}$ on a complex space $X$ is called \emph{locally free} at $x\in X$ of rank $p\geq 1$ if there is a neighborhood $U$ of $x$ such that $\mathcal{F}(U)\cong \mathcal{O}_U^p$. Such sheaves are coherent. Using Oka's theorem we get a converse: If a coherent sheaf $\mathcal{F}$ is \emph{free at $x\in X$}, i.e., if the stalk $\mathcal{F}_x$ is isomorphic to $\mathcal{O}_x^p$, then $\mathcal{F}$ is locally free at $x$ of rank $p$. In particular, the set of all points where $\mathcal{F}$ is free is open in $X$.

 For a closer study, one introduces the rank function of an $\mathcal{O}_X$-coherent sheaf $\mathcal{F}$. All $\mathbb{C}$-vector space $\bar{\mathcal{F}}_x:=\mathcal{F}_x/{\mathfrak{m}}_x\mathcal{F}_x$, $x\in X$, are of finite dimension. Here $\mathfrak{m}_x$ is the maximal ideal of $\mathcal{O}_{X,x}$.  The integer $\rk\ \mathcal{F}_x:=\dim_{\mathbb{C}} \bar{\mathcal{F}}_x\in \mathbb{N}$ is called the \emph{rank} of $\mathcal{F}$ at $X$; clearly, $\rk\ \mathcal{O}_x^p=p$. The rank function of a locally free sheaf is locally constant on a complex space $X$. Conversely, if $X$ is reduced and a sheaf  $\mathcal{F}$ is $\mathcal{O}_X$-coherent such that $\rk\ \mathcal{F}_x$ is locally constant on $X$, then $\mathcal{F}$ is a locally free sheaf on $X$.

 The set $S(\mathcal{F})$ of all points in $X$ where a coherent sheaf $\mathcal{F}$ is not free is called the \emph{singular locus} of $\mathcal{F}$. Then:
 \begin{prop}[{\cite[Proposition 7.17]{Rem}}]\label{slia}
 The singular locus $S(\mathcal{S})$, as defined precedingly, of any given $\mathcal{O}_X$-coherent sheaf $\mathcal{S}$ on a complex space $X$ is analytic in $X$. If $X$ is reduced, this set is thin in $X$.
 \end{prop}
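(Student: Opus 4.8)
The plan is to reduce to a purely local statement and to read off freeness of $\mathcal{F}$ from a local presentation matrix. Since the assertion is local, I fix $x_0\in X$, put $r=\rk\ \mathcal{F}_{x_0}$, and use coherence to choose, on a neighborhood $U$ of $x_0$, a presentation
\[
\mathcal{O}_U^{q}\xrightarrow{\;A\;}\mathcal{O}_U^{p}\longrightarrow \mathcal{F}|_U\longrightarrow 0,
\]
where $A=(a_{ij})$ is a $p\times q$ matrix of holomorphic functions on $U$. Reducing modulo $\mathfrak{m}_x$ gives $\bar{\mathcal{F}}_x=\mathbb{C}^{p}/\operatorname{im}A(x)$, whence $\rk\ \mathcal{F}_x=p-\operatorname{rank}_{\mathbb{C}}A(x)$. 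Consequently the fibre-rank function is upper semicontinuous: for every $k$ the set $\{x:\rk\ \mathcal{F}_x\ge k\}$ is the common zero locus of the $(p-k+1)\times(p-k+1)$ minors of $A$, hence analytic. This is the elementary half of the argument and already controls how the rank can jump.

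Next I would detect genuine freeness, not merely the fibre rank. Writing $\mathrm{Fitt}_{k}(\mathcal{F})$ for the (presentation-independent) ideal sheaf generated by the $(p-k)\times(p-k)$ minors of $A$, a point $x$ satisfies $\mathcal{F}_x\cong\mathcal{O}_x^{\,d}$ if and only if $\mathrm{Fitt}_{d-1}(\mathcal{F})_x=0$ and $\mathrm{Fitt}_{d}(\mathcal{F})_x=\mathcal{O}_x$, in which case necessarily $d=\rk\ \mathcal{F}_x$. Since the Fitting ideals are nested, one checks that
\[
S(\mathcal{F})=\bigcup_{j\ge 1}\Big(V\big(\mathrm{Fitt}_{j-1}(\mathcal{F})\big)\cap \operatorname{supp}\big(\mathrm{Fitt}_{j-1}(\mathcal{F})\big)\Big),
\]
the $j$-th term recording the points of fibre rank $\ge j$ at which the germ of $\mathrm{Fitt}_{j-1}(\mathcal{F})$ is nonzero. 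Each zero set $V(\cdot)$ is analytic by definition, each support is analytic because the support of a coherent sheaf is analytic, and locally the union is finite (the terms disappear once $j>p$). Therefore $S(\mathcal{F})$ is analytic, with no hypothesis on $X$.

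The step I expect to be delicate is precisely this passage from fibre rank to honest freeness: over a non-reduced $X$ a locally constant fibre rank does \emph{not} force local freeness, so $S(\mathcal{F})$ is in general strictly larger than the locus where $\rk\ \mathcal{F}_x$ jumps. What must be captured is the \emph{identical} vanishing of the relevant minors near $x$, i.e.\ $\mathrm{Fitt}_{d-1}(\mathcal{F})_x=0$ as a germ, and not merely their vanishing at the single point $x$; this is exactly why the support of the Fitting ideal, rather than its zero set alone, enters the description. Setting up this bookkeeping correctly, and invoking the analyticity of the support of a coherent sheaf, is the heart of the matter.

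Finally, for the thinness statement I would assume $X$ reduced. Then, by the converse recalled just before the proposition, $\mathcal{F}$ is locally free exactly where its fibre rank is locally constant. On each irreducible component the upper semicontinuous function $\rk\ \mathcal{F}_x$ attains its generic (minimal) value off a proper analytic subset, and on the dense open complement it is locally constant, so $\mathcal{F}$ is free there. Hence the free locus is dense, and the analytic set $S(\mathcal{F})$, having dense complement, contains no nonempty open subset; since a proper analytic subset of a reduced complex space is nowhere dense, $S(\mathcal{F})$ is thin. This completes the plan.
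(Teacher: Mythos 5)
The paper offers no proof of this proposition at all: it is quoted as a black box from Remmert's survey (\cite[Proposition 7.17]{Rem}), so there is no in-paper argument to compare yours against. On its own merits your proof is correct, modulo standard quotable facts (well-definedness and coherence of Fitting ideal sheaves, the local-ring criterion that a finitely presented module $M$ over a local ring is free of rank $d$ iff $\mathrm{Fitt}_{d-1}(M)=0$ and $\mathrm{Fitt}_{d}(M)$ is the unit ideal, analyticity of the support of a coherent sheaf, and the locally finite decomposition of a reduced complex space into irreducible components). The key set identity
$$S(\mathcal{F})=\bigcup_{j\ge 1}\Bigl(V\bigl(\mathrm{Fitt}_{j-1}(\mathcal{F})\bigr)\cap \operatorname{supp}\bigl(\mathrm{Fitt}_{j-1}(\mathcal{F})\bigr)\Bigr)$$
does check out: one inclusion is immediate by taking $j=\rk\ \mathcal{F}_x$, and the reverse inclusion uses exactly the nesting $\mathrm{Fitt}_{j-1}\subseteq\mathrm{Fitt}_{d-1}$ for $j\le d=\rk\ \mathcal{F}_x$; each term is an intersection of two analytic sets and the union is locally finite (empty for $j>p$). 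You also correctly isolate the real subtlety, namely that over non-reduced $X$ the jump locus of the fibre rank is strictly smaller than $S(\mathcal{F})$, which is why the support (germwise vanishing) of the Fitting ideal, and not merely its zero set, must enter. This is essentially the classical minor/Fitting-ideal argument that the cited reference rests on, so your route is the expected one.

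Two small repairs in the thinness part, neither a genuine gap. First, local constancy of the rank on a dense open subset of an irreducible component $X_i$ yields local constancy on $X$ (and hence freeness, via the converse recalled in the paper) only after you also remove the nowhere dense intersections $X_i\cap X_j$, $j\ne i$: near a point of $X_i$ lying on no other component, $X$ coincides with $X_i$, and only then does the componentwise statement transfer to $X$. Second, your closing appeal to ``a proper analytic subset of a reduced complex space is nowhere dense'' is false for reducible $X$ (such a subset can contain a whole component); but you do not need it, since you have already shown the free locus is dense, and a closed set with dense complement has empty interior, which is exactly thinness of the analytic set $S(\mathcal{F})$.
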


Moreover, one has the important:
\begin{thm}[{\cite[Grauert's upper semi-continuity in $\S 10.5.4$]{Grr}}]\label{Upper semi-continuity}
Let $f:X\rightarrow Y$ be a holomorphic family of compact complex manifolds with connected complex manifolds $X,Y$ and $V$ a holomorphic vector bundle on $X$. Then for any integers $i,d\geq 0$, the set $$\{y\in Y: h^i(X_y,V|_{X_y})\geq d\}$$ is an analytic subset of $Y$.
\end{thm}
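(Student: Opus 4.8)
The plan is to reduce the statement to elementary linear algebra over the base by representing the fiberwise cohomology through a bounded complex of vector bundles on $Y$, and then to exploit the analyticity of rank loci. Since $f$ is a proper submersion and $V$ is a holomorphic vector bundle, the associated coherent sheaf $\mathcal{F}:=\mathcal{O}_X(V)$ is locally free, hence flat over $\mathcal{O}_Y$; this flatness is exactly the hypothesis that will allow base change to the fibers. As the conclusion is local on $Y$, I may freely shrink $Y$ to a suitable relatively compact coordinate neighborhood.

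The key input — and the genuinely hard part — is the Grauert comparison/base-change theorem in the coherent analytic category, which belongs to the same circle of ideas underlying Theorem \ref{gdit} and is developed in \cite[\S2--3 of Chapter III]{bs}. It provides, after shrinking $Y$, a bounded complex of finite-rank locally free $\mathcal{O}_Y$-modules
$$0\to L^0\xrightarrow{d^0} L^1\xrightarrow{d^1}\cdots\xrightarrow{d^{m-1}} L^m\to 0$$
together with, for every $y\in Y$, natural isomorphisms
$$H^i(X_y,V|_{X_y})\;\cong\;H^i\big(L^\bullet\otimes_{\mathcal{O}_Y}\mathbb{C}(y)\big),$$
where $\mathbb{C}(y)=\mathcal{O}_{Y,y}/\mathfrak{m}_y$ and $V|_{X_y}$ denotes the analytic restriction. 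At the purely topological level Lemma \ref{2-5} already identifies $(R^if_*\mathcal{F})_y$ with fiber cohomology; the point of the comparison theorem is to upgrade this, for the flat sheaf $\mathcal{F}$, to a representation that is compatible with restriction to the individual points $y$.

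Granting this representation, the rest is bookkeeping with ranks of holomorphic matrices. Write $r_i=\rk_{\mathcal{O}_Y}L^i$ (constant) and let $d^j_y\colon L^j\otimes\mathbb{C}(y)\to L^{j+1}\otimes\mathbb{C}(y)$ be the base-changed differentials, whose matrix entries are holomorphic in $y$; put $\rho_j(y)=\rk d^j_y$. By rank--nullity,
$$h^i(X_y,V|_{X_y})=\dim\ker d^i_y-\dim\operatorname{im}d^{i-1}_y=r_i-\rho_i(y)-\rho_{i-1}(y),$$
so that $h^i(X_y,V|_{X_y})\ge d$ is equivalent to $\rho_i(y)+\rho_{i-1}(y)\le r_i-d$. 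Now each rank-drop locus $\{y:\rho_j(y)\le k\}$ is the common zero set of all $(k+1)\times(k+1)$ minors of $d^j_y$, hence an analytic subset of $Y$; consequently
$$\{y:h^i(X_y,V|_{X_y})\ge d\}=\bigcup_{a+b\le r_i-d}\big(\{\rho_i\le a\}\cap\{\rho_{i-1}\le b\}\big)$$
is a finite union of analytic sets, therefore analytic on the chosen neighborhood (the right-hand side being empty, hence trivially analytic, when $r_i-d<0$). Since analyticity is a local property, gluing over a cover of $Y$ completes the argument.

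I expect the sole real obstacle to be the first analytic step: producing the bounded free complex $L^\bullet$ with the stated base-change isomorphisms. This rests on Grauert's finiteness machinery — Cartan--Serre finiteness together with the representation of higher direct images of a flat coherent sheaf by a bounded complex of free modules — which is precisely the substance of the direct image theorem. Once it is in hand, the semicontinuity and analyticity of $\{h^i\ge d\}$ follow formally from the behavior of ranks of holomorphic matrix-valued maps, as above.
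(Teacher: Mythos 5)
The paper does not prove this statement at all: it is quoted verbatim from Grauert--Remmert (\S 10.5.4 of \cite{Grr}), so there is no internal argument to compare against. Your proposal is correct and is essentially the standard proof found in that cited source (and in \cite[Chapter III]{bs}): reduce to Grauert's locally free comparison complex $L^\bullet$ with base-change isomorphisms $H^i(X_y,V|_{X_y})\cong H^i(L^\bullet\otimes\mathbb{C}(y))$, note $h^i(X_y,V|_{X_y})=r_i-\rho_i(y)-\rho_{i-1}(y)$, and express $\{h^i\ge d\}$ as a finite union of intersections of determinantal (minor-vanishing) loci, which is analytic and glues over a cover of $Y$.
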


The topology in $Y$ whose closed sets are all analytic sets is called the \emph{analytic Zariski topology}. The statement of Theorem \ref{Upper semi-continuity} means an upper semi-continuity of $h^i(X_y,V|_{X_y})$ with respect to this analytic Zariski topology.

\section{Deformation limit of projective manifolds: Hodge number}\label{section1}

As a warm-up for the proof of Theorem \ref{thm-moishezon}, we present a weaker version:
Theorem \ref{conj-02} with two proofs given in the following two subsections.
\begin{thm}
\label{conj-02}
	With the additional assumption that $h^{0,2}(X_t)=h^{0,2}(X_0)$ for the $(0,2)$-Hodge numbers as $t$ is close to $0$, Conjecture \ref{conj} holds true.
\end{thm}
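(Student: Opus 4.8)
The plan is to produce a big holomorphic line bundle on $X_0$ and then invoke Lemma~\ref{big-moi}. Since $\Delta$ is contractible, Ehresmann's theorem makes $\pi$ a $C^\infty$ product over a small disk $\Delta_{\epsilon}$, so for every $t$ the inclusion $X_t\hookrightarrow\mathcal{X}_{\Delta_{\epsilon}}$ induces an isomorphism $H^2(\mathcal{X}_{\Delta_{\epsilon}},\mathbb{Z})\cong H^2(X_t,\mathbb{Z})$; in particular this group is finitely generated, hence countable, and any integral class can be transported among all fibers. I would then run the exponential sequence $0\to\mathbb{Z}\to\mathcal{O}\to\mathcal{O}^*\to 0$ on $\mathcal{X}_{\Delta_{\epsilon}}$, whose connecting map $\delta\colon H^2(\mathcal{X}_{\Delta_{\epsilon}},\mathbb{Z})\to H^2(\mathcal{X}_{\Delta_{\epsilon}},\mathcal{O})$ is the obstruction to lifting an integral class to a line bundle. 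Because $\Delta_{\epsilon}$ is Stein, the Leray spectral sequence collapses to $H^2(\mathcal{X}_{\Delta_{\epsilon}},\mathcal{O})\cong H^0(\Delta_{\epsilon},R^2\pi_*\mathcal{O})$, and the hypothesis that $h^{0,2}(X_t)=h^2(X_t,\mathcal{O})$ is constant near $0$ lets Grauert's continuity and base-change theorem (cf.\ \cite{bs,Grr}) shrink $\Delta_{\epsilon}$ so that $R^2\pi_*\mathcal{O}$ is locally free there and commutes with base change. Thus for each integral class $\gamma$ the obstruction $\delta(\gamma)$ becomes a holomorphic section $s_\gamma$ of a vector bundle over the disk $\Delta_{\epsilon}$ whose value at $t$ is, via base change, the fibrewise obstruction $\delta_t(\gamma|_{X_t})\in H^2(X_t,\mathcal{O})$.

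Next I would locate a single class that works on many fibers at once. For each $t\in\Delta_{\epsilon}^*$ the fiber $X_t$ is projective, so it carries an ample $A_t$; transporting $c_1(A_t)$ into the countable group $H^2(\mathcal{X}_{\Delta_{\epsilon}},\mathbb{Z})$ and applying the pigeonhole principle against the uncountable set $\Delta_{\epsilon}^*$ yields one integral class $\gamma$ with $\gamma=c_1(A_t)$ for all $t$ in some uncountable subset $B'\subset\Delta_{\epsilon}^*$. An uncountable subset of the disk has an accumulation point in the interior, and at every point of $B'$ the class $\gamma|_{X_t}$ is of type $(1,1)$, so the holomorphic section $s_\gamma$ vanishes on a set with an interior accumulation point; by the identity theorem $s_\gamma\equiv 0$. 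Since base change holds at $0$, this forces $\delta_0(\gamma|_{X_0})=0$ as well, and $\delta(\gamma)=0$ exhibits $\gamma$ as $c_1(\mathcal{L})$ for a genuine line bundle $\mathcal{L}$ on $\mathcal{X}_{\Delta_{\epsilon}}$ with $c_1(\mathcal{L}|_{X_t})=\gamma|_{X_t}$ for every $t$, including $t=0$.

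Finally I would propagate bigness to $X_0$. For $t\in B'$ the bundle $\mathcal{L}|_{X_t}$ has the same Chern class as the ample $A_t$, hence is numerically equivalent to it and so big, with volume $(A_t)^n>0$. By upper semicontinuity (Theorem~\ref{Upper semi-continuity}) each function $t\mapsto h^0(X_t,\mathcal{L}^{\otimes p}|_{X_t})$ equals its generic minimum value $m_p$ off a discrete set $S_p\subset\Delta_{\epsilon}$; the union $S=\bigcup_p S_p$ is countable, so I may choose $t^*\in B'\setminus S$, where $h^0(X_{t^*},\mathcal{L}^{\otimes p}|_{X_{t^*}})=m_p$ for all $p$ and $\mathcal{L}|_{X_{t^*}}$ is big, whence $\limsup_p m_p/p^n>0$ by \eqref{big-crit}. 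Upper semicontinuity again gives $h^0(X_0,\mathcal{L}^{\otimes p}|_{X_0})\ge m_p$, so $\limsup_p h^0(X_0,\mathcal{L}^{\otimes p}|_{X_0})/p^n>0$ and $\mathcal{L}|_{X_0}$ is big; Lemma~\ref{big-moi} then shows $X_0$ is Moishezon.

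The conceptual heart — and the step I expect to be most delicate — is the combination carried out in the middle paragraph: the countability of $H^2(\mathcal{X}_{\Delta_{\epsilon}},\mathbb{Z})$ played against the uncountable family of projective fibers isolates one integral class that is ample on uncountably many of them, while the invariance of $h^{0,2}$ is precisely what guarantees that $R^2\pi_*\mathcal{O}$ is locally free and satisfies base change at $0$, so that the vanishing of the obstruction section at interior accumulation points descends to an honest line bundle on $X_0$ rather than merely a class that is of type $(1,1)$ away from the central fiber. Verifying the compatibility of the connecting map $\delta$ with restriction to the fibers and with the base-change isomorphism is the main technical point that must be checked with care.
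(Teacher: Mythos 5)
Your proof is correct, but it reaches the conclusion by a route different from either of the two proofs the paper gives for Theorem~\ref{conj-02}. Your line-bundle construction coincides in substance with the paper's Lemma~\ref{leb-neg-line} (exponential sequence, Leray over the Stein disk, Grauert continuity and base change from the constancy of $h^{0,2}$), with one variation: you isolate a single integral class by pigeonhole in the countable group $H^2(\mathcal{X}_{\Delta_\epsilon},\mathbb{Z})$ and kill the obstruction section by the identity theorem on an uncountable zero set, while the paper covers $\Delta^*$ by the countably many analytic sets $Z_c$ and invokes Lebesgue negligibility; both are sound, and your variant is literally the paper's Proposition~\ref{global-B} combined with Lemma~\ref{section-zero}. (The compatibility of $\delta$ with restriction and base change that you flag as delicate is handled in the paper by the commutative diagram~\eqref{les} together with Corollary~\ref{ccs-bc} and Lemma~\ref{2-5}.) The genuine divergence is in the bigness-propagation step. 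The paper's Subsection~\ref{usc} fixes a $t$-independent twist $k$ via Demailly's effective very ampleness (Theorem~\ref{Dem-fuj}), applies Kodaira vanishing and asymptotic Riemann--Roch on nearby fibers where $L_t$ is ample (which needs the openness of ampleness off a countable union of analytic sets), and concludes by Kodaira--Spencer upper semi-continuity at $t=0$; the remark following that proof stresses that the uniformity of $k$ is what makes the limit $m\to+\infty$ legitimate. You instead observe that on each projective fiber over $B'$ the bundle $\mathcal{L}|_{X_t}$ is big --- indeed ample by Nakai--Moishezon, since ampleness and bigness are numerical on projective manifolds --- and then transfer bigness to $X_0$ by choosing $t^*\in B'$ avoiding the countable union of jumping loci $S_p$, so that the generic values $m_p$ already grow like $p^n$ and $h^0(X_0,\mathcal{L}^{\otimes p}|_{X_0})\ge m_p$ for every $p$. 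This second mechanism does not appear in the paper's proof of Theorem~\ref{conj-02}; it is exactly the machinery the paper develops later for its main theorem: your same-$c_1$-implies-big step is Proposition~\ref{propa}, and your jumping-locus argument is Proposition~\ref{ki-dim-limit}/Corollary~\ref{unc-big}, which together drive Theorem~\ref{thm-moishezon-update}. What your route buys: it dispenses with effective very ampleness, Kodaira vanishing, asymptotic Riemann--Roch and the openness-of-ampleness input, and it only uses projectivity on an uncountable subset of $\Delta_\epsilon^*$, which is precisely the strengthening the paper is ultimately after. What the paper's arguments buy: the Riemann--Roch proof yields an explicit volume lower bound $\int_{X_t}c_1(K_{X_t}\otimes L_t^{\otimes k})^n/n!$ for the big bundle on $X_0$, and the first proof (Subsection~\ref{popovici}) gives an independent, purely analytic approach via Yau's theorem, mass bounds and weak limits of K\"ahler currents that your argument does not touch.
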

Note that Theorem \ref{conj-02} directly follows from \cite[Theorem 1.2]{P3} (or just Corollary \ref{thm-pop} above) since the deformation invariance for the $(0,2)$-Hodge number implies that for the $(0,1)$-Hodge number by Kodaira--Spencer's squeeze \cite[Theorem 13]{KS}. As a corollary of Theorem \ref{conj-02} for the surface case, one obtains:
\begin{cor}[{\cite[Corollary 1.3]{P3}}]\label{decomposition}
 Let $\pi: \mathcal{X}\rightarrow \Delta$ be a holomorphic family of compact complex surfaces such that the fiber $X_t:=\pi^{-1}(t)$ is projective for each $t\in \Delta^*=\Delta\setminus \{0\}$. Then the reference fiber $X_0:=\pi^{-1}(0)$ is also projective.
\end{cor}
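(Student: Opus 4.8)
The strategy is to reduce Corollary \ref{decomposition} to Theorem \ref{conj-02} by verifying, in the surface case, the deformation invariance of the $(0,2)$-Hodge number, and then to upgrade the resulting Moishezon conclusion to projectivity via the classification of complex surfaces. First I would establish that $h^{0,2}(X_t)$ is independent of $t$ for $t$ near $0$. The holomorphic Euler characteristic $\chi(X_t,\mathcal{O}_{X_t}) = 1 - h^{0,1}(X_t) + h^{0,2}(X_t)$ is, by Noether's formula $\chi(\mathcal{O}) = \tfrac{1}{12}(c_1^2 + c_2)$, a combination of Chern numbers and is therefore a topological, hence a deformation, invariant of the connected family $\pi$ (all fibers being diffeomorphic by Ehresmann). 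It thus suffices to show that $h^{0,1}(X_t)$ is likewise invariant.

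For this I would invoke the Hodge theory of compact complex surfaces: for any compact complex surface $S$ one has $h^{0,1}(S) = \lceil b_1(S)/2 \rceil$ — equivalently $h^{0,1}=h^{1,0}$ or $h^{0,1}=h^{1,0}+1$ according as $b_1$ is even or odd — a consequence of the closedness of holomorphic $1$-forms in Kodaira's theory of surfaces, valid with no Kähler hypothesis. Since the first Betti number $b_1$ is constant along the connected family, $h^{0,1}(X_t)=h^{0,1}(X_0)$, and combining with the constancy of $\chi(\mathcal{O})$ yields $h^{0,2}(X_t)=h^{0,2}(X_0)$ for $t$ close to $0$. With this invariance in hand, Theorem \ref{conj-02} applies: since $X_t$ is projective for every $t\in\Delta^*$ and the $(0,2)$-Hodge number is deformation invariant, the reference fiber $X_0$ is Moishezon.

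It remains to promote Moishezon to projective. Here I would use that a Moishezon surface has algebraic dimension $a(X_0)=\dim_{\mathbb{C}} X_0 = 2$, together with the classical fact that a compact complex surface of algebraic dimension $2$ is a projective algebraic surface (Kodaira's classification of surfaces by algebraic dimension). Hence $X_0$ is projective, as desired.

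The main obstacle — indeed the only geometric input beyond Theorem \ref{conj-02} — is the surface-specific identity $h^{0,1}=\lceil b_1/2\rceil$, which delivers the Hodge invariance without assuming $X_0$ Kähler a priori. An alternative route worth recording is to observe that projectivity of $X_t$ for $t\neq 0$ forces $b_1(X_t)$ to be even; by deformation invariance $b_1(X_0)$ is even as well, so by the Kodaira--Miyaoka--Siu criterion $X_0$ is Kähler, whence $h^{0,1}(X_0)=b_1(X_0)/2=h^{0,1}(X_t)$. This simultaneously re-derives the Hodge invariance and exhibits $X_0$ as a Kähler Moishezon manifold, which is projective by Moishezon's theorem.
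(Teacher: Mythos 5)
Your proposal is correct, and while it follows the paper's broad outline (verify the surface-case Hodge invariance, apply Theorem \ref{conj-02} to get that $X_0$ is Moishezon, then upgrade to projectivity), both of its main steps take a genuinely different route. For the Hodge invariance, the paper quotes the degeneration at $E_1$ of the Fr\"olicher spectral sequence of any compact complex surface together with Voisin's Proposition 9.20 to conclude that \emph{all} Hodge numbers are locally constant; you instead isolate exactly the two numbers needed, combining the topological invariance of $\chi(\mathcal{O}_{X_t})$ (Noether's formula, valid for arbitrary compact surfaces) with the K\"ahler-free identity $h^{0,1}=\lceil b_1/2\rceil$ coming from the closedness of holomorphic $1$-forms --- these are essentially the same surface-theoretic inputs packaged differently, and both are sound. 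The more substantive divergence is in the last step: the paper argues that $b_1(X_0)$ is even, invokes the theorem that every compact surface with even $b_1$ is K\"ahler (Kodaira's classification plus Siu's result for $K3$ surfaces, or Buchdahl--Lamari), and then applies Moishezon's theorem that a K\"ahler Moishezon manifold is projective; you bypass the K\"ahler detour entirely by using the classical Chow--Kodaira theorem that a compact complex surface of algebraic dimension $2$ is projective. Your route is more economical, resting on a 1950s result rather than the much deeper Kählerness criterion for even-$b_1$ surfaces; the paper's route has the side benefit of exhibiting $X_0$ explicitly as K\"ahler. Your concluding ``alternative route'' is, in fact, precisely the paper's own proof.
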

\begin{proof}
Here we follow an argument inspired by Popovici \cite{P3}. In fact, the Fr\"{o}licher spectral sequence of any compact complex surface degenerates at $E_1$ as shown in \cite[(2.8) Theorem of Chapter IV]{BHPV} and thus all the Hodge numbers are locally constant for a family of compact complex surfaces as shown in \cite[Proposition 9.20]{V}. See also a power series proof in \cite[Corollary 3.24]{RZ15}. Therefore, the reference surface fiber $X_0$ is Moishezon by Theorem \ref{conj-02}. Moreover, the first Betti number of $X_0$ is even since the Betti numbers of the fibers are always constant and the fiber $X_t:=\pi^{-1}(t)$ is projective for each $t\in \Delta^*$. By Kodaira's classification of surfaces and Siu's result \cite{Sk3} for $K3$ surfaces (or \cite{buch,lam} for a uniform treatment), every compact complex surface with even first Betti number is K\"ahler. Thus, the limit surface $X_0$ is projective since it is both Moishezon and K\"ahler \cite{Moi}.
\end{proof}

We will use the Leray spectral sequence to obtain an isomorphism
$$\Gamma(\Delta, R^i\pi_*\mathcal{O}_{\mathcal{X}})\cong H^i(\mathcal{X},\mathcal{O}_{\mathcal{X}}),\qquad \text{ for any $i\in \mathbb{Z}$}.$$
\begin{thm}[]\label{leray}
Let $X,Y$ be topological spaces, $f:X\rightarrow Y$ a continuous map and $\mathcal{S}$ a sheaf of abelian groups on $X$. Then there exists the \emph{Leray spectral sequence} $(E_r)$ such that
\begin{enumerate}[$(i)$]
    \item \label{}
$E_2^{p,q}\simeq H^p(Y,R^qf_*(\mathcal{S}))$;
    \item \label{}
$(E_r)\Rightarrow H^*(X,\mathcal{S})$.
\end{enumerate}
So $H^k(X,\mathcal{S})=\bigoplus_{p+q=k}E_\infty^{p,q}.$
In particular, if $H^p(Y, R^qf_*(\mathcal{S}))=0$ for $p > 0$, there is an isomorphism
$$H^0(Y,R^qf_*(\mathcal{S}))\cong H^q(X,\mathcal{S}).$$
\end{thm}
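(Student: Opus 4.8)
The plan is to realize $(E_r)$ as the Grothendieck spectral sequence of a composite of functors, reducing the entire statement to one general theorem of homological algebra whose hypotheses I must then check. The key elementary observation is the factorization $\Gamma(X,-)=\Gamma(Y,-)\circ f_*$ of the global-section functor on the category $\mathrm{Sh}(X)$ of sheaves of abelian groups on $X$, which holds because $\mathcal{S}(X)=(f_*\mathcal{S})(Y)$ by the very definition of the direct image $f_*\mathcal{S}$. Since $H^\bullet(X,-)$, $H^\bullet(Y,-)$ and $R^\bullet f_*$ are the right derived functors of $\Gamma(X,-)$, $\Gamma(Y,-)$ and $f_*$ respectively, the desired spectral sequence is precisely the one expressing the derived functors of a composite in terms of the derived functors of its factors.

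Concretely, I would fix an injective resolution $0\to\mathcal{S}\to I^\bullet$ in $\mathrm{Sh}(X)$, which has enough injectives, apply $f_*$ to obtain a complex $f_*I^\bullet$ on $Y$ whose cohomology sheaves are the $R^qf_*\mathcal{S}$, and take a Cartan--Eilenberg resolution of $f_*I^\bullet$. The resulting first-quadrant double complex of global sections on $Y$ carries two filtrations; one associated spectral sequence has second page $H^p(Y,R^qf_*\mathcal{S})$, yielding the asserted $E_2^{p,q}\simeq H^p(Y,R^qf_*\mathcal{S})$, while the acyclicity input below makes the other collapse to $H^{p+q}(X,\mathcal{S})$. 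This gives both the abutment $(E_r)\Rightarrow H^\bullet(X,\mathcal{S})$ and the associated-graded identity $H^k(X,\mathcal{S})=\bigoplus_{p+q=k}E_\infty^{p,q}$.

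The main obstacle, and the only step requiring input beyond formal manipulations, is the acyclicity hypothesis guaranteeing the collapse of the second spectral sequence: that $f_*$ sends injective objects of $\mathrm{Sh}(X)$ to $\Gamma(Y,-)$-acyclic sheaves on $Y$. I would establish this through two standard facts --- every injective sheaf of abelian groups is flabby (flasque), and the direct image of a flabby sheaf is again flabby, since the restriction map of $f_*\mathcal{F}$ along $V\subset W$ is the restriction map of $\mathcal{F}$ along $f^{-1}(V)\subset f^{-1}(W)$, hence surjective whenever $\mathcal{F}$ is flabby. As flabby sheaves are $\Gamma(Y,-)$-acyclic, the terms $f_*I^\bullet$ compute $H^\bullet(X,\mathcal{S})$ through $\Gamma(Y,f_*I^\bullet)=\Gamma(X,I^\bullet)$, confirming the abutment. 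Finally, for the last assertion, the vanishing $H^p(Y,R^qf_*\mathcal{S})=0$ for $p>0$ kills every $E_2$-entry off the column $p=0$; since each differential $d_r$ with $r\geq 2$ then has zero source or zero target, the sequence degenerates at $E_2$, the filtration on $H^q(X,\mathcal{S})$ reduces to a single step, and $H^q(X,\mathcal{S})\cong E_2^{0,q}=H^0(Y,R^qf_*\mathcal{S})$ follows.
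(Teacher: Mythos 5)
Your proposal is correct, but it is a genuinely different route from what the paper does: the paper offers no construction at all, simply citing Demailly's CADG book ((13.8) Theorem together with the (10.12) special case of Chapter IV), where the Leray sequence is obtained as the hypercohomology spectral sequence of the direct image of a (Godement-type flabby) resolution. You instead build the sequence from scratch as the Grothendieck spectral sequence of the composite $\Gamma(X,-)=\Gamma(Y,-)\circ f_*$, via an injective resolution of $\mathcal{S}$ and a Cartan--Eilenberg resolution of $f_*I^\bullet$, and you correctly isolate and verify the one non-formal hypothesis --- that $f_*$ carries injectives to $\Gamma(Y,-)$-acyclic objects --- through the chain ``injective $\Rightarrow$ flabby'', ``$f_*$ preserves flabbiness'' (your description of the restriction maps of $f_*\mathcal{F}$ is exactly right), and ``flabby $\Rightarrow$ acyclic''. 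The degeneration argument for the final isomorphism is also complete. What your approach buys is self-containedness and generality (it works verbatim for any composite of left exact functors satisfying the acyclicity condition); what the paper's citation buys is brevity and consistency with the framework (Demailly's book) it relies on elsewhere. Two cosmetic points worth noting: the identity $H^k(X,\mathcal{S})=\bigoplus_{p+q=k}E_\infty^{p,q}$ as literally stated is only the associated graded of a filtration (the extension need not split for sheaves of abelian groups) --- your phrase ``associated-graded identity'' is the careful reading, and the final isomorphism only uses the single-column case where this is immaterial; and you should observe (a standard fact, one line) that the derived-functor $R^qf_*$ you use agrees with the paper's definition of $R^qf_*\mathcal{S}$ as the sheafification of $V\mapsto H^q(f^{-1}(V),\mathcal{S})$, since restricting an injective resolution to the open set $f^{-1}(V)$ again gives an injective resolution.
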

\begin{proof}
This just follows from \cite[(13.8) Theorem and (10.12) Special case of Chapter IV]{Dem12}.
\end{proof}

\subsection{Popovici's current-theoretic approach}\label{popovici}
Let us sketch Popovici's approach for Conjecture \ref{conj} to prove Theorem \ref{conj-02}, which indeed inspires us mostly. Most of this subsection is extracted from \cite{P3} and we don't claim any originality here.

It is well-known in deformation theory of complex structures \cite{E,k,MK} that the family of the fibers $X_t$ is $C^{\infty}$-diffeomorphic to a fixed compact differentiable manifold $X$ but equipped with a family of complex structures $J_t$, $t\in \Delta$, varying holomorphically with $t$. In particular, the de Rham cohomology groups $H^k(X_t,\mathbb{C})$ for each $k$ of the fibers are identified with a fixed element of $H^k(X,\mathbb{C})$, while the Dolbeault cohomology groups $H^{p,q}(X_t,\mathbb{C})$ for each $p,q$ may nontrivially depend on $t\in \Delta$.

\begin{lemma}[{\cite[Remark 2.1]{P3}}]\label{leb-neg}
With the setting of Conjecture \ref{conj}, there exists a non-zero integral de Rham cohomology $2$-class $c\in H^2(X,\mathbb{Z})$ such that for every $t\in \Delta^*$, $c$ can be represented by a $2$-form which is of $J_t$-type $(1,1)$.
Moreover, $c$ can be chosen in such a way that for every $t_0\in \Delta\setminus \Sigma$, $c$ is the first Chern class of some ample line bundle $L_{t_0}\rightarrow X_{t_0}$ where $\Sigma=\{0\}\cup \Sigma'\subsetneq \Delta$ and $\Sigma'=\cup \Sigma_\nu$ is a countable union of proper analytic subsets $\Sigma_\nu$ of $\Delta^*$.
\end{lemma}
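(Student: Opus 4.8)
The plan is to produce the class $c$ by a counting argument over the countable lattice $H^2(X,\mathbb{Z})$, and then to promote \lq of type $(1,1)$' to \lq ample' by openness and semi-continuity. Fix once and for all a $C^\infty$-trivialization of the family over $\Delta$, identifying each $H^2(X_t,\mathbb{Z})$ with the fixed finitely generated, hence countable, group $\Lambda:=H^2(X,\mathbb{Z})$; under this identification a class stays integral and the monodromy is trivial because $\Delta$ is simply connected. For every $t\in\Delta^*$ the fiber $X_t$ is projective, so by the Kodaira embedding theorem it carries an ample line bundle, whose first Chern class is an integral class $a_t\in\Lambda$ of $J_t$-type $(1,1)$. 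Since $\Delta^*$ is uncountable while $\Lambda$ is countable, the pigeonhole principle yields a single nonzero class $c\in\Lambda$ and an uncountable subset $E\subset\Delta^*$ with $a_t=c$ for all $t\in E$; in particular $c$ is ample on $X_t$, and a fortiori of $J_t$-type $(1,1)$, for every $t\in E$.

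First I would show that the Noether--Lefschetz type locus
$$S_c:=\{t\in\Delta^*:\ c \text{ is of } J_t\text{-type }(1,1)\}$$
is an analytic subset of $\Delta^*$. By the Lefschetz $(1,1)$-theorem, valid on the projective fibers, together with the exponential sheaf sequence, $c$ is of $J_t$-type $(1,1)$ if and only if $c$ lies in the image of $c_1:\operatorname{Pic}(X_t)\to H^2(X_t,\mathbb{Z})$, equivalently if and only if the image of $c$ under the connecting map $H^2(X_t,\mathbb{Z})\to H^2(X_t,\mathcal{O}_{X_t})$ vanishes. Globalizing over $\Delta^*$, the flat section determined by $c$, a section of $R^2\pi_*\mathbb{Z}$ whose stalks are $H^2(X_t,\mathbb{Z})$ by Lemma \ref{2-5}, maps to a holomorphic section $\sigma_c$ of $R^2\pi_*\mathcal{O}_{\mathcal{X}}$, which is coherent by Grauert's direct image theorem \ref{gdit}. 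The vanishing locus of a holomorphic section of a coherent sheaf is analytic, so it remains only to reconcile the vanishing of $\sigma_c$ in the fibers $R^2\pi_*\mathcal{O}_{\mathcal{X}}\otimes k(t)$ with the genuine vanishing in $H^2(X_t,\mathcal{O}_{X_t})$. Off the thin set where $R^2\pi_*\mathcal{O}_{\mathcal{X}}$ fails to be free (Proposition \ref{slia}) and where $h^{0,2}(X_t)$ jumps (Theorem \ref{Upper semi-continuity}), the base change map is an isomorphism and the two vanishings agree; since $S_c$ is closed, it is analytic on all of $\Delta^*$. As $\Delta^*$ is a connected one-dimensional complex manifold, $S_c$ is therefore either discrete, hence at most countable, or all of $\Delta^*$. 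Because $E\subset S_c$ is uncountable, I conclude $S_c=\Delta^*$; that is, $c$ is of $J_t$-type $(1,1)$ for every $t\in\Delta^*$, which proves the first assertion.

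It remains to establish the \lq moreover' part. Since $c$ is of type $(1,1)$ on every fiber over $\Delta^*$, the Lefschetz $(1,1)$-theorem provides, for each $t\in\Delta^*$, a holomorphic line bundle $L_t\to X_t$ with $c_1(L_t)=c$; the problem is to show that $L_t$ is ample outside a small bad set. Ampleness is an open condition in the family: since $c$ remains of type $(1,1)$ throughout $\Delta^*$ and is ample on the uncountable, hence nonempty, set $E$, a positively curved representative deforms, so the ample locus $A:=\{t\in\Delta^*:\ L_t \text{ ample}\}\supseteq E$ is open and nonempty. For its complement I would apply Grauert's upper semi-continuity (Theorem \ref{Upper semi-continuity}) to the powers $L_t^{\otimes p}$: for each $p$ the generic value of $h^0(X_t,L_t^{\otimes p})$ is attained off a proper analytic subset $\Sigma_p\subset\Delta^*$, and on the ample fibers these values realize the top-degree growth characterizing ampleness, via the associated Kodaira map becoming an embedding, whose failure is again an analytic condition. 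Setting $\Sigma'=\bigcup_p\Sigma_p$, a countable union of proper analytic subsets, and $\Sigma=\{0\}\cup\Sigma'$, the class $c$ is the first Chern class of the ample line bundle $L_{t_0}$ for every $t_0\in\Delta\setminus\Sigma$.

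The main obstacle is the analyticity of $S_c$: the description of $S_c$ as the zero locus of $\sigma_c$ is only literally correct where cohomology commutes with base change, and the possible jumping of the Hodge number $h^{0,2}(X_t)$ -- precisely the phenomenon that the hypotheses of the main theorems are designed to control -- must be absorbed into the thin exceptional locus before invoking closedness to recover analyticity on all of $\Delta^*$. Once this is in place, the counting argument and the dimension-one dichotomy that an analytic subset of $\Delta^*$ is either discrete or everything do the rest, and the ampleness refinement is comparatively soft, resting on openness of positivity and Grauert's semi-continuity.
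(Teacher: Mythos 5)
Your overall strategy---counting integral classes against the uncountable base and identifying the type-$(1,1)$ locus with the zero set of a holomorphic section of $R^2\pi_*\mathcal{O}_{\mathcal{X}}$---is the same as the paper's, but the step you yourself flag as \lq\lq the main obstacle'' is where your argument has a genuine gap. You allow for a thin set in $\Delta^*$ where $h^{0,2}(X_t)$ jumps and where $R^2\pi_*\mathcal{O}_{\mathcal{X}}$ fails to be free, and you propose to recover analyticity of $S_c$ by asserting that $S_c$ is closed. That closedness is exactly what is not obvious at a Hodge-number jumping point (the harmonic projection onto the $(0,2)$-component, hence the defining condition for $S_c$, need not vary continuously where $h^{0,2}$ jumps), and in any case \lq\lq closed and analytic off a discrete set'' does not by itself give analyticity; so the dichotomy you invoke is resting on an unproved claim. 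The paper removes the obstacle instead of patching it: since \emph{every} fiber over $\Delta^*$ is projective, Fr\"olicher degeneration at $E_1$ together with upper semi-continuity forces $t\mapsto h^{0,2}(X_t)$ to be locally constant on all of $\Delta^*$ (\cite[Proposition 9.20]{V} or \cite[Theorem 1.4.(2)]{RZ15}); Grauert's continuity theorem (Lemma \ref{gct}) then makes $R^2\pi_*\mathcal{O}_{\mathcal{X}}$ locally free on all of $\Delta^*$, base change holds everywhere there, and $Z_c$ is the zero set of a holomorphic section of a genuine vector bundle---analytic with no exceptional set to absorb. This one observation is what your proof is missing, and it is the reason the jumping phenomenon only has to be controlled at $t=0$ (by the hypotheses of the main theorems), never on $\Delta^*$.

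Your \lq\lq moreover'' step is also not sound as written. The bundles $L_t$ produced fiberwise by the Lefschetz $(1,1)$-theorem are not known to fit into a holomorphic family, and Theorem \ref{Upper semi-continuity} requires a holomorphic vector bundle on the total space; constructing such a global (or even local-over-the-base) bundle is precisely the content of the later Lemma \ref{leb-neg-line} and uses the Hodge-number hypothesis of Theorem \ref{conj-02}, which is not available inside the present lemma. So applying Grauert semi-continuity to $h^0(X_t,L_t^{\otimes p})$ is illegitimate here; moreover, top-degree growth of $h^0$ characterizes bigness, not ampleness. The paper's proof sidesteps both problems by working only with the class $c$: ampleness is open in the countable analytic Zariski topology of $\Delta^*$ by the Nakai--Moishezon criterion combined with Barlet's theory of cycle spaces, which is how the countable union $\Sigma'$ of proper analytic subsets arises.
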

\begin{proof}
  Consider any class $c\in H^2(X,\mathbb{R})$ and denote by $Z_c$ the set of points $t\in \Delta^*$ such that $c$ can be represented by a $J_t$-type $(1,1)$-form. For every $t\in \Delta^*$,
there holds a Hodge decomposition
$$H^2(X,\mathbb{C})=H^{2,0}(X_t,\mathbb{C})\oplus H^{0,2}(X_t,\mathbb{C})\oplus H^{1,1}(X_t,\mathbb{C})$$
  with Hodge symmetry $H^{2,0}(X_t,\mathbb{C})=\overline{H^{0,2}(X_t,\mathbb{C})}$ since $X_t$ is projective. Thus, $c\in H^2(X,\mathbb{R})$ contains a $J_t$-type $(1,1)$-form if and only if its projection onto $H^{0,2}(X_t,\mathbb{C})$ vanishes.  Since the function
$$\Delta^*\ni t\mapsto\dim H^{0,2}(X_t,\mathbb{C})$$
is locally constant by the projectiveness of $X_t$ for $t\in\Delta^*$ by \cite[Proposition 9.20]{V} or \cite[Theorem 1.4.(2)]{RZ15}, the higher direct image sheaf $R^2\pi_*\mathcal{O}_{\mathcal{X}}$ over $\Delta^*$ is locally free by
 Grauert's continuity theorem \cite[Theorem 4.12.(ii) of Chapter III]{bs} (or just Lemma \ref{gct} below) and hence
can be identified as a holomorphic vector bundle there.
Thus, one sees that $Z_c$ is the zero set of the holomorphic section $s_c\in \Gamma(\Delta^*,R^2\pi_*\mathcal{O}_{\mathcal{X}})$
induced by $c$ (identified with a section of $R^2\pi_*\mathbb{R}$) and followed by $R^2\pi_*\mathbb{R}\rightarrow R^2\pi_*\mathcal{O}_{\mathcal{X}}$.  Thus $Z_c$ is an analytic subset of $\Delta^*$.

By using the projectiveness of $X_t$ for $t\in\Delta^*$, one sees
$$\bigcup_c Z_c=\Delta^*,$$
where the union is countable and taken over all the integral classes $c\in H^2(X,\mathbb{Z})$ with $c$ being the first Chern class of an ample line bundle on some fiber $X_{t_0}, t_0\neq 0$; clearly $t_0\in Z_c$. Notice that a countable union of proper analytic subsets is Lebesgue negligible. So there should be some $c\in H^2(X,\mathbb{Z})$ in the union satisfying $Z_c=\Delta^*$.

To conclude the proof, one uses the standard fact that the ampleness condition is open with respect to the countable analytic Zariski topology of $\Delta^*$, which follows from the Nakai--Moishezon criterion for ampleness and the Barlet theory of cycle spaces.
\end{proof}

Consider a smooth family $\{dV_t\}_{t\in \Delta}$ of smooth (positive) volume forms on $X_t$ normalized by $\int_{X_t}dV_t=1$. For each $t\in \Delta\setminus \Sigma$, applying Yau's theorem \cite{Yst} to the class $c$ in Lemma \ref{leb-neg} viewed as a K\"ahler class on $X_t$, one obtains a smooth $2$-form $\omega_t\in c$, which is a K\"ahler form with respect to the complex structure $J_t$ such that
\begin{equation}
 \label{yau}\omega_t^n(x)=vdV_t,\quad\ x\in X_t,
\end{equation}
where $v=\int_Xc^n>0$.

Let's divide the proof of Theorem \ref{conj-02} into two steps.   The first step is to show that under the deformation invariance of the $(0,2)$-Hodge numbers $h^{0,2}(X_t)$ on all $t\in \Delta$, the family $\{\omega_t\}_{t\in \Delta\setminus \Sigma}$ of K\"ahler forms is uniformly bounded in mass in the sense that, for some constant $C>0$ independent of $t\in \Delta\setminus \Sigma$,
$$0<\int_{X_t}\omega_t\wedge\mathfrak{g}_t^{n-1}\leq C<+\infty,\quad\ \text{for all $t\in \Delta\setminus \Sigma$},$$
where $\{\mathfrak{g}_t\}_{t\in \Delta}$ is a smooth family of Gauduchon metrics after possibly shrinking $\Delta$ about $0$.

In fact, choose $\tilde{\omega}$ as any $d$-closed real $2$-form on $\mathcal{X}$ in the de Rham class $c$.  There exists a smooth real $1$-form $\beta_t$ on $X_t$ such that for every $t\in \Delta\setminus \Sigma$, on $X_t$,
\begin{equation}\label{beta}
\omega_t=i_t^*\tilde{\omega}+d_t\beta_t
\end{equation}
where $i_t: X_t\to \mathcal{X}$ and $d_t$ operates along $X_t$.
Then the mass of $\omega_t$ splits as
\begin{equation}\label{om-mass}
\int_{X_t}\omega_t\wedge\mathfrak{g}_t^{n-1}=\int_{X_t}i_t^*\tilde{\omega}\wedge\mathfrak{g}_t^{n-1}+\int_{X_t}d_t\beta_t\wedge\mathfrak{g}_t^{n-1}.
\end{equation}
The first term in the right-hand side of \eqref{om-mass} is bounded as $t$ varies in a neighborhood of $0$ since $\{\mathfrak{g}_t\}_{t\in \Delta}$ is a smooth family and $i_t^*\tilde{\omega}$ can be viewed as a smooth family 
in a neighborhood of $0$. So one needs only to estimate the second term in the right-hand side of \eqref{om-mass}.

Notice that for every $t\in \Delta\setminus \Sigma$, any solution ${\beta}_t$ of \eqref{beta} can be chosen as the type:
$$\tilde{\beta}_t+\sqrt{-1}\b\p_t\alpha_t,$$
where one explicitly chooses the $(0,1)$-part of the real $1$-form $\tilde{\beta}_t$ as
$$\tilde\beta_t^{0,1}=-\b\p^*_t\mathbb{G}_t\tilde{\omega}_t^{0,2},$$
and $\alpha_t$ as some smooth function on $X$ by the $\partial\bar\partial$-lemma. Here $\tilde{\omega}_t^{0,2}$ is the $(0,2)$-part of $i_t^*\tilde{\omega}$  with respect to the complex structure $J_t$ and $\mathbb{G}_t$ denotes the associated Green's operator to the $\b\p_t$-Laplacian $\square_t$. The verification of the above choice is not difficult and can be made by using the fact that from \eqref{beta}, the $\tilde{\omega}_t^{0,2}$ is a trivial class or equivalently, its harmonic component ($t$-dependent {\it{a priori}}) is zero. Moreover, Stokes' theorem yields
$$\int_{X_t}\sqrt{-1}\p_t\b\p_t\alpha_t\wedge\mathfrak{g}_t^{n-1}=\int_{X_t}\alpha_t\wedge\sqrt{-1}\p_t\b\p_t\mathfrak{g}_t^{n-1}=0,$$
which implies that the mass of $\omega_t$ with respect to $\mathfrak{g}_t^{n-1}$ in \eqref{om-mass} is independent of the choice of $\alpha_t$.
Under the deformation invariance of the $(0,2)$-Hodge numbers $h^{0,2}(X_t)$ for $t\in \Delta$, the family $\{\mathbb{G}_t\}_{t\in \Delta}$ of Green's operators depends smoothly on $t\in \Delta$ by the fundamental result of Kodaira--Spencer \cite[Theorem 7]{KS}.
So the second term in the right-hand side of \eqref{om-mass}
$$\int_{X_t}d\beta_t\wedge\mathfrak{g}_t^{n-1}=-2\Re \int_{X_t}\b\p^*_t\mathbb{G}_t\tilde{\omega}_t^{0,2}\wedge\p_t\mathfrak{g}_t^{n-1}$$
is bounded as $t$ varies in a neighborhood of $0$.   This proves the uniform boundedness in mass, as desired.

We come now to the second step, that is, to produce a closed positive $(1,1)$-current on $X_0$ satisfying the following properties in Theorem \ref{pop-hol} to complete the proof of Theorem \ref{conj-02}:
\begin{thm}[{Rewording of \cite[Theorem 1.3]{P08}}]\label{pop-hol}
Let $X$ be a compact complex $n$-dimensional manifold. If there exists a $d$-closed $(1,1)$-current $T$ on $X$ whose de Rham cohomology class is integral and which satisfies
$$\text{(i) $T\geq 0$;\quad (ii) $\int_X{T_{ac}^n>0}$},$$
then the cohomology class of $T$ contains a K\"ahler current and thus $X$  is Moishezon. Here $T_{ac}$ is the absolutely continuous part of $T$.
\end{thm}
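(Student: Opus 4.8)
The plan is to show that the integral class $\{T\}$ is the first Chern class of a holomorphic line bundle $L$ that is \emph{big}, so that $X$ is Moishezon by Lemma~\ref{big-moi}, the K\"ahler current in $\{T\}$ being recovered afterwards from the standard characterization of big classes. First I would record that, since $\{T\}\in H^2(X,\mathbb{Z})$ is represented by the $(1,1)$-current $T$, its $(0,2)$-Dolbeault component, hence its image in $H^2(X,\mathcal{O}_X)$, vanishes; the exponential sheaf sequence then yields a holomorphic line bundle $L$ with $c_1(L)=\{T\}$, together with a singular Hermitian metric $h$ whose curvature current equals $T$ (up to the usual factor $2\pi$). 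Because $T\ge 0$, the local weights of $h$ are plurisubharmonic. By the bigness criterion \eqref{big-crit} and Lemma~\ref{big-moi} it then suffices to prove
$$\limsup_{p\to+\infty}\frac{h^0(X,L^{\otimes p})}{p^n}\ >\ 0.$$

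The main tool for bounding $h^0(X,L^{\otimes p})$ from below is Demailly's holomorphic Morse inequalities (cf. \cite{mm}), which however require a smooth, or at worst analytically singular, metric, whereas $h$ is a general positive-curvature metric. So next I would regularize: by Demailly's regularization theorem for closed positive $(1,1)$-currents (cf. \cite{Dem12}) there is a sequence of currents $T_\epsilon=\tfrac{i}{2\pi}\Theta_{h_\epsilon}\in\{T\}$, attached to smooth (or analytically singular) metrics $h_\epsilon$ on $L$, with $T_\epsilon\ge-\epsilon\,\omega$ for a fixed Hermitian form $\omega$ and $\epsilon\downarrow 0$, and with the absolutely continuous parts $(T_\epsilon)_{ac}$ converging to $T_{ac}$ in a controlled, essentially monotone, fashion on the regular locus. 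Applying the strong Morse inequality (case $q=1$) to $h_\epsilon$ and discarding the nonnegative $h^1$ term via $h^0\ge h^0-h^1$ gives, for each fixed $\epsilon$,
$$\frac{h^0(X,L^{\otimes p})}{p^n}\ \ge\ \frac{1}{n!}\int_{X(\le 1,\,h_\epsilon)}\Big(\frac{i}{2\pi}\Theta_{h_\epsilon}\Big)^{n}\ -\ o(1),\qquad p\to+\infty.$$

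The heart of the argument, and the step I expect to be the main obstacle, is to let $\epsilon\to 0$ and recover the positive quantity $\int_X T_{ac}^n$. Since $T\ge 0$, the form $T_{ac}$ is positive semidefinite almost everywhere, so $T_{ac}^n$ is carried exactly by the set where $T_{ac}$ is strictly positive, and $\int_X T_{ac}^n>0$. On that region the curvatures $\tfrac{i}{2\pi}\Theta_{h_\epsilon}$ remain positive in the limit and contribute this mass; the delicate points are (a) showing that the contribution of the set where $\Theta_{h_\epsilon}$ acquires one small negative eigenvalue, forced by the loss of positivity $-\epsilon\omega$, tends to $0$, using that this eigenvalue is $O(\epsilon)$ together with a uniform $L^1$-bound on the remaining eigenvalues, and (b) controlling the concentration of Monge--Amp\`ere mass near the polar set of $h$ through the regularization, which is precisely where Demailly's estimates on Lelong numbers and the monotone convergence of Monge--Amp\`ere masses on the regular locus (a Fatou-type argument) enter. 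Granting this, $\liminf_\epsilon\int_{X(\le 1,\,h_\epsilon)}\big(\tfrac{i}{2\pi}\Theta_{h_\epsilon}\big)^n\ge\int_X T_{ac}^n$, whence $\limsup_p h^0(X,L^{\otimes p})/p^n\ge\tfrac1{n!}\int_X T_{ac}^n>0$, so $L$ is big.

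Finally I would translate bigness of $L$ back into the stated conclusion: a line bundle whose first Chern class is big carries, in that class, a K\"ahler current (by the Demailly--Paun characterization of big classes, equivalently by a mass-concentration regularization producing a current $\widetilde{T}\ge\epsilon\omega$), which is the asserted K\"ahler current in $\{T\}$, and $X$ is Moishezon by Lemma~\ref{big-moi}. Alternatively, the whole estimate can be packaged through Boucksom's inequality $\mathrm{vol}\{T\}\ge\int_X T_{ac}^n$ and Fujita-type approximation, but the Morse-inequality route above is the most self-contained.
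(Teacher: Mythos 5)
You should first be aware that the paper does not prove Theorem \ref{pop-hol} at all: it is imported verbatim (as a ``rewording'') from Popovici's \cite[Theorem 1.3]{P08}, so the only meaningful benchmark is Popovici's original argument. Your outline does reproduce the overall shape of that argument (integrality of $\{T\}$ gives a line bundle $L$; regularization plus singular holomorphic Morse inequalities give bigness of $L$; bigness gives a K\"ahler current and Moishezonness), but it has a genuine gap at exactly the step you call the ``main obstacle'' and then grant. Demailly's regularization theorem controls Lelong numbers and the loss of positivity $-\epsilon\omega$, but it gives no control on the Monge--Amp\`ere masses of the absolutely continuous parts; and the only general semicontinuity statement available --- Boucksom's \cite[Proposition 2.1]{Bou}, the very one this paper uses elsewhere --- reads $T_{ac}^n(x)\geq\limsup_{\epsilon}(T_\epsilon)_{ac}^n(x)$ a.e.\ for a weakly convergent sequence $T_\epsilon\to T$. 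That inequality bounds the approximants' masses from \emph{above} by the limit's, which is the opposite of what you need; it is perfectly compatible with $\int_X(T_\epsilon)_{ac}^n\to 0$ even when $\int_X T_{ac}^n>0$, and no Fatou-type or monotone-convergence argument reverses it. The inequality you actually need, $\liminf_{\epsilon}\int_{X(\le 1,\,h_\epsilon)}\bigl(\tfrac{i}{2\pi}\Theta_{h_\epsilon}\bigr)^n\geq\int_X T_{ac}^n$, is precisely the ``mass control'' in the title of \cite{P08}: it is the main new regularization theorem of that paper, proved by a substantial Ohsawa--Takegoshi-type construction rather than by soft convergence arguments, and modulo Morse inequalities it is equivalent to the statement being proved. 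As written, your proposal therefore assumes the heart of the theorem.

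Two further points. First, your very first step --- producing a singular Hermitian metric $h$ on $L$ with curvature current equal to $T$ --- is not justified: it requires solving $T=\alpha+i\partial\bar\partial u$ with $\alpha$ the curvature of a smooth metric on $L$, and since $T-\alpha$ is only known to be $d$-exact, this is an instance of the $\partial\bar\partial$-lemma, which is unavailable here because $X$ is an arbitrary compact complex manifold (it is not yet known to be Moishezon --- that is the conclusion). Second, the fallback you mention, Boucksom's inequality $\mathrm{vol}(L)\geq\int_X T_{ac}^n$ together with Fujita approximation, is not available either: Boucksom establishes it on compact \emph{K\"ahler} manifolds (its proof relies on Yau's theorem applied to K\"ahler classes), and its extension to general compact complex manifolds carrying an integral class is again exactly \cite[Theorem 1.3]{P08}, so that route is circular.
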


In fact, the uniform mass-boundedness property in the first step yields a weakly convergent subsequence $\omega_{t_k}\rightarrow T$ with $\Delta\setminus \Sigma \ni t_k\rightarrow 0$ as $k\rightarrow +\infty$. The limit current $T\geq 0$ of type $(1,1)$ with respect to the limit complex structure $J_0$ of $X_0$ is $d$-closed and lie in the de Rham class $c$. The semi-continuity property for the top power of the absolutely continuous part in $(1,1)$-currents shows for almost every $x\in X_0$
$$T_{ac}(x)^n\geq \limsup_{k\rightarrow +\infty}\omega_{t_k}(x)^n= v\limsup_{k\rightarrow +\infty}dV_{t_k}(x)=vdV_{0}(x)$$
by \eqref{yau}.
Thus, $$\int_{X_0}{T_{ac}^n}\geq v>0$$
which is just the desired $(ii)$ in Theorem \ref{pop-hol}.

\subsection{Upper semi-continuity approach}\label{usc}
As a second proof for Theorem \ref{conj-02}, we will modify the Lebesgue negligibility argument in Lemma \ref{leb-neg} to get a global holomorphic line bundle on the total space $\mathcal{X}$, and then use Kodaira--Spencer's upper semi-continuity theorem and Demailly's effective ampleness to yield a big line bundle on $X_0$.

The main difference between the following lemma and Lemma \ref{leb-neg} lies in the existence of a
global line bundle on $\mathcal{X}$ proved here.

\begin{lemma}[{}]\label{leb-neg-line}
With the setting of Theorem \ref{conj-02}, there exists a global holomorphic line bundle $L$ on the total space $\mathcal{X}$ such that for every $t\in \Delta\setminus \Sigma$, $L_t:=L|_{X_t}\rightarrow X_t$ is ample, where $\Sigma=\{0\}\cup \Sigma'\subsetneq \Delta$ and $\Sigma'=\cup \Sigma_\nu$ is a countable union of analytic subsets $\Sigma_\nu\subsetneq \Delta^*$.
\end{lemma}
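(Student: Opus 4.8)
The plan is to upgrade the cohomology class $c$ produced in Lemma \ref{leb-neg} to an honest holomorphic line bundle on the total space $\mathcal{X}$ by killing the obstruction coming from the exponential sheaf sequence. Since $\Delta$ is contractible, the inclusion of any fiber is a homotopy equivalence, so $H^2(\mathcal{X},\mathbb{Z})\cong H^2(X,\mathbb{Z})$ and we may regard $c$ as an integral class on $\mathcal{X}$. From the exponential sequence $0\to\mathbb{Z}\to\mathcal{O}_{\mathcal{X}}\to\mathcal{O}_{\mathcal{X}}^*\to 0$ and its connecting map $c_1\colon H^1(\mathcal{X},\mathcal{O}_{\mathcal{X}}^*)\to H^2(\mathcal{X},\mathbb{Z})$, the class $c$ is of the form $c_1(L)$ for some $L\in\mathrm{Pic}(\mathcal{X})$ exactly when the image of $c$ under $H^2(\mathcal{X},\mathbb{Z})\to H^2(\mathcal{X},\mathcal{O}_{\mathcal{X}})$ vanishes. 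So the whole point is to show this image is zero.

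First I would compute $H^2(\mathcal{X},\mathcal{O}_{\mathcal{X}})$. By Grauert's direct image theorem (Theorem \ref{gdit}) the sheaves $R^q\pi_*\mathcal{O}_{\mathcal{X}}$ are coherent on the Stein disk $\Delta$, so Cartan's Theorem B gives $H^p(\Delta,R^q\pi_*\mathcal{O}_{\mathcal{X}})=0$ for $p>0$; the Leray spectral sequence (Theorem \ref{leray}) then yields $H^2(\mathcal{X},\mathcal{O}_{\mathcal{X}})\cong\Gamma(\Delta,R^2\pi_*\mathcal{O}_{\mathcal{X}})$. Under this identification, the image of $c$ becomes a global section $s_c\in\Gamma(\Delta,R^2\pi_*\mathcal{O}_{\mathcal{X}})$ whose value at each $t$ is, by base change (Lemma \ref{2-5}), the projection of $c$ onto $H^{0,2}(X_t,\mathbb{C})$. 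On $\Delta^*$ this is precisely the section of Lemma \ref{leb-neg}, and since there we arranged $Z_c=\Delta^*$, it vanishes identically on $\Delta^*$.

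The crux is then to propagate this vanishing across the central fiber. Here is where the extra hypothesis enters: since the $X_t$ are projective for $t\in\Delta^*$, the Hodge number $h^{0,2}(X_t)$ is locally constant on the connected set $\Delta^*$, and by assumption it agrees with $h^{0,2}(X_0)$ for $t$ near $0$; hence $t\mapsto h^2(X_t,\mathcal{O}_{X_t})=h^{0,2}(X_t)$ is constant on all of $\Delta$ (after shrinking). Grauert's continuity theorem (Lemma \ref{gct}) then makes $R^2\pi_*\mathcal{O}_{\mathcal{X}}$ locally free, i.e.\ a holomorphic vector bundle on $\Delta$. A holomorphic section of a vector bundle that vanishes on the dense open set $\Delta^*$ vanishes identically by continuity, so $s_c\equiv 0$; this is the step I expect to be the main obstacle, as it is exactly what local freeness (and thus the $(0,2)$-invariance assumption) is needed for --- without it $R^2\pi_*\mathcal{O}_{\mathcal{X}}$ could carry torsion supported at $0$ and the argument would collapse. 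Consequently $c$ lies in the image of $c_1$, and we obtain $L\in\mathrm{Pic}(\mathcal{X})$ with $c_1(L)=c$.

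Finally, for $t\in\Delta\setminus\Sigma$ the restriction satisfies $c_1(L_t)=c|_{X_t}=c$, which by Lemma \ref{leb-neg} is the first Chern class of an ample line bundle on $X_t$, hence a K\"ahler class on the projective manifold $X_t$. Since ampleness of a line bundle on a compact complex manifold depends only on whether its real first Chern class is representable by a positive $(1,1)$-form, the Kodaira embedding theorem shows $L_t$ is ample, with $\Sigma=\{0\}\cup\Sigma'$ as in Lemma \ref{leb-neg}. This completes the plan.
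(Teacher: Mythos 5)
Your proposal is correct and follows essentially the same route as the paper's proof: the exponential sequence on $\mathcal{X}$, the Leray/Cartan B identification $H^2(\mathcal{X},\mathcal{O}_{\mathcal{X}})\cong\Gamma(\Delta,R^2\pi_*\mathcal{O}_{\mathcal{X}})$, local freeness of $R^2\pi_*\mathcal{O}_{\mathcal{X}}$ from the $(0,2)$-Hodge number invariance, and propagation of the vanishing of $s_c$ from $\Delta^*$ to all of $\Delta$ to lift $c$ to a line bundle $L$ on $\mathcal{X}$. The only cosmetic difference is the final fiberwise step, where the paper deduces ampleness of $L|_{X_t}$ via the Nakai--Moishezon criterion together with the openness-of-ampleness argument, while you use the $\partial\bar\partial$-lemma and Kodaira embedding on the projective fibers; these are interchangeable here.
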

\begin{proof} Although the arguments are for the most part similar to Lemma \ref{leb-neg}, for the sake of clarity
we sketch the main points here.    Consider an ample line bundle $L_{t_0}$ on $X_{t_0}$ with some $t_0\in\Delta^*$ and its first Chern class $c:=c_{1}(L_{t_0})\in H^2(X,\mathbb{Z})$.   This time, we will use the exact sequence
\begin{equation}\label{eshdis}
 \cdots\rightarrow  H^1(\mathcal{X}, \mathcal{O}^*_{\mathcal{X}})\rightarrow H^2(\mathcal{X}, \mathds{Z})
\rightarrow H^2(\mathcal{X}, \mathcal{O}_{\mathcal{X}})\rightarrow\cdots
\end{equation}
obtained by the standard exponential exact sequence
$$0\rightarrow  \mathds{Z}\rightarrow \mathcal{O}_{\mathcal{X}}\rightarrow \mathcal{O}^*_{\mathcal{X}}\rightarrow 0.$$
By the similar argument and notations as in Lemma \ref{leb-neg} using however our assumption
on the deformation invariance
of $h^{0,2}(X_t)$ over the whole $\Delta$, we reach an analytic subset $Z_c$,
the zero set in $\Delta$ of the holomorphic section $s_c\in \Gamma(\Delta,R^2\pi_*\mathcal{O}_{\mathcal{X}})$ induced by $c$.  Now on $\Delta^*$, by the projectiveness of $X_t$ one has
$$\bigcup_c Z_c\supseteq\Delta^*$$
with the union taken over all the integral classes $c\in H^2(X,\mathbb{Z})$ satisfying that $c=c_1(H_t)$ for some ample line bundle $H_t$ on $X_{t}, t\neq 0$.  Since a countable union of proper analytic subsets is Lebesgue negligible, there should be some $\tilde{c}\in \Gamma(\Delta,R^2\pi_*\mathbb{Z})$ induced by some ample line bundle $L_{\tilde{t}_0}$ on $X_{\tilde{t}_0}$ with some $\tilde{t}_0\in\Delta^*$ in the union satisfying $Z_{\tilde{c}}\supseteq\Delta^*$.
This gives $Z_{\tilde{c}}=\Delta$ since $Z_{\tilde{c}}$ is analytic in $\Delta$.  (Here $\tilde{t}_0$ could be different from $t_0$ in the beginning.) That is, $0 \equiv s_{\tilde{c}}\in\Gamma(\Delta, R^2\pi_*\mathcal{O}_{\mathcal{X}})$. By using the identification preceding Theorem \ref{leray} and the long exact sequence above, this
implies that $\tilde c$ is the image of some element in $H^1(\mathcal{X},\mathcal{O}^*_{\mathcal{X}})$. This element is the desired global holomorphic line bundle $L$ on the total space $\mathcal{X}$ because its restriction $L|_{X_{\tilde{t}_0}}$ to $X_{\tilde{t}_0}$ admits the first Chern class $\tilde{c}$ and thus $L|_{X_{\tilde{t}_0}}$ is ample by Nakai--Moishezon criterion.

The remaining reasoning can be repeated as in the last paragraph of Lemma \ref{leb-neg}.
\end{proof}

\begin{rem}\label{get-L0}
Notice that the restriction $L|_{X_{\tilde{t}_0}}$ to $X_{\tilde{t}_0}$ of the global holomorphic line bundle $L$ on the total space $\mathcal{X}$ is not necessarily equal to $L_{\tilde{t}_0}$ (in the middle of the proof) although they have the same $c_1$.  Nevertheless, by the argument in \cite[Lemma 2.1]{Weh}, one can construct a new global holomorphic line bundle $L'$ on the total space $\mathcal{X}$ such that its restriction $L'|_{X_{\tilde{t}_0}}$ to $X_{\tilde{t}_0}$ is just $L_{\tilde{t}_0}$ by using the commutative diagram of long exact
sequences
\begin{equation}\label{co-lo-se2}
\xymatrix@C=0.5cm{
  \cdots \ar[r]^{}
  & H^{1}(\mathcal{X},\mathcal{O}_{\mathcal{X}}) \ar[d]_{} \ar[r]^{}
  & H^{1}(\mathcal{X},\mathcal{O}_{\mathcal{X}}^*)\ar[d]_{} \ar[r]^{}
  & H^{2}(\mathcal{X},\mathbb{Z}) \ar[d]_{} \ar[r]^{}
  & H^{2}(\mathcal{X},\mathcal{O}_{\mathcal{X}}) \ar[d]_{} \ar[r]^{} & \cdots \\
   \cdots \ar[r]
  & H^{1}({X_{\tilde{t}_0}},\mathcal{O}_{{X_{\tilde{t}_0}}})\ar[r]^{}
  &H^{1}({X_{\tilde{t}_0}},\mathcal{O}_{X_{\tilde{t}_0}}^*) \ar[r]^{}
  & H^{2}({X_{\tilde{t}_0}},\mathbb{Z}) \ar[r]
  & H^{2}({X_{\tilde{t}_0}},\mathcal{O}_{{X_{\tilde{t}_0}}})\ar[r]&\cdots.}
\end{equation}
Note that the condition on the deformation invariance of $h^{0,1}$ as required in \cite{Weh} is implied by
that of ours on $h^{0,2}$ (see the remarks after Theorem \ref{conj-02}).
 \end{rem}

We are ready to finish our second proof after invoking the following two theorems.

\begin{thm}[Asymptotic Riemann--Roch, {\cite[Corollary 1.1.25]{laz}}]\label{arr}
Let $E$ be a holomorphic vector bundle and $L$ a holomorphic line bundle on a compact complex manifold $X$ of dimension $n$. If $H^i(X,E\otimes L^{\otimes m})=0$ for $i>0$ and $m\gg 0$, then
\begin{equation}\label{rrf}
  h^0(X,E\otimes L^{\otimes m})=\rk\ E\cdot \frac{\int_X c_1(L)^n}{n!}m^n+o(m^n)
\end{equation}
for large $m$. More generally, \eqref{rrf} holds provided that
$h^i(X,E\otimes L^{\otimes m})=o(m^n)$
for $i>0$.
\end{thm}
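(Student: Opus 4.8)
The plan is to deduce the asymptotic estimate \eqref{rrf} from the Hirzebruch--Riemann--Roch theorem by first computing the Euler characteristic
$$\chi(X,E\otimes L^{\otimes m})=\sum_{i=0}^n(-1)^i h^i(X,E\otimes L^{\otimes m}).$$
Since we only assume $X$ to be a compact complex manifold, not necessarily projective, I would invoke the index-theoretic form of Hirzebruch--Riemann--Roch (valid on any compact complex manifold via the Atiyah--Singer index theorem), which gives
$$\chi(X,E\otimes L^{\otimes m})=\int_X\mathrm{ch}(E\otimes L^{\otimes m})\cdot\mathrm{Td}(X).$$
The first task is to show that this integral is a polynomial in $m$ of degree at most $n$ and to read off its leading coefficient.

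For this I would use the multiplicativity of the Chern character, $\mathrm{ch}(E\otimes L^{\otimes m})=\mathrm{ch}(E)\cdot e^{m c_1(L)}$, together with the expansion
$$e^{m c_1(L)}=\sum_{k=0}^{n}\frac{m^k}{k!}\,c_1(L)^k,$$
which terminates because $c_1(L)^k\in H^{2k}(X)$ vanishes for $k>n$. Extracting the degree-$2n$ component of $\mathrm{ch}(E)\cdot e^{m c_1(L)}\cdot\mathrm{Td}(X)$ and integrating over $X$, every power $m^k$ acquires a coefficient that is a fixed integral of characteristic classes independent of $m$; the top power $m^n$ can only come from pairing the degree-$0$ part $\rk\,E$ of $\mathrm{ch}(E)$ and the degree-$0$ part $1$ of $\mathrm{Td}(X)$ with $\tfrac{1}{n!}c_1(L)^n$, since any higher-degree contribution from $\mathrm{ch}(E)$ or $\mathrm{Td}(X)$ forces a strictly lower power of $c_1(L)$. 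This yields
$$\chi(X,E\otimes L^{\otimes m})=\rk\,E\cdot\frac{\int_X c_1(L)^n}{n!}\,m^n+O(m^{n-1}).$$

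It remains to pass from $\chi$ to $h^0$. Under the first hypothesis $H^i(X,E\otimes L^{\otimes m})=0$ for $i>0$ and $m\gg 0$ one has $h^0=\chi$ outright, so \eqref{rrf} follows because $O(m^{n-1})=o(m^n)$. Under the weaker hypothesis $h^i(X,E\otimes L^{\otimes m})=o(m^n)$ for $i>0$, I would instead write
$$h^0(X,E\otimes L^{\otimes m})=\chi(X,E\otimes L^{\otimes m})+\sum_{i>0}(-1)^{i+1}h^i(X,E\otimes L^{\otimes m}),$$
and absorb the finite error sum, each term being $o(m^n)$, into the remainder. I do not expect a genuine obstacle here: the only point demanding care is that we work in a possibly non-K\"ahler compact complex setting, so rather than the algebraic-geometric form cited in \cite{laz} I would appeal to the index-theoretic Riemann--Roch for the Euler characteristic of a locally free coherent sheaf on a compact complex manifold; granting that, the polynomiality in $m$ and the identification of the leading term are purely formal consequences of the Chern-character expansion above.
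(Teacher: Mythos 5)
Your proof is correct, but it takes a genuinely different route from the paper, which in fact offers no argument of its own for this theorem: it simply cites \cite[Corollary 1.1.25]{laz}, where the result is proved in the algebraic category (projective varieties/complete schemes) via the polynomiality of the Snapper--Euler characteristic $\chi\bigl(X,\mathcal{F}\otimes\mathcal{O}_X(mD)\bigr)$ and d\'evissage, resting on numerical intersection theory rather than on index theory. Your appeal to the Atiyah--Singer form of Hirzebruch--Riemann--Roch for the Dolbeault complex on an arbitrary compact complex manifold is exactly the right substitute, and it buys something concrete: the paper states the theorem for compact complex manifolds and actually uses it in that generality (in the second proof of Corollary \ref{sm02} it is applied to a fiber $X_{\tilde\tau}$ not yet known to be Moishezon, let alone projective), a case the literal citation to \cite{laz} does not cover. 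The remaining steps of your argument --- multiplicativity of the Chern character, termination of $e^{mc_1(L)}$ in degree $n$, extraction of the $m^n$-coefficient from the degree-$2n$ component (forcing the degree-$0$ parts $\rk\ E$ of $\mathrm{ch}(E)$ and $1$ of $\mathrm{Td}(X)$), and the passage from $\chi$ to $h^0$ by absorbing the finitely many $o(m^n)$ terms $h^i$, $i>0$ --- are all sound; the only ingredient taken as a black box is the index theorem itself, which is legitimate. In short, your proof is correct and, in the non-projective setting the paper works in, more faithful to the stated generality than the reference the paper relies on.
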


The other result is on the effective very ampleness.
Here and henceforth, let $K_M$ denote the canonical line bundle of the complex manifold $M$.
\begin{thm}[{\cite[Corollary 2]{Dem93} and also \cite{s96}}]\label{Dem-fuj}
If $L$ is an ample line bundle over an $n$-dimensional projective manifold $X$, then $K_X^{\otimes 2}\otimes L^{\otimes k}$ is very ample for $k>2C(n):=4C_nn^n$ with $C_n<3$ depending only on $n$.
\end{thm}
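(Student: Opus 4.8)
The plan is to deduce very ampleness from Nadel's vanishing theorem for multiplier ideal sheaves, reducing the statement to separation of points and of $1$-jets. Set $F:=K_X\otimes L^{\otimes k}$, so that $K_X^{\otimes 2}\otimes L^{\otimes k}=K_X\otimes F$. Very ampleness is equivalent to surjectivity, for all distinct $x,y\in X$ and all tangent directions, of the two restriction maps
$$H^0(X,K_X\otimes F)\to (K_X\otimes F)\otimes\big(\mathcal{O}_X/(\mathfrak{m}_x\mathfrak{m}_y)\big),\qquad H^0(X,K_X\otimes F)\to (K_X\otimes F)\otimes\big(\mathcal{O}_X/\mathfrak{m}_x^2\big).$$
First I would attach to a singular Hermitian weight $\varphi$ on $F$ the multiplier ideal $\mathcal{I}(\varphi)$ and the short exact sequence
$$0\to K_X\otimes F\otimes\mathcal{I}(\varphi)\to K_X\otimes F\to (K_X\otimes F)\otimes\big(\mathcal{O}_X/\mathcal{I}(\varphi)\big)\to 0;$$
taking cohomology and using Nadel's vanishing $H^1(X,K_X\otimes F\otimes\mathcal{I}(\varphi))=0$---available because $i\Theta(F)+i\partial\bar\partial\varphi>0$---forces the restriction map onto $(K_X\otimes F)\otimes(\mathcal{O}_X/\mathcal{I}(\varphi))$ to be surjective. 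Thus it suffices, for each separation problem, to build $\varphi$ whose multiplier ideal equals the target ideal ($\mathfrak{m}_x\mathfrak{m}_y$, respectively $\mathfrak{m}_x^2$) near the relevant points, is trivial elsewhere, and has strictly positive curvature. The adjunction factor $K_X$ is exactly what makes this twisted vanishing applicable.

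The heart of the matter, and the main obstacle, is producing such a weight with prescribed singularities \emph{and} positive curvature simultaneously. Here I would exploit the ampleness of $L$: fix a K\"ahler form $\omega\in c_1(L)$ and concentrate the mass of the class $k\,c_1(L)$ at the chosen point(s). Concretely, I seek a closed positive $(1,1)$-current $\Theta$ cohomologous to $k\,c_1(L)$ with an isolated logarithmic pole of Lelong number $\nu(\Theta,x)>n$ for point separation (a pair of such poles at $x$ and $y$), respectively $\nu(\Theta,x)>n+1$ for $1$-jet separation; by Skoda's integrability criterion the weight attached to $\Theta$ then has multiplier ideal contained in $\mathfrak{m}_x$ (respectively $\mathfrak{m}_x^2$) with an isolated zero. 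The attainable Lelong number is controlled from below by solving a degenerate complex Monge--Amp\`ere equation \`a la Yau \cite{Yst}, which concentrates the total mass $k^n\int_X c_1(L)^n$ at the point, and from above by Demailly's self-intersection inequality for Lelong numbers. The numerical threshold is $k^n\int_X c_1(L)^n>C_nn^n$, which---since $\int_X c_1(L)^n\ge 1$ for ample $L$ and allowing a factor for the two poles needed to separate points---is guaranteed once $k>4C_nn^n$; this is precisely the condition forcing the concentrated Lelong number to exceed $n$, and the constant $C_n<3$ emerges from optimizing the mass-concentration estimate in each fixed dimension $n$.

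Finally I would assemble the pieces: carrying out the construction for every pair $(x,y)$ and every $1$-jet, and invoking Nadel vanishing in each case, produces global sections of $K_X^{\otimes 2}\otimes L^{\otimes k}$ that separate points and tangent vectors throughout $X$, which is precisely very ampleness. The remaining subtlety is uniformity---choosing the local pole profiles to depend continuously on the points and using the compactness of $X\times X$ to extract a single exponent $k$ that works everywhere; it is this uniform mass-concentration estimate that ultimately yields the clean dimensional bound $k>4C_nn^n$.
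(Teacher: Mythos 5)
The paper gives no proof of Theorem \ref{Dem-fuj}: it is quoted as a black box from \cite[Corollary 2]{Dem93} (see also \cite{s96}) and then invoked in Subsection \ref{usc}. Your sketch is therefore in effect an attempt to reconstruct Demailly's own argument, and the toolbox you list---Nadel vanishing with multiplier ideals, mass concentration via Yau's theorem \cite{Yst}, Skoda's criterion, the self-intersection inequality---is indeed his. But two essential steps are missing, and they are precisely where the theorem's content lies. First, Nadel vanishing for $H^1(X,K_X\otimes F\otimes\mathcal{I}(\varphi))$ requires a singular metric with strictly positive curvature current on $F=K_X\otimes L^{\otimes k}$ itself, whereas your Monge--Amp\`ere construction only produces a positive current $\Theta$ in the class $k\,c_1(L)$; the remaining factor of $K_X$ inside $F$ is never metrized, and its curvature can be arbitrarily negative. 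You cannot absorb that negativity into the ample part without letting $k$ depend on $X$ and $L$, which would destroy the purely dimensional bound $k>4C_nn^n$. Demailly's device is to metrize this second copy of $K_X$ by the volume form of the Monge--Amp\`ere solution itself, so that its curvature is governed by the equation; this is exactly why the statement concerns $K_X^{\otimes 2}\otimes L^{\otimes k}$ rather than $K_X\otimes L^{\otimes k}$, and your remark that ``the adjunction factor $K_X$ is exactly what makes this twisted vanishing applicable'' accounts for only one of the two copies of $K_X$.

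Second, ``multiplier ideal equal to $\mathfrak{m}_x\mathfrak{m}_y$ near the points and trivial elsewhere'' is not something the concentration argument gives for free: the weak limit of the concentrated solutions can acquire Lelong numbers $\geq 1$ along positive-dimensional subvarieties through $x$ (or elsewhere), in which case $V(\mathcal{I}(\varphi))$ is not finite near $x$ and the exact-sequence argument no longer separates points or jets. Controlling this propagation of singularities---by the self-intersection inequality together with an induction over the dimension of the jumping loci, using that $L^d\cdot Y\geq 1$ for every $d$-dimensional subvariety $Y$ since $L$ is ample---is the heart of \cite{Dem93} and the true origin of the factor $n^n$. Your own numerology exposes the gap: if the threshold really were $k^n\int_Xc_1(L)^n>C_nn^n$ as you assert, then a bound $k>C_n^{1/n}\,n$, linear in $n$, would already suffice, not $k>4C_nn^n$; the stated bound is not recovered by your mass count but reverse-engineered from the conclusion. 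So the sketch points at the correct tools but omits the two mechanisms that make the theorem true with a constant depending only on $n$.
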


It is obvious that for some fixed $k>C(n)$, Theorem \ref{Dem-fuj} implies that $K_X\otimes L^{\otimes k}$ is ample (cf. also \cite{fu87}) and thus Kodaira vanishing theorem gives
$$H^q(X,(K_X\otimes L^{\otimes k})^{\otimes m})=0,\ \text{for $q,m\geq 1$}.$$
So asymptotic Riemann--Roch Theorem \ref{arr} gives
$$h^0(X,(K_X\otimes L^{\otimes k})^{\otimes m})=\frac{\int_X c_1(K_X\otimes L^{\otimes k})^n}{n!}m^n+o(m^n),\ \text{for $q,m\geq 1$},$$
where
$$\int_X c_1(K_X\otimes L^{\otimes k})^n>0.$$
Back to the proof of Theorem \ref{conj-02}, let's recall the line bundle $L$ on $\mathcal{X}$ constructed in Lemma \ref{leb-neg-line}. By Kodaira--Spencer's upper semi-continuity \cite[Theorem 4]{KS}, one has
$$h^0(X_0,(K_{X_0}\otimes L_0^{\otimes k})^{\otimes m})\geq h^0(X_t,(K_{X_t}\otimes L_t^{\otimes k})^{\otimes m})$$
for any small $t\in \Delta$ in a neighbourhood of $0$ and thus
\begin{equation}\label{ample-h0}
\limsup_{m\rightarrow +\infty}\frac{h^0(X_0,(K_{X_0}\otimes L_0^{\otimes k})^{\otimes m})}{m^{n}}\geq \frac{\int_{X_t} c_1(K_{X_t}\otimes L_t^{\otimes k})^n}{n!}>0
\end{equation}
for any small $t\in \Delta\setminus \Sigma$ in a neighbourhood of $0$ since then $L_t$ is ample. Thus, by \eqref{big-crit}, $K_{X_0}\otimes L_0^{\otimes k}$ is a big line bundle on $X_0$ and so $X_0$ is Moishezon.  This completes
our second proof of Theorem \ref{conj-02}.

\begin{rem}
Demailly's effective very ampleness in Theorem \ref{Dem-fuj} is crucial in this proof.  For, we need a $t$-independent bound for $k$, with $K_{X_t}\otimes L_t^{\otimes k}$ ample, such that in the upper semi-continuity as $m\rightarrow +\infty$ for which $t$ possibly becomes smaller, the first inequality in \eqref{ample-h0} can still hold.
\end{rem}

\section{Proof of Main Theorem \ref{thm-moishezon}}
The basic idea is to construct a global holomorphic line bundle over the total space by use of torsion and techniques
of currents, in Proposition \ref{global-B} and Theorem \ref{thm-gauduchon-update}, respectively, and then use the extension of bigness done in Corollary \ref{unc-big} to conclude that the restriction of the constructed global holomorphic line bundle to any fiber is big.
\subsection{Deformation density of Kodaira--Iitaka dimension}
We first describe the deformation behavior of Kodaira--Iitaka dimension. Throughout this subsection, we consider the holomorphic family $\pi: \mathcal{X}\rightarrow  Y$ of  compact complex $n$-dimensional manifolds over a connected complex manifold $ Y$ of dimension one with $X_t:=\pi^{-1}(t)$ for $t\in Y$.
\begin{prop}\label{ki-dim-limit}
Assume that there exists a holomorphic line bundle $L$ on $\mathcal{X}$  and set $L_t:=L|_{X_t}$. If the Kodaira--Iitaka dimension $\kappa(L_t)=\kappa$ for each $t$ in an uncountable set $B$ of $ Y$, then $\kappa(L_t)\geq \kappa$ for all $t\in  Y$.
\end{prop}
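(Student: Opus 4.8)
The plan is to reduce the statement to a single polynomial lower bound for the dimensions $h^0(X_t, L_t^{\otimes q})$ that holds uniformly on \emph{all} of $Y$, and then read off $\kappa(L_t)\geq\kappa$ fibrewise via Siegel's lemma. We may assume $\kappa\geq 0$, the case $\kappa=-\infty$ being vacuous. The mechanism that spreads a fibrewise estimate to the whole base is the combination of Grauert's upper semi-continuity (Theorem \ref{Upper semi-continuity}) with the elementary fact that, since $Y$ is a connected complex manifold of dimension one, it is irreducible and every proper analytic subset has dimension zero, hence is discrete and at most countable; consequently any uncountable analytic subset of $Y$ must equal $Y$.

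First I would record a fibrewise growth estimate and then make its constants uniform. Applying Theorem \ref{asymp} to $D=L_t$ on $X_t$, and using $\kappa(L_t)=\kappa$ for $t\in B$, I obtain $\alpha_t>0$, a threshold $m_{0,t}$ and a positive integer $d_t$ with $\alpha_t m^{\kappa}\leq h^0(X_t, L_t^{\otimes d_t m})$ for all $m\geq m_{0,t}$. These data depend on $t$, so for $d\in\mathbb{N}^+$, $c\in\mathbb{Q}^+$ and $m_1\in\mathbb{N}^+$ I set $B_{d,c,m_1}:=\{t\in B: h^0(X_t, L_t^{\otimes d m})\geq c\,m^{\kappa}\ \text{for all}\ m\geq m_1\}$. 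Choosing a rational $c<\alpha_t$ and $m_1\geq m_{0,t}$ shows $t\in B_{d_t,c,m_1}$, so $B$ is the union of the countable family $\{B_{d,c,m_1}\}_{(d,c,m_1)\in\mathbb{N}^+\times\mathbb{Q}^+\times\mathbb{N}^+}$. As $B$ is uncountable, some $B_{d_0,c_0,m_1^0}$ is uncountable.

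Next I would globalize the estimate. For each $m\geq m_1^0$ the set $A_m:=\{t\in Y: h^0(X_t, L_t^{\otimes d_0 m})\geq\lceil c_0 m^{\kappa}\rceil\}$ is analytic by Theorem \ref{Upper semi-continuity}, applied to the holomorphic vector bundle $L^{\otimes d_0 m}$ with $i=0$, and since $h^0$ is integer-valued it contains the uncountable set $B_{d_0,c_0,m_1^0}$. Being an uncountable analytic subset of the one-dimensional connected manifold $Y$, it cannot be proper, whence $A_m=Y$. Thus $h^0(X_t, L_t^{\otimes d_0 m})\geq c_0 m^{\kappa}$ for every $t\in Y$ and every $m\geq m_1^0$; writing $q=d_0 m$ this gives $h^0(X_t, L_t^{\otimes q})\geq (c_0 d_0^{-\kappa})\,q^{\kappa}$ for all large $q$ in the progression $q\in d_0\mathbb{N}$.

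Finally, for each fixed $t\in Y$, Siegel's lemma (\cite[Lemma 2.2.6]{mm}) gives $h^0(X_t, L_t^{\otimes q})\leq C_t q^{\varrho_q}$, so the lower bound forces $\varrho_q\geq\kappa-\log(C_t d_0^{\kappa}/c_0)/\log q$, which exceeds $\kappa-1$ for all sufficiently large $q$ in the progression; as $\varrho_q$ is an integer this yields $\varrho_q\geq\kappa$, hence $\kappa(L_t)=\max_{p}\varrho_p\geq\kappa$, as desired. I expect the main obstacle, and really the whole point, to be the passage from the $t$-dependent constants produced by Theorem \ref{asymp} to a single analytic threshold condition valid on an uncountable set: the triple pigeonhole over $(d,c,m_1)$ is exactly what makes the constants uniform, and only after that can Grauert's semi-continuity together with the one-dimensionality of $Y$ propagate the estimate to every fibre.
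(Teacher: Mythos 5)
Your proof is correct, and it runs on the same engine as the paper's: apply Theorem \ref{asymp} fibrewise on $B$, use a countable pigeonhole to make the $t$-dependent constants uniform on an uncountable subset, invoke Theorem \ref{Upper semi-continuity} together with the fact that a proper analytic subset of the one-dimensional connected $Y$ is at most countable to promote the estimate to all of $Y$, and then conclude fibrewise. The difference is in the execution, and it is worth recording. You perform a single pigeonhole over the triples $(d,c,m_1)$, accept that the resulting lower bound only holds along the arithmetic progression $q\in d_0\mathbb{N}$, and close with Siegel's lemma, which is a clean and slightly more elementary finish. The paper instead argues in two stages: a first pigeonhole produces one power $P$ with $h^0(X_t,L_t^{\otimes P})\neq 0$ for every $t\in Y$; setting $H=L^{\otimes P}$ then allows Theorem \ref{asymp} with $d=1$, and a second pigeonhole over the tail-intersection sets $S_{p,q}=T_{p,q}(H)\cap T_{p+1,q}(H)\cap\cdots$ yields the estimate \eqref{16-3}: a single pair $(I,J)$ with $h^0(X_t,H_t^{\otimes i})\geq i^{\kappa}/J$ for \emph{all} $i\geq I$ and all $t\in Y$ simultaneously, after which the upper bound in Theorem \ref{asymp} finishes. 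For the proposition as stated your route suffices; what the paper's heavier bookkeeping buys is precisely the full-tail uniform estimate \eqref{16-3}, which is not a by-product but the form actually reused later (Lemma \ref{global-lemma2}.\ref{global-lemma2-ii}), Remark \ref{delta}, and the start of the proof of Theorem \ref{thm-gauduchon-update}), whereas your progression-only bound would not plug in there directly without repeating the paper's $H=L^{\otimes P}$ upgrade.
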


\begin{proof}
  The case $\kappa=-\infty$ is trivial and so one assumes that $\kappa\geq 0$.
For any two positive integers $p,q$, set
$$\label{tpql}
T_{p,q}(L)=\{t\in  Y: h^0(X_t,L_t^{\otimes p})\geq \frac{1}{q}p^\kappa\}.
$$
By the upper semi-continuity Theorem \ref{Upper semi-continuity}, it is known that $T_{p,q}(L)$ is a proper analytic subset of $Y$ or equal to $Y$. From the assumption that $\kappa(L_t)=\kappa$ for each $t\in B$ and Theorem \ref{asymp}, it holds that
$$\bigcup_{p,q\in \mathbb{N}^+}T_{p,q}(L)\supseteq B$$
despite that the $d$ in Theorem \ref{asymp} is not necessarily one here.
Then since a proper analytic subset of a one-dimensional manifold is at most countable, there exists some analytic subset in the union, denoted by $T_{P,Q}(L)$, such that $T_{P,Q}(L)\supseteq B$ and so $T_{P,Q}(L)= Y$. That is, for any $t\in  Y$, there holds
$$h^0(X_t,L_t^{\otimes P})\geq \frac{1}{Q}P^\kappa.$$

Now take $H=L^{\otimes P}$ and thus for any $t\in Y$,
$$h^0(X_t,H_t)\neq 0.$$ For any two positive integers $p,q$, we write
$$S_{p,q}=T_{p,q}(H)\cap T_{p+1,q}(H)\cap T_{p+2,q}(H)\cap\cdots,$$
where
\begin{equation}\label{tpqh}
T_{p,q}(H)=\{t\in  Y: h^0(X_t,H_t^{\otimes p})\geq \frac{1}{q}p^\kappa\}
\end{equation}
and similarly for $T_{p+1, q}(H), T_{p+2, q}(H),\cdots$.  By the upper semi-continuity again $S_{p,q}$ is a proper analytic subset of $ Y$ or equal to $ Y$ (\cite[(5.5) Theorem of Chapter II]{Dem12}). From the assumption that $\kappa(H_t)=\kappa(L_t)=\kappa$ for each $t\in B$ and Theorem \ref{asymp} with $d=1$, one sees that
$$\bigcup_{p,q\in \mathbb{N}^+}S_{p,q}\supseteq B.$$
Here a proper analytic subset is at most countable, hence there exists some analytic subset in the union, denoted by $S_{I,J}$, such that $S_{I,J}\supseteq B$ and so $S_{I,J}= Y$. That is, for any $t\in  Y$, there holds for all $i\geq I$,
\begin{equation}\label{16-3}
h^0(X_t,H_t^{\otimes i})\geq \frac{1}{J}i^\kappa.
\end{equation}
Hence,
$\kappa(L_t)=\kappa(H_t)\geq \kappa$ for all $t\in  Y$ by Theorem \ref{asymp}  again (with $d=1$).
\end{proof}

\begin{rem}
Our inequality \eqref{16-3} proves more than what is asserted in the proposition. Indeed, \eqref{16-3} plays crucial roles
in many places of this paper, particularly in
Remark \ref{delta}. For instance, \eqref{16-3} is used to obtain Lemma \ref{global-lemma2}.\ref{global-lemma2-ii})
while Lemma \ref{global-lemma2}.\ref{global-lemma2-ii}) is applied to the beginning of the
proof of Theorem \ref{thm-gauduchon-update}.

Moreover, Proposition \ref{ki-dim-limit} is equivalent to Lieberman--Sernesi's main result \cite[Theorem on p. $77$]{ls} in some sense:
With the notations in  Proposition \ref{ki-dim-limit}, there exist a constant $\ell$ and
a set $W \subseteq  Y$, which is the complement of the union of a countable number of proper closed subvarieties, such that
\[ \begin{cases}
\kappa(L_t)=\ell,\ &\text{if $t\in W$;}\\[4pt]
\kappa(L_t)>\ell, &\text{if $t\in  Y\setminus W$.}
\end{cases} \]

We first prove Proposition \ref{ki-dim-limit} by \cite[Theorem on p. $77$]{ls}. If not, there exists some $L_t$, whose Kodaira--Iitaka dimension $\kappa(L_t)< \kappa$.  Then, by their notations, the $B$ in Proposition \ref{ki-dim-limit} must lie outside $W$.  Now, their theorem tells us that $ Y\setminus W$ is only a countable union of analytic subsets of
$ Y$, which is again a countable set, denoted by $E$.   Since $E$ contains $B$ as just mentioned
and $B$ is assumed to be uncountable, a contradiction follows.

Next we prove the converse. One takes $\ell$ as $\min_{t\in  Y} \kappa(L_t)$ and writes $\ell=-1$ for the moment if $\ell=-\infty$.  First suppose that $Y$ is of dimension one.  Define
$T_{p, q}(L)$ similarly (by setting $\kappa$ there to be $\ell$ here) and use $Y$ in place of $B$ to reach $H$.  So the set
$$\{t\in  Y: \kappa(L_t)\geq \ell+1\}=\bigcup_{p,q\in \mathbb{N}^+}S_{p,q},$$
where $S_{p,q}=S_{p,q}(H)$ is the set according to \eqref{tpqh} with $\kappa$ replaced by $\ell+1$, is just the desired countable union of proper closed subvarieties in \cite[Theorem on p. $77$]{ls}.   Note that if we avoid $H$ and define $S_{p,q}=S_{p,q}(L)$, then the above equality becomes only a reverse inclusion ($\supset$) in view of Theorem \ref{asymp}.  Note also that this part of argument remains valid and applicable to $Y$ of higher dimensions.  The converse is proved.
 \qed
\end{rem}

As a direct application of Proposition \ref{ki-dim-limit}, one has the extension property of bigness:
\begin{cor}\label{unc-big}
Let $\pi: \mathcal{X}\rightarrow  Y$ be a holomorphic family of compact complex $n$-dimensional manifolds. Assume that there exists a holomorphic line bundle $L$ on $\mathcal{X}$ such that for each $t$ in an uncountable set of $ Y$, $L|_{X_t}$ is big. Then for each $t\in  Y$, $L|_{X_t}$ is also big and thus $X_t$ is Moishezon.
\end{cor}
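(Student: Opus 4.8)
The plan is to reduce the statement directly to Proposition \ref{ki-dim-limit}, together with the characterizations of bigness and Moishezon-ness recorded earlier. First I would recall that, by the definition of bigness preceding Lemma \ref{big-moi}, a line bundle $L|_{X_t}$ on the $n$-dimensional fiber $X_t$ is big precisely when its Kodaira--Iitaka dimension satisfies $\kappa(L|_{X_t}) = n$. Since the rank of every Kodaira map $\Phi_p$ is bounded above by $\dim_{\mathbb{C}} X_t = n$, one always has $\kappa(L|_{X_t}) \le n$; hence bigness is equivalent to the extremal equality $\kappa(L|_{X_t}) = n$.

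Next, let $B \subseteq Y$ denote the given uncountable set on which $L|_{X_t}$ is big. By the previous paragraph, $\kappa(L|_{X_t}) = n$ for every $t \in B$; in particular these Kodaira--Iitaka dimensions all coincide with the common value $\kappa = n$. This is exactly the hypothesis of Proposition \ref{ki-dim-limit} with $\kappa = n$, so applying that proposition yields $\kappa(L|_{X_t}) \ge n$ for all $t \in Y$. Combined with the universal upper bound $\kappa(L|_{X_t}) \le n$, we conclude $\kappa(L|_{X_t}) = n$, that is, $L|_{X_t}$ is big, for every $t \in Y$. Finally, Lemma \ref{big-moi} converts bigness of $L|_{X_t}$ into the Moishezon property of $X_t$, completing the argument.

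There is essentially no serious obstacle to clear here beyond Proposition \ref{ki-dim-limit} itself, which does the real work. The only point requiring care is that the hypothesis of that proposition demands that the Kodaira--Iitaka dimensions over the uncountable set all equal a \emph{single} value $\kappa$; this is automatic in the present situation because bigness forces every one of them to attain the maximal value $n$. Thus the corollary is immediate once the extremal reformulation of bigness is in place.
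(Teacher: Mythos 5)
Your proof is correct and is essentially the paper's own argument: the paper also deduces the corollary as a direct application of Proposition \ref{ki-dim-limit}, using that bigness means $\kappa(L|_{X_t})=n$ (the maximal possible value), so the proposition's lower bound $\kappa(L|_{X_t})\ge n$ on all of $Y$ forces equality, and Lemma \ref{big-moi} then gives the Moishezon property.
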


\begin{rem} Using the above argument, one is able to partially solve the following
conjecture \cite[Conjecture in Remark 7.6]{Ue}:
The Kodaira dimension is upper semi-continuous under small deformations of complex manifolds.
In fact, one assumes that $\kappa(X_{t_0})=\kappa_0\geq 0$ for some $t_0\in  Y$. Then there exists some positive integer $p_0$ such that $$h^0(X_{t_0},K_{X_{t_0}}^{\otimes p_0})\neq 0.$$
 Without loss of generality, one assumes $p_0=1$ since $\kappa(L^{\otimes q})=\kappa(L)$ for a line bundle $L$ and any positive integer $q$. So there exists some $\beta>0$ such that for all sufficiently large integer $p$,
$$h^0(X_{t_0},K_{X_{t_0}}^{\otimes p})< \beta p^{\kappa_0}.$$
For the $p$ as above, set
$$V_{p}=\{t\in Y: h^0(X_t,K_{X_t}^{\otimes p})\geq \beta p^{\kappa_0}\},$$
which is obviously a proper analytic subset of $ Y$. Let $V= Y\setminus \bigcup_p V_p$. Then for any $t\in V$, there holds
$$h^0(X_t,K_{X_t}^{\otimes p})< \beta p^{\kappa_0},$$
which implies $\kappa_t\leq \kappa_0$.
For global line bundles other than canonical line bundles, the similar argument and conclusion hold.
\qed
\end{rem}

\subsection{Existence of line bundle over total space: Hodge number} \label{ext-lb-h} 
First we introduce the torsion sheaf as in \cite[\S 7.5]{Rem}.
Let $X$ be a reduced complex space. For every $\mathcal{O}_X$-module $\mathcal{F}$ one defines
$$T(\mathcal{F}):=\bigcup_{x\in X}T(\mathcal{F}_x),$$
where $$T(\mathcal{F}_x):=\{s_x\in \mathcal{F}_x: g_xs_x=0,\ \text{for a suitable $g_x\in \mathcal{A}_x$}\}$$
with the multiplicative stalk $\mathcal{A}_x$ of the subsheaf $\mathcal{A}$ of all non-zero divisors in $\mathcal{O}_X$.
For our cases below we have $\mathcal{A}=\mathcal{O}_X$.  We obtain an  $\mathcal{O}_X$-module in $\mathcal{F}$; obviously $T(\mathcal{F})_x=T(\mathcal{F}_x)$. $T(\mathcal{F})$ is called the \emph{torsion sheaf} of $\mathcal{F}$. We call $\mathcal{F}$ \emph{torsion free at $x$} if $T(\mathcal{F})_x=0$. The sheaf $\mathcal{F}/T(\mathcal{F})$ is torsion free everywhere. Subsheaves of locally free sheaves are torsion free.

 \begin{prop}[]\label{s-torsion}
For a proper morphism $\pi: \mathcal{X}\rightarrow Y$ of compact complex manifolds over a connected one-dimensional manifold with $0\in Y$,
suppose that $R^2\pi_*\mathcal{O}_{\mathcal{X}}$ is locally free on $Y^*:=Y\setminus \{0\}$.  Let $s\in \Gamma( Y, R^2\pi_*\mathcal{O}_{\mathcal{X}})$.  Then $s|_{ Y^*}=0$ is equivalent to the germ $s_0\in T(R^2\pi_*\mathcal{O}_{\mathcal{X}})_0$.
As a consequence, if $T(R^2\pi_*\mathcal{O}_{\mathcal{X}})_0=0$ and $s|_{ Y^*}=0$, then $s=0$ in $\Gamma( Y, R^2\pi_*\mathcal{O}_{\mathcal{X}})$.
\end{prop}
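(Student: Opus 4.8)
The plan is to exploit the fact that, since $Y$ is a one-dimensional complex manifold, the local ring $R:=\mathcal{O}_{Y,0}$ is a discrete valuation ring with uniformizer $z$ (a local coordinate centered at $0$); in particular $R$ is a principal ideal domain, and an element of an $R$-module is a torsion element precisely when it is annihilated by some power $z^k$. Write $\mathcal{F}:=R^2\pi_*\mathcal{O}_{\mathcal{X}}$, which is coherent by Grauert's direct image Theorem \ref{gdit}, so that $\mathcal{F}_0$ is a finitely generated $R$-module and, by the identity $T(\mathcal{F})_x=T(\mathcal{F}_x)$ recalled above, $T(\mathcal{F})_0$ is exactly the torsion submodule of $\mathcal{F}_0$.

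First I would prove the implication $s_0\in T(\mathcal{F})_0\Rightarrow s|_{Y^*}=0$. If $z^ks_0=0$ for some $k$, then $z^ks$ vanishes on a neighborhood $U$ of $0$; for every $t\in U^*:=U\setminus\{0\}$ the function $z^k$ is a unit in $\mathcal{O}_{Y,t}$, so $s_t=0$ and hence $s|_{U^*}=0$. Because $\mathcal{F}|_{Y^*}$ is locally free by hypothesis, $s|_{Y^*}$ is a holomorphic section of a vector bundle on the connected manifold $Y^*$ (connected, being a connected complex curve with a single point removed), so the identity theorem propagates the vanishing from the nonempty open set $U^*$ to all of $Y^*$.

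The converse $s|_{Y^*}=0\Rightarrow s_0\in T(\mathcal{F})_0$ is the heart of the matter, and I would argue by contradiction. Suppose $s_0$ is not torsion, that is, $\mathrm{Ann}_R(s_0)=0$, so the cyclic module $R\cdot s_0$ is free of rank one. Consider the coherent subsheaf $\mathcal{R}:=\mathrm{im}\big(\mathcal{O}_Y\xrightarrow{\,\cdot s\,}\mathcal{F}\big)$ generated by $s$ (coherent as the image of a morphism of coherent sheaves); its stalk at $0$ is $R\cdot s_0\cong R$, so $\mathcal{R}$ is free at $0$ and therefore locally free of rank one in a neighborhood of $0$ by Oka's theorem recalled in the subsection on locally free sheaves. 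In particular $\mathcal{R}_t\cong\mathcal{O}_{Y,t}\neq 0$ for all $t$ near $0$. But $s|_{Y^*}=0$ forces $\mathcal{R}_t=\mathcal{O}_{Y,t}\cdot s_t=0$ for every such $t\neq 0$, a contradiction. Hence $s_0\in T(\mathcal{F})_0$, completing the equivalence.

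Finally, the stated consequence is immediate: if $T(\mathcal{F})_0=0$ and $s|_{Y^*}=0$, then by the equivalence $s_0\in T(\mathcal{F})_0=0$, so $s_0=0$; together with $s_t=0$ for all $t\in Y^*$ this shows that every germ of $s$ vanishes, whence $s=0$ in $\Gamma(Y,\mathcal{F})$. I expect the main obstacle to be precisely the converse implication: the naive temptation is to deduce $s_0=0$ directly from vanishing on $Y^*$, which is false in general, and the correct content—captured by the local freeness of the cyclic sheaf $\mathcal{O}_Y\cdot s$ via Oka's theorem—is that vanishing away from the origin can only push the germ into the torsion, not annihilate it.
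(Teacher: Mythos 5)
Your proof is correct, and while your first implication (torsion at $0$ implies vanishing on $Y^*$) runs along the same lines as the paper's — annihilation by a power of the coordinate, the unit/no-zero-divisor argument at points $t\neq 0$, with the identity theorem on the connected $Y^*$ made explicit (the paper leaves this propagation step implicit) — your proof of the converse, which is indeed the heart of the matter, is genuinely different. The paper deduces $s_0\in T(R^2\pi_*\mathcal{O}_{\mathcal{X}})_0$ from $\sup(s)\subseteq\{0\}$ by invoking the coherence of the local cohomology sheaf $\mathcal{H}^0_{\{0\}}\mathcal{F}$ and its description as the sections annihilated by powers of the ideal $\mathcal{I}(\{0\})$, i.e.\ \cite[Proposition 3.2 of Chapter II]{bs}; this is a structural result valid for any analytic subset of any complex space and it directly produces the global relation $t^m\cdot s=0$. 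You instead argue by contradiction through the cyclic subsheaf $\mathcal{R}=\mathrm{im}(\mathcal{O}_Y\xrightarrow{\cdot s}\mathcal{F})$: if $s_0$ were not torsion, then (since $\mathcal{O}_{Y,0}$ is a domain) $\mathcal{R}_0\cong\mathcal{O}_{Y,0}$ is free, so by the Oka-type converse recalled in the paper's preliminaries $\mathcal{R}$ is locally free of rank one near $0$, forcing nonzero stalks $\mathcal{R}_t$ at nearby $t\neq 0$, which contradicts $\mathcal{R}_t=\mathcal{O}_{Y,t}\cdot s_t=0$ coming from $s|_{Y^*}=0$. Your route is more elementary and self-contained: it uses only coherence of images of coherent sheaves and the freeness-of-stalk criterion already stated in the paper, together with the fact that $\mathcal{O}_{Y,0}$ is a discrete valuation ring, and it isolates exactly why vanishing off the origin can only push the germ into torsion rather than kill it. What the paper's appeal to $\mathcal{H}^0_A$ buys in exchange is generality and a cleaner global statement: it works verbatim for sections supported on an arbitrary closed analytic subset $A$ (not just a point on a curve) and yields the annihilation $\mathcal{I}(A)^m\cdot s=0$ directly, which is the form in which the result is reused elsewhere in the paper (e.g.\ in the proof of Proposition \ref{Hodge-torsionfree}).
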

\begin{proof} In the algebraic category this is standard, cf. \cite[Lemma 5.3 of Chapter II]{Ht}. Here we work in the
analytic category.  First, $0=s|_{Y^*}\in  \Gamma( Y^*, R^2\pi_*\mathcal{O}_{\mathcal{X}})$ implies that $\sup(s)\cap  Y^*=\emptyset$ and thus $\sup(s)\subseteq \{0\}$.
We need \cite[Proposition 3.2 of Chapter II]{bs}:
Let $Z$ be a complex space, $A$ a closed analytic subset of $Z$ and $\mathcal{F}$ a coherent analytic sheaf on $Z$. Then the sheaf $\mathcal{H}_A^0\mathcal{F}$ is coherent and is equal to the subsheaf of all sections of $\mathcal{F}$ annihilated by suitable powers of the ideal $\mathcal I(A)$. Here the subsheaf $\mathcal{H}_A^0\mathcal{F}$ of $\mathcal{F}$ is formed by the sections whose support is in $A$.
Accordingly, $s$ is a section of $\mathcal{H}_{\{0\}}^0 R^2\pi_*\mathcal{O}_{\mathcal{X}}$ and
since $t$ generates $\mathcal{I}(A)$, there exists some $m>0$ such that, after possibly shrinking $Y$,
$$t^m\cdot s=0\in \Gamma( Y, R^2\pi_*\mathcal{O}_{\mathcal{X}})$$
which implies $s_0\in T(R^2\pi_*\mathcal{O}_{\mathcal{X}})_0$.

Conversely, if $s_0\in T(R^2\pi_*\mathcal{O}_{\mathcal{X}})_0$, then it follows that there exists some $m>0$ such that
$$t^m\cdot s=0\in \Gamma( Y, R^2\pi_*\mathcal{O}_{\mathcal{X}}).$$
By the local freeness of $R^2\pi_*\mathcal{O}_{\mathcal{X}}$ and $t^m\neq 0$ both on $ Y^*$, we obtain $0=s|_{Y^*}\in  \Gamma( Y^*, R^2\pi_*\mathcal{O}_{\mathcal{X}})$, as to be proved. Indeed, the germs $s_t$ lies in the stalk ${(R^2\pi_*\mathcal{O}_\mathcal{X})}_t$ and if $t$ is nonzero, ${(R^2\pi_*\mathcal{O}_\mathcal{X})}_t$ is a free $\mathcal{O}_t$-module by the local freeness of $R^2\pi_*\mathcal{O}_\mathcal{X}$ over $ Y^*$. Since $\mathcal{O}_t$ has no zero-divisor, $t^m \cdot s=0$ on $ Y^*$ yields the vanishing of $s_t$ for all $t\in  Y^*$.    For the last statement, it follows from the first part that the germs $s_y=0$ for all $y\in Y$, giving $s\equiv 0$.
\end{proof}

Here we need a simple result from commutative algebra:
 \begin{lemma}[{\cite[Exercise 2 on p. 31]{am}}]\label{caresult} Let $\mathcal{A}$ be a commutative ring with $1$, $\mathfrak{a}$ an ideal of $\mathcal{A}$ and $\mathfrak{M}$ an $\mathcal{A}$-module. Then $\mathfrak{M} \otimes_\mathcal{A} \mathcal{A}/\mathfrak{a}$ is isomorphic to $\mathfrak{M}/\mathfrak{a}\mathfrak{M}$.
 \end{lemma}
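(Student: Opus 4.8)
The plan is to deduce the isomorphism from the right-exactness of the tensor functor $\mathfrak{M} \otimes_{\mathcal{A}} (-)$, rather than manipulating elements by hand. First I would write down the tautological short exact sequence of $\mathcal{A}$-modules
\[
0 \longrightarrow \mathfrak{a} \stackrel{\iota}{\longrightarrow} \mathcal{A} \longrightarrow \mathcal{A}/\mathfrak{a} \longrightarrow 0,
\]
in which $\iota$ is the inclusion $\mathfrak{a} \hookrightarrow \mathcal{A}$ and the second map is the canonical projection.

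Next I would apply the right-exact functor $\mathfrak{M} \otimes_{\mathcal{A}} (-)$ to obtain the exact sequence
\[
\mathfrak{M} \otimes_{\mathcal{A}} \mathfrak{a} \xrightarrow{\;1 \otimes \iota\;} \mathfrak{M} \otimes_{\mathcal{A}} \mathcal{A} \longrightarrow \mathfrak{M} \otimes_{\mathcal{A}} \mathcal{A}/\mathfrak{a} \longrightarrow 0.
\]
Then I would invoke the canonical isomorphism $\mathfrak{M} \otimes_{\mathcal{A}} \mathcal{A} \cong \mathfrak{M}$, given by $m \otimes a \mapsto am$, to identify the middle term with $\mathfrak{M}$. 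Under this identification the image of $1 \otimes \iota$ is exactly the submodule $\mathfrak{a}\mathfrak{M}$ of $\mathfrak{M}$, since a pure tensor $m \otimes a$ with $a \in \mathfrak{a}$ is carried to $am \in \mathfrak{a}\mathfrak{M}$, and such products span $\mathfrak{a}\mathfrak{M}$. Right-exactness then exhibits $\mathfrak{M} \otimes_{\mathcal{A}} \mathcal{A}/\mathfrak{a}$ as the cokernel of $1 \otimes \iota$, which is precisely $\mathfrak{M}/\mathfrak{a}\mathfrak{M}$, giving the claimed isomorphism.

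The only point requiring genuine care — and hence the \emph{main obstacle}, such as it is — is the identification of the image of $1 \otimes \iota$ with $\mathfrak{a}\mathfrak{M}$; everything else is formal. Should one prefer an element-level argument avoiding the right-exactness machinery, the alternative is to build the isomorphism directly: the $\mathcal{A}$-bilinear map $\mathfrak{M} \times \mathcal{A}/\mathfrak{a} \to \mathfrak{M}/\mathfrak{a}\mathfrak{M}$ sending $(m,\, a + \mathfrak{a}) \mapsto am + \mathfrak{a}\mathfrak{M}$ is well-defined (changing the representative of $a + \mathfrak{a}$ alters $am$ only by an element of $\mathfrak{a}\mathfrak{M}$) and thus factors through a homomorphism $\mathfrak{M} \otimes_{\mathcal{A}} \mathcal{A}/\mathfrak{a} \to \mathfrak{M}/\mathfrak{a}\mathfrak{M}$, while $m \mapsto m \otimes (1 + \mathfrak{a})$ descends to an inverse $\mathfrak{M}/\mathfrak{a}\mathfrak{M} \to \mathfrak{M} \otimes_{\mathcal{A}} \mathcal{A}/\mathfrak{a}$; verifying that the two composites are the respective identities is then routine.
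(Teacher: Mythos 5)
Your proof is correct, and it is essentially the intended argument: the paper gives no proof of its own but cites Atiyah--Macdonald, Exercise 2 on p.~31, whose stated hint is precisely to tensor the exact sequence $0 \to \mathfrak{a} \to \mathcal{A} \to \mathcal{A}/\mathfrak{a} \to 0$ with $\mathfrak{M}$ and use right-exactness, exactly as you do. Your identification of the image of $1\otimes\iota$ with $\mathfrak{a}\mathfrak{M}$ is the one point needing care, and you handle it correctly; the element-level alternative you sketch is also fine.
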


We also collect several (partial) results on Grauert's base change theorem. In this part one always considers the proper morphism $f:X\rightarrow Y$ of complex spaces and a coherent analytic sheaf $\mathcal{F}$ on $X$, \emph{flat with respect to $f$}, which means that the $\mathcal{O}_{f(x)}$-modules $\mathcal{F}_x$ are flat for all $x\in X$.
 \begin{lemma}[Grauert's base change theorem, {\cite[Theorem 3.4 of Chapter III]{bs}}]\label{gbc}  Let $y$ be a point in $Y$, and $q$ an integer. The following assertions are equivalent:
\begin{enumerate}[$(i)$]
    \item \label{gbci}
The canonical morphisms
$$R^qf_*(\mathcal{F})_y\otimes_{\mathcal{O}_y} M\rightarrow R^qf_*(\mathcal{F}\otimes_{\mathcal{O}_X} f^*M)_y$$
 are isomorphisms, where $M$ is an arbitrary  $\mathcal{O}_y$-module of finite type.
    \item \label{}
The functor $M\mapsto R^qf_*(\mathcal{F}\otimes_{\mathcal{O}_X} f^*M)_y$ is right exact.
    \item \label{}
The functor $M\mapsto R^{q+1}f_*(\mathcal{F}\otimes_{\mathcal{O}_X} f^*M)_y$ is left exact.
    \item \label{gbciv}
The canonical map $$R^qf_*(\mathcal{F})_y\rightarrow R^qf_*(\mathcal{F}/\hat{\mathfrak{m}}_y\mathcal{F})_y$$ is surjective, where $\mathfrak{m}_y$ is the maximal ideal of $\mathcal{O}_{Y,y}$ or the natural ideal-sheaf given by this and $\hat{\mathfrak{m}}_y$ is the ideal-sheaf of $\mathcal{O}_X$ generated by the inverse image of $\mathfrak{m}_y$.
 \end{enumerate}
 \end{lemma}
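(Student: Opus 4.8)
The plan is to collapse the four assertions into a single algebraic statement about a bounded complex of finite free modules over the local ring $A:=\mathcal{O}_{Y,y}$, and then to prove the equivalences by homological algebra. The input I would invoke is the local representation of higher direct images that refines Grauert's coherence theorem (Theorem~\ref{gdit}) and is available precisely because $\mathcal{F}$ is flat over $f$ (as established by Forster--Knorr and by Kiehl--Verdier): after shrinking $Y$ about $y$, there is a bounded complex $(L^\bullet,d)$ of finitely generated free $\mathcal{O}_Y$-modules together with isomorphisms
$$R^qf_*(\mathcal{F}\otimes_{\mathcal{O}_X}f^*M)_y\cong H^q(L^\bullet\otimes_{\mathcal{O}_y}M)=:T^q(M),$$
natural in the finite type $\mathcal{O}_y$-module $M$. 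Under this identification the canonical map of \textit{(i)} becomes the algebraic base change map $\beta^q_M:T^q(A)\otimes_A M\to T^q(M)$, while, writing $\mathcal{F}/\hat{\mathfrak{m}}_y\mathcal{F}=\mathcal{F}\otimes_{\mathcal{O}_X}f^*(A/\mathfrak{m}_y)$ via Lemma~\ref{caresult}, the map of \textit{(iv)} becomes $T^q(A)\to T^q(k)$ with $k:=A/\mathfrak{m}_y$; since $T^q(A)\to T^q(A)\otimes_A k$ is surjective, \textit{(iv)} is equivalent to surjectivity of $\beta^q_k$. Everything is thereby reduced to the complex $L^\bullet$.

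The one structural tool I would record is that, as each $L^p$ is free and hence flat, every short exact sequence $0\to M'\to M\to M''\to0$ tensors to a short exact sequence of complexes and so yields a long exact sequence
$$\cdots\to T^q(M')\to T^q(M)\xrightarrow{a}T^q(M'')\xrightarrow{\delta}T^{q+1}(M')\xrightarrow{b}T^{q+1}(M)\to\cdots.$$
From this, \textit{(ii)}$\Leftrightarrow$\textit{(iii)} is immediate: right exactness of $T^q$ says $a$ is always surjective, i.e. $\delta\equiv0$, i.e. $b$ is always injective, which (exactness at the middle being automatic) is exactly left exactness of $T^{q+1}$. For \textit{(i)}$\Rightarrow$\textit{(ii)} a natural isomorphism $\beta^q$ identifies $T^q$ with the right exact functor $T^q(A)\otimes_A-$. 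For \textit{(ii)}$\Rightarrow$\textit{(i)} I would take a finite presentation $A^{m}\to A^{n}\to M\to0$ and compare the two right exact functors $T^q(A)\otimes_A-$ and $T^q(-)$ (the latter right exact by \textit{(ii)}) through $\beta^q$; as $T^q$ commutes with finite direct sums, $\beta^q$ is an isomorphism on the free modules $A^n$, so the five lemma forces $\beta^q_M$ to be an isomorphism. Trivially \textit{(i)}$\Rightarrow$\textit{(iv)}.

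The remaining implication \textit{(iv)}$\Rightarrow$\textit{(ii)} is the genuinely substantive one, and I expect it to be the main obstacle. Here I would replace $L^\bullet$, up to isomorphism of complexes over the local ring $A$, by a \emph{minimal} complex, splitting off trivial acyclic summands $\cdots\to0\to A\xrightarrow{\mathrm{id}}A\to0\to\cdots$ (which change neither $T^q$ nor $\beta^q$); the minimal part has all differential entries in $\mathfrak{m}_y$, so $d\otimes k=0$ and hence $T^q(k)=L^q\otimes_A k$. Surjectivity of $\beta^q_k$ then says that $\ker d^q$ spans $L^q/\mathfrak{m}_yL^q$, i.e. $\ker d^q+\mathfrak{m}_yL^q=L^q$, whence Nakayama's lemma applied to $L^q/\ker d^q$ gives $\ker d^q=L^q$, that is $d^q=0$. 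Consequently $T^q(M)=\operatorname{coker}(d^{q-1}\otimes_A M)$, a cokernel functor and so right exact, which is \textit{(ii)}. Chaining \textit{(i)}$\Rightarrow$\textit{(ii)}$\Leftrightarrow$\textit{(iii)}, \textit{(ii)}$\Rightarrow$\textit{(i)}$\Rightarrow$\textit{(iv)} and \textit{(iv)}$\Rightarrow$\textit{(ii)} closes the circle. The delicate points to watch are the naturality and base change compatibility of the representing complex $L^\bullet$ (exactly what flatness of $\mathcal{F}$ supplies) and the passage to the minimal complex, which must be carried out compatibly with the maps $\beta^q$ so that the Nakayama computation transfers back to the sheaves $R^qf_*$.
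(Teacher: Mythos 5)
There is no in-paper argument to compare against here: Lemma \ref{gbc} is quoted from \cite{bs} (Theorem 3.4 of Chapter III) and used as a black box, so your proposal can only be measured against the cited source. On those terms it is correct, and it is in substance the classical proof — the one in \cite{bs}, and in the algebraic category in Mumford/EGA III. The representation of $M\mapsto R^qf_*(\mathcal{F}\otimes_{\mathcal{O}_X}f^*M)_y$ by a bounded complex $L^\bullet$ of finite free modules over $\mathcal{O}_{Y,y}$, \emph{natural} in $M$, is exactly what properness of $f$ plus flatness of $\mathcal{F}$ provides (Forster--Knorr, Kiehl--Verdier); this is where all the analytic difficulty of the theorem is concentrated, and you correctly identify it as input rather than something to reprove. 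The downstream homological algebra is sound: the long-exact-sequence argument showing $(ii)\Leftrightarrow(iii)$ (both say the connecting maps $\delta$ vanish), the five-lemma/finite-presentation argument for $(ii)\Rightarrow(i)$, the trivial implication $(i)\Rightarrow(iv)$, and the minimal-complex-plus-Nakayama argument for $(iv)\Rightarrow(ii)$ are all standard and correctly executed, including the point that splitting off trivial summands $0\to A\xrightarrow{\mathrm{id}}A\to 0$ changes neither the functors $T^q$ nor the comparison maps $\beta^q$. One small streamlining is available: once minimality gives $d^q=0$, you obtain $(iv)\Rightarrow(i)$ directly, since right exactness of the tensor product yields $T^q(M)=\operatorname{coker}(d^{q-1}\otimes_A M)\cong\operatorname{coker}(d^{q-1})\otimes_A M=T^q(A)\otimes_A M$; the detour through $(ii)$ is then unnecessary.
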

As a corollary of Lemma \ref{gbc}, one gets the exactness criterion:
\begin{cor}[{\cite[Corollary 3.7 of Chapter III]{bs}}]\label{exact-cri}
With the assumptions of Lemma \ref{gbc}, the following assertions are equivalent:
\begin{enumerate}[$(a)$]
    \item \label{}
The functor $M\mapsto R^qf_*(\mathcal{F}\otimes_{\mathcal{O}_X} f^*M)_y$ is exact.
   \item \label{}
The canonical maps
$$R^qf_*(\mathcal{F})_y\rightarrow R^qf_*(\mathcal{F}/\hat{\mathfrak{m}}_y\mathcal{F})_y,\ R^{q-1}f_*(\mathcal{F})_y\rightarrow R^{q-1}f_*(\mathcal{F}/\hat{\mathfrak{m}}_y\mathcal{F})_y$$
are surjective.
 \end{enumerate}
\end{cor}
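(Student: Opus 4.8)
The plan is to deduce the corollary formally from Grauert's base change theorem (Lemma \ref{gbc}), combined with the elementary categorical fact that a covariant additive functor between abelian categories is exact if and only if it is simultaneously left exact and right exact. Write $G^q(M):=R^qf_*(\mathcal{F}\otimes_{\mathcal{O}_X} f^*M)_y$ for the functor in question, defined on the category of $\mathcal{O}_y$-modules of finite type. Assertion $(a)$ is precisely the statement that $G^q$ is exact, so by the categorical fact it suffices to characterize the right exactness and the left exactness of $G^q$ separately, and to match each with one of the two surjectivity conditions in $(b)$.

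First I would read off the right exactness of $G^q$ at level $q$. Applying Lemma \ref{gbc} at the integer $q$, assertion $(ii)$ says exactly that $G^q$ is right exact, and this is equivalent to assertion $(iv)$ at level $q$, namely that the canonical map $R^qf_*(\mathcal{F})_y\rightarrow R^qf_*(\mathcal{F}/\hat{\mathfrak{m}}_y\mathcal{F})_y$ is surjective. This is the first surjectivity appearing in $(b)$.

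Next I would capture the left exactness of the \emph{same} functor $G^q$ by shifting the index down by one. Since $G^q=G^{(q-1)+1}$, I apply Lemma \ref{gbc} at the integer $q-1$: there assertion $(iii)$ asserts that $M\mapsto R^{(q-1)+1}f_*(\mathcal{F}\otimes_{\mathcal{O}_X} f^*M)_y=G^q(M)$ is left exact, and this is equivalent to assertion $(iv)$ at level $q-1$, i.e. to the surjectivity of $R^{q-1}f_*(\mathcal{F})_y\rightarrow R^{q-1}f_*(\mathcal{F}/\hat{\mathfrak{m}}_y\mathcal{F})_y$. This is the second surjectivity in $(b)$. Combining the two steps, $G^q$ is exact if and only if it is both right and left exact, which by the above holds if and only if both canonical maps in $(b)$ are surjective, giving $(a)\Leftrightarrow(b)$.

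The only point I expect to require care, and which I would spell out rather than treat as routine, is the index bookkeeping: the right exactness of $G^q$ is extracted from Lemma \ref{gbc} at level $q$ through the equivalence $(ii)\Leftrightarrow(iv)$, whereas the left exactness of the very same functor must be extracted from the lemma applied at level $q-1$ through $(iii)\Leftrightarrow(iv)$. Once this matching of indices is made transparent, the remainder of the argument is purely formal, resting only on the stated equivalences of Lemma \ref{gbc} and on the decomposition of exactness into left and right exactness.
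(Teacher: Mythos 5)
Your proof is correct and is essentially the argument the paper intends: the paper states this result as a formal corollary of Lemma \ref{gbc} (citing B\u{a}nic\u{a}--St\u{a}n\u{a}\c{s}il\u{a}), and your derivation --- right exactness of $G^q$ via $(ii)\Leftrightarrow(iv)$ at level $q$, left exactness of the same functor via $(iii)\Leftrightarrow(iv)$ at level $q-1$, then exact $=$ left exact $+$ right exact --- is precisely that deduction, with the index shift correctly handled.
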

One says that \emph{$\mathcal{F}$ is cohomologically flat in dimension $q$ at the point $y$} if the equivalent conditions of the criterion in Corollary \ref{exact-cri} are fulfilled;
 $\mathcal{F}$ is \emph{cohomologically flat in dimension $q$ over $Y$} if $\mathcal{F}$ is cohomologically flat  in dimension $q$ at any point of $Y$.
\begin{lemma}[Grauert's continuity theorem, {\cite[Theorem 4.12.(ii) of Chapter III]{bs}}]\label{gct}
With the assumptions of Lemma \ref{gbc}, if $\mathcal{F}$ is cohomologically flat in dimension $q$ over $Y$ in the sense
as given precedingly, then the function
$$y\mapsto \dim H^q(X_y, \mathcal{F}_y)$$
is locally constant. Conversely, if this function is locally constant and $Y$ is a reduced space, then $\mathcal{F}$ is cohomologically flat in dimension $q$ over $Y$; in particular, the sheaf $R^qf_*(\mathcal{F})$ is locally free.
\end{lemma}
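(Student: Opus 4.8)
The plan is to reduce this global, analytic statement to homological algebra over the Noetherian local rings $\mathcal{O}_{Y,y}$ by means of a local \emph{base-change complex}. The deep analytic input, which I expect to be the main obstacle, is the refinement of Grauert's direct image theorem (Theorem \ref{gdit}) underlying Lemma \ref{gbc}: after shrinking $Y$ to a neighborhood $U$ of a fixed point $y$, there is a bounded complex
$$L^\bullet:\quad 0\to L^0\xrightarrow{d^0} L^1\to\cdots\to L^m\to 0$$
of finite free $\mathcal{O}_U$-modules together with functorial isomorphisms
$$R^pf_*(\mathcal{F}\otimes_{\mathcal{O}_X}f^*M)\;\cong\;\mathcal{H}^p\!\left(L^\bullet\otimes_{\mathcal{O}_U}M\right)$$
for every coherent $\mathcal{O}_U$-module $M$ and every $p$; its existence rests on the $f$-flatness of $\mathcal{F}$. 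Granting this, all assertions become statements about the cohomology of a bounded complex of finite free modules over $A:=\mathcal{O}_{Y,y}$, whose residue field is $k(y)=\mathbb{C}$.

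First I would pin down the fiber cohomology in these terms. Since $\mathcal{F}$ is $f$-flat, Lemma \ref{caresult} identifies $\mathcal{F}\otimes_{\mathcal{O}_X}f^*k(y)$ with the restriction $\mathcal{F}_y:=\mathcal{F}/\hat{\mathfrak{m}}_y\mathcal{F}$, and Lemma \ref{2-5} applied to this sheaf gives
$$H^q(X_y,\mathcal{F}_y)\;\cong\;R^qf_*(\mathcal{F}/\hat{\mathfrak{m}}_y\mathcal{F})_y\;\cong\;\mathcal{H}^q\!\left(L^\bullet\otimes_A k(y)\right).$$
Writing $r_p(y)$ for the rank over $k(y)$ of the matrix $d^p$ reduced modulo $\mathfrak{m}_y$, the elementary count for a complex of vector spaces yields
$$\dim_{\mathbb{C}}H^q(X_y,\mathcal{F}_y)=\dim_{\mathbb{C}}\!\big(L^q\otimes k(y)\big)-r_q(y)-r_{q-1}(y).$$
Because the locus where a fixed minor of $d^p$ is a unit is open, each $r_p$ is lower semicontinuous, which already reproves the upper semicontinuity behind Theorem \ref{Upper semi-continuity}; the point now is to decide when local constancy holds.

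For the forward direction I would invoke Corollary \ref{exact-cri}: cohomological flatness in dimension $q$ at $y$ means that $M\mapsto R^qf_*(\mathcal{F}\otimes f^*M)_y=\mathcal{H}^q(L^\bullet\otimes M)_y$ is exact, equivalently (by condition $(iv)$ of Lemma \ref{gbc} applied at both $q$ and $q-1$) that the two base-change maps are surjective. A standard homological lemma for bounded complexes of finite free modules then shows that $H^q(L^\bullet)_y=R^qf_*(\mathcal{F})_y$ is free and that its formation commutes with arbitrary base change on a whole neighborhood of $y$; consequently $\dim_{\mathbb{C}}H^q(X_{y'},\mathcal{F}_{y'})$ equals the constant rank of this free module for all $y'$ near $y$, which gives local constancy and simultaneously exhibits $R^qf_*(\mathcal{F})$ as locally free.

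For the converse, suppose $Y$ is reduced and $y'\mapsto\dim_{\mathbb{C}}H^q(X_{y'},\mathcal{F}_{y'})$ is locally constant. By the displayed count this forces $r_q+r_{q-1}$ to be locally constant near $y$; since each $r_p$ is lower semicontinuous, the relation $r_q=\text{const}-r_{q-1}$ makes $r_q$ also upper semicontinuous, hence continuous, hence (being integer valued) locally constant, and likewise for $r_{q-1}$. Thus $d^{q-1}$ and $d^q$ have locally constant rank modulo every maximal ideal, so the coherent cokernels $\mathrm{coker}(d^{q-1})$, $\mathrm{coker}(d^{q})$ have locally constant rank function; here reducedness of $Y$ is indispensable, for by the principle recalled in the preliminaries that a coherent sheaf on a reduced complex space with locally constant rank function is locally free, these cokernels are locally free. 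This lets the complex $L^\bullet$ split off locally free pieces around degree $q$ in a neighborhood, which makes $M\mapsto\mathcal{H}^q(L^\bullet\otimes M)$ exact, i.e.\ $\mathcal{F}$ cohomologically flat in dimension $q$, and again identifies $R^qf_*(\mathcal{F})$ with a locally free sheaf. The genuinely hard step is the first one---producing the finite free base-change complex $L^\bullet$; once it is available, everything is commutative and linear algebra, with the reducedness hypothesis entering only through the ``constant rank implies locally free'' principle.
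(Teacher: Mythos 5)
Your proposal is correct, but there is essentially nothing in the paper to compare it against: Lemma \ref{gct} is not proved in the paper at all, being quoted directly from \cite[Theorem 4.12.(ii) of Chapter III]{bs}, just as Lemma \ref{gbc} and Corollary \ref{exact-cri} are. What you have written is, in substance, a reconstruction of the proof in that cited source: the local existence of a bounded complex $L^\bullet$ of finite free modules computing $R^pf_*(\mathcal{F}\otimes_{\mathcal{O}_X} f^*M)$ functorially in $M$ (resting on the $f$-flatness of $\mathcal{F}$, as a refinement of Theorem \ref{gdit}) is exactly the foundation on which Chapter III of \cite{bs} is built, and the steps you perform on top of it are all sound: the count $\dim_{\mathbb{C}} H^q(X_y,\mathcal{F}_y)=\dim\bigl(L^q\otimes k(y)\bigr)-r_q(y)-r_{q-1}(y)$, the lower semicontinuity of the ranks $r_p$ (which indeed also recovers the semicontinuity of Theorem \ref{Upper semi-continuity}), the translation of cohomological flatness into surjectivity of the two base-change maps via Corollary \ref{exact-cri}, and, in the converse, forcing each $r_p$ to be locally constant (two lower semicontinuous integer-valued functions with locally constant sum) and then invoking the principle, recalled in the paper's Section 2, that a coherent sheaf of locally constant rank on a \emph{reduced} complex space is locally free --- which is, as you say, the only point where reducedness enters. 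The sole caveat is the one you flag yourself: the existence of the universal base-change complex $L^\bullet$ is assumed, not proved, and it is a theorem of essentially the same depth as Grauert's direct image theorem; since that same machinery underlies the entire package (Lemma \ref{gbc}, Corollary \ref{exact-cri}, Lemma \ref{gct}) that the paper imports from \cite{bs} without proof, this assumption is consistent with the paper's level of citation, but it does mean your argument is a reduction of the continuity theorem to the base-change apparatus rather than a self-contained proof.
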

Based on the above, one obtains:
\begin{cor}[{\cite[Theorem (8.5).(iv) of Chapter I]{BHPV}}]\label{ccs-bc}
Let $X$, $Y$ be reduced complex spaces and $f : X\rightarrow Y$ a
proper holomorphic map. If $\mathcal{F}$ is any coherent sheaf on $X$, which is flat with respect to $f$, and $h^q(X_y,\mathcal{F}_y)$
is constant in $y\in Y$, then the base-change map  $$R^qf_*(\mathcal{F})_y/{\mathfrak{m}_yR^qf_*(\mathcal{F})_y}\rightarrow R^qf_*(\mathcal{F}/\hat{\mathfrak{m}}_y\mathcal{F})_y$$ is bijective.
\end{cor}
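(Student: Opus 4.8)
The plan is to obtain the bijectivity by specializing Grauert's base-change isomorphism to the residue field at $y$, using only the machinery already assembled in Lemmas \ref{gbc}, \ref{gct} and Corollary \ref{exact-cri} together with the elementary Lemma \ref{caresult}.

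First I would upgrade the pointwise hypothesis into cohomological flatness. The assumption that $y\mapsto h^q(X_y,\mathcal{F}_y)$ is constant (hence locally constant), combined with the reducedness of $Y$ and the flatness of $\mathcal{F}$ with respect to $f$, places us exactly in the converse direction of Grauert's continuity theorem (Lemma \ref{gct}). Applying it, $\mathcal{F}$ is cohomologically flat in dimension $q$ over $Y$, and in particular at the chosen point $y$.

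Next, by the very definition of cohomological flatness in dimension $q$ at $y$, the equivalent conditions of the exactness criterion (Corollary \ref{exact-cri}) are fulfilled; in particular the canonical map at level $q$ is surjective, which is precisely condition $(iv)$ of Lemma \ref{gbc}. By the equivalence $(iv)\Leftrightarrow(i)$ in that lemma, the canonical morphisms
$$R^qf_*(\mathcal{F})_y\otimes_{\mathcal{O}_y}M\longrightarrow R^qf_*(\mathcal{F}\otimes_{\mathcal{O}_X}f^*M)_y$$
are then isomorphisms for every finite-type $\mathcal{O}_y$-module $M$. The final step is to feed the residue field into this isomorphism: taking $M=\mathcal{O}_y/\mathfrak{m}_y$, the left-hand side becomes $R^qf_*(\mathcal{F})_y/\mathfrak{m}_yR^qf_*(\mathcal{F})_y$ by Lemma \ref{caresult}, while on the right $f^*(\mathcal{O}_y/\mathfrak{m}_y)$ is the structure sheaf of the (infinitesimal) fiber, so that $\mathcal{F}\otimes_{\mathcal{O}_X}f^*(\mathcal{O}_y/\mathfrak{m}_y)\cong\mathcal{F}/\hat{\mathfrak{m}}_y\mathcal{F}$ and the right-hand side becomes $R^qf_*(\mathcal{F}/\hat{\mathfrak{m}}_y\mathcal{F})_y$. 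Under these identifications the canonical morphism is exactly the base-change map in the statement, which is therefore bijective.

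I expect the only genuine subtlety to be the bookkeeping in this last identification: one must verify that the tensor-product sheaf $\mathcal{F}\otimes_{\mathcal{O}_X}f^*(\mathcal{O}_y/\mathfrak{m}_y)$ really coincides with $\mathcal{F}/\hat{\mathfrak{m}}_y\mathcal{F}$ (this is precisely where $\hat{\mathfrak{m}}_y$, the ideal-sheaf of $\mathcal{O}_X$ generated by the inverse image of $\mathfrak{m}_y$, enters) and that the canonical arrow of Lemma \ref{gbc}$(i)$ restricts to the arrow written in the corollary. Everything else is a formal transfer through the chain of equivalences Lemma \ref{gct} $\Rightarrow$ Corollary \ref{exact-cri} $\Rightarrow$ Lemma \ref{gbc}, so no additional analytic input beyond the flatness already in force is needed.
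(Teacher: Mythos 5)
Your proposal is correct and follows essentially the same route as the paper, whose proof is precisely the one-line instruction to combine Lemma \ref{gct} (converse direction, using reducedness of $Y$ and constancy of $h^q$) with Lemma \ref{gbc} or Corollary \ref{exact-cri}; your write-up simply fills in the details of that chain, including the specialization to $M=\mathcal{O}_y/\mathfrak{m}_y$ via Lemma \ref{caresult} and the identification $\mathcal{F}\otimes_{\mathcal{O}_X}f^*(\mathcal{O}_y/\mathfrak{m}_y)\cong\mathcal{F}/\hat{\mathfrak{m}}_y\mathcal{F}$.
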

\begin{proof}
 Use Lemmas \ref{gct} and \ref{gbc} or Lemma \ref{gct} and Corollary \ref{exact-cri}.
\end{proof}

\begin{rem} Note that $H^q(X_y, (\mathcal{O}_{\mathcal{X}})_y) =H^q(X_y, \mathcal{O}_{X_y})$.
For, $\mathcal{F}_y = i_y^*\mathcal{F}$, where $i_y: X_y \to \mathcal{X}$ is the closed embedding, which equals
$\mathcal{F}\otimes k(y)=\mathcal{F}/ \hat{\mathfrak{m}}_y\mathcal{F}$. This is an elementary fact in commutative algebra.
\end{rem}

The following will be used in the proof of Proposition \ref{global-B}.

\begin{prop}[]\label{Hodge-torsionfree} For a holomorphic family $\pi: \mathcal{X}\rightarrow Y$ of compact complex manifolds over a connected manifold of dimension one,
$h^1(X_y, \mathcal{O}_{X_y})$ is independent of $y\in Y$ if and only if the sheaf $R^2\pi_*\mathcal{O}_{\mathcal{X}}$ is torsion free.
\end{prop}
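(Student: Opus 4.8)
The plan is to localize at an arbitrary point, reduce torsion-freeness to a statement over a discrete valuation ring, and then read off both $h^1$ and the torsion of the direct images from a single finite free complex. Fix $y_0\in Y$; since $Y$ is a one-dimensional manifold, $R:=\mathcal{O}_{Y,y_0}$ is a discrete valuation ring with uniformizer $t$ and residue field $k$, and over such a ring a finitely generated module is torsion free if and only if it is free. Torsion-freeness of the coherent sheaf $R^2\pi_*\mathcal{O}_{\mathcal{X}}$ is a stalkwise condition (and off a thin analytic set the sheaf is automatically locally free by Proposition \ref{slia}), so it suffices to analyze each stalk $F^q:=(R^q\pi_*\mathcal{O}_{\mathcal{X}})_{y_0}$ as an $R$-module, for $q=0,1,2$.

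First I would invoke Grauert's base change machinery from \cite[Chapter III]{bs} (the theory underlying Lemmas \ref{gbc} and \ref{gct}): after shrinking $Y$ about $y_0$ there is a bounded complex $L^\bullet$ of finite free $\mathcal{O}_Y$-modules with functorial isomorphisms $R^q\pi_*(\mathcal{O}_{\mathcal{X}}\otimes\pi^*M)\cong \mathcal{H}^q(L^\bullet\otimes M)$ for every $\mathcal{O}_Y$-module $M$. Taking $M=\mathcal{O}_Y$ gives $\mathcal{H}^q(L^\bullet)\cong R^q\pi_*\mathcal{O}_{\mathcal{X}}$, while $M=k(y)$ together with the Remark after Corollary \ref{ccs-bc} and Lemma \ref{2-5} gives $\mathcal{H}^q(L^\bullet\otimes k(y))\cong H^q(X_y,\mathcal{O}_{X_y})$. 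Because $R$ has global dimension one, the universal coefficient spectral sequence for the free complex $L^\bullet$ degenerates into short exact sequences
\[
0\to F^q\otimes_R k\to H^q(X_{y_0},\mathcal{O}_{X_{y_0}})\to \mathrm{Tor}_1^R(F^{q+1},k)\to 0.
\]
Writing $F^q\cong R^{r_q}\oplus\bigoplus_i R/(t^{e_i})$ and counting dimensions (using $\dim_k(F^q\otimes k)=r_q+s_q$ and $\dim_k\mathrm{Tor}_1^R(F^q,k)=s_q$, where $s_q$ is the number of torsion summands) yields the key formula $h^q(X_{y_0},\mathcal{O}_{X_{y_0}})=r_q+s_q+s_{q+1}$.

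The decisive step is the normalization coming from $q=0$. Since $\pi$ is proper with compact connected fibers, $\pi_*\mathcal{O}_{\mathcal{X}}\cong\mathcal{O}_Y$ and $h^0(X_y,\mathcal{O}_{X_y})\equiv 1$, so $F^0$ is free of rank one, $s_0=0$, $r_0=1$; plugging into the formula for $q=0$ forces $1=1+0+s_1$, hence $s_1=0$. Thus $R^1\pi_*\mathcal{O}_{\mathcal{X}}$ is \emph{always} torsion free, and being torsion free over the one-dimensional reduced base $Y$ it is locally free, so its rank $r_1$ is constant on the connected manifold $Y$. The $q=1$ instance of the formula then reads, at every point $y\in Y$,
\[
h^1(X_y,\mathcal{O}_{X_y})=r_1+s_2(y),
\]
where $s_2(y)$ is the number of torsion summands of $(R^2\pi_*\mathcal{O}_{\mathcal{X}})_y$. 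Since $r_1$ is a fixed constant, $h^1(X_y,\mathcal{O}_{X_y})$ is independent of $y$ if and only if $s_2(y)=0$ for all $y$, which is precisely the torsion-freeness of $R^2\pi_*\mathcal{O}_{\mathcal{X}}$; this proves both implications at once.

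I expect the main obstacle to be the second paragraph: producing the finite free complex $L^\bullet$ and the attendant universal coefficient short exact sequences in the analytic category, equivalently identifying the cokernel of the base-change map $F^1\otimes k\to H^1(X_{y_0},\mathcal{O}_{X_{y_0}})$ with $\mathrm{Tor}_1^R(F^2,k)$. Everything afterwards is linear algebra over the discrete valuation ring together with the normalization $\pi_*\mathcal{O}_{\mathcal{X}}=\mathcal{O}_Y$. As a consistency check with the stated results, this cokernel identification says exactly that the degree-one base-change map is surjective if and only if $R^2\pi_*\mathcal{O}_{\mathcal{X}}$ is torsion free at $y$; combined with Grauert's continuity theorem (Lemma \ref{gct}) and the exactness criterion (Corollary \ref{exact-cri}), constancy of $h^1$ is equivalent to cohomological flatness in dimension one, hence to surjectivity of the degree-zero and degree-one base-change maps, the former being automatic here.
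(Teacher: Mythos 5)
Your argument is correct, but it takes a genuinely different route from the paper's. The paper proves both directions by element-chasing in the long exact sequence attached to $0\to t^m\mathcal{O}_{\mathcal{X}}\to\mathcal{O}_{\mathcal{X}}\to\mathcal{O}_{\mathcal{X}}/t^m\mathcal{O}_{\mathcal{X}}\to 0$: a jump of $h^{0,1}$ produces, via the connecting map and the division-by-$t$ isomorphism $R^2\pi_*(t\mathcal{O}_{\mathcal{X}})\cong R^2\pi_*\mathcal{O}_{\mathcal{X}}$, an explicit nonzero torsion section; conversely, a section killed by $t^m$ forces the map $R^1\pi_*\mathcal{O}_{\mathcal{X}}\to R^1\pi_*(\mathcal{O}_{\mathcal{X}}/t^m\mathcal{O}_{\mathcal{X}})$ to be non-surjective, contradicting constancy of $h^1$ through the base change statements (Lemma \ref{gbc}, Corollary \ref{ccs-bc}); the \lq only if' half can alternatively be quoted from \cite[Proposition in $\S 10.5.5$]{Grr}. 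You instead localize at the DVR stalk $\mathcal{O}_{Y,y}$, represent $M\mapsto R^q\pi_*(\mathcal{O}_{\mathcal{X}}\otimes\pi^*M)$ by a bounded complex of finite free modules (the Grauert complex that underlies the base change theory of \cite[Chapter III]{bs}, so it is available in the analytic category), and run the universal coefficient sequence over the DVR, obtaining the single formula $h^q(X_y,\mathcal{O}_{X_y})=r_q+s_q+s_{q+1}$ together with the normalization $s_1\equiv 0$ coming from $\pi_*\mathcal{O}_{\mathcal{X}}=\mathcal{O}_Y$ at $q=0$. Your route buys more: it proves in passing that $R^1\pi_*\mathcal{O}_{\mathcal{X}}$ is \emph{always} torsion free (recovering the $i=0$ case of the Grauert--Remmert result the paper quotes), it quantifies the jump as $h^1(X_y)=r_1+s_2(y)$ with $s_2(y)$ the number of torsion summands of $(R^2\pi_*\mathcal{O}_{\mathcal{X}})_y$, and it treats both implications symmetrically; the paper's route, by contrast, needs nothing beyond the long exact sequence and the base change lemmas it has already stated, avoiding your heavier black box. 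One sentence should be made explicit at the end of your proof: constancy of $h^1=r_1+s_2(y)$ forces $s_2\equiv 0$ only because $s_2(y)$ vanishes off a thin analytic set (Proposition \ref{slia}), an observation you record in your first paragraph but do not invoke in the final equivalence.
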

\begin{proof}
Remark that this type of results should be known to experts, such as \cite{s04} in a different context.  As it is crucial to our purpose here, we prefer to give a complete proof.

 The \lq if' part can be proved by the long exact sequence
\begin{equation}\label{tor-es}
  \cdots\rightarrow R^1\pi_*\mathcal{O}_{\mathcal{X}}\xrightarrow{\gamma} R^1\pi_*(\mathcal{O}_{\mathcal{X}}/{t\mathcal{O}_{\mathcal{X}}}) \xrightarrow{\delta} R^2\pi_*(t\mathcal{O}_{\mathcal{X}})\xrightarrow{\imath} R^2\pi_*\mathcal{O}_{\mathcal{X}}\rightarrow\cdots
\end{equation}
induced by the short exact sequence
$$0\rightarrow t\mathcal{O}_{\mathcal{X}}\xrightarrow{\imath} \mathcal{O}_{\mathcal{X}}\xrightarrow{\gamma} \mathcal{O}_{\mathcal{X}}/{t\mathcal{O}_{\mathcal{X}}} \rightarrow 0.$$
First note that by using Lemma \ref{2-5}
$$R^1\pi_*(\mathcal{O}_{\mathcal{X}}/{t\mathcal{O}_{\mathcal{X}}})\cong H^1(X_0,\mathcal{O}_{X_0}).$$
Now suppose that the Hodge number $h^1(X_y, \mathcal{O}_{X_y})$ is not constant.  We are easily reduced to the situation where the Hodge number $h^1(X_y, \mathcal{O}_{X_y})$ is constant around a punctured small connected neighborhood of some point $P\in Y$ and
jumps at this point $P$ by Lemma \ref{gct} and Proposition \ref{slia}. Set this point $P$ as $0$, and write $U$ for this small neighborhood and $\mathcal{X}$ still for $\pi^{-1}(U)$.  By using the first part of Lemma \ref{gct} and
(b) of Corollary \ref{exact-cri}, the preceding jumping property of $h^{0,1}$ enables us to choose an element $e\in H^1(X_0, \mathcal{O}_{X_0})$ not belonging
to the image of the map $\gamma$ in the long exact sequence \eqref{tor-es}.  Then $\delta(e)$ is nonzero in $R^2\pi_*(t\mathcal{O}_{\mathcal{X}})$ by the exactness
of the long exact sequence \eqref{tor-es}. Since the Hodge number $h^1(X_y, \mathcal{O}_{X_y})$ is constant over $U^*:=U\setminus \{0\}$, $R^1\pi_*(\mathcal{O}_{\mathcal{X}})$ is thus locally free over $U^*$ giving that $\gamma$
in \eqref{tor-es} is surjective outside $t=0$ by using Corollary \ref{ccs-bc}.  This gives in turn $\delta(e)=0$ outside $t=0$ by the long exact sequence \eqref{tor-es} again.

Next we check that $\delta(e)$ will give rise to a nontrivial torsion element of $R^2\pi_*\mathcal{O}_\mathcal{X}$.
Obviously, the map $\imath$ in \eqref{tor-es} induces an isomorphism outside $t=0$:
$$\imath: R^2\pi_*(t\mathcal{O}_{\mathcal{X}})\cong R^2\pi_*\mathcal{O}_\mathcal{X}.$$
Observe next that the map $\imath$ divided by $t$, denoted by $\jmath=\imath/t$,
induces an isomorphism over $U$
$$\jmath: R^2\pi_*(t\mathcal{O}_{\mathcal{X}})\cong R^2\pi_*\mathcal{O}_\mathcal{X}.$$
Recall that $0\ne\delta(e)\in R^2\pi_*(t\mathcal{O}_{\mathcal{X}})$ as just indicated.  Thus $\jmath(\delta(e))\ne 0$.
However $\jmath(\delta(e))=0$ outside $t=0$ since $\delta(e)=0$ outside $t=0$.  We now see that $\jmath(\delta(e))$ is a torsion element, as desired, by Proposition \ref{s-torsion} after possibly shrinking $U$ so that $R^2\pi_*\mathcal{O}_\mathcal{X}$ is also locally free on $U^*$.

 The \lq only if' part is a special case of \cite[Proposition in $\S 10.5.5$]{Grr}:
If $h^i(X_y,V|_{X_y})$ with a holomorphic vector bundle $V$ on $\mathcal X$ is independent of $y\in Y$, then the sheaf $R^{i+1}f_*V$ is torsion free.  Alternatively, one can prove this in a way similar to the \lq if' part by
the long exact sequence for some positive integer $m$
\begin{equation}\label{tor-es-m}
  \cdots\rightarrow R^1\pi_*\mathcal{O}_{\mathcal{X}}\xrightarrow{\gamma_m} R^1\pi_*(\mathcal{O}_{\mathcal{X}}/{t^m\mathcal{O}_{\mathcal{X}}}) \xrightarrow{\delta} R^2\pi_*(t^m\mathcal{O}_{\mathcal{X}})\xrightarrow{\imath} R^2\pi_*\mathcal{O}_{\mathcal{X}}\rightarrow\cdots
\end{equation}
associated with the short exact sequence
$$0\rightarrow t^m\mathcal{O}_{\mathcal{X}}\xrightarrow{\imath} \mathcal{O}_{\mathcal{X}}\xrightarrow{\gamma_m} \mathcal{O}_{\mathcal{X}}/{t^m\mathcal{O}_{\mathcal{X}}} \rightarrow 0$$
and by Grauert's base change theorem.
To start with, suppose that $R^2\pi_*\mathcal{O}_{\mathcal{X}}$ is not torsion free,
say $T(R^2\pi_*\mathcal{O}_{\mathcal{X}})_p\ne 0$ at some $p$, and that $R^2\pi_*\mathcal{O}_{\mathcal{X}}$ is locally
free on $V^*=V\setminus \{p\}$ for some neighborhood $V$ of $p$.
After possibly shrinking $V$, pick an $s\in \Gamma(V, R^2\pi_*\mathcal{O}_{\mathcal{X}})$ with $0\ne s_p\in T(R^2\pi_*\mathcal{O}_{\mathcal{X}})_p$.  Set this $p$ as $0$.
From  Proposition \ref{s-torsion} one sees that there exists an integer $m >0$ such that
\begin{equation}\label{vanishing}
t^m\cdot s =0\in \Gamma(V, R^2\pi_*\mathcal{O}_{\mathcal{X}}).
\end{equation}

We first assume $m=1$ in \eqref{vanishing} and work over \eqref{tor-es-m} with $m=1$.
Clearly $t\cdot s$ defines a section of $R^2\pi_*(t\mathcal{O}_{\mathcal{X}})$.
We claim that $t\cdot s$ is a nonzero section of $R^2\pi_*(t\mathcal{O}_{\mathcal{X}})$.
Recall the isomorphism
$$\jmath: R^2\pi_*(t\mathcal{O}_{\mathcal{X}})\cong  R^2\pi_*\mathcal{O}_{\mathcal{X}}$$
induced from $t\mathcal{O}_{\mathcal{X}} \rightarrow  \mathcal{O}_{\mathcal{X}}$ and division by $t$.
One has $\jmath(t\cdot s) =s$ and since $0\ne s\in \Gamma(V, R^2\pi_*\mathcal{O}_{\mathcal{X}})$ by construction, giving
$\jmath(t\cdot s)\ne 0$ hence $t\cdot s\ne 0$ in $R^2\pi_*(t\mathcal{O}_{\mathcal{X}})$ as claimed.
Moreover, by $\imath:  R^2\pi_*(t\mathcal{O}_{\mathcal{X}}) \rightarrow  R^2\pi_*\mathcal{O}_{\mathcal{X}}$ in the long exact sequence \eqref{tor-es}, one has
$\imath(t\cdot s)=t\cdot s\in \Gamma(V, R^2\pi_*\mathcal{O}_{\mathcal{X}})$ which is zero by \eqref{vanishing}
for $m=1$.
This means that $\ker\imath$ is nontrivial.  We are ready to show that the Hodge number $h^1(X_y, \mathcal{O}_{X_y})$
cannot be independent of $y\in V$, giving the conclusion of the  \lq only if' part for $m=1$.  Suppose otherwise.  Then the base change theorem in Corollary \ref{ccs-bc} implies that in the long exact sequence \eqref{tor-es-m},
the map $\gamma_{m=1}$ is a surjection hence that $\ker\imath$ is trivial. This contradicts the
preceding assertion that $\ker\imath$ is nontrivial.
We have now shown that $h^1(X_y, \mathcal{O}_{X_y})$ cannot be constant in $y$ as desired.

We can deal with the general case $m>1$ similarly.   The key is to replace $t$ by $t^m$ throughout the preceding paragraph and use the base change theorem for $m>1$ in Lemma \ref{gbc}. Indeed, if the Hodge number $h^1(X_y, \mathcal{O}_{X_y})$ is constant, the isomorphism in Corollary \ref{ccs-bc}  for $m=q=1$ implies the surjection in \eqref{gbciv} of Lemma \ref{gbc} and thus the isomorphism in \eqref{gbci} of Lemma \ref{gbc} for $m>1$, $q=1$ and  $M=\mathcal{O}_y/\mathfrak{m}_y^m$.  This yields  the desired surjection of the map $\gamma_m$
in \eqref{tor-es-m} by Lemma \ref{caresult} with $$\mathfrak{M}= {(R^1\pi_*\mathcal{O}_{\mathcal{X}})}_y,  \mathcal{A}=\mathcal{O}_y,  \mathfrak{a}=\mathfrak{m}_y^m.$$
As remarked earlier, this can lead to the desired conclusion similarly as done for $m=1$.
\end{proof}

\begin{prop}\label{global-B}
Let $\pi: \mathcal{X}\rightarrow \Delta$ be a holomorphic family of compact complex manifolds.
Let $B$ be an uncountable subset of $\Delta$ and for each $t$ of $B$, the fiber $X_t$ is equipped with a
line bundle $L_t$.  Assume that the deformation invariance of Hodge numbers
$h^{0,1}(X_t)$ is valid on $t\in \Delta$.
Then there exists a (global) line bundle $L$ over $\mathcal{X}$ such that
$L|_{X_{\tau}} = L_{\tau}$ for some $\tau\in B$ and that $c_1(L|_{X_s}) = c_1(L_s)$
for $s$ in some uncountable subset of $B$.
\end{prop}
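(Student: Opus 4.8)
The plan is to exploit the exponential exact sequence on the total space together with the fact that invariance of $h^{0,1}(X_t)$ over $\Delta$ forces $R^2\pi_*\mathcal{O}_{\mathcal{X}}$ to be torsion free, via Proposition \ref{Hodge-torsionfree}. Consider for each $t\in B$ the first Chern class $c_1(L_t)\in H^2(X_t,\mathbb{Z})$, which under the topological identification of all fibers lives in the fixed lattice $H^2(X,\mathbb{Z})$. The natural map $R^2\pi_*\mathbb{Z}\to R^2\pi_*\mathcal{O}_{\mathcal{X}}$ sends each integral class $c$ to a holomorphic section $s_c\in\Gamma(\Delta,R^2\pi_*\mathcal{O}_{\mathcal{X}})$, and the zero set $Z_c:=\{t: s_c(t)=0\}$ is exactly the locus where $c$ is of Hodge type permitting a line bundle. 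For $t\in B$ the class $c_1(L_t)$ does vanish under $R^2\pi_*\mathcal{O}_{X_t}$ at $t$, so $t\in Z_{c_1(L_t)}$.

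First I would run the Lebesgue-negligibility/uncountability argument as in Lemma \ref{leb-neg-line}: since $H^2(X,\mathbb{Z})$ is countable, $B=\bigcup_{c}(B\cap Z_c)$ is a countable union indexed by the possible Chern classes $c=c_1(L_t)$. Because $B$ is uncountable, some single class $\tilde{c}\in H^2(X,\mathbb{Z})$ has $B\cap Z_{\tilde c}$ uncountable; fix this $\tilde c$ and set $S:=B\cap Z_{\tilde c}$, which is the ``uncountable subset of $B$'' in the statement. Pick any $\tau\in S$ with $L_\tau$ having $c_1(L_\tau)=\tilde c$.

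Next I would promote $\tilde c$ to a global line bundle. The section $s_{\tilde c}$ vanishes on the uncountable set $S$; since a proper analytic subset of the one-dimensional base is at most countable, $Z_{\tilde c}=\Delta$, so $s_{\tilde c}\equiv 0$ in $\Gamma(\Delta,R^2\pi_*\mathcal{O}_{\mathcal{X}})$. Here is where torsion-freeness is essential: the identity theorem only gives vanishing away from finitely many points, but Proposition \ref{Hodge-torsionfree} (applicable since $h^{0,1}$ is invariant over $\Delta$) rules out any torsion, so $s_{\tilde c}=0$ genuinely everywhere, via Proposition \ref{s-torsion}. Using the Leray identification $\Gamma(\Delta,R^2\pi_*\mathcal{O}_{\mathcal{X}})\cong H^2(\mathcal{X},\mathcal{O}_{\mathcal{X}})$ preceding Theorem \ref{leray} and the long exact sequence \eqref{eshdis}, the vanishing of the image of $\tilde c$ in $H^2(\mathcal{X},\mathcal{O}_{\mathcal{X}})$ means $\tilde c$ lifts to some $L\in H^1(\mathcal{X},\mathcal{O}^*_{\mathcal{X}})$, i.e.\ a global holomorphic line bundle $L$ on $\mathcal{X}$ with $c_1(L)=\tilde c$. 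Restricting to any fiber $X_s$ with $s\in S$ gives $c_1(L|_{X_s})=\tilde c=c_1(L_s)$, as required.

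Finally I would arrange $L|_{X_\tau}=L_\tau$ on the nose, not merely up to $c_1$. This is the delicate point: two line bundles with equal first Chern class differ by an element of the Picard torus $\mathrm{Pic}^0$, so I would follow the comparison-of-exponential-sequences argument of Remark \ref{get-L0} (after \cite[Lemma 2.1]{Weh}), using the commutative ladder \eqref{co-lo-se2} between the total space and the fiber $X_\tau$. The deformation invariance of $h^{0,1}$ guarantees the relevant restriction map $H^1(\mathcal{X},\mathcal{O}_{\mathcal{X}})\to H^1(X_\tau,\mathcal{O}_{X_\tau})$ is surjective, so I can modify $L$ by a global bundle pulled from $\mathrm{Pic}^0(\mathcal{X})$ to correct the $\mathrm{Pic}^0$-discrepancy on $X_\tau$ without disturbing $c_1$ on the other fibers. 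The main obstacle I anticipate is precisely this last normalization together with cleanly invoking torsion-freeness at the right moment; the rest is a faithful adaptation of the Lebesgue-negligibility mechanism already established in Lemma \ref{leb-neg-line}.
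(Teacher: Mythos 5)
Your proposal follows the paper's own route step for step: pigeonhole on the countably many possible Chern classes to get a single class $\tilde c$ shared by uncountably many fibers, kill the image of $\tilde c$ in $H^2(\mathcal{X},\mathcal{O}_{\mathcal{X}})$ by combining vanishing on the locally free locus with torsion-freeness of $R^2\pi_*\mathcal{O}_{\mathcal{X}}$ (Propositions \ref{Hodge-torsionfree} and \ref{s-torsion}, using the $h^{0,1}$-invariance), lift $\tilde c$ through the exponential sequence via the Leray identification, and normalize $L|_{X_\tau}=L_\tau$ by Wehler's argument as in Remark \ref{get-L0}. Two of your steps, however, are stated in a form that does not quite hold and need the extra care the paper supplies.

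First, your set $S:=B\cap Z_{\tilde c}$ is the wrong set for the final conclusion: membership in $Z_{\tilde c}$ only says that $\tilde c$ dies in $H^2(X_t,\mathcal{O}_{X_t})$, not that $c_1(L_t)=\tilde c$; for $s\in S$ with $c_1(L_s)\neq\tilde c$ the asserted equality $c_1(L|_{X_s})=c_1(L_s)$ fails. The pigeonhole must be applied to the level sets $\{t\in B:\ c_1(L_t)=c\}$, and $S$ taken to be an uncountable level set, exactly as in the paper's first paragraph. Second, since only $h^{0,1}$-invariance is assumed (unlike Lemma \ref{leb-neg-line}, where $h^{0,2}$-invariance makes $R^2\pi_*\mathcal{O}_{\mathcal{X}}$ locally free everywhere), here $R^2\pi_*\mathcal{O}_{\mathcal{X}}$ is a priori only coherent: the evaluation $s_{\tilde c}(t)$, the claim that its vanishing detects the Hodge-type condition on $X_t$, and the analyticity of $Z_{\tilde c}$ all require identifying the fiber of this sheaf at $t$ with $H^2(X_t,\mathcal{O}_{X_t})$, i.e.\ a base-change statement that is not automatic at points where $h^{0,2}$ jumps (for a sheaf with torsion the fiberwise vanishing locus need not even be analytic). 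The paper handles this by shrinking to an open set $U$ with discrete complement on which $R^2\pi_*\mathcal{O}_{\mathcal{X}}$ is locally free and $h^{0,2}(X_t)$ is constant, so that Corollary \ref{ccs-bc} identifies fibers of the vector bundle with $H^2(X_s,\mathcal{O}_{X_s})$; the vanishing of the section is then asserted only on the still-uncountable set $\hat S=S\cap U$, Lemma \ref{section-zero} gives vanishing on all of $U$, and your torsion argument (which is the paper's) takes over across the discrete set $\Delta\setminus U$. With these two repairs your argument coincides with the paper's proof.
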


\begin{proof}
The union of $c_1(L_t)$ in $H^2(X, \mathbb{Z})$ is countable,
but $B$ is uncountable. So there exists some uncountable subset $S$ of $B$ such that
if $s\in S$, then $c_1(L_s)$ is the same one in $H^2(X, \mathbb{Z})$, which we denote by the common $c$.

Let $h$ be the rank of $R^2\pi_*\mathcal{O}_{\mathcal{X}}$ at the generic point of $\Delta$.
By Proposition \ref{slia}, $R^2\pi_*\mathcal{O}_{\mathcal{X}}$ can be identified with
a vector bundle of rank $h$ on some $U$ of $\Delta$
with $\Delta\setminus U$ being a proper analytic subset of $\Delta$ which is a discrete subset of $\Delta$.
But we prefer to assume further that $\mathcal{O}_{\mathcal{X}}$ is cohomologically flat
on $U$ (in dimension $2$) which in our case means that $h^2(X_t, \mathcal{O}_{X_t})$ is independent of $t\in U$ after possibly shrinking $U$;
see Lemma \ref{gct} and Theorem \ref{Upper semi-continuity}.
One sees that the intersection $\hat{S}:= S \cap U$ remains uncountable.

For any $s$ of $\hat{S} \subseteq S$,
consider the commutative diagram
\begin{equation}\label{les}
\xymatrix@C=0.5cm{
  \cdots \ar[r]^{}
  & H^1(\mathcal{X}_U, \mathcal{O}^*_\mathcal{X})\ar[d]_{} \ar[r]^{}
  & H^2(\mathcal{X}_U, \mathbb{Z}) \ar[d]_{} \ar[r]^{i}
  & H^2(\mathcal{X}_U, \mathcal{O}_\mathcal{X})\ar[d]^{\Upsilon}\ar[r]^{} & \cdots \\
   \cdots \ar[r]
  &H^{1}({X_{s}},\mathcal{O}_{X_{s}}^*) \ar[r]^{}
  & H^{2}(X_{s},\mathbb{Z}) \ar[r]
  & H^{2}(X_{s},\mathcal{O}_{{X_{s}}})\ar[r]&\cdots.}
\end{equation}
We claim $i(c)=0$ in $H^2(\mathcal{X}_U, \mathcal{O}_\mathcal{X})$ where $i$ is induced by
$\mathbb{Z}\to \mathcal{O}_{\mathcal{X}}$.   Recall that $H^2(\mathcal{X}_U, \mathcal{O}_\mathcal{X})\cong \Gamma(U, R^2\pi_*\mathcal{O}_{\mathcal{X}})$ as obtained by the Leray spectral sequence argument in Theorem \ref{leray} using that $U$ is Stein.  We shall adopt them interchangeably in what follows.
For this claim, first note that the image of $i(c)$ under the map $\Upsilon$ to
$H^2(X_s, \mathcal{O}_{X_s})$, is zero
since $c$ is the first Chern class of the line bundle $L_s$ on $X_s$, $s\in \hat{S}$.
Recall above that $R^2\pi_*\mathcal{O}_{\mathcal X}$ is a vector bundle over $U$.  By Corollary \ref{ccs-bc},
Lemma \ref{2-5} and the cohomological flatness in our assumption, the fiber of this vector bundle at the closed point $s$ can be identified with $H^2(X_s, \mathcal{O}_{X_s})$, and the section $i(c)$ is a holomorphic section of this vector bundle.   Now that the holomorphic section $i(c)$ has zeros on $\hat{S}$ as just noted, and that $\hat{S}$
is uncountable, one concludes with Lemma \ref{section-zero} below that
$i(c)\equiv 0\in H^2(\mathcal{X}_U, \mathcal{O}_\mathcal{X})$, proving our claim above.

Fix any $p$ of $\Delta\setminus U$.  We shall prove that the preceding assertion $i(c)=0$ on $U$ yields $i(c)=0$ on a
neighborhood $V_p$ of $p$.  But this claim follows immediately if one applies Propositions  \ref{Hodge-torsionfree} and \ref{s-torsion} to a neighborhood $V_p$ of $p$ by using the deformation invariance of Hodge numbers $h^{0,1}(X_t)$ for all $t\in \Delta$ (noting that $\Delta\setminus U$ is a discrete subset of $\Delta$).

Combining these, we can conclude $i(c)=0$ in $\Gamma(\Delta, R^2\pi_*\mathcal{O}_{\mathcal{X}})$.
By $\Gamma(\Delta, R^2\pi_*\mathcal{O}_{\mathcal{X}})\cong H^2(\mathcal{X}, \mathcal{O}_\mathcal{X})$ again,
one sees that by the similar long exact sequence in the diagram \eqref{les} with $\mathcal{X}_U$ replaced by $\mathcal{X}$, $c$ can be the first Chern class of a global line bundle $L$ on $\mathcal{X}$.  That is,
$L|_{X_s}$ and $L_s$ have the same first Chern class $c$ for any $s$ of $\hat{S}$.   Fix any $\tau$ of $S$.  By  the same
argument as in Remark \ref{get-L0} due to Wehler, we can modify the global line bundle $L$ so that
$L|_{X_{\tau}}=L_{\tau}$ as line bundles on $X_{\tau}$.   This proves the proposition.
\end{proof}

The following well-known fact in complex analysis of one variable is used above:

\begin{lemma}[]\label{section-zero}
  Let $E$ be a holomorphic vector bundle over a one dimensional
(connected) complex manifold $M$ and $s$ a holomorphic section of $E$ on $M$.  If $s$ has uncountably
many zeros, then $s$ is identically zero.
\end{lemma}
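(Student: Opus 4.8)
The plan is to reduce the statement to the classical identity theorem for holomorphic functions of one variable by passing to local trivializations of the bundle $E$. The key observation is that the hypothesis ``uncountably many zeros'' on a connected one-dimensional manifold $M$ forces the zero set to have a limit point; this is where the uncountability is genuinely used, and it is the heart of the argument.

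First I would argue that the zero set $Z:=\{x\in M: s(x)=0\}$ has an accumulation point in $M$. Indeed, $M$ is a connected one-dimensional complex manifold, hence second countable, so it admits a countable base of open sets, or equivalently it can be covered by countably many coordinate charts each biholomorphic to a disk. If $Z$ were discrete (had no accumulation point in $M$), then each chart would meet $Z$ in a discrete, hence at most countable, set; a countable union of countable sets is countable, contradicting that $Z$ is uncountable. Therefore $Z$ has at least one accumulation point $x_0\in M$.

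Next, working near $x_0$, I would choose a coordinate chart $U\ni x_0$ over which $E$ is trivial, say $E|_U\cong U\times\mathbb{C}^r$ via a holomorphic frame. In this trivialization $s$ becomes an $r$-tuple $(s_1,\dots,s_r)$ of holomorphic functions on $U\subseteq\mathbb{C}$, and each $s_j$ vanishes on $Z\cap U$, which accumulates at $x_0$. By the identity theorem for holomorphic functions of one complex variable, each $s_j\equiv 0$ on the connected set $U$, so $s\equiv 0$ on $U$.

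Finally I would upgrade this local vanishing to global vanishing by a standard connectedness argument: the set $W:=\{x\in M: s\equiv 0 \text{ in a neighborhood of } x\}$ is open by definition and nonempty since $x_0\in W$; it is also closed, because on any chart $s$ is an $r$-tuple of holomorphic functions and the subset of a connected chart on which a holomorphic function vanishes identically near a point is open and closed there (again by the identity theorem). Since $M$ is connected, $W=M$, and hence $s\equiv 0$ on all of $M$. The only real subtlety, and the step I would flag as the main obstacle, is the first one: extracting an accumulation point from the uncountability hypothesis, which relies on second countability of $M$; once that is secured the remainder is the classical identity/continuation principle applied coordinate-wise.
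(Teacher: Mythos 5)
Your proof is correct, and it supplies exactly the standard argument that the paper leaves implicit: the paper states this lemma without proof, as a ``well-known fact in complex analysis of one variable,'' and the intended reasoning is precisely yours --- uncountability of the zero set forces an accumulation point (via second countability), after which the identity theorem in a local trivialization plus an open-closed connectedness argument gives global vanishing. The only point worth flagging is that second countability of a general connected one-dimensional complex manifold is either taken as part of the definition of manifold or requires Rad\'o's theorem; in the paper's actual application $M$ is an open subset of the disk $\Delta$, so this is automatic.
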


Now, we shall improve the above result by the following: 
\begin{prop}\label{propa}
Let $M$ be a compact complex manifold with two holomorphic
line bundles $L_1$ and $L_2$ with $c_1(L_1)=c_1(L_2).$   Suppose that
$L_1$ is big.  Then $L_2$ is big too.
\end{prop}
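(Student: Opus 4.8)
The plan is to reduce the statement to the projective case via a modification, where bigness becomes a purely numerical property. Since $L_1$ is big, Lemma \ref{big-moi} shows that $M$ is Moishezon, so by the definition of a Moishezon manifold recalled in the introduction there exist a projective algebraic manifold $\widetilde{M}$ and a holomorphic modification $\mu:\widetilde{M}\to M$. I would then work on $\widetilde{M}$ with the pulled-back line bundles $\mu^*L_1$ and $\mu^*L_2$, observing that $c_1(\mu^*L_1)=\mu^*c_1(L_1)=\mu^*c_1(L_2)=c_1(\mu^*L_2)$ in $H^2(\widetilde{M},\mathbb{Z})$.

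First I would record that pullback along $\mu$ preserves the Kodaira--Iitaka dimension of a line bundle, and in particular preserves bigness. Indeed, since $\mu$ is a modification of complex manifolds one has $\mu_*\mathcal{O}_{\widetilde{M}}=\mathcal{O}_M$, so the projection formula yields $H^0(\widetilde{M},(\mu^*L)^{\otimes p})\cong H^0(M,L^{\otimes p})$ for every line bundle $L$ on $M$ and every $p\geq 1$. Applying this to $L_1$ (together with $\dim\widetilde{M}=\dim M=n$) shows that $\mu^*L_1$ is big on $\widetilde{M}$, and applying it to $L_2$ shows that $L_2$ is big on $M$ as soon as $\mu^*L_2$ is big. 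Thus it suffices to prove the proposition on the projective manifold $\widetilde{M}$.

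On a smooth projective variety the volume $\operatorname{vol}(L)=\limsup_{p\to\infty} n!\,h^0(\widetilde{M},L^{\otimes p})/p^n$ depends only on the numerical class of $L$, and $L$ is big precisely when $\operatorname{vol}(L)>0$; see the bigness criterion \eqref{big-crit} and, e.g., \cite{laz}. Since $c_1(\mu^*L_1)=c_1(\mu^*L_2)$ in $H^2(\widetilde{M},\mathbb{Z})$, the line bundle $\mu^*L_2\otimes(\mu^*L_1)^{-1}$ has vanishing first Chern class and is therefore numerically trivial, so $\mu^*L_1$ and $\mu^*L_2$ are numerically equivalent. Hence $\operatorname{vol}(\mu^*L_2)=\operatorname{vol}(\mu^*L_1)>0$, so $\mu^*L_2$ is big, and by the previous paragraph $L_2$ is big on $M$.

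The point that makes this reduction indispensable, and what I expect to be the only real obstacle, is that one cannot transfer positivity directly on $M$. Bigness of $L_1$ produces a K\"ahler current in the de Rham class $c_1(L_1)=c_1(L_2)$, but turning such a current into a singular Hermitian metric on the \emph{bundle} $L_2$ would require the difference between the current and a smooth curvature form of $L_2$ to be $\partial\bar\partial$-exact, whereas it is only known to be $d$-exact; on a general (non-K\"ahler) compact complex manifold the $\partial\bar\partial$-lemma fails and this obstruction need not vanish. Passing to the projective model $\widetilde{M}$, where bigness is numerical, is precisely what circumvents this difficulty.
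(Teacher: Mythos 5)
Your proof is correct, and its reduction to a projective modification parallels the ``alternative'' argument the paper sketches at the end of its own proof of Proposition \ref{propa}; however, your final step is genuinely different. The paper's primary argument is analytic: it characterizes bigness by the existence of a singular Hermitian metric whose curvature current is strictly positive (\cite[Proposition 4.2]{Dem92}, \cite[Theorem 4.6]{JS}), writes $L_2=L_1\otimes G$ with $c_1(G)=0$, and uses the $\partial\bar\partial$-lemma to endow $G$ with a smooth metric of zero curvature, thereby transporting the strictly positive current from $L_1$ to $L_2$; the general case is handled either by the fact that Moishezon manifolds satisfy the $\partial\bar\partial$-lemma, or by pulling back to a projective blow-up and running the same metric argument there. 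You instead pull back to the projective model and invoke the purely algebraic fact that the volume of a line bundle on a projective manifold depends only on its numerical equivalence class (cf. \cite[Proposition 2.2.41]{laz}), so the numerically trivial twist $\mu^*L_2\otimes(\mu^*L_1)^{-1}$ cannot destroy bigness; your transfer of sections under the modification via $\mu_*\mathcal{O}_{\widetilde M}=\mathcal{O}_M$ and the projection formula is the same equality $h^0(\widetilde M,(\mu^*L)^{\otimes p})=h^0(M,L^{\otimes p})$ that the paper extracts from \cite[Lemma 5.3]{Ue}. Your route buys a currents-free, purely numerical endgame; the paper's route buys an argument valid directly on any $\partial\bar\partial$-manifold without passing to a model. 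One correction to your closing remark: the reduction is not ``indispensable,'' because once $L_1$ is big the manifold $M$ is Moishezon by Lemma \ref{big-moi}, and Moishezon manifolds \emph{do} satisfy the $\partial\bar\partial$-lemma (\cite{par}, \cite[Theorem 5.22]{DGMS}) --- precisely the point the paper's first route exploits to argue directly on $M$, so the obstruction you describe does in fact vanish there.
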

\begin{proof}
First we note the following
characterization of a big line bundle $L_1$ over a compact complex manifold $M$: $L_1$ is big if and only if $L_1$ admits a singular Hermitian metric $h$ such that the curvature
current $\Theta_{L_1,h}$  is strictly positive, namely, $\Theta_{L_1,h} \geq \omega$ in the sense of
current for a smooth strictly positive $(1,1)$-form $\omega$ on $M$.  The proof of this characterization can be found
in \cite[Proposition 4.2]{Dem92}
for $M$ being K\"ahler and in \cite[Theorem 4.6]{JS} for general $M$.

For our purpose, let us first assume that $M$ is a $\partial\bar\partial$-manifold.  As
$L_2=L_1\otimes G$ for $G:=L_2\otimes L_1^*$ with $c_1(G)=0$, the $\partial\bar\partial$-lemma of $M$ gives rise to a
smooth hermitian metric $g$ on $G$ with its Chern curvature $\Theta_{G, g}=0$ (cf. \cite[\S\ 9.1]{zheng}).  From this $g$
together with the above characterization for bigness, Proposition \ref{propa} follows easily.

For the general case, by Lemma \ref{big-moi} our $M$ is Moishezon since $M$ admits a big line bundle $L_1$.  Moreover,
any Moishezon manifold satisfies the $\partial\bar\partial$-lemma by  \cite{par} or \cite[Theorem 5.22]{DGMS}.  Hence
our proposition follows from this and the preceding paragraph for the $\partial\bar\partial$-case.   Alternatively one can see this as follows. There exists a (finite sequence of) blow-up(s) $f: N\rightarrow M$ along smooth center(s)
with $N$ being a projective manifold.    It is true that $L_1$ is big on $M$ if and only if $f^*L_1$ is big on $N$, which follows from
\cite[Theorem 5.13]{Ue} (for the condition needed there we may assume that $kL_1$ is Cartier for
a large and fixed $k$) or from \cite[Corollary 6.13]{ryy}.   Now $N$ is obviously a $\partial\bar\partial$-manifold.
As $c_1(f^*L_1)=c_1(f^*L_2)$, it follows that $f^*L_2$ is big on $N$.  Thus $L_2$ is big on $M$.
The proof of Proposition \ref{propa} is completed.
\end{proof}

We can now apply Propositions \ref{global-B} and \ref{propa} to Conjecture \ref{conj-M}: 
\begin{thm}\label{thm-moishezon-update}
Let $B$ be an uncountable subset  of $\Delta$.
If the fiber $X_t:=\pi^{-1}(t)$ for each $t\in B$ is Moishezon
 and the deformation invariance of Hodge numbers $h^{0,1}(X_t)$ holds for all $t\in \Delta$, then $X_t$ is Moishezon for each $t\in \Delta$.  Moreover, there exists a bimeromorphic embedding $\Phi:\mathcal{X}\to \mathbb{P}^N\times\Delta$
for some $N\in \mathbb{N}$ such that
$\Phi(X_t)\subset\mathbb{P}^N\times \{t\}$. In fact, the exact statements as
\eqref{thm-gauduchon-update-ii} of Theorem \ref{thm-gauduchon-update} hold here too.
\end{thm}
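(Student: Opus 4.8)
The plan is to reduce the first assertion to three ingredients that have already been assembled in the preceding sections: the characterization of Moishezon manifolds by big line bundles (Lemma \ref{big-moi}), the construction of a global line bundle over the total space (Proposition \ref{global-B}), and the fiberwise propagation of bigness (Corollary \ref{unc-big}), linked together by Proposition \ref{propa}. The second assertion I would then obtain by feeding the constructed global line bundle into the relative Kodaira-map machinery, exactly as in part \eqref{thm-gauduchon-update-ii} of Theorem \ref{thm-gauduchon-update}.

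First I would invoke Lemma \ref{big-moi}: since $X_t$ is Moishezon for every $t\in B$, each such fiber carries a big holomorphic line bundle, which I take as the fiberwise data $L_t$ in the hypothesis of Proposition \ref{global-B}. The assumed deformation invariance of $h^{0,1}(X_t)$ over all of $\Delta$ is precisely the standing hypothesis of that proposition, so it produces a global line bundle $L$ on $\mathcal{X}$ together with some $\tau\in B$ satisfying $L|_{X_\tau}=L_\tau$ and an uncountable subset $S\subseteq B$ on which $c_1(L|_{X_s})=c_1(L_s)$. For each $s\in S$ the fiberwise bundle $L_s$ is big by construction, and since $c_1(L|_{X_s})=c_1(L_s)$, Proposition \ref{propa} forces $L|_{X_s}$ to be big as well. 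Thus $L$ is a global line bundle whose restriction to the fiber over each point of the uncountable set $S$ is big, and Corollary \ref{unc-big} (the extension of bigness furnished by Proposition \ref{ki-dim-limit}) then yields at once that $L|_{X_t}$ is big for \emph{every} $t\in\Delta$, so that each $X_t$ is Moishezon. This settles the first statement, and it is worth emphasizing that it becomes essentially formal once Propositions \ref{global-B}, \ref{propa} and Corollary \ref{unc-big} are in hand.

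For the bimeromorphic embedding I would run the relative Kodaira-map construction of the proof of Theorem \ref{thm-gauduchon-update}.\eqref{thm-gauduchon-update-ii} verbatim, since its sole input is exactly a global line bundle $L$ on $\mathcal{X}$ with $L|_{X_t}$ big for all $t$, which I have just produced. Concretely one studies the coherent direct images $\pi_*L^{\otimes m}$ (coherent by Grauert's Theorem \ref{gdit}), selects a single large $m$ for which the associated relative Kodaira map into $\mathbb{P}^N\times\Delta$ restricts to a bimeromorphism on the generic fiber, and then reads off the proper analytic set $\Sigma\subset\Delta$ from the loci where the fiberwise behavior degenerates, using the upper semi-continuity Theorem \ref{Upper semi-continuity}.

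The main obstacle is not the first part but securing the embedding \emph{uniformly} in $t$: one must choose the exponent $m$ independently of $t$ so that the fiberwise map $\Phi_m|_{X_t}$ is bimeromorphic for all $t$ outside a proper analytic $\Sigma\subset\Delta$, rather than with an $m$ that deteriorates as $t$ varies. This uniformity is precisely the content established in Theorem \ref{thm-gauduchon-update}.\eqref{thm-gauduchon-update-ii}, to whose proof I would defer for the detailed construction of $\Phi$, $\mathcal{Y}$ and $\Sigma$.
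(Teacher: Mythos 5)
Your proposal is correct and follows essentially the same route as the paper's own proof: extract big line bundles $L_t$ via Lemma \ref{big-moi}, apply Proposition \ref{global-B} to get a global $L$ with matching first Chern classes on an uncountable set, upgrade to bigness of $L|_{X_s}$ via Proposition \ref{propa}, conclude with Corollary \ref{unc-big}, and defer the bimeromorphic embedding to the proof of Theorem \ref{thm-gauduchon-update}.\eqref{thm-gauduchon-update-ii}, which the paper likewise notes works verbatim in the Hodge-number case.
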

\begin{proof} Suppose that the Moishezon fiber $X_t:=\pi^{-1}(t)$ for each $t\in B$ admits a big line bundle $L_t$.
Actually we have obtained a line bundle $L$ on $\mathcal{X}$ such that $c_1(L|_{X_t})=c=c_1(L_t)$ for each $t\in \hat{S}\subseteq B$ in (the last paragraph in the proof of) Proposition \ref{global-B} according to notations therein.
With Proposition \ref{propa}, we can avoid the use of Remark \ref{get-L0} (due to Wehler) and reach better
results.  That is, for each $t\in \hat{S}\subseteq B$, $L|_{X_t}$ is big since $L_t$ is assumed to be
big on $X_t$.   Now we are entitled to use Corollary \ref{unc-big} since $\hat{S}$ is uncountable and $L$ is
a global line bundle on $\mathcal{X}$.  It follows the conclusion that all the fibers $X_t$ are
Moishezon, as to be proved.  We postpone the part on the bimeromorphic embedding until the proof of Theorem \ref{thm-gauduchon-update}.\eqref{thm-gauduchon-update-ii} which actually also works for
the Hodge number case here.
\end{proof}

\subsection{Strongly Gauduchon metric case and bimeromorphic embedding}

Recall that Popovici introduced the following notion:
\begin{defn}[{\cite[Definition 4.1]{P3}}]\label{sGau}
Let $X$ be a compact complex $n$-dimensional manifold. A smooth positive-definite $(1, 1)$-form $\mathfrak{g}$ on $X$ will be said to be a \emph{strongly
Gauduchon metric} if the $(n, n-1)$-form $\partial\mathfrak{g}^{n-1}$ is $\bar\partial$-exact on $X$. If $X$ carries such a metric, $X$ will be said to be a \emph{strongly Gauduchon
manifold}.
\end{defn}

A nice deformation property of strongly Gauduchon manifold is the local stability under small deformation.
\begin{lemma}[{\cite[Theorem 3.1]{P14}}]\label{sdsGau}
Any small deformation of a strongly Gauduchon manifold is still a strongly Gauduchon manifold.
\end{lemma}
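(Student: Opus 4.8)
The plan is to reduce the strongly Gauduchon property to the existence of a single closed real $(2n-2)$-form on the fixed underlying differentiable manifold, and then to observe that such a form, once produced on $X_0$, automatically persists on all nearby fibers because the de Rham differential does not depend on the parameter $t$. Concretely, the first step I would carry out is to record the following characterization, which is the technical heart of the matter: a compact complex $n$-manifold $(X,J)$ admits a strongly Gauduchon metric if and only if it carries a $d$-closed real $(2n-2)$-form $\Omega$ whose $(n-1,n-1)$-component $\Omega^{n-1,n-1}$ (with respect to $J$) is strictly positive. For the forward implication I would start from a strongly Gauduchon metric $\mathfrak{g}$, so that $\partial\mathfrak{g}^{n-1}=\bar\partial\Gamma$ for some $(n,n-2)$-form $\Gamma$, and set $\Omega:=\mathfrak{g}^{n-1}-\Gamma-\overline{\Gamma}$. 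A direct bidegree check shows $\Omega$ is real and $d$-closed: its only possible components have types $(n,n-2)$, $(n-1,n-1)$, $(n-2,n)$, the $(n,n-1)$-part of $d\Omega$ is exactly $\partial\mathfrak{g}^{n-1}-\bar\partial\Gamma=0$, and the $(n-1,n)$-part is its conjugate; meanwhile $\Omega^{n-1,n-1}=\mathfrak{g}^{n-1}>0$.

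For the converse direction, given such an $\Omega$, I would decompose the equation $d\Omega=0$ into types: its $(n,n-1)$-component yields $\partial\Omega^{n-1,n-1}=-\bar\partial\Omega^{n,n-2}$, so that $\partial\Omega^{n-1,n-1}$ is $\bar\partial$-exact. Invoking Michelsohn's theorem that every strictly positive $(n-1,n-1)$-form is the $(n-1)$-st power of a unique positive-definite $(1,1)$-form, I write $\Omega^{n-1,n-1}=\mathfrak{g}^{n-1}$; then $\partial\mathfrak{g}^{n-1}$ is $\bar\partial$-exact, so $\mathfrak{g}$ is a strongly Gauduchon metric. This completes the characterization.

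With this in hand the stability is nearly immediate. As recalled in Subsection \ref{popovici}, all fibers are diffeomorphic to one fixed compact differentiable manifold equipped with a holomorphically varying family of complex structures $J_t$. I would apply the forward direction to the strongly Gauduchon manifold $X_0$ to obtain a genuine smooth $d$-closed real $(2n-2)$-form $\Omega$ on the fixed manifold whose $(n-1,n-1)$-component is strictly positive with respect to $J_0$. Since $d$ is the parameter-independent de Rham differential, this same $\Omega$ remains $d$-closed and real on every fiber $X_t$; only its type decomposition changes with $t$. The single point to verify is that the $(n-1,n-1)$-component of $\Omega$ taken with respect to $J_t$ stays strictly positive for small $t$: this follows from the openness of Michelsohn positivity together with the smooth dependence of $J_t$ on $t$, since the bidegree-$(n-1,n-1)$ projection of the fixed form $\Omega$ varies continuously in $t$ and converges uniformly, by compactness, to $\Omega^{n-1,n-1}$ computed for $J_0$ as $t\to 0$. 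Applying the converse direction on each such $X_t$ then produces a strongly Gauduchon metric $\mathfrak{g}_t$ with $\mathfrak{g}_t^{n-1}$ equal to the $J_t$-component of $\Omega$, as desired.

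I expect the main obstacle to be purely in the characterization, not in the deformation argument: once the equivalence with $d$-closed positive $(2n-2)$-forms is in place, stability is essentially free because $d$ is $t$-independent and positivity is an open condition. The real content is therefore the bookkeeping of the bidegree decomposition of $d\Omega=0$ and, above all, Michelsohn's $(n-1)$-st root theorem, which is what allows one to pass between a positive $(n-1,n-1)$-form and an honest positive $(1,1)$-metric in both directions.
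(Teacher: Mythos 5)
Your proposal is correct and follows essentially the same route as the paper: the paper also reduces to Popovici's characterization (a strongly Gauduchon metric exists if and only if there is a $d$-closed real $(2n-2)$-form $\Omega$ with $\Omega^{n-1,n-1}>0$), views the family as a fixed differentiable manifold with holomorphically varying complex structures $J_t$, uses the $t$-independence of $d$ together with openness of positivity of the $J_t$-type $(n-1,n-1)$-component, and extracts the metric via Michelsohn's $(n-1)$-st root. The only difference is that you prove the characterization directly (correctly, including the bidegree bookkeeping for $\Omega=\mathfrak{g}^{n-1}-\Gamma-\overline{\Gamma}$), whereas the paper simply cites \cite[Proposition 4.2]{P3} for it.
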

\begin{proof}
This result is implicit in the proof of \cite[Proposition 4.5]{P3}.
As shown in \cite[Proposition 4.2]{P3}, the existence of a strongly Gauduchon metric on a complex $n$-dimensional manifold $M$ is equivalent to the condition that there exists a real smooth $d$-closed $(2n-2)$-form $\Omega$ on $M$ with its $(n-1,n-1)$-type component $\Omega^{n-1,n-1}>0$  with respect to the complex structure on $M$. Now we come to the family $(X, J_t)_{t\in \Delta}$ of complex manifolds with a fixed differentiable manifold $X$.   We see that the $(n-1,n-1)$-type components of $\Omega$ with respect to
the complex structure $J_t$ vary smoothly in $t\in \Delta$ as the complex structures $(J_t)_{t\in \Delta}$ do. So one still has $\Omega_t^{n-1,n-1}>0$ on $(X, J_t)$ after possibly shrinking $\Delta$ about $0$. Then Michelsohn's procedure on extracting the root of order $n-1$ for $\Omega_t^{n-1,n-1}>0$ gives a smooth family $\{\mathfrak{g}_t\}_{t\in \Delta}$ of strongly Gauduchon metrics by \cite[Proposition 4.2]{P3} again. Thus,
$\partial_t\mathfrak{g}_t^{n-1}=-\bar\partial_t\Omega_t^{n,n-2}$ for any small $t\in \Delta$.
\end{proof}

We are now in a position to turn to the main part of this subsection.

\begin{lemma}\label{global-lemma2}
Let $\pi: \mathcal{X}\rightarrow \Delta$ be a holomorphic family of compact complex manifolds
and $B$ an uncountable subset of $\Delta$.  Suppose that  for each $t$ of $B$,
$X_t$ is equipped with a big line bundle $L_t$.
Then  there exists an open subset $U$ of $\Delta$ with $\Delta\setminus U$ being a proper complex analytic subset
of $\Delta$ and a line bundle $L$ over $\mathcal{X}_U=\pi^{-1}(U)$ such that
\begin{enumerate}[$(i)$]
    \item \label{global-lemma2-i}
$L|_{X_s}$ is big for any $s$ in an uncountable subset of $U$;
\item \label{global-lemma2-ii}
the similar estimate \eqref{16-3} holds on $U$.  As a consequence,
$X_t$ is Moishezon for every $t\in U$.
 \end{enumerate}
\end{lemma}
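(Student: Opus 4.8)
The plan is to reuse only the \emph{first half} of the proof of Proposition \ref{global-B} — the part that produces a line bundle over the open locus $U$ where $R^2\pi_*\mathcal{O}_{\mathcal{X}}$ behaves well — and to observe that this half never invokes the deformation invariance of $h^{0,1}$. The invariance hypothesis entered Proposition \ref{global-B} only through Proposition \ref{Hodge-torsionfree}, which was used to push the class $c$ across the bad points $\Delta\setminus U$. Here we simply refuse to cross those points: we stay over $U$ and run the extension-of-bigness argument of Proposition \ref{ki-dim-limit} there. The key structural fact making this work is that $\Delta\setminus U$ is a proper analytic subset of the one-dimensional $\Delta$, hence discrete and at most countable, so $U$ is still a connected one-dimensional complex manifold and the uncountability we need survives intersection with $U$.

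Concretely, I would first repeat the opening of Proposition \ref{global-B}: since the possible classes $c_1(L_t)$ form a countable subset of $H^2(X,\mathbb{Z})$ while $B$ is uncountable, there is an uncountable $S\subseteq B$ on which $c_1(L_s)$ is a single class $c$. Using Proposition \ref{slia}, Grauert's continuity theorem (Lemma \ref{gct}) and the upper semi-continuity Theorem \ref{Upper semi-continuity}, I would then choose $U\subseteq\Delta$ open with $\Delta\setminus U$ a proper analytic (hence discrete) set, over which $R^2\pi_*\mathcal{O}_{\mathcal{X}}$ is locally free and $\mathcal{O}_{\mathcal{X}}$ is cohomologically flat in dimension $2$. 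Because $\Delta\setminus U$ is countable, $\hat{S}:=S\cap U$ remains uncountable.

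Next I would carry out the vanishing-section argument verbatim. The class $c$ maps to a section $i(c)\in H^2(\mathcal{X}_U,\mathcal{O}_{\mathcal{X}})\cong\Gamma(U,R^2\pi_*\mathcal{O}_{\mathcal{X}})$ under the exponential sequence; by Corollary \ref{ccs-bc} and Lemma \ref{2-5} this is a holomorphic section of the vector bundle $R^2\pi_*\mathcal{O}_{\mathcal{X}}$ over $U$ whose fiber at $s$ is $H^2(X_s,\mathcal{O}_{X_s})$, and it vanishes at every $s\in\hat{S}$ because $c=c_1(L_s)$ there. Since $\hat{S}$ is uncountable, Lemma \ref{section-zero} forces $i(c)\equiv 0$, so $c$ lifts to a line bundle $L$ on $\mathcal{X}_U$ with $c_1(L|_{X_s})=c=c_1(L_s)$ for all $s\in\hat{S}$. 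As each $L_s$ is big, Proposition \ref{propa} immediately gives that $L|_{X_s}$ is big for every $s\in\hat{S}$, which is \eqref{global-lemma2-i}.

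Finally, for \eqref{global-lemma2-ii} I would apply the argument of Proposition \ref{ki-dim-limit}, equivalently Corollary \ref{unc-big}, to the line bundle $L$ on $\mathcal{X}_U$ over the connected one-dimensional base $U$, with $\hat{S}$ in the role of the uncountable set and $\kappa=n$ (bigness). This yields $\kappa(L|_{X_t})\geq n$, hence $=n$, for every $t\in U$, together with the explicit lower bound of the form \eqref{16-3}; by Lemma \ref{big-moi} every fiber $X_t$ with $t\in U$ is then Moishezon. I expect the only genuine obstacle to be the conceptual one already flagged: without the $h^{0,1}$ hypothesis there is no way to conclude torsion-freeness of $R^2\pi_*\mathcal{O}_{\mathcal{X}}$ at the points of $\Delta\setminus U$, so the line bundle cannot be extended across that discrete set. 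The remedy is precisely to accept this and work over $U$, where discreteness of the removed set guarantees both connectedness of the base and uncountability of $\hat{S}$, so that Proposition \ref{ki-dim-limit} applies without change.
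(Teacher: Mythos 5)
Your proposal is correct and is essentially the paper's own proof: the paper likewise reuses the first half of Proposition \ref{global-B} (constant class $c$ on an uncountable $\hat{S}\subset U$, local freeness of $R^2\pi_*\mathcal{O}_{\mathcal{X}}$ over $U$, vanishing of $i(c)$ via Lemma \ref{section-zero}, and the exponential sequence to produce $L$ on $\mathcal{X}_U$), then applies Proposition \ref{propa} fiberwise for \eqref{global-lemma2-i} and the proof of Proposition \ref{ki-dim-limit} over $U$ for \eqref{global-lemma2-ii}. Your explicit remarks that the $h^{0,1}$ hypothesis entered Proposition \ref{global-B} only through the torsion-freeness needed to cross $\Delta\setminus U$, and that $U$ stays connected with $\hat{S}$ uncountable, are exactly the (implicit) point of the paper's argument.
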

\begin{proof}  As in the proof of Proposition \ref{global-B},  $R^2\pi_*\mathcal{O}_{\mathcal{X}}$ is a vector bundle
over an open subset $U$ and there exists an uncountable subset $\hat{S}\subset U$ of $B$ such
that $c_1(L_t)=c$ are the same for all $t\in \hat{S}$.  Thus,
with the notations there, $i(c) =0$ on $U$ and it follows from the long exact sequence over $U$
that $c$ is the image of a global line
bundle $L$ over $\mathcal{X}_U$.

To prove \eqref{global-lemma2-i} that this line bundle $L$ satisfies the bigness on $X_s$ for $s\in \hat{S}$,
we just apply Proposition \ref{propa} to each $X_s$ for $s$ in $\hat{S}$ with the two line bundles $L|_{X_s}$ and $L_s$
because both have the same $c_1$ ($=c$).

The term \eqref{global-lemma2-ii} is just given by  \eqref{global-lemma2-i} together with
the proof of Proposition \ref{ki-dim-limit} applied to $U$.
\end{proof}

For a positive current $T$, we write $T_{ac}$ for the absolute continuous part
in its Lebesgue decomposition
(cf. \cite[Subsection 2.3]{Bou} for a brief review).   Recall also the \emph{volume} $v(L)$ of a big
line bundle $L$ over a compact complex manifold $M$ of dimension $n$:
$$ v(L)= \limsup_{k\to\infty}\frac{n!}{k^n}h^0(M, L^{\otimes k}).$$

\begin{prop}\label{abc}
Let $Y$ be a compact \emph{Moishezon} manifold of complex dimension $n$ with a big line bundle $J$.
Suppose that $Y$ is equipped with a smooth volume form $dV$ with $\int_YdV=1$.
Then for every constant $\varepsilon>0$, there exists a closed, positive $(1,1)$-current $T_J:= T_{J,\varepsilon}\in c_1(J)$ such that
$T_{J,ac}^n(x)> (v(J)-\varepsilon)dV(x)$ a.e. on $Y$.
\end{prop}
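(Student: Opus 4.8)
The plan is to transport the problem to a projective model, run Fujita's approximate Zariski decomposition there, and realize the ample part by a degenerate Monge--Amp\`ere solution of Yau whose prescribed volume form is the pullback of $dV$. Since $Y$ is Moishezon, first choose a projective modification $\mu_0:\widehat Y\to Y$; because $\mu_{0*}\mathcal{O}_{\widehat Y}=\mathcal{O}_Y$ one has $H^0(\widehat Y,(\mu_0^*J)^{\otimes m})\cong H^0(Y,J^{\otimes m})$, so $v(\mu_0^*J)=v(J)$ (the same modification-invariance used in Proposition \ref{propa}). Apply Fujita's approximate Zariski decomposition \cite{fu} to the big line bundle $\mu_0^*J$ on $\widehat Y$ and absorb the extra modification into $\mu_0$: this produces a projective modification $\mu:\widetilde Y\to Y$ and a decomposition $\mu^*J=A+E$ of $\mathbb{Q}$-line bundles, with $A$ ample, $E$ effective, and
$$A^n=\int_{\widetilde Y}c_1(A)^n> v(J)-\varepsilon.$$
Let $Z:=\mathrm{Exc}(\mu)$ be the exceptional analytic set, a proper (hence Lebesgue-negligible) analytic subset of $\widetilde Y$. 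Then $\mu^*dV$ is a smooth semipositive volume form, strictly positive exactly on $\widetilde Y\setminus Z$ and vanishing along $Z$, and $\int_{\widetilde Y}\mu^*dV=\int_Y dV=1$ since $\mu$ is a modification.

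Next, fix a K\"ahler form $\omega\in c_1(A)$ on the projective manifold $\widetilde Y$ and apply Yau's solution \cite{Yst} of the (degenerate) complex Monge--Amp\`ere equation
$$\bigl(\omega+\sqrt{-1}\partial\bar\partial\varphi\bigr)^n=A^n\,\mu^*dV,$$
whose two sides carry the same total mass $A^n=\int_{\widetilde Y}\omega^n$ because $\int_{\widetilde Y}\mu^*dV=1$. This yields a bounded $\omega$-plurisubharmonic solution $\varphi$; set $\omega_A:=\omega+\sqrt{-1}\partial\bar\partial\varphi\geq 0$, a closed positive $(1,1)$-current in $c_1(A)$. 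By Yau's interior regularity, on the open set $\widetilde Y\setminus Z$ where the right-hand side is smooth and strictly positive, $\varphi$ is smooth and $\omega_A$ is a genuine K\"ahler form satisfying $\omega_A^n=A^n\,\mu^*dV$ pointwise. Using the Lelong--Poincar\'e identification of the integration current $[E]$ with $c_1(\mathcal{O}(E))$, the closed positive current
$$\widetilde T:=\omega_A+[E]$$
represents $\mu^*c_1(J)=c_1(A)+c_1(\mathcal{O}(E))$.

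Finally, push $\widetilde T$ down and set $T_J:=\mu_*\widetilde T$, a closed positive $(1,1)$-current lying in $c_1(J)$ by the projection formula $\mu_*\mu^*c_1(J)=c_1(J)$. Off the Lebesgue-negligible set $\mu(Z)$ the map $\mu$ is a biholomorphism, the purely singular contribution $[E]$ (supported on $Z$) disappears under $\mu_*$, and $T_J$ agrees with the smooth form $(\mu^{-1})^*\omega_A$; hence there $T_{J,ac}=(\mu^{-1})^*\omega_A$ and
$$T_{J,ac}^n=(\mu^{-1})^*\bigl(\omega_A^n\bigr)=(\mu^{-1})^*\bigl(A^n\,\mu^*dV\bigr)=A^n\,dV.$$
Since $A^n>v(J)-\varepsilon$ and $dV(x)>0$ everywhere, this gives $T_{J,ac}^n(x)>(v(J)-\varepsilon)\,dV(x)$ for almost every $x\in Y$, as desired.

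The main obstacle is the degenerate nature of the Monge--Amp\`ere equation: the prescribed volume form $\mu^*dV$ vanishes along the exceptional locus $Z$, so one must invoke Yau's solution in the degenerate regime and, crucially, his interior regularity to know that $\omega_A$ is smooth and nondegenerate precisely where $\mu^*dV>0$. That regularity is exactly what lets me identify the absolutely continuous part $T_{J,ac}$ with $(\mu^{-1})^*\omega_A$ almost everywhere, thereby upgrading the a priori inequality between $T_{J,ac}^n$ and the Monge--Amp\`ere measure to the exact pointwise equality $\omega_A^n=A^n\,\mu^*dV$ off a set of measure zero.
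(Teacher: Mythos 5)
Your proposal is correct and takes essentially the same route as the paper's own proof: a projective modification via Fujita's approximate Zariski decomposition \cite{fu} so that the ample part $A$ satisfies $A^n>v(J)-\varepsilon$, then Yau's degenerate Monge--Amp\`ere solution \cite{Yst} with prescribed volume form proportional to the pullback of $dV$ (the paper phrases this as a $\delta$-regularized family $\tilde\omega_\delta$ and checks the limit stays in the class, which your global bounded potential $\varphi$ handles automatically), followed by adding the integration current of the effective part and pushing forward. One inaccuracy: Fujita's effective $\mathbb{Q}$-divisor $E$ need \emph{not} be supported on the exceptional locus $Z$ of $\mu$, so $\mu_*[E]$ does not ``disappear''; the conclusion nevertheless survives because the integration current of any divisor has vanishing absolutely continuous part, so $T_{J,ac}=(\mu^{-1})^*\omega_A$ still holds almost everywhere off the measure-zero set $\mu(Z\cup\mathrm{Supp}\,E)$.
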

\begin{proof}
Compared with the proof of \cite[Theorem 1.2]{Bou}, our proof uses the degenerate Monge--Amp\`ere equations, but no weak
limit for  $T_{J,\varepsilon}$ with respect to $\varepsilon$ is taken since $Y$ is only assumed to be \emph{Moishezon} here.

As $Y$ is Moishezon,  there exists a modification
$\pi:\tilde Y\to Y$ where $\tilde Y$ is a projective manifold equipped with an ample divisor $A$ such that
$m\pi^*J=A+G$ for some effective divisor $G$ and some $m\in \mathbb{N}$ since $\pi^*J$ is still
big on $\tilde Y$ by \cite[Theorem 5.13]{Ue}.  Let the ramification divisor be denoted by
$R_{\pi}$ thus $K_{\tilde Y}=\pi^*K_Y+R_{\pi}$ and $h_R$ be a smooth Hermitian metric on $R_{\pi}$
as a line bundle.  Write $s$ for a canonical section of $R_{\pi}$ with $(s)=R_{\pi}$ and $||s||$ for its norm
with respect to $h_R$.
For the given volume form $dV>0$ on $Y$, $\pi^*dV$ denotes the pull-back on $\tilde Y$.
Let $\omega_{\tilde Y}$ be a K\"ahler form in $\frac{1}{m}c_1(A)$,
defined via a suitable Hermitian metric on $A$.    One sees that
$$d\tilde V:=\frac{\pi^*dV}{||s||^2}$$
is a smooth positive volume form on $\tilde Y$.  

To proceed further, we need to consider the {\it degenerate} Monge--Amp\`ere equation for $\tilde\omega$
\begin{equation}\label{c1A}
\tilde\omega^n=(\int_{\tilde Y}\omega_{\tilde Y}^n)\pi^*dV
\end{equation}
from the following nondegenerate  type as \cite[Section 5]{Yst}:
\begin{equation}\label{yau3}
 \tilde\omega_{\delta}^n =c_{\delta}(||s||+\delta)^2 (\int_{\tilde Y}\omega_{\tilde Y}^n) d\tilde V
\end{equation}
for $\tilde\omega_{\delta}:=\omega_{\tilde Y}+\sqrt{-1}\partial\bar\partial\varphi_{\delta}>0$ where
$\delta>0$ is small and the normalization constant $c_{\delta}$:
$$c_{\delta}=\int_{\tilde Y}\tilde\omega_{\delta}^n\cdot \Big(\int_{\tilde Y}\omega_{\tilde Y}^n\cdot\int_{\tilde Y}(||s||+\delta)^2 d\tilde V\Big)^{-1}$$
with $c_{\delta=0}=1$ since $\int_{\tilde Y}\pi^*dV=\int_Y dV=1$ by assumption.

Fortunately, \eqref{yau3} and its limiting version \eqref{c1A} have been solved by Yau in \cite[Theorems 1 and 3]{Yst}, so that
$ \tilde\omega_{\delta}>0$ exists and as $\delta\to 0$,  $\{\tilde\omega_{\delta}\}_{\delta}$
is uniformly bounded and converges in every compact subsets of
$\tilde Y\setminus R_{\pi}$ for a subsequence $\delta_k\to 0$ and  $\tilde\omega$ is obtained by taking this subsequence convergence. See also the monograph \cite{gz} for
a review and new results.

We claim that $\tilde\omega\in  \frac{1}{m}c_1(A)$.
Let $\chi$ be any smooth closed $(2n-2)$-form  on $\tilde Y$.
We note the following facts:
\item{i)} $\{|\partial_i\partial_{\bar j}\varphi_{\delta}|\}_{\delta}$ is uniformly bounded as shown \cite[pp. 374-375]{Yst},
\item{ii)}  $\partial_i\partial_{\bar j}\varphi_{\delta_k}\to\partial_i\partial_{\bar j}\varphi_{0}$ a.e. on
$\tilde Y$,
\item{iii)} $\int_{\tilde Y}\partial\bar\partial\varphi_{\delta}\wedge \chi=0$ since  $\partial\bar\partial\varphi_{\delta}$ is smooth and is a trivial class.

It follows that $\int_{\tilde Y}\partial\bar\partial\varphi_{0}\wedge d\chi'=0$ for any
smooth $(2n-3)$-form $\chi'$ by Lebesgue convergence theorem, hence that the $(1,1)$-form $\partial\bar\partial\varphi_{0}$ with locally integrable coefficients defines a closed current on $\tilde Y$.
To have $\tilde\omega\in  \frac{1}{m}c_1(A)$, it remains to show that
$\sqrt{-1}\partial\bar\partial\varphi_{0}$ gives a trivial class since $\tilde\omega=\omega_{\tilde Y}+\sqrt{-1}\partial\bar\partial\varphi_{0}$.    It suffices to show that
$\int_{\tilde Y}\partial\bar\partial\varphi_{0}\wedge \chi=0$.  This follows again by the same
convergence reasoning.   Our claim $\tilde\omega\in  \frac{1}{m}c_1(A)$ is proved.

To find a representative for $c_1(J)$, we first equip $G$ with a natural singular Hermitian metric $h_G$
such that $c_1(G, h_G)=[G]$, the closed  positive $(1,1)$-current defined by $G$ as \cite[Example 3.13]{dem}.
We have now a representative $\tilde\omega+\frac{1}{m}c_1(G, h_G):=T_{\pi^*J}$ for $c_1(\pi^*J)$ which
is also closed and positive.   Since $T_{\pi^*J}$ is now defined via a singular Hermitian metric, a well-known
procedure using the push-forward current $\pi_*T_{\pi^*J}$, which can also be defined
using a singular Hermitian metric on $J$, yields that  $\pi_*T_{\pi^*J}$ is closed, positive
and represents $c_1(J)$ according to \cite[proofs in Theorem 4.6]{JS}, which we denote by $T_J$.   Remark that we can't
assert that $T_{\pi^*J}$ is strictly positive, as $\tilde\omega$ is degenerate along $R_{\pi}$.

Write $T_{J, ac}$ for the absolute continuous part of $T_J$ as mentioned earlier.
We are going to compute $T^n_{J, ac}(x)$ for $x\in Y$.
Now $T_{J, ac}=\pi_*(T_{\pi^*J, ac})$ since $\pi$ is proper by \cite[Proposition 2.2]{Bou}.
Further one sees that $T_{\pi^*J, ac}=\tilde\omega$  as \cite[Subsection 2.3]{Bou} and \cite[Example 3.13]{dem}.
Since $\pi$ is a modification which is an isomorphism outside sets of measure $0$, it follows that
$(\pi_*(T_{\pi^*J, ac}))^n=\pi_*(T_{\pi^*J, ac}^n)$ according to \cite[p. 1054]{Bou} hence that
$T_{J, ac}^n=\pi_*(\tilde\omega^n)$ which equals $(\int_{\tilde Y}\omega_{\tilde Y}^n)\pi_*(\pi^*dV)$
by \eqref{c1A}.    By the fact that $\pi_*(\pi^*dV)=dV$ since $dV$ is smooth by the last equality on \cite[p. 40]{JS},
we have now achieved
$$T_{J, ac}^n=(\int_{\tilde Y}\omega_{\tilde Y}^n)dV.$$
It remains to make $\int_{\tilde Y}\omega_{\tilde Y}^n$ large to meet the theorem by choosing a suitable modification
$\tilde Y$.    Fortunately we are able to find such a suitable modification $\tilde Y\to Y$ provided the following result on
the Zariski decomposition in the sense of Fujita \cite{fu}.
\end{proof}

\begin{lemma}\label{fujita}
Let $L$ be a big line bundle on a compact complex manifold $Y$ of dimension $n$.
Then for every constant $\varepsilon>0$, there exists a modification
$\tilde Y\to Y$ and a decomposition $\pi^*L=A+E$ where $A$ is an ample $\mathbb{Q}$-divisor and $E$ an effective
$\mathbb{Q}$-divisor, such that $A^n>v(L)-\varepsilon$.
\end{lemma}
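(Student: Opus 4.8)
The plan is to reduce the statement to Fujita's approximate Zariski decomposition in the projective category, which is exactly the content of \cite{fu}, by first passing to a projective modification of $Y$. Since $Y$ carries the big line bundle $L$, it is Moishezon by Lemma \ref{big-moi}; hence, by the very definition of a Moishezon manifold recalled in the introduction, there exists a projective manifold $Y'$ and a modification $p:Y'\to Y$ (and after a further resolution we may take $Y'$ smooth projective).

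The first step is to verify that both bigness and the volume are preserved under $p$. That $p^{*}L$ is again big on $Y'$ is \cite[Theorem 5.13]{Ue}. For the volume, since $p$ is a modification and $Y$ is smooth, hence normal, one has $p_{*}\mathcal{O}_{Y'}=\mathcal{O}_{Y}$, so the projection formula gives $p_{*}(p^{*}L^{\otimes k})\cong L^{\otimes k}$ for every $k$. Combined with the Leray-type identity $H^{0}(Y',p^{*}L^{\otimes k})\cong H^{0}(Y,p_{*}p^{*}L^{\otimes k})$, this yields $h^{0}(Y',p^{*}L^{\otimes k})=h^{0}(Y,L^{\otimes k})$ for all $k$, and therefore $v(p^{*}L)=v(L)$ directly from the definition of the volume recalled before Proposition \ref{abc}.

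The second step is to apply the projective form of Fujita's theorem \cite{fu} to the big line bundle $p^{*}L$ on the projective manifold $Y'$. For the given $\varepsilon>0$, it produces a birational morphism $q:\tilde Y\to Y'$ from a projective manifold $\tilde Y$, together with a decomposition $q^{*}p^{*}L=A+E$, where $A$ is an ample $\mathbb{Q}$-divisor and $E$ an effective $\mathbb{Q}$-divisor, such that $A^{n}>v(p^{*}L)-\varepsilon$; here one uses $v(A)=A^{n}$, which holds because $A$ is ample.

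Finally, set $\pi:=p\circ q:\tilde Y\to Y$, which is a composition of modifications and hence itself a modification onto the manifold $Y$. Then $\pi^{*}L=q^{*}p^{*}L=A+E$ is the required decomposition, and by the volume invariance established in the first step, $A^{n}>v(p^{*}L)-\varepsilon=v(L)-\varepsilon$, as claimed. The only genuinely delicate point is the volume invariance $v(p^{*}L)=v(L)$ across the Moishezon modification $p$; everything else is either elementary bookkeeping with the composed modification or a direct appeal to \cite{fu}.
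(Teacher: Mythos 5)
Your proposal is correct and follows essentially the same route as the paper: pass to a projective modification (possible since $Y$ is Moishezon), invoke \cite[Theorem 5.13]{Ue} for bigness of the pullback, apply Fujita's theorem \cite{fu} there, and compose the two modifications. The only cosmetic difference is in the volume comparison: the paper gets by with the inequality $h^0(\tilde Y_1, q\pi_1^*L)\ge h^0(Y,qL)$ and cites \cite[Lemma 5.3]{Ue} for the equality, whereas you prove the equality $v(p^*L)=v(L)$ directly via $p_*\mathcal{O}_{Y'}=\mathcal{O}_Y$ and the projection formula, which amounts to the same thing.
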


\begin{proof} When $Y$ is projective, this is the Fujita decomposition theorem \cite{fu} (cf. \cite[Theorem 14.6]{dem}).
In general we may first take a modification $\pi_1:\tilde Y_1\to Y$ where $\tilde Y_1$ is a projective manifold,
then $\pi_1^*L$ is big on
$\tilde Y_1$  by \cite[Theorem 5.13]{Ue}.    Now that $\tilde Y_1$ is projective, one has
a second modification $\pi_2:\tilde Y\to\tilde Y_1$ with decomposition $\pi_2^*(\pi_1^*L)=A+E$ as in the lemma such that
$A^n>v(\pi_1^*L)-\varepsilon$.   By setting $\pi=\pi_1\circ\pi_2$, the assertion follows if $v(\pi_1^*L)\ge v(L)$.
This holds true in view that $h^0(\tilde Y_1, q\pi_1^*L)\ge h^0(Y, qL)$
for every $q\ge 1$.  Remark that this inequality is actually an equality by \cite[Lemma 5.3]{Ue}.
\end{proof}

In the second part on bimeromorphic embedding of the proof for Theorem \ref{thm-gauduchon-update}, we will often apply the following preliminaries.

The first ones are proper modification and meromorphic map as shown in the nice reference \cite[$\S$ 2]{Ue} on bimeromorphic geometry, where all analytic spaces are assumed to be
 irreducible and reduced (but not necessarily compact) and a complex varieties are assumed to be compact, unless otherwise explicitly mentioned.

\begin{defn}[{\cite[Definition 2.1]{Ue}}]\label{modification}
A morphism $\pi: \tilde{X}\rightarrow X$ of two equidimensional irreducible and reduced complex spaces is called a \emph{proper modification}, if it satisfies:
\begin{enumerate}
  \item [$(i)$] $\pi$ is proper and surjective;
  \item [$(ii)$] there exist nowhere dense analytic subsets $\tilde E\subseteq \tilde{X}$ and $E \subseteq X$ such that
                  $$
                  \pi:\tilde{X}-\tilde E\rightarrow X-E
                  $$
                  is a biholomorphism, where $\tilde E:=\pi^{-1}(E)$ is called the \emph{exceptional space of the modification}.
\end{enumerate}
If $\tilde X$ and $X$ are compact, a proper modification $\pi: \tilde{X}\rightarrow X$ is often called simply a \emph{modification}.
\end{defn}

More generally, we have the following definition.
\begin{defn}[{\cite[Definition 2.2]{Ue}}]\label{bimero}
Let $X$ and $Y$ be two  irreducible and reduced  complex spaces.
A map $\varphi$ of $X$ into the power set of $Y$ is a \emph{meromorphic map} of $X$ into $Y$,
denoted by $\varphi: X\dashrightarrow Y$, if $X$ satisfies the following conditions:
\begin{enumerate}
  \item [$(i)$] The graph ${\mathcal{G}}({\varphi}):=\{(x,y)\in X\times Y: y\in \varphi(x)\}$ of $\varphi$ is an irreducible analytic subset in $X\times Y$;
  \item [$(ii)$] The projection map $p_X:{\mathcal{G}}({\varphi})\rightarrow X$ is a proper modification.
\end{enumerate}

A meromorphic map $\varphi: X\dashrightarrow Y$ of complex varieties is called a \emph{bimeromorphic map} if $p_Y:{\mathcal{G}}({\varphi})\rightarrow Y$ is also a proper modification.

If $\varphi$ is a bimeromorphic map, the analytic set
$$
\{(y,x)\in Y\times X: (x,y)\in {\mathcal{G}}(\varphi)\}\subseteq Y\times X
$$
defines a meromorphic map $\varphi^{-1}:Y\dashrightarrow X$ such that $\varphi\circ\varphi^{-1}=id_Y$ and $\varphi^{-1}\circ\varphi=id_X$.

Two complex varieties $X$ and $Y$ are called \emph{bimeromorphically equivalent} (or \emph{bimeromorphic}) if there exists a bimeromorphic map $\varphi: X\dashrightarrow Y$.

Obviously, the notion of bimeromorphic map is naturally valid for general complex spaces, cf. \cite[\S\ $2$ and $3$]{st} for example. More specially, see also the notions of rational and  birational maps  on
\cite[pp. 490-493]{Griffith} in the algebraic setting.
\end{defn}

Then we  need two more remarkable theorems.
\begin{thm}[{Theorem A of Cartan, \cite[Theorem 7.2.8]{h90}}]\label{cartan-a}
Let $\Omega$ be a Stein manifold and $\mathcal{F}$ a coherent analytic
sheaf on $\Omega$. For every $z\in \Omega$, the $\mathcal{O}_z$-module $\mathcal{F}_z$ is then generated by the
germs at $z$ of the sections in $\Gamma(\Omega, \mathcal{F})$.
\end{thm}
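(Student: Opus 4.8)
The plan is to derive Theorem A from the vanishing of higher cohomology of coherent analytic sheaves on a Stein manifold, that is, from Cartan's Theorem B: for any coherent analytic sheaf $\mathcal{G}$ on $\Omega$ one has $H^q(\Omega,\mathcal{G})=0$ for all $q\geq 1$. In Hörmander's framework this vanishing is the analytic heart of the matter, obtained from the $L^2$-solvability of the $\bar\partial$-equation with plurisubharmonic weights on pseudoconvex domains; I would take it as the known input from \cite[\S\ 7.2]{h90} and concentrate on the short homological reduction to Theorem A.

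First I would reduce to a surjectivity statement modulo the maximal ideal. Since $\mathcal{F}$ is coherent, the stalk $\mathcal{F}_z$ is a finitely generated module over the local Noetherian ring $\mathcal{O}_z=\mathcal{O}_{\Omega,z}$, so by Nakayama's lemma it suffices to show that the evaluation map $\Gamma(\Omega,\mathcal{F})\to \mathcal{F}_z/\mathfrak{m}_z\mathcal{F}_z$ onto the finite-dimensional $\mathbb{C}$-vector space $\mathcal{F}_z/\mathfrak{m}_z\mathcal{F}_z$ is surjective: lifting a $\mathbb{C}$-basis of that quotient to global sections produces $\mathcal{O}_z$-generators of $\mathcal{F}_z$, which is exactly the assertion of the theorem.

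The key step is to present $\mathcal{F}_z/\mathfrak{m}_z\mathcal{F}_z$ as the global sections of a coherent quotient sheaf. Let $\mathcal{I}_{\{z\}}\subseteq\mathcal{O}_\Omega$ be the coherent ideal sheaf of the point $z$ and set $\mathcal{G}:=\mathcal{I}_{\{z\}}\mathcal{F}\subseteq\mathcal{F}$, which is coherent (being the image of the coherent sheaf $\mathcal{I}_{\{z\}}\otimes_{\mathcal{O}_\Omega}\mathcal{F}$ in $\mathcal{F}$) and satisfies $\mathcal{G}_y=\mathcal{F}_y$ for $y\neq z$ while $\mathcal{G}_z=\mathfrak{m}_z\mathcal{F}_z$. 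Hence the quotient $\mathcal{F}/\mathcal{G}$ is the skyscraper sheaf supported at $z$ with stalk $\mathcal{F}_z/\mathfrak{m}_z\mathcal{F}_z$, so $\Gamma(\Omega,\mathcal{F}/\mathcal{G})=\mathcal{F}_z/\mathfrak{m}_z\mathcal{F}_z$. The short exact sequence $0\to\mathcal{G}\to\mathcal{F}\to\mathcal{F}/\mathcal{G}\to 0$ yields the cohomology sequence
$$\Gamma(\Omega,\mathcal{F})\longrightarrow \Gamma(\Omega,\mathcal{F}/\mathcal{G})\longrightarrow H^1(\Omega,\mathcal{G}).$$
Because $\mathcal{G}$ is coherent and $\Omega$ is Stein, Theorem B gives $H^1(\Omega,\mathcal{G})=0$, so the map $\Gamma(\Omega,\mathcal{F})\to\mathcal{F}_z/\mathfrak{m}_z\mathcal{F}_z$ is surjective, which closes the Nakayama reduction of the previous paragraph.

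The genuinely hard part is Theorem B itself, not this deduction: all the analytic content lies in establishing the $\bar\partial$-vanishing, first for $\mathcal{O}_\Omega$ on pseudoconvex domains through the $L^2$-estimates, and then, by means of local free (syzygy) resolutions together with a \v{C}ech/Leray patching argument over an exhaustion of $\Omega$ by Stein compacts, for an arbitrary coherent $\mathcal{F}$. Granting that input, the passage to Theorem A is precisely the brief homological computation carried out above.
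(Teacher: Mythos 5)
Your proof is correct: the Nakayama reduction, the coherent subsheaf $\mathcal{I}_{\{z\}}\mathcal{F}$ with skyscraper quotient $\mathcal{F}/\mathcal{I}_{\{z\}}\mathcal{F}$, and the vanishing $H^1(\Omega,\mathcal{I}_{\{z\}}\mathcal{F})=0$ from Theorem B together give the standard derivation of Theorem A. The paper offers no proof of this statement---it is quoted verbatim as a classical result from H\"ormander \cite[Theorem 7.2.8]{h90}---and your argument is essentially the one given in that reference, so the two approaches coincide.
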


\begin{thm}[{Remmert's proper mapping theorem, \cite[Theorem 2.11 of Chapter III]{bs}}] \label{remmert}
The image of a closed analytic set by a proper morphism
is a closed analytic set.
\end{thm}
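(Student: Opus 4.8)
The plan is to deduce the theorem from Grauert's direct image theorem (Theorem~\ref{gdit}), which is available to us, rather than to reprove it from scratch. Write $f:X\to Y$ for the proper morphism and $A\subseteq X$ for the closed analytic set. Restricting $f$ to $A$ — which is itself a complex space, and on which $f$ remains proper — one reduces at once to showing that $f(X)$ is a closed analytic subset of $Y$ for an arbitrary proper holomorphic map $f:X\to Y$. Closedness is immediate, since a proper map into a locally compact Hausdorff space is closed; the entire content is therefore the \emph{analyticity} of $f(X)$, a property that is local on the target $Y$.

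First I would push forward the (coherent) structure sheaf. By Theorem~\ref{gdit}, the sheaf $\mathcal{F}:=f_*\mathcal{O}_X=R^0f_*\mathcal{O}_X$ is a coherent $\mathcal{O}_Y$-module. The crucial point is then the set-theoretic identification
$$
\operatorname{supp}\mathcal{F}=f(X).
$$
For this I would invoke Lemma~\ref{2-5}: as $f$ is proper, $(f_*\mathcal{O}_X)_y\cong H^0(f^{-1}(y),\mathcal{O}_X)$ for every $y\in Y$, and the right-hand side contains the restriction of the constant function $1$, so it is nonzero exactly when the fibre $f^{-1}(y)$ is nonempty, i.e.\ exactly when $y\in f(X)$. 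Finally, the support of a coherent analytic sheaf is a closed analytic set: locally it is the zero set of the annihilator ideal sheaf $\operatorname{Ann}\mathcal{F}$, which is again coherent (equivalently, the zero locus of the $0$-th Fitting ideal of a finite free presentation, whose holomorphic generators exist by Theorem~\ref{cartan-a} over a Stein neighbourhood). Combining these three steps gives that $f(X)=\operatorname{supp}\mathcal{F}$ is analytic, as desired.

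The genuine difficulty of the statement is of course concealed inside Grauert's coherence theorem, on which the argument rests; the remaining bookkeeping — the base-change identification of the stalk and the analyticity of the support — is routine. If one instead wanted a self-contained proof avoiding Theorem~\ref{gdit}, the main obstacle would lie elsewhere: one would reduce, by induction on the maximal fibre dimension, to the case of a \emph{finite} holomorphic map, and the heart of the matter would then be the local description of such a map as a branched analytic covering whose image is cut out by the elementary symmetric (``norm'') functions of the fibre via the Weierstrass preparation theorem. Carrying out the dimension reduction — slicing the compact fibres by generic hyperplane sections while keeping track of how the images reassemble — is the delicate step in that alternative route.
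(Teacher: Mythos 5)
The paper itself gives no proof of this statement: it is quoted as a classical theorem with a citation to B\u{a}nic\u{a}--St\u{a}n\u{a}\c{s}il\u{a}, where it is obtained in essentially the way you propose, namely as a corollary of Grauert's direct image theorem. Your argument is correct: after restricting to $A$ (so that one may take $\mathcal{F}=f_*\mathcal{O}_A$, coherent by Theorem~\ref{gdit}), Lemma~\ref{2-5} identifies the stalk $\mathcal{F}_y$ with $H^0\bigl(f^{-1}(y)\cap A,\mathcal{O}_A\bigr)$, which is nonzero exactly when the fibre is nonempty, so $f(A)=\operatorname{Supp}\mathcal{F}$; and the support of a coherent sheaf is the zero set of its annihilator ideal sheaf, which is coherent, hence analytic (and automatically closed, consistently with properness). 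Two small remarks: the derivation is non-circular only because Grauert's theorem is established independently of Remmert's theorem, which is true of the standard proofs and of the cited source; and your appeal to Theorem~\ref{cartan-a} over a Stein neighbourhood is superfluous, since coherence by definition already furnishes local finite free presentations.
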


Finally, we will also be much concerned with questions of irreducibility and properness in the second part of the proof for Theorem \ref{thm-gauduchon-update} on bimeromorphic embedding and
refer the readers to \cite[\S\ $1$ and $2$ of Chapter $9$]{Grr} for a nice introduction.

The principal result of this section is the following, to which we devote ourselves in most
of the remaining paper.

\begin{thm}\label{thm-gauduchon-update}
Let $B$ be an uncountable subset  of $\Delta$.
Suppose that the fiber $X_t:=\pi^{-1}(t)$  is Moishezon for each $t\in B$
 and each fiber $X_t$ for $t\in \Delta\setminus B$ admits a strongly Gauduchon metric as in Definition \ref{sGau}.
Then we have the following.
\begin{enumerate}[$(i)$]
\item\label{thm-gauduchon-update-i}
Any $X_t$  for $t\in \Delta$ is a Moishezon manifold (of dimension $n$).
\item \label{thm-gauduchon-update-ii}
There exists a global line bundle $\tilde L$ on $\mathcal{X}$ such
that the restriction $\tilde L|_{X_t}$ is big for every $t\in \Delta$ ($\tilde L$ is usually termed \emph{$\pi$-big}).   Moreover, over $\Delta$, there exist a bimeromorphic map
$$\Phi:\mathcal{X}\dashrightarrow\mathcal{Y},$$
where
$\mathcal{Y}$ is a subvariety of $\mathbb{P}^N\times\Delta$ for some $N\in \mathbb{N}$ with every fiber $Y_t\subset\mathbb{P}^N\times\{t\}$ being a projective
 variety of dimension $n$, and also a proper analytic set $\Sigma\subset\Delta$  such that
$\Phi$ induces a bimeromorphic map
$$\Phi|_{X_t}:X_t\dashrightarrow Y_t$$
 for every
$t\in\Delta\setminus\Sigma$.
\end{enumerate}
\end{thm}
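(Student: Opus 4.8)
The plan is to treat the two parts in turn, reducing everything to the finitely many degenerate parameters and exploiting the strongly Gauduchon hypothesis precisely there. First I would invoke Lemma \ref{global-lemma2}: since each $X_t$ with $t\in B$ carries a big line bundle, there is an open $U\subseteq\Delta$ whose complement $\Delta\setminus U$ is a proper analytic (hence discrete) subset, a line bundle $L$ on $\mathcal{X}_U=\pi^{-1}(U)$ with $L|_{X_t}$ big for every $t\in U$, and the estimate \eqref{16-3}; in particular $X_t$ is Moishezon for all $t\in U$ and $v(L|_{X_t})\ge v_0>0$ uniformly on $U$. It then remains to handle each isolated point $t_0\in\Delta\setminus U$; if $t_0\in B$ the fiber is Moishezon by hypothesis, so I may assume $t_0\notin B$, i.e. $X_{t_0}$ is strongly Gauduchon.

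For such a $t_0$, Lemma \ref{sdsGau} furnishes a smooth family $\{\mathfrak{g}_t\}$ of strongly Gauduchon metrics on the nearby fibers together with a fixed real $d$-closed $(2n-2)$-form $\Omega$ satisfying $\Omega^{n-1,n-1}_t=\mathfrak{g}_t^{n-1}>0$. For each $t\in U$ near $t_0$ I would apply Proposition \ref{abc} to $J=L|_{X_t}$ and a normalized smooth volume form $dV_t$, producing a closed positive $(1,1)$-current $T_t\in c_1(L|_{X_t})=c$ (the fixed integral class) with $T_{t,ac}^n(x)>(v(L|_{X_t})-\varepsilon)\,dV_t(x)$ a.e. The decisive point, and the reason the strongly Gauduchon hypothesis enters, is the uniform mass bound
\begin{equation*}
\int_{X_t}T_t\wedge\mathfrak{g}_t^{n-1}=\int_{X_t}T_t\wedge\Omega=c\cdot[\Omega],
\end{equation*}
valid because $T_t$ is of type $(1,1)$, so only $\Omega^{n-1,n-1}_t$ contributes, and because both $T_t$ and $\Omega$ are $d$-closed, so the integral is the purely topological pairing $c\cdot[\Omega]$, independent of $t$. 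Bounded mass yields a weakly convergent subsequence $T_{t_k}\to T_{t_0}$ as $U\ni t_k\to t_0$, with $T_{t_0}$ a $d$-closed positive $(1,1)$-current on $X_{t_0}$ in the integral class $c$. By semicontinuity of the top power of the absolutely continuous part (as in Subsection \ref{popovici}), together with $v(L|_{X_{t_k}})\ge v_0$ and $dV_{t_k}\to dV_{t_0}$, choosing $\varepsilon<v_0$ gives $\int_{X_{t_0}}T_{t_0,ac}^n\ge v_0-\varepsilon>0$. Theorem \ref{pop-hol} then shows $X_{t_0}$ is Moishezon, completing \eqref{thm-gauduchon-update-i}.

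For \eqref{thm-gauduchon-update-ii} I would first promote $L$ to a global line bundle on $\mathcal{X}$. Since $U$ is Stein and $\pi_*L^{\otimes m}\ne 0$ for $m\gg0$, Theorem \ref{cartan-a} gives a nonzero section of $L^{\otimes m}$ over $\mathcal{X}_U$, i.e. an effective divisor $D$ with $\mathcal{O}_{\mathcal{X}_U}(D)=L^{\otimes m}$. To extend $L^{\otimes m}$ across the finite set $\Delta\setminus U$ I would take the closure $\bar D$ in $\mathcal{X}$; its analyticity is exactly where volumes must be controlled. The fibrewise divisors $D_t$ lie in the fixed class $mc$, so their volumes $\int_{D_t}\mathfrak{g}_t^{n-1}=m\,(c\cdot[\Omega])$ are uniformly bounded, whence, comparing $\mathfrak{g}_t$ with a fixed ambient metric, Bishop's compactness theorem produces a limit cycle over each $t_0\in\Delta\setminus U$ and $\bar D$ is analytic. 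Setting $\tilde L:=\mathcal{O}_{\mathcal{X}}(\bar D)$, one has $\tilde L|_{X_t}=L^{\otimes m}|_{X_t}$ big for all $t\in U$, so Corollary \ref{unc-big} makes $\tilde L$ $\pi$-big.

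Finally, with $\tilde L$ $\pi$-big I would build the bimeromorphic embedding from the relative Kodaira map. For $\ell\gg0$ the sheaf $\pi_*\tilde L^{\otimes\ell}$ is coherent by Grauert's Theorem \ref{gdit} and locally free off a proper analytic subset of $\Delta$ by Proposition \ref{slia}; its sections define a meromorphic map $\Phi:\mathcal{X}\dashrightarrow\mathbb{P}^N\times\Delta$ over $\Delta$ sending $X_t$ into $\mathbb{P}^N\times\{t\}$. By Remmert's proper mapping theorem (Theorem \ref{remmert}) and properness of $\mathbb{P}^N\times\Delta\to\Delta$, the image $\mathcal{Y}$ is a subvariety with projective fibers $Y_t$. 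On a fiber, $\Phi|_{X_t}$ is the Kodaira map of the big bundle $\tilde L^{\otimes\ell}|_{X_t}$, hence bimeromorphic onto its $n$-dimensional image for $\ell$ large and generic $t$; the locus $\Sigma\subset\Delta$ where bimeromorphicity or the fiber dimension fails is a proper analytic set by upper semicontinuity of fiber dimension and Grauert's Theorem \ref{Upper semi-continuity}. I expect the main obstacle to lie here and in the previous paragraph: one must choose a single $\ell$ that works fibrewise across the family, an effective $t$-independent bound comparable to the role of Theorem \ref{Dem-fuj} in Section \ref{section1}, and one must guarantee that the degenerate limits $Y_{t_0}$ stay $n$-dimensional, both of which again rest on the uniform volume and mass control supplied by the strongly Gauduchon metric. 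The same construction, fed with the global $\pi$-big bundle produced in Proposition \ref{global-B}, yields the embedding asserted in Theorem \ref{thm-moishezon-update}.
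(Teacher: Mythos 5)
Your part \eqref{thm-gauduchon-update-i} is essentially the paper's own argument and is correct: Lemma \ref{global-lemma2} gives $L$ on $\mathcal{X}_U$ with uniform volume lower bound $v_0$ via \eqref{16-3}, Proposition \ref{abc} supplies the Fujita--Yau currents, the strongly Gauduchon structure gives the mass bound, and Boucksom's semicontinuity plus Theorem \ref{pop-hol} finish. Your observation that $\int_{X_t}T_t\wedge\mathfrak{g}_t^{n-1}=\int_{X_t}T_t\wedge\Omega$ is the purely topological constant $c\cdot[\Omega]$ (because $\mathfrak{g}_t^{n-1}=\Omega_t^{n-1,n-1}$ for a fixed $d$-closed $\Omega$ and $T_t$ is a closed $(1,1)$-current) is in fact a clean streamlining of the paper's integration-by-parts computation, and it lets you bypass the auxiliary divisors $D_t$ and the Poincar\'e--Lelong comparison that the paper uses to transfer the mass bound.

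The genuine gap is in part \eqref{thm-gauduchon-update-ii}, at the globalization of $L$. Fibrewise mass bounds on the divisors $D_t$ do \emph{not} bound the $2n$-dimensional volume of the total divisor $D$ near a degenerate fiber, so Bishop's \emph{extension} theorem does not apply and the closure $\bar D$ need not be analytic. Concretely, take $\mathcal{X}=\mathbb{P}^1\times\Delta$ and the section $s=w_1-e^{1/t}w_0$ of $\mathrm{pr}_1^*\mathcal{O}_{\mathbb{P}^1}(1)$ over $\mathbb{P}^1\times\Delta^*$: its divisor $D=\{z=e^{1/t}\}$ has constant fibrewise degree one, yet $D$ has infinite area near $t=0$ and its closure clusters on all of $\mathbb{P}^1\times\{0\}$, hence is not analytic. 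This is exactly the obstruction the paper flags in the proof: ``the total divisor $D=\cup_tD_t$ is not known to be of bounded volume, which renders the usual extension theorem of Bishop not applicable,'' and even the limit cycles produced by Bishop's \emph{compactness} theorem ``are not yet known to be linearly equivalent to one another,'' so they do not yield an extension of $L$ across $p$. The paper's actual mechanism is entirely different: once \eqref{thm-gauduchon-update-i} is proved, every fiber is Moishezon, hence satisfies the $\partial\bar\partial$-lemma, hence $h^{0,1}(X_t)$ is deformation invariant; Proposition \ref{global-B} (resting on the torsion-freeness of $R^2\pi_*\mathcal{O}_{\mathcal{X}}$ via Propositions \ref{Hodge-torsionfree} and \ref{s-torsion}, and the exponential sequence) then extends the Chern class across $\Delta\setminus U$ and produces a global line bundle, and Proposition \ref{propa} together with Corollary \ref{unc-big} gives $\pi$-bigness. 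Your proposal never invokes this Hodge-theoretic route, and without it the global $\tilde L$ does not exist by your construction.

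The final paragraph of your proposal also leaves unproved exactly what you flag: for big (non-ample) bundles there is no effective, $t$-independent power $\ell$ analogous to Theorem \ref{Dem-fuj}, and upper semicontinuity alone does not give bimeromorphy of $\Phi|_{X_t}$ or $\dim Y_t=n$. The paper circumvents this by a countability argument choosing a single point $b$ where all $h^0(X_t,L^{\otimes q}|_{X_t})$ are simultaneously locally constant, obtaining bimeromorphy on the one fiber $X_b$, and then propagating it to the whole family through Steps I--IV (properness of the graph projections as in Lemma \ref{proper}, desingularization of $\mathcal{Y}$, finiteness and covering-type arguments, connectedness of the graph fibers, and an uncountability refinement keeping $\Phi(X_b)$ off the singular locus of $\mathcal{Y}$). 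These are the substantive parts of the proof and cannot be replaced by the semicontinuity appeals in your sketch.
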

\begin{proof}

To start with, we will use Lemma \ref{global-lemma2} with the same notations therein.  By the estimate in \eqref{global-lemma2-ii} of
Lemma \ref{global-lemma2}, we assume without loss of generality that $h^0(X_t,L|_{X_t})$ is nonzero for any $t$ of $U$
and $L$ is $\pi$-big on $\mathcal{X}_U$.   Write $W=\Delta\setminus U$.  Fix
a point $p$ of $W$ with a small neighborhood $V_p$.  Since $W$ is a proper analytic subset of $\Delta$ which is
a discrete subset of $\Delta$, $V^*:=V_p\setminus\{p\}$ is contained in $U$ if $V_p$ is small.
Our first aim is to show that  $X_p$ is Moishezon for any such $p\not\in B$.

Here we work over $V_p$ and by restriction $L$ is a line bundle
on $\mathcal{X}_{V^*}$.   Recall that $\pi_*L$ is a coherent analytic sheaf on $V^*$.
By $h^0(X_t,L|_{X_t})\neq 0$ together with Theorem \ref{Upper semi-continuity}
and Lemma \ref{gct}, we see that $\pi_*L$ is nontrivial and on an open subset $V^*_1\subset V^*$,
$\pi_*L$ is cohomologically flat (in dimension $0$) in the sense of Lemma \ref{gct}, hence locally free.
There exists a global section $s$ of $\pi_*L$ on $V^*$ such that $s\not\equiv 0$ on $V^*_1$
by Theorem A of Cartan (= Theorem \ref{cartan-a}) since $V^*$ is Stein.

We are going to use the above $s$ to construct a divisor.  Since $s(t)$ is identified with a
section of $L|_{X_t}$ for each $t\in V_1^*$, we set $D_t$ to be
the divisor in $X_t$ defined by the section $s(t)$ if $s(t)\neq 0$.   Thus, we have a family of
divisors $\{D_t\}_{t\in V^*_2}$ where $V^*_2\subset V^*_1$ with the complement a discrete subset
of $V^*_1$.

Let $T_t$ be the
$d_t$-closed positive $(1, 1)$-current in $X_t$ defined by the divisors $D_t$.   Write $\omega$ for a smooth $2$-form on $\mathcal{X}$ in
$$H^2_{dR}(\mathcal{X},\mathbb{R})\cong H^2_{\mathcal{C}}(\mathcal{X},\mathbb{R})\cong H^2_{\mathcal{C}}(X_t,\mathbb{R}),$$
where $H^\bullet_{\mathcal{C}}(\bullet,\mathbb{R})$ means the de Rham cohomology group in the sense of current and the second isomorphism is induced by the embedding $i_t: X_t\rightarrow \mathcal{X}$.
Choose now $\omega$ in the de Rham class $c_1(L)$ so that as cohomology classes,
$$[\omega]|_{X_t}=[T_t]$$
and thus
$$\omega_t:=i^*_t\omega=T_t+d_t\beta_t$$
where $\beta_t$ is some real $1$-form on $X_t$ (in the sense of current).
Hence, by the proof, notations of Lemma \ref{sdsGau} and the strongly Gauduchon condition here, the above boundedness of
$$\int_{D_t}\mathfrak{g}_t^{n-1}=\int_{X_t}T_t\wedge\mathfrak{g}_t^{n-1}$$
is equivalent to that of
$$\int_{X_t}\partial_t \beta_t^{0,1}\wedge\mathfrak{g}_t^{n-1}
=-\int_{X_t} \beta_t^{0,1}\wedge\bar\partial_t\Omega_t^{n,n-2}
=-\int_{X_t} \bar\partial_t\beta_t^{0,1}\wedge\Omega_t^{n,n-2}
=-\int_{X_t} \omega_t^{0,2}\wedge\Omega_t^{n,n-2},$$
where $\omega_t^{0,2}$ denotes the $(0,2)$-part of $\omega_t$.   Here the integration of $\bar{\partial_t }\beta_t^{1,0}\wedge\mathfrak{g}_t^{n-1}$
is the complex conjugate of the above since $\mathfrak{g}_t$, $\omega_t$ and $\beta_t$ are real.   Finally, the boundedness of $\int_{X_t} \omega_t^{0,2}\wedge\Omega_t^{n,n-2}$ follows from that $\omega_t$ and $\Omega_t$ depend smoothly on $t$.
We conclude that the volume $vol(D_t)$ is uniformly bounded, hence that there exists a weakly convergent subsequence $T_{t_k}\to T_p$
for some $ t_k\to p$, $t_k\in V^*_2$ such that $T_p$ is a closed, positive current of type $(1,1)$ and
belongs to the same class as $c_1(L)$.

The difficulty lies in that the total divisor $D=\cup_tD_t$ is not known to be of bounded volume, which renders the
usual extension theorem of Bishop not applicable (cf. {\cite[Theorem (2.14)]{s74}} or \cite[Theorem 3]{bis}).
Instead, we may use the convergence theorem of Bishop \cite[Theorem 1]{bis} to
find a divisor $D_p\subset X_p$ after a limiting process of $D_t$, which does imply that the family $\{D_t\}_{t\in V_2^*}$
is relatively compact in the space of cycles in $\mathcal{X}$ in a suitable sense.  Unfortunately, these limiting divisors
are not yet known to be linearly equivalent to one another, despite all being of the same $c_1$ as that of $L$.  As such,
they do not immediately lead to an extension of $L$ across $p$.     We shall come to this again shortly.

Nevertheless,  if for each $p_k\in V^*$, we use the current
$T_{{\rm FY}, {p_k}}\in c_1(L|_{X_{p_k}})$ constructed
in Proposition \ref{abc} (the subscript FY may hint at Fujita--Yau), then by running through the same arguments as above, one observes that $\{T_{{\rm FY}, {p_k}}\}_{{p_k}\in V^*}$ are also bounded in mass, hence that they
give rise to a weakly convergent subsequence $T_{{\rm FY}, {p_k}}\to T_{{\rm FY}}$
for some ${p_k}\to p$.   Alternatively, we consider the Poincar\'e--Lelong equation (in the sense of current)
(cf. \cite[(3.11)]{dem}):
$$ \frac{\sqrt{-1}}{2\pi}\partial_t\bar\partial_t \log||s(t)||_{h_t}^2=T_t-T_{{\rm FY}, t}$$
where $h_t$ is a singular Hermitian metric on $L|_{X_t}$ for $T_{{\rm FY}, t}$ (see the proof of Proposition \ref{abc}, the
second-to-last paragraph).
Since $$\int_{X_t} \partial_t\bar\partial_t \log||s(t)||_{h_t}^2\wedge \mathfrak{g}_t^{n-1}
=\int_{X_t}\log||s(t)||_{h_t}^2\wedge \partial_t\bar\partial_t \mathfrak{g}_t^{n-1}=0$$
by the Gauduchon condition $\partial_t\bar\partial_t \mathfrak{g}_t^{n-1}=0$
and since the volume $vol(D_t)$ as $\int_{X_t}T_t\wedge\mathfrak{g}_t^{n-1}$
is uniformly bounded as just shown, we find that the mass $\int_{X_t}T_{{\rm FY}, t}\wedge\mathfrak{g}_t^{n-1}$
is uniformly bounded too.

In what follows, we have here normalized the volume $\int_{X_t}\mathfrak{g}_t^{n}=1$
for every $t\in V_p$.

We are ready to finish the proof that $X_p$ is Moishezon.
The following arguments correspond to those of Popovici in \cite[Step 2 on p. 527]{P3}, so let's just be brief.
Let's first note that the family $\mathcal{X}\to V_p$ can be equivalently described as
$(X, J_t)_{t\in V_p}$ where $X$ is a fixed differentiable manifold with $\{J_t\}_{t\in V_p}$
a family of complex structures holomorphically varied in $t$.   We will use this picture in this paragraph
without further notice.   As the same arguments on \cite[p. 527]{P3},
$T_{{\rm FY}}$ is a closed positive $(1,1)$-current and lies in an integral class.
We need the following semi-continuity property given by \cite[Proposition 2.1]{Bou}.  For almost every $x\in X_p$,
$$T_{{\rm FY}, ac}^n(x)\ge \limsup_{k\to\infty} (T_{{\rm FY}, {p_k}})^n_{ac}(x).$$
By the criterion of \cite[Theorem 1.3]{P08} or Theorem \ref{pop-hol}, it comes down to showing
that the RHS above is bounded from below by $\delta dV_p(x)$
for some constant $\delta>0$ where $dV_p=\mathfrak{g}_p^{n}$.
This requirement in our present situation can be fulfilled as an immediate consequence of
Proposition \ref{abc} and \eqref{global-lemma2-ii} of
Lemma \ref{global-lemma2}.  Namely, the uniform estimate in \eqref{global-lemma2-ii} of
Lemma \ref{global-lemma2} yields a $\delta>0$ such that the volumes
$$v(L|_{X_{p_k}})> 2\delta>0$$
 for all $k$,
so applying Proposition \ref{abc}, with a common $\varepsilon=\delta$, to every $L|_{X_{p_k}}$ does the job.

\begin{rem}\label{delta}
 The existence of the uniform lower bound $\delta$ is crucial here and also distinguishes our proof from \cite[Step 2 on p. 527]{P3} since $0\not\in\bar B$ is allowed and then one is unable to
take the subsequence limit ${p_k}\to p$ to obtain a uniform lower bound for  the volumes $v(L|_{X_{p_k}})$ as mentioned in the paragraph preceding Question \ref{question}.
\end{rem}
The first part of the theorem is proved.
We turn now to the second half of the theorem.

{\it Proof for \eqref{thm-gauduchon-update-ii} of Theorem \ref{thm-gauduchon-update}}.
Our proof is divided into four major steps.

\newtheorem{step}{Step}
\renewcommand{\thestep}{$($\Roman{step}$)$}
\begin{step}\label{step 1}
Global line bundle $L$ and Kodaira map $\Phi$
\end{step}
Since all fibers are now Moishezon by the first half of the theorem,
it is well-known that the deformation invariance of the Hodge number of all types holds for any such family,
so that in particular, $h^{0,1}(X_t)$ stays the same in $t\in \Delta$.   Our Proposition \ref{global-B} applies.
Jointly with Proposition \ref{propa} and Corollary \ref{unc-big}, we have arrived at
a global line bundle, denoted by $\tilde L$, on $\mathcal{X}$ such that $\tilde L|_{X_t}$ is big
for every $t\in \Delta$; this is said to be \emph{$\pi$-big} on $\mathcal{X}$.   Moreover,
the uniform estimate similar to \eqref{16-3} holds for $\tilde L|_{X_t}$ with every $t\in\Delta$.
The arguments in the beginning of this proof can now be refined as follows.

Let's  denote the above $\tilde L$ by $L$ if there is no danger of confusion.
To proceed further, a difficulty arises.  On each fiber $X_t$, there is some $q_t\in\mathbb{N}$ such
that $H^0(X_t, L|_{X_t}^{\otimes q_t})$ can induce a bimeromorphic embedding of $X_t$; $q_t$ may depend on $t$,
however.  We do not know how to control it even though the uniform estimate \eqref{16-3} holds here.
Our methods of overcoming the difficulty consist in the study of bimeromorphic geometry of
$\mathcal{X}\to\Delta$ as a family; the complex analytic desingularization \cite{ahv} also plays a useful role
in the process.

We can choose a point $b\in \Delta$ such that  for every $q\in \mathbb{N}$,
$h^0(X_t, L^{\otimes q}|_{X_t})$ is locally constant in some neighborhood (dependent on $q$) of $b$.
This is because for a given $m\in \mathbb{N}$,
the set of points in $\Delta$ where $h^0(X_t, L^{\otimes m}|_{X_t})$ fails to be locally constant,
is at most countable (cf. Theorem \ref{Upper semi-continuity}).
We fix such a $b\in \Delta$.  For this fiber $X_b$ at $b$, there exists a $\tilde{q}\in \mathbb{N}$
such that the Kodaira map associated with the complete linear system
$|L^{\otimes \tilde{q}}|_{X_b}|$ gives a bimeromorphic embedding of $X_b$ since $L|_{X_b}$ is big.
Now the preceding local invariance of $h^0(X_t, L^{\otimes \tilde{q}}|_{X_t})$ at $b$ yields
that the natural map
$$(\pi_*L^{\otimes \tilde{q}})_b\to H^0(X_b, L^{\otimes \tilde{q}}|_{X_b})$$
is surjective;
see Corollary \ref{ccs-bc}.  By Theorem A of Cartan (= Theorem \ref{cartan-a}),
one can choose $E$ linearly spanned by $$\{s_0, s_1, s_2,\cdots, s_N\}\subset \pi_*L^{\otimes \tilde{q}}(\Delta)$$ whose germs generate
$(\pi_*L^{\otimes \tilde{q}})_b$.   To sum up, by the identification $\pi_*L^{\otimes \tilde{q}}(\Delta)\cong
H^0(\mathcal{X}, L^{\otimes \tilde{q}})$,  the Kodaira map associated with the above $E$, denoted by
$$\Phi: \mathcal{X}\dashrightarrow \mathbb{P}^N\times\Delta: x\mapsto\big([s_0(x):s_1(x):\cdots :s_N(x)], \pi(x)\big),$$
is meromorphic
on $\mathcal{X}$ and bimeromorphic on $X_b$.  Here as usual, the concept of meromorphic/bimeromorphic maps is understood in the sense of Remmert; e.g. see Definition \ref{bimero} for more.   For this claim it is
stated in \cite[Example 2.4.2, p. 15]{Ue} for compact varieties.  If the target is a projective space,
it is treated in \cite[pp. 490-493]{Griffith} or \cite{s75};
a variant of which for our need is given as follows, since $\mathcal{X}$ is noncompact and  the target  is not a projective space,
 but $\mathbb{P}^N\times\Delta$.
Let $i$ denote the composite map $$\mathbb{P}^N\times\Delta\subset\mathbb{P}^N\times\mathbb{P}^1\to \mathbb{P}^{2N+1}$$
where the second map is the Segr\'e embedding $$([x_0:x_1:\cdots :x_N], [1:t])\mapsto [x_0:x_0t:x_1:x_1t:\cdots :x_N:x_Nt]$$
 (cf. \cite[p. 192]{Griffith}), and
 $$\tilde i:\mathcal{X}\times (\mathbb{P}^N\times\Delta)\to \mathcal{X}\times\mathbb{P}^{2N+1}:(z^1, z^2)\mapsto (z^1, i(z^2)).$$
Associated with the sections $$\{s_0, s_0\tilde t, s_1, s_1\tilde t, \cdots, s_N, s_N\tilde t\}\subset H^0(\mathcal{X}, L^{\otimes \tilde{q}}\otimes_{\mathcal{O}_\mathcal{X}}\pi^*\mathcal{O}_\Delta)$$
where $\tilde t\in H^0(\mathcal{X}, \pi^*\mathcal{O}_\Delta)$ denotes $\pi^*t$,
the composite map
$$\Phi_1:=i\circ \Phi:\mathcal{X}\dashrightarrow \mathbb{P}^{2N+1}$$
is a meromorphic map by \cite[pp. 490-492]{Griffith}
so that  with the graph  ${\mathcal{G}}_1\subset \mathcal{X}\times\mathbb{P}^{2N+1}$  of $\Phi_1$, one expects
$$\mathcal{G}:={\tilde i}^{-1}({\mathcal{G}}_1\cap \tilde i (\mathcal{X}\times(\mathbb{P}^N\times\Delta)))={\tilde i}^{-1}({\mathcal{G}}_1)\subset \mathcal{X}\times(\mathbb{P}^N\times\Delta)$$ to serve
as the graph of $\Phi$.  Indeed, this $\mathcal{G}$ is a complex space in $\mathcal{X}\times(\mathbb{P}^N\times\Delta)$.
The other conditions required for $\mathcal{G}$ as a graph variety of
a meromorphic map in Definition \ref{bimero} can
also be verified via ${\mathcal{G}}_1$.  It follows that
$$\Phi:\mathcal{X}\dashrightarrow \mathbb{P}^N\times\Delta$$
is a meromorphic map, as claimed.

Denote the respective projections by $$q_1:{\mathcal{G}}\to \mathcal{X}$$ and
$$q_2:=\pi_{\mathbb{P}^N\times\Delta}|_\mathcal{G}:{\mathcal{G}}\to \mathbb{P}^N\times\Delta$$
where $\pi_{\mathbb{P}^N\times\Delta}:\mathcal{X}\times (\mathbb{P}^N\times\Delta)\to \mathbb{P}^N\times\Delta$, and
set $$\mathcal{Y}:=q_2(\mathcal{G})=\Phi(\mathcal{X})\subset {\mathbb{P}^N\times\Delta}$$ (cf. \cite[p. 14]{Ue}).   The projection $\pi_{\mathbb{P}^N\times\Delta}$
is not proper.
But by Lemma \ref{proper} below, $\mathcal{Y}$ is a closed subvariety of $\mathbb{P}^N\times\Delta$
and as such $\mathcal{Y}\to\Delta$ is proper;
$$Y_t\subset\mathbb{P}^N\times\{t\}$$
 denotes the corresponding projective subvariety in $\mathcal{Y}$ seated at $t$.
 Since $\mathcal{X}$ is irreducible, so is $\mathcal{G}$ (cf. \cite[p. 13]{Ue}), and
$\mathcal{Y}=q_2(\mathcal{G})$ is thus irreducible. 
Clearly $\mathcal{Y}$ is of dimension $n+1$.

Remark that some refinements of the above construction will be made in Step \ref{step 3} for our need in due course.

Some tools in what follows have counterparts in algebraic category, however we work mostly within analytic category.
The above $\Phi$ is actually a morphism outside a subvariety $\mathcal{S}(\Phi)$ of codimension at least two in
$\mathcal{X}$ (cf. \cite[the third paragraph on p. 333]{re}), giving that for every $t\in \Delta$, $X_t\not\subset \mathcal{S}(\Phi)$ by dimension reason.
Outside the analytic set $S_t:=\mathcal{S}(\Phi)\cap X_t$ of codimension at least one in $X_t$, the restriction
$$\Phi_t:=\Phi|_{X_t\setminus S_t}$$ is a morphism.   By \cite[pp. 35-36]{st},
$\Phi_t$ is still a meromorphic map on $X_t$.  For later references, we can do it in the following way.
Recall that the projection $$q_1:\mathcal{G}(\Phi)={\mathcal{G}}\to\mathcal{X}$$ from the graph
of $\Phi$, is a proper modification 
(cf. Definition \ref{bimero}) so that $q_1^{-1}(X_t)\subset \mathcal{G}$ is an analytic subset of dimension
$n$.   Let  $\mathcal{C}$ be the unique irreducible component of $q_1^{-1}(X_t)$ which contains the graph of
$\Phi|_{X_t\setminus S_t}$, so that $\mathcal{C}\setminus q_1^{-1}(S_t)\cong X_t\setminus S_t$
biholomorphically and thus that $q_1|_{\mathcal{C}}:\mathcal{C}\to X_t$
is a proper modification (cf. Definition \ref{modification}).
Moreover, as for any subset $T\subset\mathcal{X}$,
$q_1^{-1}(T)=\mathcal{G}\cap(T\times \mathcal{Y})$, one has $\mathcal{C}\subset
q_1^{-1}(X_t)\subset X_t\times\mathcal{Y}$
hence that $\mathcal{C}$ induces a meromorphic map $\phi_{\mathcal{C}}:X_t\dashrightarrow\mathcal{Y}$
whose graph is precisely $\mathcal{C}$, since a meromorphic map is uniquely determined by an analytic subset $\mathcal{M}$ of $X_1\times X_2$ with two complex spaces $X_1,X_2$ which satisfies the condition $(M)$ that the projection $p_{X_1}:\mathcal{M}\rightarrow X_1$ is a proper modification as argued on \cite[p. 14]{Ue}.   Clearly $\phi_{\mathcal{C}}$ coincides
with $\Phi_t$ on the open subset $X_t\setminus S_t$.  Since $\phi_{\mathcal{C}}$ is meromorphic,
we now conclude that $\Phi_t:X_t\dashrightarrow \mathcal{Y}$ is a meromorphic map, as claimed.

We denote by $$\Phi|_{X_t}:X_t\dashrightarrow\mathcal{Y}$$ the meromorphic map associated with $\Phi_t$ as just shown.
 Since $\Phi|_{X_t}$ is now meromorphic
so that $\Phi|_{X_t}(X_t)$ is actually a closure of $\Phi|_{X_t}(X_t\setminus S_t)\subset Y_t$ in the
analytic set $Y_t$ (\cite[p. 493]{Griffith}),
we see that $\Phi|_{X_t}(X_t)\subset Y_t$ for every $t\in \Delta$.
However, it is not claimed that this closure $\Phi|_{X_t}(X_t)$ equals $Y_t$.
On the other hand, we will see in Step \ref{step 4} that with $\Phi$ in place of $\Phi|_{X_t}$,
$\Phi(X_t)= Y_t$ for every $t\in \Delta$.

We shall now see that $Y_t$ is of dimension $n$ for every $t\in \Delta$.
First note that, if $|b'-b|\ll 1$, $d\Phi|_{X_{b'}}$ is of rank $n$ at generic point of $X_{b'}$ since
it is so for $X_b$, hence that ${\rm dim}\, Y_b={\rm dim}\, Y_{b'}=n$.
In the algebraic setting, applying the upper semi-continuity of dimension (cf. \cite[Corollary 3 in Section 8 of Chapter 1]{mum}),
one is allowed to conclude that $Y_t$ is of dimension $n$.  Alternatively, an approach suitable in
our analytic setting is described as follows.
Let $$\{H_i\}_{1\le i\le n}\subset\mathbb{P}^N$$
be any hyperplane sections and
$$\tilde H_i:=H_i\times\Delta.$$
The closed subvariety $\tilde H_1\cap \tilde H_2\cap\cdots\cap\tilde H_n\cap \mathcal{Y}$ is projected,
via $\mathcal{Y}\subset \mathbb{P}^N\times\Delta\to\Delta$, down to a
subvariety $W_1\subset\Delta$ by Remmert's proper mapping theorem in complex spaces (= Theorem \ref{remmert}),
which is therefore the whole $\Delta$, as follows from the fact that
$W_1$ must contain a small open subset of $\Delta$ by ${\rm dim}\, Y_{b'}=n$.
If ${\rm dim}\,Y_{t_0}<n$ for some $t_0\in \Delta$, by choosing
$H_i$ in general positions such that $$H_1\cap H_2\cap\cdots\cap H_n\cap Y_{t_0}=\emptyset$$ with
$Y_{t_0}\subset \mathbb{P}^N$ via identification $ \mathbb{P}^N\cong  \mathbb{P}^N\times \{t_0\}$, then
with these $H_i$, $\tilde H_1\cap \tilde H_2\cap\cdots\cap\tilde H_n\cap \mathcal{Y}$ is projected,
via $\mathcal{Y}\to\Delta$, to a subset of $\Delta$
missing $t_0$, contradicting the preceding $W_1=\Delta$.   Hence ${\rm dim}\,Y_t\ge n$  for every $t\in\Delta$ so that
${\rm dim}\,Y_t=n$ since if
${\rm dim}\,Y_{t_1}\ge n+1$ for some $t_1\in\Delta$, it contradicts that
$\mathcal{Y}$ is irreducible and of dimension $n+1$ as
already indicated.

The following lemma has been used in the first half of this step. Notice that the projection $\mathcal{X}\times(\mathbb{P}^N\times\Delta)\to \mathbb{P}^N\times\Delta$ is usually not proper and we try to prove that its restriction to the graph of $\Phi$ is indeed proper.
\begin{lemma}\label{proper} With the notations as above, the projection morphism
$$q_2:\mathcal{G}\,(\subset \mathcal{X}\times(\mathbb{P}^N\times\Delta))\to \mathbb{P}^N\times\Delta$$ is
proper.  As a consequence, $q_2(\mathcal{G})$ is an analytic subvariety of $\mathbb{P}^N\times\Delta$.
\end{lemma}
\begin{proof} Since $\Phi:\mathcal{X}\dashrightarrow \mathbb{P}^N\times\Delta$ is a meromorphic map, $q_1:{\mathcal{G}}\to\mathcal{X}$
is a proper modification by Definition \ref{bimero} and  thus there exist two respective open dense
subsets $$U\subset \mathcal{G}\quad \text{and}\quad V\subset \mathcal{X},$$
which are biholomorphically equivalent under $q_1|_U: U\to V=q_1(U)$, such that $\Phi$ is a morphism on $V$
and $U=\{(x, \Phi(x)) \}_{x\in V}$ (cf. \cite[the remarks preceding Remark 2.3, p. 14]{Ue}).
Let $W_2\subset \mathbb{P}^N\times\Delta$ be a compact subset.  To prove by contradiction,
suppose that $q_2^{-1}(W_2)\subset \mathcal{G}$ is not compact.   Set the projections
$$\pi:\mathcal{X}\to\Delta\quad \text{and}\quad \pi_{\mathbb{P}^N\times\Delta}: \mathbb{P}^N\times\Delta\to \Delta.$$
Then under the projection $q_1:{\mathcal{G}}\to \mathcal{X}$,
$q_1(q_2^{-1}(W_2))$ is closed but not compact since $q_1$ is proper.
This means, since $\pi:\mathcal{X}\to\Delta$ is proper,
that there exists a sequence $t_k\in\Delta$ with $t_k\to \partial\Delta$ and
 $\{t_k\}_k\subset \pi(q_1(q_2^{-1}(W_2)))$.
We first show that $$\Phi(X_t)\subset \mathbb{P}^N_t:=\pi_{\mathbb{P}^N\times\Delta}^{-1}(t)$$ for every $t\in \Delta$.
See another more systematic approach in Step \ref{step 4} for this $\lq\lq\subset"$ again and the related issues.

As $\Phi(T)=\cup_{x\in T}\Phi(x)$ for a set $T\subset\mathcal{X}$,  we need to show that $\Phi(x)\in \mathbb{P}^N_t$
if $x\in X_t$. 
For any $(x, y)\in q_1^{-1}(x)\subset \mathcal{G}$,
there exists a sequence
$(x_j, y_j)\in U\subset \mathcal{G}$, $(x_j, y_j)\to (x, y)$ (by that $U$ is dense)
with $x_j\in V=q_1(U)$ and $y_j=\Phi(x_j)$.
Thus, $$\lim_j x_j=\lim_j q_1(x_j, y_j) \to q_1(x, y)=x.$$
By definition $\Phi(x)=q_2(q_1^{-1}(x))$ as in the first paragraph on \cite[p. 14]{Ue}, we have just seen that any point
$y\in \Phi(x)=q_2(q_1^{-1}(x))$
is a limit point of
the form $\Phi(x_j)=y_j$ for some sequence $x_j\to x$ in $\mathcal{X}$ with $x_j\in V$.
In this case, since  $x_j\not\in \mathcal{S}(\Phi)$, i.e., $\Phi$ is a morphism at $x_j$,
one has $\pi(x_j)=\pi_{\mathbb{P}^N\times\Delta}\Phi(x_j)$ by construction of $\Phi$, which is $\pi_{\mathbb{P}^N\times\Delta}(y_j)$.
Write $\pi(x_j)=\pi_{\mathbb{P}^N\times\Delta}(y_j)=t_j$.  So $y_j\in  \mathbb{P}^N_{t_j}$ and if $x\in X_t$,
then $t_j\to t$ since $\pi(x_j)\to \pi(x)$.
In short, for any $x\in X_t$ and any $y\in \Phi(x)$, $y=\lim_jy_j\in \lim_j\mathbb{P}^N_{t_j}$ which is
$\mathbb{P}^N_t$ as $t_j\to t$.
That is $\Phi(X_t)\subset \mathbb{P}^N_t$ for every $t\in \Delta$, as claimed.

The remaining is standard.
Corresponding to every
$t_k\in \{t_k\}_k$
above, there is a $$(x_k, y_k)\in q_2^{-1}(W_2)\subset \mathcal{G}$$ 
with $t_k=\pi(q_1(x_k, y_k))=\pi(x_k)$ so $x_k\in X_{t_k}$.
By $y_k\in \Phi(x_k)$, $$\pi_{\mathbb{P}^N\times\Delta}(y_k)\in \pi_{\mathbb{P}^N\times\Delta}(\Phi(x_k))\in \pi_{\mathbb{P}^N\times\Delta}(\mathbb{P}^N_{t_k})=t_k$$
by $x_k\in X_{t_k}$ and $\Phi(X_{t_k})\subset \mathbb{P}^N_{t_k}$ above.   In short, $\pi_{\mathbb{P}^N\times\Delta}(y_k)=t_k$.
By $y_k=q_2(x_k, y_k)$ and $q_2(x_k, y_k)\in W_2$, $\pi_{\mathbb{P}^N\times\Delta}(y_k)\in \pi_{\mathbb{P}^N\times\Delta}(W_2)$ which
 is a compact subset in $\Delta$ since $W_2\subset \mathbb{P}^N\times\Delta$ is compact by assumption.
This contradicts $\pi_{\mathbb{P}^N\times\Delta}(W_2)\ni\pi_{\mathbb{P}^N\times\Delta}(y_k)=t_k\to \partial\Delta$.
As said, the contradiction yields that $q_2:{\mathcal{G}}\to \mathbb{P}^N\times\Delta$ is
a proper morphism.

The second statement of the lemma follows from
Remmert's proper mapping theorem (cf. \cite[p. 395]{Griffith} or Theorem \ref{remmert}). 
\end{proof}

\begin{rem}\label{proper-rem} In fact, the above proof works for the following situation.
Let $\pi_{Z_1}:Z_1\to\Delta$ and $\pi_{Z_2}:Z_2\to \Delta$ be proper morphisms where
$Z_1$, $Z_2$ be irreducible (and reduced) complex spaces.  Suppose that $\psi: Z_1\dashrightarrow Z_2$ is a meromorphic map
(in the sense of Remmert)
with $\mathcal{G}(\psi)\subset Z_1\times Z_2$ the irreducible subvariety of the graph of $\psi$.  Let $\emptyset\neq U\subset Z_1$ be an open dense subset such that $\psi$ is a morphism on $U$.  Suppose furthermore that $\pi_{Z_1}|_U=\pi_{Z_2}\circ \psi|_U$.  Then
the projection morphism ${\mathcal{G}}(\psi)\to Z_2$ is proper.
\end{rem}

The remaining proof is devoted to the bimeromorphic problem of $\Phi$ and $\Phi|_{X_t}$.
\begin{step}\label{step 2}
Bimeromorphic embedding of $\Phi$
\end{step}  We shall use the notations
$$\Phi:\mathcal{X}\dashrightarrow\mathbb{P}^N\times\Delta\quad \text{and}\quad \Phi:\mathcal{X}\dashrightarrow\mathcal{Y}=\Phi(\mathcal{X})$$
interchangeably.   Let's start with a desingularization
$\mathcal{R}_{{\mathcal{Y}}}:\tilde{\mathcal{Y}}\to\mathcal{Y}$; it can be chosen as a proper modification.
For a review, see \cite[Theorem 2.12]{Ue} for compact cases and \cite[Theorem 7.13]{pe}
or \cite[Theorem 5.4.2, p. 271]{ahv} for general cases.
Note that if $h$ is a proper modification between complex spaces,
then $h^{-1}$ is still a meromorphic map \cite[p. 34]{st}
and hence $h$ is a bimeromorphism \cite[p. 33]{st}.
Write
$$\Psi:=\mathcal{R}_{{\mathcal{Y}}}^{-1}\circ\Phi:\mathcal{X}\dashrightarrow \mathcal{\tilde Y},$$
 which is still meromorphic (cf. \cite[pp. 16-17]{Ue}).
In algebraic cases,
if $g:X\to Y$ with $Y$ irreducible is a generically finite morphism such that
$g(X)$ is dense in $Y$ or equivalently $g$ is dominant, then there exists an open dense subset $U\subseteq Y$
such that the induced morphism $g^{-1}(U)\to U$ is a finite morphism (e.g. \cite[Exercise 3.7 of Chapter II]{Ht}).
If $g$ is only a generically finite dominant {\it rational} map, by going to its graph and restricting to the open dense subset $V\subset X$ which is the complement of the indeterminacies of $g$, one is reduced to the morphism case and
there is a similar conclusion.

Analytically, let's adopt a similar strategy here.
Write $\mathcal{S}(\Psi)$ for the indeterminacies of $\Psi$,
which is of codimension at least two in $\mathcal{X}$ (\cite[the third paragraph on p. 333]{re}).
Since $\mathcal{X}$ and $\tilde{\mathcal{Y}}$ are smooth and of the same dimension,
the ramification divisor $R_\Psi\subset {\mathcal{X}}$ is
well-defined. Namely, it is first defined outside $\mathcal{S}(\Psi)$ and then extends across it since ${\rm dim}\,R_\Psi>{\rm dim}\,\mathcal{S}(\Psi)$ by
Remmert--Stein extension theorem, cf. \cite[p. 293]{bis}.
Let
$$\mathcal{G}(\Psi)\subset \mathcal{X}\times\mathcal{\tilde Y}$$
denote
the graph of $\Psi$ with the projections $$p_{\mathcal{X}}:\mathcal{G}(\Psi)\to\mathcal{X}\quad \text{and}\quad
p_{\mathcal{\tilde Y}}: \mathcal{G}(\Psi)\to \mathcal{\tilde Y},$$ respectively.
Having proved Lemma \ref{py-proper} below that $p_{\mathcal{\tilde Y}}: \mathcal{G}(\Psi)\to \mathcal{\tilde Y}$ is proper, one knows that the image of an analytic set under $p_{\mathcal{\tilde Y}}$
is still analytic by the proper mapping theorem of Remmert (= Theorem \ref{remmert}).

Set $$\Psi(\mathcal{S}(\Psi)):
=p_{\tilde{\mathcal{Y}}}(p_{\mathcal{X}}^{-1}(\mathcal{S}(\Psi)))$$ which is a proper  analytic subvariety of $\tilde{\mathcal{Y}}$ and
similarly the subvariety
$$\Psi^{-1}(\Psi(\mathcal{S}(\Psi))):=p_{\mathcal{X}}(p_{\tilde{\mathcal{Y}}}^{-1}(\Psi(\mathcal{S}(\Psi))))\subset\mathcal{X};$$
also subvarieties $\Psi(R_\Psi)$, $\Psi^{-1}(\Psi(R_\Psi))$.
Write $$\Psi':\mathcal{X}\setminus \big(\Psi^{-1}(\Psi(\mathcal{S}(\Psi)))\cup \Psi^{-1}(\Psi(R_\Psi))\big)=:\mathcal{X}'
\to {\tilde{\mathcal{Y}}}':=\tilde{\mathcal{Y}}\setminus \big(\Psi(\mathcal{S}(\Psi))\cup \Psi(R_\Psi)\big)$$ and $\Psi'_t$
for its restriction to (an open part of) $X_t$, more precisely to
$X_{t}':=X_{t}\cap \mathcal{X}'$ with images in $\tilde Y'_{t}:={\tilde Y}_{t}\cap \tilde{\mathcal{Y}}'$
for those $X_{t}'\neq\emptyset$. Here $\tilde Y_t:=\pi_{\tilde{\mathcal{Y}}}^{-1}(t),$
where $$\pi_{\tilde{\mathcal{Y}}}:\tilde{\mathcal{Y}}\to \Delta$$ is the projection via $\tilde{\mathcal{Y}}\to\mathcal{Y}\to\Delta$.
By construction $\Psi'$ is a surjective morphism and since $d\Psi'$ is now of maximal rank everywhere,
$\Psi'$ is a local biholomorphism. 
The $\mathcal{X}'$ and ${\tilde{\mathcal{Y}}}'$ can possibly be enlarged.  Suppose that
$x\in \mathcal{X}\setminus \mathcal{S}(\Psi)$ and $\Psi$ is a local
biholomorphism between the open neighborhoods $\mathcal{H}_x\ni x$ and $\mathcal{K}_{\Psi(x)}\ni \Psi(x)$.  Then
$\Psi|_{\mathcal{X}'\cup \mathcal{H}_x}:\mathcal{X}'\cup \mathcal{H}_x\to {\tilde{\mathcal{Y}}}'\cup \mathcal{K}_{\Psi(x)}$ is still surjective and a local biholomorphism.
By enlarging $\mathcal{X}'$ and ${\tilde{\mathcal{Y}}}'$ this way, we can assume that if $\Psi$ is a local biholomorphism
at $x'$, then $x'\in \mathcal{X}'$ and $\Psi(x')\in{\tilde{\mathcal{Y}}}'$.  Here, $\mathcal{X}'\subset\mathcal{X}$ and
${\tilde{\mathcal{Y}}}'\subset{\tilde{\mathcal{Y}}}$ are connected open dense subsets in the sense of ordinary complex topology.

We would like to show that $\Psi'$ is a finite morphism.    First note that since $\Psi'$ is a morphism and surjective,
$$\Psi'(X_t')=\tilde Y_t'=\Psi_t'(X_t')\ \text{and}\ \Psi'^{-1}(\tilde Y_t')=X_t'=\Psi_t'^{-1}(\tilde Y_t').$$
It follows that, if $\Psi'^{-1}(\Psi'(x'))$ is infinite for some $x'\in\mathcal{X}'$ with
$y_\tau=\Psi'(x')\in  \tilde Y_\tau'\subset {\tilde{\mathcal{Y}}'}$, 
then ${\Psi'_\tau}^{-1}(y_\tau)=\Psi'^{-1}(y_\tau)\subset X_\tau$ is also infinite.
This cannot occur.
We shall now see that $$\Psi|_{X_\tau}:X_\tau\to  \Psi|_{X_\tau}(X_\tau)\subset \tilde Y_\tau$$ is generically finite,
and that, with $\Psi'_\tau$ defined on $X_\tau':=X_\tau\cap\mathcal{X}'$,
$$C:= {\Psi'_\tau}^{-1}({\Psi'_\tau}(x_{\tau}'))$$ is necessarily finite if $x_{\tau}'\in X_\tau'$.
Here $\Psi|_{X_\tau}$ is a meromorphic map by the same reasoning that $\Phi|_{X_t}$ is meromorphic for every $t\in \Delta$, in Step \ref{step 1}.
This will prove our claim that $\Psi'$ is a finite morphism.

To work on the meromorphic map $\Psi|_{X_\tau}:X_\tau\dashrightarrow \Psi|_{X_\tau}(X_\tau)$ above, we consider the meromorphic map
$$\widehat{\Psi|_{X_\tau}}:=p_2^{-1}\circ \Psi|_{X_\tau}\circ p_1:\hat X_\tau\dashrightarrow \hat Z_{\tau},$$
where
$$p_1:\hat X_\tau\to X_\tau\quad \text{and}\quad p_2:\hat Z_{\tau}\to \Psi|_{X_\tau}(X_\tau)$$
are proper modifications from projective
manifolds $\hat X_\tau$ and $\hat Z_{\tau}$, respectively. Here $\Psi|_{X_\tau}(X_\tau)$ is a Moishezon manifold by \cite[Corollary 2.24]{cp}. Now that $\widehat{\Psi|_{X_\tau}}$ is generically finite, as well-known
since $\widehat{\Psi|_{X_\tau}}$ is an algebraic (rational) map and is generically of maximal rank,
one has that $\Psi|_{X_\tau}=p_2\circ \widehat{\Psi|_{X_\tau}}\circ p_1^{-1}$ is also generically finite. 
To verify that $C(={\Psi'_\tau}^{-1}({\Psi'_\tau}(x_{\tau}')))$ is finite, suppose otherwise that $C$ is infinite.
We can assume, by further monoidal transformations of $\hat X_\tau$, that
$\widehat{\Psi|_{X_\tau}}:\hat X_\tau\to \hat Z_{\tau}$ is actually a morphism.
By commutativity ${\Psi'_\tau}\circ p_1=p_2\circ \widehat{\Psi|_{X_\tau}}$ where defined,  thus $$\widehat{\Psi|_{X_\tau}}^{-1}(p_2^{-1}(K))\supset
p_1^{-1}({\Psi'_\tau}^{-1}(K))$$ for any set $K\subset \Psi|_{X_\tau}(X_\tau)$, one sees
that $$p_1^{-1}(C)\subset \widehat{\Psi|_{X_\tau}}^{-1}(p_2^{-1}({\Psi'_\tau}(x_{\tau}')))=:\hat C$$
with $\hat C$ being an analytic set in $\hat X_\tau$.  Since $\hat C\subset \hat X_\tau$ has only finitely many
irreducible components, there must exist an irreducible component $\mathcal{J}\subset
\hat X_\tau$ of $\hat C$ such that
$p_1(\mathcal{J})\cap C$ is infinite.  The fact $|p_1(\mathcal{J})|=\infty$ implies that the irreducible analytic set $p_1(\mathcal{J})$ in $X_\tau$
must be of dimension at least one since $X_\tau$ is compact.
Recall $X_\tau'$ and ${\Psi'_\tau}$ above. 
This $p_1(\mathcal{J})\cap X_\tau'\supset p_1(\mathcal{J})\cap C$, a nontrivial open subset of $p_1(\mathcal{J})$,
is also of dimension at least one since $p_1(\mathcal{J})$ is irreducible. 
By the commutativity ${\Psi'_\tau}\circ p_1=p_2\circ\widehat{\Psi|_{X_\tau}}$,
$${\Psi'_\tau}(p_1(\mathcal{J})\cap X_\tau')\subset p_2(\widehat{\Psi|_{X_\tau}}(\mathcal{J}))
\subset p_2(\widehat{\Psi|_{X_\tau}}(\hat C)) \subset p_2(p_2^{-1}({\Psi'_\tau}(x_{\tau}')))={\Psi'_\tau}(x_{\tau}'),$$ i.e.,
$p_1(\mathcal{J})\cap X_\tau'\subset  {\Psi'_\tau}^{-1}({\Psi'_\tau}(x_{\tau}'))=C$ which by ${\rm dim}(p_1(\mathcal{J})\cap X_\tau')\ge 1$, gives that
$C\subset X_\tau'$, must also be of dimension at least one.  The facts that ${\Psi'_\tau}(C)$ is a point
and ${\rm dim}\, C\ge 1$ contradict that ${\Psi'_\tau}$ on $X_\tau'$ is a local biholomorphism.
As said, this contradiction proves that $\Psi':\mathcal{X}'\to \tilde{\mathcal{Y}}'$ is a finite morphism.

Given that $\Psi':\mathcal{X}'\to \tilde{\mathcal{Y}}'$ is finite, let's recall that in algebraic cases,
a finite surjective morphism between nonsingular varieties
over algebraically closed field is flat (cf. \cite[Exercise 9.3 of Chapter III]{Ht}) and a finite flat morphism $g:X\to Y$
with $Y$ Noetherian gives
that $g_*\mathcal{O}_X$ is a locally free $\mathcal{O}_Y$-module (cf. \cite[Proposition 2 in Section 10 of Chapter 3]{mum}). 
Applying these algebraic facts to $\Psi'$, 
one sees that 
$\Psi'_*\mathcal{O}_{\mathcal{X}'}$ is locally free, say, of rank $r$ on ${\tilde{\mathcal{Y}'}}$.
The local freeness of $\Psi'_*\mathcal{O}_{\mathcal{X}'}$ yields that the cardinality $(=r)$ of
$\Psi'^{-1}(y)$ is independent of $y\in \tilde{\mathcal{Y}}'$.

We shall now prove a similar result as above in the present analytic setting.  This is standard in covering spaces of topology.
For notations and references in the later use, we give some details.
Let $x_0'\in X_{t_0}'$ and $x'\in \mathcal{X}'$ be any point nearby $x_0'$.
Connecting the two points $\Psi'(x_0')=y_0'$ and $\Psi'(x')=y'$ by an analytic curve $\check{C}\subset\mathcal{Y}'$,
one has that $\tilde C:=\Psi'^{-1}(\check{C})$ is an
analytic set with each irreducible component $\tilde C_i$, $1\le i\le k$, being of dimension one and
$\Psi'(\tilde C_i)=\check{C}$ since $\Psi'$ is finite
surjection as remarked above.   Further, these components are pairwise disjoint.  For, if $c\in \tilde C_i\cap \tilde C_j$ with
$\tilde C_i\ne \tilde C_j$, then $\Psi'$ would be seen to be at least ``two-to-one" around $c$, violating the fact
that $\Psi'$ is a local biholomorphism everywhere, as mentioned earlier.   This property of disjointness leads us to arrive at
the fact that each point in
$\Psi'^{-1}(y_0')$ is joined by a unique $\tilde C_i$ to a point in $\Psi'^{-1}(y')$ and vice versa, so that
$\Psi'^{-1}(y_0')$ and $\Psi'^{-1}(y')$ are of the same cardinality $(=k)$.
If the two points $x_0', x' \in \mathcal{X}'$ are not close to each other,
the same result remains valid since $\mathcal{X}'$ is connected.
We conclude that, if with some $t_0$ it holds that $\Psi'_{t_0}$ is injective, then
$\Psi'$ is injective too since ${\Psi_{t_0}'}^{-1}(\tilde Y_{t_0}')=\Psi'^{-1}(\tilde Y_{t_0}')$ as
previously given.

We are going to show that $\Psi'$ is a biholomorphism.  The surjection part is
noted earlier; the injection part is to see that $X_b':=
X_b\cap \mathcal{X}'\ne\emptyset$. Recall that
$\Phi|_{X_b}$ is bimeromorphic by the assumption on $X_b$,
hence that when $X_b'\ne \emptyset$, $\Psi'_{b}$ is injective,
giving that $\Psi'$ is injective by the preceding paragraph.
Fix a $x_b\in X_b$ such that $\Phi|_{X_b}$ is a morphism
at $x_b$ and that $d(\Phi|_{X_b})(x_b)$ is of rank $n$.
We see that $d\Phi$ is always nonsingular
along the $t$-direction, hence that $d\Phi(x_b)$ is of rank $n+1$.  So
$\Phi:\mathcal{X}\to\mathcal{Y}$ is a local immersion at $x_b$; it is a local biholomorphism at $x_b$
provided that $\mathcal{Y}$ is smooth at $\Phi(x_b)$.   To facilitate our discussion, we make the
claim that
$$\hbox{{\it
$\mathcal{Y}$ is smooth at $\Phi(x_b)$, and thus $\Phi:\mathcal{X}\to \mathcal{Y}$ is  a local biholomorphism at $x_b$.}}  $$ 
We come for a complete discussion of this claim in Step \ref{step 3}.

It is a remarkable fact that the desingularization $\mathcal{R}_{{\mathcal{Y}}}:\tilde{\mathcal{Y}}\to \mathcal{Y}$
can be chosen in such a way that it is an isomorphism away from the singular points of $ \mathcal{Y}$ (\cite[Theorem 5.4.2,
p. 271]{ahv}). 
Let's choose such a desingularization in advance.  It follows that $\mathcal{R}_{{\mathcal{Y}}}^{-1}$ is a local
biholomorphism at $\Phi(x_b)$ since ${\mathcal{Y}}$ is assumed smooth there.  So
$\Psi=\mathcal{R}_{{\mathcal{Y}}}^{-1}\circ\Phi$ is a local biholomorphism at $x_b$ and thus $x_b\in \mathcal{X}'$
by construction of $\mathcal{X}'$, giving $x_b\in X_b'$ so $X_b'\ne\emptyset$.
As remarked earlier,  $X_b'\ne\emptyset$ implies that $\Psi'$ is injective
and, in turn, that $\Psi'$ is a biholomorphism.

Now since $\Psi'$ is proved to be biholomorphic, with the fact that
$\Psi$ is meromorphic, we see that $\Psi$ is a bimeromorphic map.   For, the set
$$\widehat{\mathcal{G}(\Psi)}:=\{(y, x)\in \tilde{\mathcal{Y}}\times\mathcal{X}: (x, y)\in \mathcal{G}(\Psi)\}$$ is irreducible and analytic since
$\mathcal{G}(\Psi)$ is so.  The projection morphism
$$\hat p_{\tilde{\mathcal{Y}}}:\widehat{\mathcal{G}(\Psi)}\to \tilde{\mathcal{Y}}$$
is proper as shown in Lemma \ref{py-proper}, and
since $\Psi'$ is biholomorphic and then
$\hat p_{\tilde{\mathcal{Y}}}$ induces $$\mathcal{G}(\Psi'^{-1})\cong \tilde{\mathcal{Y}}'\subset\tilde{\mathcal{Y}},$$
where $\mathcal{G}(\Psi'^{-1})\subset \widehat{\mathcal{G}(\Psi)}$ is open and dense, $\hat p_{\tilde{\mathcal{Y}}}$ is thus a proper modification.
It follows from Definition \ref{modification}, with the fact that
$\hat p_{\tilde{\mathcal{Y}}}$ is a proper modification, that $\Psi^{-1}:\tilde{\mathcal{Y}}\dashrightarrow\mathcal{X}$ is a meromorphic map by Definition \ref{bimero}
and hence a bimeromorphic map since $\Psi\circ \Psi^{-1}={\rm id}_{\tilde{\mathcal{Y}}}$ and $\Psi^{-1}\circ \Psi={\rm id}_{\mathcal{X}}$ (or see \cite[p. 33]{st}).
Having that $\Psi:\mathcal{X}\dashrightarrow\tilde{\mathcal{Y}}$ is a bimeromorphic map, we now know
that $\Phi=\mathcal{R}_{{\mathcal{Y}}}\circ \Psi$ is a bimeromorphic map since the composite of two bimeromorphic maps remains meromorphic
hence bimeromorphic (cf. \cite[pp. 16-17]{Ue} or \cite[2) of Proposition 9]{st}),
as claimed in the second part of the theorem.

\begin{lemma}\label{py-proper}
The map $p_{\mathcal{\tilde Y}}: \mathcal{G}(\Psi)\to \mathcal{\tilde Y}$ is proper.
\end{lemma}
\begin{proof}
  This is equivalent to that $\hat p_{\tilde{\mathcal{Y}}}:\widehat{\mathcal{G}(\Psi)}\to \tilde{\mathcal{Y}}$ is proper
in the preceding paragraph. To justify our assertion above that $\hat p_{\tilde{\mathcal{Y}}}$ is proper, noting that the spaces under consideration are not
compact and the target space $\tilde{\mathcal{Y}}$ may be more general than those in Step \ref{step 1},
one uses Remark \ref{proper-rem}.  Alternatively, let's indicate arguments while dropping most details. 
Let $W_3\subset \tilde{\mathcal{Y}}$ be a compact set and suppose that $\hat p_{\tilde{\mathcal{Y}}}^{-1}(W_3)\subset\widehat{\mathcal{G}(\Psi)}$ is not compact.
Then under the projection $\hat p_{\mathcal{X}}:\widehat{\mathcal{G}(\Psi)}\to \mathcal{X}$,
$\hat p_{\mathcal{X}}(\hat p_{\tilde{\mathcal{Y}}}^{-1}(W_3))$ is closed but not compact, so that 
there exists a sequence $t_k\in\Delta$ with $t_k\to \partial\Delta$ and
 $\{t_k\}_k\subset \pi(\hat p_{\mathcal{X}}(\hat p_{\tilde{\mathcal{Y}}}^{-1}(W_3)))$.  As in Lemma \ref{proper}, to
show that for every $t\in \Delta$ $$\Psi(X_t)\subset \tilde Y_t=\pi_{\tilde{\mathcal{Y}}}^{-1}(t),$$
where $\pi_{\tilde{\mathcal{Y}}}:\tilde{\mathcal{Y}}\to \Delta$ is the projection via $\tilde{\mathcal{Y}}\to\mathcal{Y}\to\Delta$, 
we are reduced to showing that $\Psi(x)\in \tilde{Y_t}=\pi_{\tilde{\mathcal{Y}}}^{-1}(t)$ if $x\in X_t$. 
Now that $\hat p_{\mathcal{X}}:\widehat{\mathcal{G}(\Psi)}\to\mathcal{X}$ being a modification, is a biholomorphism
between the dense and open subsets $U$, $V$ of $\mathcal{G}$, $\mathcal{X}$, respectively,
with $V\cap \mathcal{S}(\Psi)=\emptyset$, i.e., $\Psi$ being a morphism on $V$ and $U=\{(\Psi(x), x)\}_{x\in V}$.
In the remaining part, with $\tilde Y_t$ in place of $\mathbb{P}^N_t$ in Lemma \ref{proper},
by exactly the same arguments one can show that
any $(y, x)\in \hat p_{\mathcal{X}}^{-1}(x)\subset \widehat{\mathcal{G}(\Psi)}$ can be approached by
a sequence $(y_j, x_j)\in U$ with $x_j\in V$ and $y_j=\Psi(x_j)$, in such a way that 
$y=\lim_j y_j\in \lim_j\tilde{Y}_{t_j}\subset\tilde{Y}_t$.
It implies the similar conclusion $\Psi(X_t)\subset \tilde Y_t$ for every $t\in \Delta$.
As in Lemma \ref{proper},  corresponding to every $t_k$ given above, there is a $(y_k, x_k)\in \hat p_{\tilde{\mathcal{Y}}}^{-1}(W_3)\subset \widehat{\mathcal{G}(\Psi)}$ 
with $t_k=\pi(\hat p_{\mathcal{X}}(y_k, x_k))=\pi(x_k)$, i.e., $x_k\in X_{t_k}$, 
such that $\pi_{\tilde{\mathcal{Y}}}(W_3)\ni\pi_{\tilde{\mathcal{Y}}}(y_k)=t_k\to \partial\Delta$, contradicting that $W_3$ is compact.
\end{proof}

As promised, let's treat the smoothness issue above before moving forwards.
\begin{step}\label{step 3}
Smoothness of $\mathcal{Y}$ at $\Phi(x_b)$
\end{step}
Since $x_b\in X_b$, the idea is to refine the process of choosing
$b\in\Delta$ in Step \ref{step 1} in such a way that $\Phi(X_b)$ is not entirely contained in the set  of
singular points of $\mathcal{Y}$.  If so, it follows by an analogous procedure as before that, one
can choose $x_b\in X_b$ such that $\mathcal{Y}$ is smooth at $\Phi(x_b)$, as desired.    This refinement is as follows.

Again, we start with
a global line bundle $L$ on $\mathcal{X}$ which is $\pi$-big; it follows that
 $H^0(X_t, L^{\otimes q(t)}|_{X_t})$ gives a bimeromorphic embedding of $X_t$, for every $t\in \Delta$ together with
a choice of $q(t)\in \mathbb{N}$ that depends on $t$.  Note that if
$$\mathcal{E}:=\{e_1, e_2, \cdots, e_k\}\subset H^0(X_t, L^{\otimes q(t)}|_{X_t})$$ contains a basis,
then the Kodaira map associated with $\mathcal{E}$ is still a bimeromorphic map on $X_t$.
The set $\Delta$ is uncountable while $\mathbb{N}$ is countable;
we easily infer that there exists an uncountable set $\Lambda\subset\Delta$ and some $q\in \mathbb{N}$ such that
$q(t)=q$ for each $t\in \Lambda$.    With this given $q$, $h^0(X_t, L^{\otimes q}|_{X_t})$ is locally constant
outside a proper analytic set $W_4\subset \Delta$ as seen by using Theorem \ref{Upper semi-continuity} or
by \cite[3) of Theorem 1.4, p. 6]{Ue} which, via \eqref{gbciv} of Lemma \ref{gbc} and Corollary \ref{exact-cri},
gives in Corollary \ref{gct} the cohomological flatness of $L^{\otimes q}$ in dimension $0$ over $\Delta\setminus W_4$.
By reducing $\Lambda$ while maintaining uncountability,
we can assume that $\Lambda\subset\Delta\setminus W_4$ since $W_4$ in this case can
only be a discrete subset of $\Delta$.
We can further reduce $\Lambda$ and assume that $\Lambda$ is relatively compact in $\Delta\setminus W_4$
(so that $\bar\Lambda\subset \Delta\setminus W_4$) since with $\Lambda=\cup_i(\Lambda\cap O_i)$ for a covering
of $\Delta\setminus W_4$ by countably many  open and relatively compact subsets $O_i$, $\Lambda\cap O_{i_0}$
must be uncountable for some $i_0$.   Also, one sees that there exists a $z\in \Delta\setminus W_4$ such that
for any neighborhood $\mathcal{U}\ni z$, $\mathcal{U}\cap \Lambda$ remains uncountable.  For, by a similar argument as above working
on $O_{i_0}$ there exist $$O_{i_0}=O'_{1}\supset O'_{2}\supset O'_{3}\supset\cdots$$
such that $\Lambda\cap O'_{k}$ is
uncountable for each $k=1, 2, \cdots,$ and $O'_{k}$ converges to some $z\in \bar O_{i_0}\subset \Delta\setminus W_4$.

Having the above uncountable subset $\Lambda$, we are ready to reconstruct the Kodaira
map $\Phi$.  As in Step \ref{step 1}, by Theorem A of Cartan (= Theorem \ref{cartan-a}) we can choose a set $\mathcal{T}$ linearly spanned by sections
$$\{s_0,s_1,\cdots,s_N\}\subset \pi_*L^{\otimes q}(\Delta)$$
such that the germs $(s_0)_z, (s_1)_z,\cdots,(s_N)_z$ at $z$ generate the stalk $(\pi_*L^{\otimes q})_z$,
hence that they generate $(\pi_*L^{\otimes q})_w$ for every $w$ in a neighborhood $U$ of $z$ as a property from
the coherent sheaves (cf. \cite[Lemma 7.1.3]{h90}).  We denote by $\Phi_\mathcal{T}$ the Kodaira map associated with
$\mathcal{T}$.  As said, $\Lambda\cap \mathcal{N}$ is uncountable for any neighborhood $\mathcal{N}\ni z$, so $\Lambda_U:=\Lambda\cap U$ remains uncountable.
By the cohomological flatness above, the natural map
$$\lambda:=\lambda_w: (\pi_*L^{\otimes q})_w\to H^0(X_w, L^{\otimes q}|_{X_w})$$
is surjective for every $w\in U\setminus W_4\neq\emptyset$.
This, together with our preceding construction,
yields that $\Phi_\mathcal{T}$, when restricted to $X_t$,  induces a bimeromorphic embedding of $X_t$ for every
$t\in \Lambda_{U}\subset U\setminus W_4$.  Here, the images
$\{\lambda(s_0), \lambda(s_1), \cdots,\lambda(s_N)\}\subset H^0(X_t, L^{\otimes q}|_{X_t})$ may
not be linearly independent, nevertheless they give a bimeromorphic embedding on $X_t$ as already remarked.

We denote by $\mathcal{Y}_\mathcal{T}$ the image  $\Phi_\mathcal{T}(\mathcal{X})$.
Write $$\mathcal{S}=\mathcal{S}_h\cup \mathcal{S}_v$$ for the set of singular points of
$\mathcal{Y}_\mathcal{T}$, where the vertical part $ \mathcal{S}_v$ consists of those (irreducible) components of $\mathcal{S}$ that are mapped to
points in $\Delta$ via $p_{\mathcal{Y}_\mathcal{T}}:\mathcal{Y}_\mathcal{T}\to\Delta$.   Set
$$\Xi:=p_{\mathcal{Y}_\mathcal{T}}(\mathcal{S}_v)\subset\Delta.$$
There are only countably many components
in $\mathcal{S}_v$, say, by Remmert's proper mapping theorem (= Theorem \ref{remmert}) so that $\Xi$ is discrete, and by that every fiber $X_t$ is compact.  The components in $\mathcal{S}_v$ are fiberwise separated away from one another as $\Xi$ is discrete.
 Since  $\Xi$ is countable and $\Lambda_U$ is uncountable (or $\Xi$ is discrete and
$\Lambda_U$ is not discrete), $\Lambda_U\setminus \Xi\neq\emptyset$.  Now pick some $o\in\Lambda_U\setminus \Xi$.
This means that the $\Phi_\mathcal{T}$-image of $X_o$ is not contained in $\mathcal{S}_v$ and in turn,
neither in singularities $\mathcal{S}$ of $\mathcal{Y}_\mathcal{T}$.
The smoothness in the beginning is proved, so long as with the choice of sections $E$ in Step \ref{step 1}
changed to $\mathcal{T}$, the $X_b$, $\Phi$ ($=\Phi_E$), $\mathcal{Y}$ ($=\mathcal{Y}_E$) etc. in Step \ref{step 1}
are replaced by $X_o$, $\Phi_\mathcal{T}$, $\mathcal{Y}_\mathcal{T}$ throughout.  Of course, with these $X_o$, $\Phi_\mathcal{T}$, $\mathcal{Y}_\mathcal{T}$ the reasoning for Step \ref{step 2} remains unaltered.  This completes our Step \ref{step 3}.

We are left with the bimeromorphic problem of $\Phi|_{X_t}$. Henceforth, we drop the subscript $\mathcal{T}$ in
$\Phi_\mathcal{T}$, $\mathcal{Y}_\mathcal{T}$, etc.
\begin{step}\label{step 4}
  Bimeromorphic embedding of $\Phi|_{X_t}$
\end{step}
Since $\Phi$ is bimeromorphic as just shown, $\Phi$ induces a biholomorphism $$\hat\Phi:\mathcal{X}\setminus \mathcal{S}_1\cong\mathcal{Y}\setminus \mathcal{S}_2$$
with some proper subvarieties $\mathcal{S}_1$, $\mathcal{S}_2$
of $\mathcal{X}$, $\mathcal{Y}$, respectively. 
Writing $\mathcal{S}_1'\subset \mathcal{S}_1$ for the union of vertical divisors, namely the divisors in $\mathcal{S}_1$ which are mapped to
a point of $\Delta$, and $\Delta_{\delta}$ for $\pi(\mathcal{S}_1')$, we see that $\Delta_{\delta}$ is a proper analytic subset of $\Delta$
by the proper mapping theorem of Remmert (= Theorem \ref{remmert}) since $\mathcal{S}_1'\ne\mathcal{X}$.
For every $t\in \Delta\setminus \Delta_{\delta}$, $X_t\setminus \mathcal{S}_1$ is thus a nontrivial open subset of $X_t$ by
dimension reason, and
$\Phi|_{X_t\setminus \mathcal{S}_1}=\hat\Phi|_{X_t\setminus \mathcal{S}_1}$ is a biholomorphism in view that  $\hat\Phi$ is so.
As $\Phi|_{X_t}:X_t\dashrightarrow Y_t$ exists as a meromorphic map for every $t\in\Delta$ as shown in Step \ref{step 1},
we conclude the following.  In an easier way similar to the bimeromorphism of $\Psi$ in Step \ref{step 2} due to the compactness of $X_t$ and $Y_t$, $\Phi|_{X_t}:X_t\dashrightarrow \Phi|_{X_t}(X_t)$ is a bimeromorphism
for every $t\in \Delta\setminus \Delta_{\delta}$, and
$\Phi|_{X_t}:X_t\dashrightarrow Y_t$ is a bimeromorphic map for any $t\in \Delta\setminus \Delta_{\delta}$
such that $Y_t$ is irreducible (see \cite[pp. 17, 13]{Ue}).
In the following we shall work on this irreducibility problem of $Y_t$.

Let's first see some connections of the irreducibility of $Y_t$ with the indeterminacies  $\mathcal{S}(\Phi)$ of $\Phi$.
Write
$$Y_t=\Phi|_{X_t}(X_t)\cup Y_t'$$
where $\Phi|_{X_t}$
is meromorphic on $X_t$ as seen in Step \ref{step 1}, and $Y_t'$ is the union of the remaining components in $Y_t$.
A word of caution is in order.  That is,
$\Phi(X_t)$ and $\Phi|_{X_t}(X_t)$ may be different (\cite[p. 17]{Ue}).  We shall now
see that $Y_t'\subset\Phi(\mathcal{S}(\Phi))$.
Suppose that there exists $p_t\in Y_t'\setminus \Phi|_{X_t}(X_t)$ such that
$p_t=\Phi(q_{t'})$ for some $q_{t'}\in\mathcal{X}$ at which $\Phi$ is a morphism.
Then $t'\neq t$.  For, if $t'=t$, then $\Phi|_{X_t}$ is also a morphism at $q_{t'}=q_t$
so that $p_t=\Phi(q_{t})=\Phi|_{X_t}(q_{t}) \in \Phi|_{X_t}(X_t)$, contradicting $p_t\not\in \Phi|_{X_t}(X_t)$.  But if $t'\neq t$ and
$\Phi$ is a morphism at $q_{t'}$, then $p_t=\Phi(q_{t'})\in Y_{t'}$, again a contradiction
since $Y_t\cap Y_{t'}=\emptyset$ if $t\ne t'$.   To sum up,
it is now $$(Y_t'\setminus \Phi|_{X_t}(X_t))\cap \Phi(\mathcal{X}\setminus
\mathcal{S}(\Phi))=\emptyset$$ so that we must have $Y_t'\subset \Phi(\mathcal{S}(\Phi))$ since $\Phi(\mathcal{X})=\mathcal{Y}$. 
In the case where $Y_{t}'=\emptyset$,
$$Y_t=\Phi|_{X_t}(X_t)\cup Y_t'=\Phi|_{X_t}(X_t)$$ so that  $Y_t$ is irreducible since $X_t$ is so.  The latter assertion
is standard.  For, the graph $\mathcal{G}(\Phi|_{X_t})$ is irreducible, so by the above that $Y_t=\Phi|_{X_t}(X_t)$,
equivalently, that the projection $\mathcal{G}(\Phi|_{X_t})\to Y_t$ is surjective (\cite[p. 16]{Ue}), $Y_t$ is irreducible.

It seems of interest to decide what parts of $\mathcal{S}(\Phi)\subset\mathcal{X}$ are carried by $\Phi$ to $Y_t'$ above.
We hope to discuss this elsewhere.  In what follows, we take another approach to the above irreducibility problem
of $Y_t$.

For use shortly, some preparations are in order.
Let $\tilde{\mathcal{G}}$ be a desingularization of the graph $\mathcal{G}(\Phi)=\mathcal{G}$ with $\tilde {\mathcal{G}}\to \mathcal{G}$ being
proper (cf. the beginning remarks in Step \ref{step 2}),
and the projections $${\tilde p}_{\mathcal{X}},\quad  {\tilde p}_{\mathcal{Y}}$$
the composites of morphisms $\tilde {\mathcal{G}}\to {\mathcal{G}}\to \mathcal{X}$, $\tilde {\mathcal{G}}\to {\mathcal{G}}\to \mathcal{Y}$, respectively. 
Set
$$\pi:\mathcal{X}\to \Delta\quad \text{and}\quad \pi_\mathcal{Y}:\mathcal{Y}\to \Delta$$
as the standard projections.
There exists a proper analytic subset $\Delta_{s}\subset\Delta$ such that the fiber $\tilde {\mathcal{G}}_t:=(\pi\circ
{\tilde p}_{\mathcal{X}})^{-1}(t)$
is smooth for every $t\not\in \Delta_{s}$ (\cite[p. 8]{Ue}).

We claim that $\tilde {\mathcal{G}}_t$, hence ${\mathcal{G}}_t$, is connected for every $t\in \Delta$.
By construction ${\tilde p}_{\mathcal{X}}:\tilde {\mathcal{G}}\to \mathcal{X}$ is a proper modification (\cite[pp. 22, 13]{Ue}) so that
$ ({\tilde p_\mathcal{X}})_*\mathcal{O}_{\tilde{\mathcal{G}}}=\mathcal{O}_{\mathcal{X}}$ (\cite[Corollary 1.14]{Ue}).  We arrive at
${\pi}_*(({\tilde p_\mathcal{X}})_*\mathcal{O}_{\tilde{\mathcal{G}}})=\mathcal{O}_\Delta$ since every fiber
of $\pi:\mathcal{X}\to \Delta$ is connected so that ${\pi}_*\mathcal{O}_{\mathcal{X}}=\mathcal{O}_\Delta$
(\cite[Proposition 1.13]{Ue}).
The fact that $({\pi}\circ {\tilde p_{\mathcal{X}}})_*\mathcal{O}_{\tilde{\mathcal{G}}}=\mathcal{O}_\Delta$ allows us to
conclude that $\tilde {\mathcal{G}}_t$ is connected for every $t\in \Delta$.  This assertion in the algebraic setting,  is standard
(cf. \cite[Corollary 11.3]{Ht}); in the analytic setting here, one can find a proof in
\cite[the paragraph above Corollary 2.13 on p. 111]{bs}.
With the connectedness of $\tilde {\mathcal{G}}_t$, we are going to show that $\Phi(X_t)=Y_t$ for every $t\in \Delta$,
as claimed in the second half of Step \ref{step 1}.

Our proof of $\Phi(X_t)=Y_t$ uses again the desingularization $\tilde{\mathcal{G}}$.  First note the following.
Let $\varphi:U\dashrightarrow V$ be a meromorphic map
between complex spaces.  Assume that $q_1:U_1\to U$ and $q_2:U_2\to U_1$ are proper modifications such that $h_1:=\varphi\circ q_1$ and $h_2:=\varphi\circ q_1\circ q_2$ are morphisms.  Let $F\subset U$ be a subset and write
$B_1=h_1(q_1^{-1}(F))$, $B_2=h_2((q_1\circ q_2)^{-1}(F))$ as subsets of $V$.   Then $B_1=B_2$.    For, $h_2=h_1\circ q_2$ so that
$$B_2=(h_1\circ q_2)((q_1\circ q_2)^{-1}(F))=h_1(q_1^{-1}(F))=B_1$$
as claimed.

Due to this, the original definition of $\Phi(X_t)$ using $\mathcal{G}$ (\cite[p. 14]{Ue}) can now be rewritten as
$\Phi(X_t)={\tilde p}_{\mathcal{Y}}({\tilde p}_{\mathcal{X}}^{-1}(X_t))$ by using $\tilde{\mathcal{G}}$ if one sets
$F=X_t$ and $U=\mathcal{X}, V=\mathcal{Y}, \varphi=\Phi, U_1=\mathcal{G}, U_2=\tilde{\mathcal{G}}$ in the above.
Now one sees that
$\Phi(X_t)={\tilde p}_{\mathcal{Y}}({\tilde p}_{\mathcal{X}}^{-1}(X_t))$  is a compact analytic subset of $\mathcal{Y}$.
For,  ${\tilde p}_{\mathcal{X}}$ is proper so that ${\tilde p}_{\mathcal{X}}^{-1}(X_t)$ is compact,
hence ${\tilde p}_{\mathcal{Y}}({\tilde p}_{\mathcal{X}}^{-1}(X_t))$ is compact.
Further, the restriction of ${\tilde p}_{\mathcal{Y}}$ to the compact subset
${\tilde p}_{\mathcal{X}}^{-1}(X_t)$ is necessarily proper,
regardless of whether the entire ${\tilde p}_{\mathcal{Y}}$ is proper.
(The entire projection ${\tilde p}_{\mathcal{Y}}:\tilde {\mathcal{G}}\to \mathcal{Y}$ is actually proper by the arguments similar to
Lemma \ref{proper} in Step \ref{step 1}; see Remark \ref{proper-rem}.  But we do not need this here.)
By Remmert's proper mapping theorem (= Theorem \ref{remmert}) applied
to ${\tilde p}_{\mathcal{Y}}|_{{\tilde p}_{\mathcal{X}}^{-1}(X_t)}$, its image is an analytic set.
Hence $\Phi(X_t)$ is analytic and compact, as claimed.

As $\pi_\mathcal{Y}:\mathcal{Y}\to\Delta$ is proper, $\pi_\mathcal{Y}(\Phi(X_t))$ is also a compact analytic set in $\Delta$.
Observe that ${\tilde p}_{\mathcal{X}}^{-1}(X_t)=\tilde {\mathcal{G}}_t$ so that $\Phi(X_t)=
{\tilde p}_{\mathcal{Y}}(\tilde {\mathcal{G}}_t)$, and ${\tilde p}_{\mathcal{Y}}(\tilde {\mathcal{G}}_t)$ is thus connected since
 $\tilde {\mathcal{G}}_t$ is so for every $t\in \Delta$, as given in the preceding paragraph.
In short, $\pi_\mathcal{Y}(\Phi(X_t))$ is a connected compact analytic set in $\Delta$, thus it can only be a single point
$\theta\in \Delta$.   As noted in Step \ref{step 1}, $X_t\not\subset \mathcal{S}(\Phi)$ so that $\pi_\mathcal{Y}(\Phi(x))=t$ for $x\in X_t$
at which $\Phi$ is a morphism.  We conclude that the single point $\pi_\mathcal{Y}(\Phi(X_t))=\theta$ equals $t$, equivalently
$$\Phi(X_t)\subset Y_t\quad \text{for every $t\in \Delta$.}$$
By combining $\Phi(\mathcal{X})=\mathcal{Y}$ (\cite[pp. 14, 16]{Ue})
and $\Phi(\mathcal{X})=\cup_{t\in \Delta}\Phi(X_t)$ as seen from their definitions (\cite[p. 14]{Ue}), one sees that the above
inclusion $``\subset"$ is necessarily an equality, that is
$$\Phi(X_t)=Y_t,\ \text{for every $t\in \Delta$},$$
 as to be proved.
However, as it was hinted in the second half of Step \ref{step 1}, this is not equivalent to claiming that $\Phi|_{X_t}(X_t)=Y_t$ for every $t\in \Delta$.

Remark that the above proof does not use the
fact that $\Phi$ is generically one to one or $\hat \Phi$ is biholomorphic; it is valid generally
but we omit the precise formulation.

We are ready to finish the proof for the bimeromorphic property of $\Phi|_{X_t}$.
Recall that in the preceding paragraphs, $\tilde {\mathcal{G}}_t$ is smooth for every $t\in \Delta\setminus \Delta_{s}$ and
is connected for every $t\in \Delta$.  In particular, $\tilde {\mathcal{G}}_t$ is irreducible for every $t$ outside $\Delta_{s}$.
Recalling the above that $\Phi(X_t)=Y_t$ for every $t\in \Delta$ and that $\Phi(X_t)={\tilde p}_{\mathcal{Y}}(\tilde {\mathcal{G}}_t)$,
which is now seen to be irreducible since $\tilde {\mathcal{G}}_t$ is (for $t\not\in \Delta_{s}$),
one obtains that $Y_t$ is irreducible for $t\in\Delta\setminus \Delta_{s}$.
Also recall in the beginning of this step
that for every $t\in \Delta\setminus \Delta_{\delta}$, $\Phi|_{X_t}:X_t\dashrightarrow \Phi|_{X_t}(X_t)$ is a
bimeromorphism, and that $\Phi|_{X_t}:X_t\dashrightarrow Y_t$ is also bimeromorphic provided that $Y_t$ is irreducible.
We have now established that
 $$\Phi|_{X_t}:X_t\dashrightarrow Y_t$$
 is a bimeromorphism
for every $t\in \Delta\setminus (\Delta_{\delta}\cup \Delta_{s})$ where $\Delta_{\delta}\cup \Delta_{s}$ is an analytic subset of $\Delta$.
This completes Step \ref{step 4} and also the proof for the second half of Theorem \ref{thm-gauduchon-update}.
\end{proof}

\begin{rem}\label{rem4.23}
The key property of $\Delta$ that is used in the bimeromorphic embedding of $\mathcal{X}\to \Delta$ above
is its Steinness, so that Theorem A of Cartan (= Theorem \ref{cartan-a}) is allowed.  At this point,  the same conclusion holds if $\Delta$ is replaced
by more general spaces (cf. the proof of Theorem \ref{thm-gauduchon-update'} for use).
We omit the details of the precise formulation.
\end{rem}

If the strongly Gauduchon condition is assumed solely on the central fiber $X_0$, one has the following
variant of the above theorem.

\begin{thm}\label{thm-gauduchon-update'}
Let $B$ be an uncountable subset of $\Delta$. Suppose that the fiber $X_t:=\pi^{-1}(t)$ is Moishezon for each $t \in B$ and the reference fiber $X_0$ admits a strongly Gauduchon metric as in Definition \ref{sGau}.  Then there exists a small constant $\epsilon >0$ such that $X_t$ is Moishezon for each $t\in\Delta_{\epsilon}:=\{z\in \mathbb{C}: |t|<\epsilon\}$. In particular, $X_0$ is Moishezon.
Moreover, there exists an open subset $U\subset \Delta$ with $\Delta\setminus U$ a discrete subset of $\Delta$ such that
the statements analogous to \eqref{thm-gauduchon-update-ii} of Theorem \ref{thm-gauduchon-update} with $\Delta$ replaced by $\Delta_{\epsilon}\cup U,$ hold.
\end{thm}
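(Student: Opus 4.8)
The plan is to reduce the statement to the machinery already developed for Theorem \ref{thm-gauduchon-update}, the only genuinely new point being that the strongly Gauduchon hypothesis is now imposed solely on $X_0$ and that $B$ need not accumulate at $0$, so that $B\cap\Delta_\epsilon$ may be countable or even empty. First I would fix, once and for all, the open set $U$ and the line bundle $L$ over $\mathcal{X}_U$ supplied by Lemma \ref{global-lemma2}: since each $X_t$ with $t\in B$ is Moishezon, it carries a big line bundle $L_t$, and Lemma \ref{global-lemma2} then yields $U\subset\Delta$ with $\Delta\setminus U$ discrete, together with an $L$ on $\mathcal{X}_U$ which is big on an uncountable subset of $U$ and, crucially, satisfies the uniform estimate \eqref{16-3} throughout $U$; in particular $X_t$ is Moishezon for every $t\in U$ by \eqref{global-lemma2-ii} of Lemma \ref{global-lemma2}.

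Next I would propagate the strongly Gauduchon condition off the central fiber. By Lemma \ref{sdsGau} there exist an $\epsilon>0$ and a smooth family $\{\mathfrak{g}_t\}_{t\in\Delta_\epsilon}$ of strongly Gauduchon metrics on the fibers $X_t$, $t\in\Delta_\epsilon$, after shrinking $\epsilon$. The exceptional points lying in $\Delta_\epsilon\setminus U$ (this set is discrete, hence finite in any compact subdisk) would then be treated exactly as in the first half of the proof of Theorem \ref{thm-gauduchon-update}: for such a $p$ choose a small $V_p\subset\Delta_\epsilon$ with $V^*:=V_p\setminus\{p\}\subset U$, restrict $L$ to $\mathcal{X}_{V^*}$, form the currents $T_{{\rm FY},t}\in c_1(L|_{X_t})$ of Proposition \ref{abc}, and use the metrics $\mathfrak{g}_t$ together with the Gauduchon and Poincar\'e--Lelong identities to bound their mass uniformly and extract a weak limit $T_{{\rm FY}}$ on $X_p$. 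The essential point is that the uniform lower volume bound $v(L|_{X_{p_k}})>2\delta$ needed to invoke the criterion of Theorem \ref{pop-hol} now comes from the uniform estimate \eqref{16-3} on $U$, rather than from a limit $p_k\to p$ taken \emph{inside} $B$; this is precisely the mechanism emphasized in Remark \ref{delta}. Hence $X_p$ is Moishezon for every $p\in\Delta_\epsilon\setminus U$, and combined with the Moishezon property on $U$ we conclude that $X_t$ is Moishezon for all $t\in\Omega:=\Delta_\epsilon\cup U$; in particular $X_0$ is Moishezon. Observe that $\Omega=\Delta\setminus D$ with $D:=(\Delta\setminus U)\setminus\Delta_\epsilon$ discrete, so $\Omega$ is a connected open, hence Stein, subset of $\mathbb{C}$ containing $0$.

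For the bimeromorphic embedding I would run Step \ref{step 1} through Step \ref{step 4} of the proof of \eqref{thm-gauduchon-update-ii} over the Stein base $\Omega$ in place of $\Delta$. Since every fiber over $\Omega$ is now Moishezon, the deformation invariance of $h^{0,1}(X_t)$ holds on $\Omega$, so Proposition \ref{global-B} and its proof, which carry over to the one-dimensional Stein base $\Omega$, together with Proposition \ref{propa} and Corollary \ref{unc-big}, produce a global line bundle $\tilde L$ on $\mathcal{X}_\Omega$ that is $\pi$-big, along with the uniform analogue of \eqref{16-3}. Because $\Omega$ is Stein, Theorem A of Cartan (Theorem \ref{cartan-a}) applies, so the construction of the Kodaira map $\Phi:\mathcal{X}_\Omega\dashrightarrow\mathbb{P}^N\times\Omega$, the properness statements (Lemmas \ref{proper} and \ref{py-proper}), the finiteness and biholomorphy of $\Psi'$ in Step \ref{step 2}, the smoothness refinement of Step \ref{step 3}, and the fibrewise bimeromorphy of Step \ref{step 4} all transcribe verbatim, yielding a subvariety $\mathcal{Y}\subset\mathbb{P}^N\times\Omega$ with projective fibers $Y_t$ of dimension $n$ and a proper analytic $\Sigma\subset\Omega$ off which $\Phi|_{X_t}:X_t\dashrightarrow Y_t$ is bimeromorphic; this is exactly the transfer sanctioned by Remark \ref{rem4.23}. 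I expect the only real obstacle to be the bookkeeping of the second paragraph, namely ensuring that it is the uniform bigness estimate on $U$, and not an accumulation of $B$ at $0$, that supplies the lower bound $\delta$ at the finitely many exceptional fibres in $\Delta_\epsilon\setminus U$, while the remaining steps are faithful transcriptions of the arguments for Theorem \ref{thm-gauduchon-update} over a Stein base.
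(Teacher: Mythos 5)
Your proposal is correct and follows essentially the same route as the paper: Lemma \ref{sdsGau} to spread the strongly Gauduchon condition over $\Delta_\epsilon$, the set $U$ and $\pi$-big bundle $L$ from Lemma \ref{global-lemma2} with the uniform estimate \eqref{16-3} supplying the lower volume bound at the exceptional fibers of $\Delta_\epsilon\setminus U$, and then the bimeromorphic-embedding argument rerun over the connected open (hence Stein) set $\Delta_\epsilon\cup U$ exactly as sanctioned by Remark \ref{rem4.23}. Your write-up merely makes explicit the mass-bound and $T_{\rm FY}$-current details that the paper compresses into ``apply the previous arguments to $\mathcal{X}_{\Delta_\epsilon}$.''
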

\begin{proof}
Lemma \ref{sdsGau} implies that there exists a small disk $\Delta_\epsilon\subset \Delta$ of $t=0$, such that $X_t$ is strongly Gauduchon for each $t\in \Delta_\epsilon$.   Since $B$ is not {\it a priori} known to be contained in $\Delta_{\epsilon}$,
we cannot directly apply the previous arguments to $\Delta_\epsilon$.   Instead,
with notations in {\it proof} of Theorem \ref{thm-gauduchon-update}, let $U$ be the open subset of $\Delta$
in the beginning of that proof such that $L$ is $\pi$-big on $\mathcal{X}_U$.   By equipping the subfamily
$\mathcal{X}_{U\cap\Delta_\epsilon}\subset\mathcal{X}_{\Delta_\epsilon}$ with this line bundle $L$
we can now safely apply the previous arguments to $\mathcal{X}_{\Delta_\epsilon}$ by using $L$ and the
strongly Gauduchon conditions within $\Delta_{\epsilon}$.  We obtain
the first part of the theorem.
For the second part, $\Delta'=U\cup\Delta_\epsilon$ is an open subset of $\Delta$ and is still connected.
We apply the preceding proof of Theorem \ref{thm-gauduchon-update} to $\Delta'$ in place of $\Delta$.
Then the second part of the theorem follows; see Remark \ref{rem4.23}. 
\end{proof}

\subsection{Examples for Theorem \ref{thm-moishezon}}
The goal of this subsection is to establish  examples for Theorem \ref{thm-moishezon}.

We give a brief review of Siu--Demailly's solution of \emph{Grauert--Riemenschneider conjecture}: If a compact complex manifold possesses a Hermitian holomorphic line bundle whose curvature is semi-positive everywhere and strictly positive at one point of the manifold, then this manifold is Moishezon.
\begin{defn}[{}]\label{semipos} {A compact complex manifold is called \emph{semi-positive Moishezon}} if there exists a Hermitian holomorphic line bundle on this manifold, whose curvature is semi-positive everywhere and strictly positive at one point. By Siu's criterion \cite{s84}, this manifold is Moishezon.
\end{defn}

Let $E$ be a holomorphic vector bundle of rank $r$ and $L$ a holomorphic line bundle on a compact complex manifold $X$ of dimension $n$. If $L$ is equipped with a smooth Hermitian metric $h$ of Chern curvature form $\Theta_{L,h}$, we define the \emph{$q$-index set} of $L$ to be the open subset
$$X(L,h,q)=\left\{x\in X: \text{$\sqrt{-1}\Theta_{L,h}$ has $q$ negative eigenvalues and $n-q$ positive eigenvalues}\right\}$$
for $0\leq q\leq n$. We also introduce
$$X(L,h,\leq q)=\bigcup_{0\leq j\leq q}X(L,h,j).$$
\begin{thm}[{\cite{Dem85}}]\label{hmi}
With the above setting, the cohomology groups $H^q(X,E\otimes L^{\otimes k})$ satisfy the asymptotic inequalities as $k\rightarrow +\infty$:
\begin{enumerate}[$(1)$]
    \item \emph{{(Weak Morse inequality)}} \label{}
$$h^q(X,E\otimes L^{\otimes k})\leq r\frac{k^n}{n!}\int_{X(L,h,q)}(-1)^q\left(\frac{\sqrt{-1}}{2\pi}\Theta_{L,h}\right)^n+o(k^n).$$
    \item \emph{{(Strong Morse inequality)}} \label{}
$$\label{int-cond}
\sum_{0\leq j\leq q}(-1)^{q-j}h^j(X,E\otimes L^{\otimes k})\leq r\frac{k^n}{n!}\int_{X(L,h,\leq q)}(-1)^q\left(\frac{\sqrt{-1}}{2\pi}\Theta_{L,h}\right)^n+o(k^n).
$$
 \end{enumerate}

\end{thm}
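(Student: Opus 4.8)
The plan is to establish both inequalities through the spectral theory of the Kodaira--Hodge Laplacian, following the semiclassical principle that high tensor powers $L^{\otimes k}$ concentrate the analysis near a pointwise harmonic--oscillator model governed by the curvature. First I would fix smooth Hermitian metrics on $X$ and on $E$, use the given $h$ on $L$, and form the $\bar\partial$-Laplacian $\square_k^{(q)}=\bar\partial\bar\partial^\ast+\bar\partial^\ast\bar\partial$ acting on $(0,q)$-forms with values in $E\otimes L^{\otimes k}$. Since $X$ is compact, Hodge theory gives $h^q(X,E\otimes L^{\otimes k})=\dim\ker\square_k^{(q)}$. Both inequalities are then read off from the distribution of small eigenvalues: writing $N_k^{(q)}(\lambda)$ for the number of eigenvalues of $\square_k^{(q)}$ not exceeding $\lambda$, counted with multiplicity, the weak inequality follows from the trivial domination $\dim\ker\square_k^{(q)}\le N_k^{(q)}(\lambda)$, while the strong inequality rests on a finite-complex lemma. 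Namely, since $\bar\partial$ commutes with $\square_k$ and carries the span $\mathcal H_{\le\lambda}^{(j)}$ of low-energy eigenforms into $\mathcal H_{\le\lambda}^{(j+1)}$, the truncated complex $(\mathcal H_{\le\lambda}^{(\bullet)},\bar\partial)$ is a finite-dimensional complex whose degree-$j$ cohomology is exactly $\ker\square_k^{(j)}$; the standard alternating-sum estimate for a finite complex then yields $\sum_{j\le q}(-1)^{q-j}h^j\le\sum_{j\le q}(-1)^{q-j}\dim\mathcal H_{\le\lambda}^{(j)}=\sum_{j\le q}(-1)^{q-j}N_k^{(j)}(\lambda)$.

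The heart of the argument is the asymptotics of $N_k^{(q)}(\mu k)$ as $k\to\infty$, with $\mu>0$ fixed and small, which I would obtain from the heat kernel. The Bochner--Kodaira--Nakano identity gives $\square_k^{(q)}=\tfrac12\nabla^\ast\nabla+k\,\mathcal M_q+O(1)$, where the curvature term $\mathcal M_q$ is, at each point $x$, the fiberwise endomorphism of $\Lambda^{0,q}$ built from the eigenvalues $\gamma_1(x),\dots,\gamma_n(x)$ of $\tfrac{\sqrt{-1}}{2\pi}\Theta_{L,h}(x)$ relative to the reference Hermitian form. Rescaling normal coordinates by $\sqrt k$ turns the operator into a perturbation of the constant-curvature model Laplacian on $\mathbb C^n$, a direct sum of harmonic oscillators whose spectrum and spectral density are explicit. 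A local heat-kernel (or resolvent) computation, combined with dominated convergence over $X$, then yields
\begin{equation*}
\lim_{k\to\infty}k^{-n}N_k^{(q)}(\mu k)=r\int_X\nu_{q,\mu}\big(\gamma_1(x),\dots,\gamma_n(x)\big)\,dV(x),
\end{equation*}
where $\nu_{q,\mu}$ is the model oscillator's spectral-counting density in bidegree $q$ and $dV$ is the reference volume form normalized so that $(\tfrac{\sqrt{-1}}{2\pi}\Theta_{L,h})^n=n!\,\gamma_1\cdots\gamma_n\,dV$.

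To finish I would let $\mu\downarrow0$. As $\mu\to0$ the density $\nu_{q,\mu}$ concentrates on the locus where the model has near-zero spectrum in degree $q$, which is exactly where the curvature signature is $(q,n-q)$; one computes that $\nu_{q,\mu}\to\mathbf 1_{\{\mathrm{signature}=(q,\,n-q)\}}\,|\gamma_1\cdots\gamma_n|$. Since $\gamma_1\cdots\gamma_n$ has sign $(-1)^q$ on $X(L,h,q)$, the limiting integral equals $r\,n!^{-1}\int_{X(L,h,q)}(-1)^q(\tfrac{\sqrt{-1}}{2\pi}\Theta_{L,h})^n$, and restoring the factor $k^n$ and feeding this back through the two spectral inequalities, while absorbing the $o(k^n)$ errors, produces both the weak and strong Morse inequalities. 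The main obstacle I anticipate is the uniform control of the rescaled heat-kernel (or resolvent) asymptotics: one must show the rescaled operators converge to the harmonic-oscillator model with error bounds uniform in $x\in X$, strong enough to justify exchanging the limits $k\to\infty$ and $\mu\to0$, and one must handle the degenerate locus where some $\gamma_j(x)$ vanishes—there the model is only semidefinite and $\nu_{q,\mu}$ is discontinuous—by showing this set contributes negligibly. The finite-complex lemma underlying the strong inequality is routine linear algebra, but calibrating the thresholds $\lambda_k=\mu k$ so that the domination and the asymptotics hold simultaneously is where the care lies.
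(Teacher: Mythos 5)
The paper does not prove this theorem; it is quoted as a known result from Demailly's 1985 paper [Dm85], so there is no internal argument to compare against. Your outline reproduces, in structure, exactly Demailly's original spectral-theoretic proof --- Hodge theory plus counting of small eigenvalues of the Kodaira Laplacian, the finite low-energy $\bar\partial$-subcomplex yielding the alternating-sum (strong) inequalities, semiclassical rescaling to the pointwise harmonic-oscillator model, and the limit $\mu\downarrow 0$ --- and the two difficulties you flag (uniform control of the rescaled eigenvalue asymptotics, and the degenerate locus where curvature eigenvalues vanish, where exact two-sided asymptotics of the counting function are genuinely needed because the alternating sum carries both signs) are precisely the analytic content of the cited paper.
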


Using the strong Morse inequality with $q=1$, Demailly obtained:
\begin{thm}[{\cite{Dem85}}]\label{}
Let $X$ be a compact complex manifold with a Hermitian holomorphic line bundle $(L,h)$ over $X$ satisfying
$$\label{int-cond}
\int_{X(L,h,\leq 1)}\left(\frac{\sqrt{-1}}{2\pi}\Theta_{L,h}\right)^n>0.
$$
Then $L$ is a big line bundle and thus $X$ is a Moishezon manifold.
\end{thm}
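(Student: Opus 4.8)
The plan is to read off the result directly from the strong Morse inequality of Theorem \ref{hmi} specialized to $q=1$, with the auxiliary bundle taken to be $E=\mathcal{O}_X$ (so that $r=1$), and then to conclude via the bigness criterion \eqref{big-crit} and Lemma \ref{big-moi}. First I would transcribe part (2) of Theorem \ref{hmi} in this special case. The left-hand side $\sum_{0\leq j\leq 1}(-1)^{1-j}h^j(X,L^{\otimes k})$ equals $h^1(X,L^{\otimes k})-h^0(X,L^{\otimes k})$, while the right-hand side, because of the factor $(-1)^q=-1$, becomes $-\tfrac{k^n}{n!}\int_{X(L,h,\leq 1)}\left(\tfrac{\sqrt{-1}}{2\pi}\Theta_{L,h}\right)^n+o(k^n)$. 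Thus the inequality reads
\[
h^1(X,L^{\otimes k})-h^0(X,L^{\otimes k})\leq -\frac{k^n}{n!}\int_{X(L,h,\leq 1)}\left(\frac{\sqrt{-1}}{2\pi}\Theta_{L,h}\right)^n+o(k^n),
\]
which upon multiplying by $-1$ rearranges into the favorable form
\[
h^0(X,L^{\otimes k})-h^1(X,L^{\otimes k})\geq \frac{k^n}{n!}\int_{X(L,h,\leq 1)}\left(\frac{\sqrt{-1}}{2\pi}\Theta_{L,h}\right)^n-o(k^n).
\]

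Next, since $h^1(X,L^{\otimes k})\geq 0$ we always have $h^0\geq h^0-h^1$, so the same lower bound persists for $h^0(X,L^{\otimes k})$ by itself. Writing $C:=\int_{X(L,h,\leq 1)}\left(\frac{\sqrt{-1}}{2\pi}\Theta_{L,h}\right)^n$, which is strictly positive by the hypothesis, I would divide by $k^n$ and let $k\to+\infty$; since $o(k^n)/k^n\to 0$ this gives
\[
\limsup_{k\to +\infty}\frac{h^0(X,L^{\otimes k})}{k^n}\geq \frac{C}{n!}>0.
\]
This is exactly the bigness criterion \eqref{big-crit}, whence $L$ is big, and then Lemma \ref{big-moi} immediately yields that $X$ is Moishezon.

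I do not expect any genuine obstacle here: all of the analytic substance is already contained in the holomorphic Morse inequalities of Theorem \ref{hmi}, so the argument reduces to a formal manipulation once the inequality is applied. The only point that requires care is the bookkeeping of signs, namely keeping track of both the alternating coefficients $(-1)^{1-j}$ on the left and the factor $(-1)^q=-1$ on the right, so that the resulting estimate points in the correct direction to bound $h^0$ from below. After that, invoking \eqref{big-crit} and Lemma \ref{big-moi} is routine.
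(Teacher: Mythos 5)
Your proposal is correct and is essentially the paper's own argument: the paper derives this theorem precisely by specializing the strong Morse inequality of Theorem \ref{hmi} to $q=1$ with $E=\mathcal{O}_X$, and the identical chain $h^0(X,L^{\otimes k})\geq h^0-h^1\geq \frac{k^n}{n!}\int_{X(L,h,\leq 1)}\left(\frac{\sqrt{-1}}{2\pi}\Theta_{L,h}\right)^n-o(k^n)$ followed by the bigness criterion \eqref{big-crit} and Lemma \ref{big-moi} appears explicitly in the paper (see the proof of Theorem \ref{thm-moishezon0}). Your sign bookkeeping is accurate, so there is nothing to add.
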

Obviously, a semi-positive Moishezon manifold $(X,L,h)$ in the sense of Definition \ref{semipos} satisfies
$$\int_{X(L,h,\leq 1)}\left(\frac{\sqrt{-1}}{2\pi}\Theta_{L,h}\right)^n>0$$
since $X(L,h,1)=\emptyset$ and
$$\int_{X(L,h,0)}\left(\frac{\sqrt{-1}}{2\pi}\Theta_{L,h}\right)^n=\int_{X}\left(\frac{\sqrt{-1}}{2\pi}\Theta_{L,h}\right)^n>0.$$

Under this type of integration conditions and assumptions on fibers $X_t$ for all $t\in \Delta^*$,
the proof for the deformation limit problem can be somewhat simplified:

\begin{thm}\label{thm-moishezon0}
Let the fiber $X_t:=\pi^{-1}(t)$ be Moishezon for each $t\in \Delta^*$ and admit a Hermitian holomorphic line bundle $(L_t,h_t)$ satisfying Demailly's integration condition
$$\label{0int-cond}
\int_{X(L_t,h_t,\leq 1)}\left(\frac{\sqrt{-1}}{2\pi}\Theta_{L_t,h_t}\right)^n>0.
$$
Suppose that the reference fiber $X_0$ satisfies the local deformation invariance for Hodge number of type $(0,1)$ or admits a strongly Gauduchon metric as in Definition \ref{sGau}. Then $X_0$ is still Moishezon.
\end{thm}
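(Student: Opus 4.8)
The plan is to reduce the statement to the deformation-limit results already established, with the integration condition serving only to certify bigness of the fibres together with a uniform lower bound on their volumes. First I would note that, by Demailly's bigness criterion (the theorem following Theorem \ref{hmi}), the hypothesis $\int_{X(L_t,h_t,\leq 1)}\big(\tfrac{\sqrt{-1}}{2\pi}\Theta_{L_t,h_t}\big)^n>0$ forces each $L_t$ to be a big line bundle, so that $X_t$ is Moishezon for every $t\in\Delta^*$. Hence $\pi:\mathcal{X}\to\Delta$ satisfies the assumptions of Theorem \ref{thm-moishezon} with the uncountable set $B=\Delta^*$, and it remains only to propagate bigness to the central fibre under each of the two hypotheses on $X_0$.

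In the first case, where $h^{0,1}(X_t)$ is deformation invariant, I would simply invoke the mechanism of Theorem \ref{thm-moishezon-update}: Proposition \ref{global-B} produces a global line bundle $L$ on $\mathcal{X}$ with $c_1(L|_{X_s})=c_1(L_s)$ for all $s$ in an uncountable subset $\hat S\subset\Delta^*$; since each $L_s$ is big, Proposition \ref{propa} gives that $L|_{X_s}$ is big; and Corollary \ref{unc-big} then forces $L|_{X_t}$ to be big for every $t\in\Delta$, in particular for $t=0$. Thus $X_0$ is Moishezon, and the integration condition enters here only through the initial bigness of the $L_t$.

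In the strongly Gauduchon case I would follow the proof of Theorem \ref{thm-gauduchon-update'}, the advantage of the present hypothesis being that the uniform volume bound becomes transparent. Indeed, Lemma \ref{global-lemma2} yields a global line bundle $L$ on $\mathcal{X}_U$ with $U=\Delta\setminus\{0\}$, with $L|_{X_s}$ big on an uncountable $\hat S\subset U$ and all fibres over $U$ Moishezon; moreover the strong Morse inequality of Theorem \ref{hmi} with $q=1$ and $E=\mathcal{O}_{X_t}$ gives $h^0(X_t,L_t^{\otimes k})\ge \tfrac{k^n}{n!}\int_{X(L_t,h_t,\leq 1)}\big(\tfrac{\sqrt{-1}}{2\pi}\Theta_{L_t,h_t}\big)^n+o(k^n)$, so that the volume $v(L_t)$ is bounded below by the positive Morse integral. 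Combined with the uniform estimate \eqref{16-3} coming from \eqref{global-lemma2-ii} of Lemma \ref{global-lemma2}, this supplies a uniform lower bound $v(L|_{X_t})\ge 2\delta>0$ near $0$. With this in hand I would run the current argument of Theorem \ref{thm-gauduchon-update}: Lemma \ref{sdsGau} propagates the strongly Gauduchon condition to fibres near $0$, the Fujita--Yau currents $T_{\mathrm{FY},t_k}\in c_1(L|_{X_{t_k}})$ of Proposition \ref{abc} are uniformly bounded in mass, and the semicontinuity of the absolutely continuous top power together with $v(L|_{X_{t_k}})\ge 2\delta$ yields $\int_{X_0}T_{\mathrm{FY},ac}^{\,n}\ge\delta\,dV_0>0$; Popovici's criterion (Theorem \ref{pop-hol}) then shows that $X_0$ is Moishezon.

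The main obstacle is the same as in Theorem \ref{thm-gauduchon-update'}: the global line bundle is a priori defined only on $\mathcal{X}_U$, so bigness must be transported across the central fibre, where no line bundle is yet available. The integration condition streamlines the derivation of the uniform volume lower bound (directly through the Morse integral) but does not by itself circumvent the need for a weak limit of positive currents and the control of its absolutely continuous part at $t=0$; that passage to the limit, resting on the strongly Gauduchon mass control and Popovici's criterion, remains the delicate point.
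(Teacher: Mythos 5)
Your argument is correct, and since Theorems \ref{thm-moishezon-update} and \ref{thm-gauduchon-update'} are established before Theorem \ref{thm-moishezon0}, reducing to them is legitimate and not circular; but your route is genuinely different from the paper's own proof. You use the integration condition only once, through Demailly's criterion (the theorem following Theorem \ref{hmi}), to make each $L_t$ big, and then run the earlier machinery: Proposition \ref{global-B}, Proposition \ref{propa} and Corollary \ref{unc-big} in the Hodge-number case, and the full current-theoretic argument (Lemma \ref{global-lemma2}, the strongly Gauduchon mass control, the Fujita--Yau currents of Proposition \ref{abc}, and Popovici's criterion Theorem \ref{pop-hol}) in the strongly Gauduchon case; your identification of \eqref{16-3}, rather than the individual Morse integrals (which carry no uniformity in $t$), as the source of the uniform volume bound is the correct one. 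The paper instead exploits the integration condition to simplify the endgame, which is the stated purpose of this theorem (``the proof \dots can be somewhat simplified''): once a global line bundle $L$ with $c_1(L|_{X_{t_0}})=c_1(L_{t_0})$ at a \emph{single} $t_0\in\Delta^*$ is produced (via Subsection \ref{usc} together with Propositions \ref{Hodge-torsionfree} and \ref{s-torsion} in the first case, via the argument of Theorem \ref{thm-gauduchon-update} in the second), the $\partial\bar\partial$-lemma on the Moishezon fiber $X_{t_0}$ transfers $h_{t_0}$ to a metric on $L|_{X_{t_0}}$ with identical curvature, a smooth extension of this metric in $t$ and continuity of the curvature integral give Demailly's integration condition on every fiber with $|t-t_0|\le\epsilon$, and the strong Morse inequality then makes $L|_{X_t}$ big on this uncountable set, so that Corollary \ref{unc-big} concludes. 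That route avoids Proposition \ref{propa} (whose proof rests on Fujita-type decompositions on modifications) and propagates bigness from one parameter value by pure continuity, whereas your reduction is shorter as a derivation but inherits all the heavier ingredients. Two details you should patch: Proposition \ref{global-B} requires invariance of $h^{0,1}$ on all of $\Delta$, while the hypothesis of Theorem \ref{thm-moishezon0} is only local at $0$ --- either note, as the paper does, that Moishezon fibers satisfy the $\partial\bar\partial$-lemma so that $h^{0,1}$ is locally constant on $\Delta^*$, or simply work over a small disc $\Delta_\epsilon$ on which the local invariance holds (its punctured part is still uncountable); and the open set $U$ of Lemma \ref{global-lemma2} is only the complement of a discrete subset of $\Delta$, not necessarily $\Delta\setminus\{0\}$ as you wrote, which is harmless here since the remaining points of $\Delta\setminus U$ lie in $\Delta^*$ and their fibers are Moishezon by hypothesis.
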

\begin{proof} We deal with the Hodge number case first.
Recall that any Moishezon manifold satisfies the $\partial\bar\partial$-lemma by  \cite{par} or \cite[Theorem 5.22]{DGMS} and thus follows the degeneracy of Fr\"olicher spectral sequence at $E_1$. So it satisfies the deformation invariance of all-type Hodge numbers by \cite[Proposition 9.20]{V} or also \cite[Theorem 1.3]{RZ15}. By assumption, Grauert's continuity theorem \cite[Theorem 4.12.(ii) of Chapter III]{bs} (or just Lemma \ref{gct} above) implies that $R^2\pi_*\mathcal{O}_{\mathcal{X}}$ over $\Delta^*$ is locally free.
Then by using the fact that each of the Moishezon fiber $X_t$ admits a big line bundle $L_t$, the Lebesgue negligibility argument in Subsection \ref{usc} leads to a section $s\in \Gamma(\Delta, R^2\pi_*\mathcal{O}_{\mathcal{X}})$ which arises
from $c_1(L_t)$ and proves to be satisfying $s|_{\Delta^*}=0$, and thus $s=0$ by combining Propositions \ref{Hodge-torsionfree} and \ref{s-torsion}.
So there exists a holomorphic line bundle $L$ on $\mathcal{X}$ such that for some $t_0\in \Delta^*$, the Hermitian metric  $(L|_{X_{t_0}},\tilde{h}_{t_0})$  satisfies
\begin{equation}\label{dci}
\int_{X(L|_{X_{t_0}},\tilde{h}_{t_0},\leq 1)}\left(\frac{\sqrt{-1}}{2\pi}\Theta_{L|_{X_{t_0}},\tilde{h}_{t_0}}\right)^n>0,
\end{equation}
where the Hermitian metric $\tilde{h}_{t_0}$ is obtained by the $\partial\b\partial$-lemma on $X_{t_0}$ such that $\Theta_{L|_{X_{t_0}},\tilde{h}_{t_0}}=\Theta_{L_{t_0},{h}_{t_0}}$.  Under the deformation invariance of $h^{0,1}$, we can also construct a holomorphic line bundle $L'$ on $\mathcal{X}$ with $L'|_{X_{t_0}}=L_{t_0}$ for this $t_0\in \Delta^*$ as in Remark \ref{get-L0}.   However, for our purpose the equality $c_1(L|_{X_{t_0}})=c_1(L|_{t_0})$ is sufficient as far as \eqref{dci} is concerned.

As for the second case, the argument of Theorem \ref{thm-gauduchon-update} 
with the assumption of strongly Gauduchon metric gives the desired holomorphic line bundle $L$ on $\mathcal{X}$ with the same curvature integration property as \eqref{dci}.  Remark that with this integration condition, one can avoid the use of Proposition \ref{propa}; see below for more.

In summary, one obtains a holomorphic Hermitian line bundle $(L,h)$ on $\mathcal{X}$ and  a hermitian metric on $L_{t_0}:=L|_{X_{t_0}}$ for some $t_0\in \Delta^*$ such that $(L_{t_0},h_{t_0}:=h|_{X_{t_0}})$ satisfies
$$\int_{X(L_{t_0},h_{t_0},\leq 1)}\left(\frac{\sqrt{-1}}{2\pi}\Theta_{L_{t_0},h_{t_0}}\right)^n>0.$$
By Demailly's strong Morse inequality in Theorem \ref{hmi}, one has
$$h^0(X_{t_0},L_{t_0}^{\otimes k})\geq h^0(X_{t_0},L_{t_0}^{\otimes k})-h^1(X_{t_0},L_{t_0}^{\otimes k})\geq \frac{k^n}{n!}\int_{X(L_{t_0},h_{t_0},\leq 1)}\left(\frac{\sqrt{-1}}{2\pi}\Theta_{L_{t_0},h_{t_0}}\right)^n-o(k^n)$$
and thus $L_{t_0}$ is big.

The difficulty here is that we have only one big line bundle $L_{t_0}$ with $t_0\in \Delta^*$ for the moment.  Fortunately, for $|t-t_0|\leq \epsilon$ with some small constant $\epsilon>0$, one still has, by continuity of smooth extension
of the smooth Hermitian metric on $L|_{X_{t_0}}$, that
$$\int_{X(L_{t},h_{t},\leq 1)}\left(\frac{\sqrt{-1}}{2\pi}\Theta_{L_{t},h_{t}}\right)^n>0.$$
By Demailly's strong Morse inequality again, one obtains that $L_t$ is big for $|t-t_0|\leq \epsilon$. So Corollary \ref{unc-big} completes the proof.
\end{proof}

As a direct corollary of Theorem \ref{thm-moishezon0}, one obtains the following result.
\begin{cor}\label{sm02}
If the fiber $X_t:=\pi^{-1}(t)$ for each $t\in \Delta^*$ is semi-positive Moishezon  and the $(0,1)$-Hodge number of $X_0$ satisfies the deformation invariance  or admits a strongly Gauduchon metric as in Definition \ref{sGau}, then $X_0$ is Moishezon.
\end{cor}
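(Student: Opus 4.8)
The plan is to reduce the statement directly to Theorem \ref{thm-moishezon0}, since Corollary \ref{sm02} is precisely the special case in which the Hermitian line bundles supplied on the fibers happen to be semi-positive. The only point that I would need to verify is that a semi-positive Moishezon fiber automatically fulfills Demailly's integration condition figuring in the hypothesis of Theorem \ref{thm-moishezon0}; once this is in place, the conclusion is immediate.

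First I would fix $t\in\Delta^*$ and let $(L_t,h_t)$ be the Hermitian holomorphic line bundle furnished by Definition \ref{semipos}, whose curvature $\sqrt{-1}\Theta_{L_t,h_t}$ is semi-positive on all of $X_t$ and strictly positive at some point $x_t\in X_t$. Semi-positivity forbids any negative eigenvalue of $\sqrt{-1}\Theta_{L_t,h_t}$, so the $1$-index set $X(L_t,h_t,1)$ is empty and hence
$$X(L_t,h_t,\le 1)=X(L_t,h_t,0).$$
Off the open set $X(L_t,h_t,0)$ at least one eigenvalue vanishes, so the top power $\left(\frac{\sqrt{-1}}{2\pi}\Theta_{L_t,h_t}\right)^n$ is identically zero there; consequently
$$\int_{X(L_t,h_t,\le 1)}\left(\frac{\sqrt{-1}}{2\pi}\Theta_{L_t,h_t}\right)^n=\int_{X_t}\left(\frac{\sqrt{-1}}{2\pi}\Theta_{L_t,h_t}\right)^n.$$
Because $\sqrt{-1}\Theta_{L_t,h_t}\ge 0$ pointwise and is strictly positive in a neighborhood of $x_t$, the integrand is a non-negative top form which is strictly positive near $x_t$, whence the last integral is strictly positive. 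This is exactly the elementary computation already recorded immediately before the statement of Theorem \ref{thm-moishezon0}.

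Having checked this, every fiber $X_t$ with $t\in\Delta^*$ is Moishezon (by Siu's criterion, as in Definition \ref{semipos}) and carries a Hermitian holomorphic line bundle $(L_t,h_t)$ satisfying Demailly's integration condition. Combined with the assumption that $X_0$ either satisfies the local deformation invariance for the $(0,1)$-Hodge number or admits a strongly Gauduchon metric in the sense of Definition \ref{sGau}, all hypotheses of Theorem \ref{thm-moishezon0} are in force, and that theorem yields at once that $X_0$ is Moishezon. I do not expect any genuine obstacle in this deduction: the entire analytic substance---the Lebesgue-negligibility construction of a global line bundle on $\mathcal{X}$, the torsion-freeness input through Propositions \ref{Hodge-torsionfree} and \ref{s-torsion}, the strongly Gauduchon mass estimates, and the final application of Demailly's strong Morse inequality (Theorem \ref{hmi}) together with Corollary \ref{unc-big}---has already been carried out in the proof of Theorem \ref{thm-moishezon0}. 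The present corollary merely repackages the semi-positivity hypothesis into that framework.
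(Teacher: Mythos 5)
Your proof is correct, but it is not the argument the paper actually writes out: it is the ``first'' proof the authors leave implicit when they introduce the statement as a direct corollary of Theorem \ref{thm-moishezon0}. Indeed, the eigenvalue computation you perform --- emptiness of $X(L_t,h_t,1)$ for a semi-positive metric and the resulting identity $\int_{X(L_t,h_t,\le 1)}\bigl(\tfrac{\sqrt{-1}}{2\pi}\Theta_{L_t,h_t}\bigr)^n=\int_{X_t}\bigl(\tfrac{\sqrt{-1}}{2\pi}\Theta_{L_t,h_t}\bigr)^n>0$ --- is recorded verbatim in the paper immediately before that theorem, so your reduction is exactly the intended one and has no gap. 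The proof the paper gives, however, is deliberately a second, different argument offered as being of independent interest: it extracts from the proof of Theorem \ref{thm-moishezon0} only the existence of a global line bundle $L$ on $\mathcal{X}$ whose restriction $L_\tau$ to a single fiber $X_\tau$ is semi-positive and positive at a point, then invokes Berndtsson's solution of the Grauert--Riemenschneider conjecture \cite{bn} to get the bounds $h^q(X_\tau,L_\tau^{\otimes k})<c_qk^{n-q}$ for $1\le q\le n$ and $h^0(X_\tau,L_\tau^{\otimes k})\ge c_0k^n$; the jumping sets $V_{m,q}=\{t\in\Delta: h^q(X_t,L_t^{\otimes m})\ge c_qm^{n-q}\}$ are proper analytic, hence discrete, so their union over all $m>N$ and $q$ is countable, and on the uncountable complement $\tilde V$ asymptotic Riemann--Roch (Theorem \ref{arr}) converts the higher-cohomology bounds into bigness of $L_{\tilde\tau}$, after which Corollary \ref{unc-big} concludes. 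The trade-off: your route is shorter and rests on the full strength of Theorem \ref{thm-moishezon0}, whose internal mechanism propagates bigness to nearby fibers via continuity of the curvature integral together with Demailly's strong Morse inequality (Theorem \ref{hmi}); the paper's written route replaces that curvature-continuity step by a purely cohomological propagation from one fiber to uncountably many, exploiting the sharper $O(k^{n-q})$ control of higher cohomology that Berndtsson's estimate provides (as opposed to the $o(k^n)$ control coming from Morse inequalities alone). Both arguments are sound proofs of the corollary.
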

\begin{proof}
Here we give a second proof of Corollary \ref{sm02}, which seems of independent interest.

By the proof of Theorem \ref{thm-moishezon0}, there exist  a holomorphic line bundle $L$ on $\mathcal{X}$ and some $\tau\in B$ such that $L_\tau:=L|_{X_\tau}$ is semi-positive on the whole $X_\tau$ and strictly positive at one point of $X_\tau$. The difficulty here is that the line bundle $L_\tau$ is big only at one $\tau$ for the moment. By Berndtsson's solution of Grauert--Riemenschneider conjecture \cite{bn}, there exist $c_0,c_1,\cdots>0$ and some positive integer $N$ such that for all $k>N$, there hold
$$h^q(X_\tau, L_\tau^{\otimes k})<c_q k^{n-q},$$
for all $1\leq q\leq n$ and
$$h^0(X_\tau, L_\tau^{\otimes k})\geq c_0 k^{n}.$$

For any $m> N$ and $1\leq q\leq n$, let
$$V_{m,q}=\{t\in \Delta:h^q(X_t, L_t^{\otimes m})\geq c_q m^{n-q}\}$$ and
$$V_m=\cup_{q=1}^n V_{m,q}.$$ Then $V_m$ is an analytic subset of $\Delta$ but not equal to $\Delta$ since for $m>N$, $t=\tau$ is excluded from $V_m$. So for $m>N$, $V_m$ is a discrete subset of $\Delta$. Now
set $V=\cup_{m>N} V_m,$
which is a countable subset of $\Delta$, and $$\tilde{V}:=\Delta\setminus V$$
which is non-empty and uncountable.
So for $\tilde{\tau}\in \tilde V$,  one has
$$h^q(X_{\tilde{\tau}}, L_{\tilde{\tau}}^{\otimes m})< c_q m^{n-q}$$
for each $1\leq q\leq n$ and $m> N$. Thus by asymptotic Riemann--Roch Theorem \ref{arr} applied to $L_{\tilde{\tau}}^{\otimes m}$, one obtains
$$h^0(X_{\tilde{\tau}}, L_{\tilde{\tau}}^{\otimes m})\geq c_0 m^n$$
for all $m> N$, giving that $L_{\tilde{\tau}}$ is also big on $X_{\tilde{\tau}}$ for each $\tilde\tau\in \tilde V$.
We now apply Corollary \ref{unc-big} to complete the proof.
\end{proof}

\end{document}